\documentclass[11pt, oneside]{article}   	
\usepackage{geometry}                		
\geometry{letterpaper}                   		
\usepackage{graphicx}				
\usepackage{amssymb}
\usepackage{amsmath}
\usepackage{amsthm}
\usepackage{tikz}
\usepackage{enumerate}
\usetikzlibrary{matrix,arrows,decorations.pathmorphing}
\usepackage[mathscr]{euscript}
\usepackage{wasysym}
\usepackage{mathtools}
\usepackage{hyperref}
\usepackage{array}
\usepackage{comment}

\newcommand{\R}{\mathbb{R}}


\newtheorem{thm}{Theorem}[section]

\newtheorem{lemma}[thm]{Lemma}
\newtheorem{prop}[thm]{Proposition}

\theoremstyle{definition}
\newtheorem{defn}[thm]{Definition}
\newtheorem{ex}[thm]{Example}

\theoremstyle{remark}
\newtheorem*{Proof of Proposition 4.8}{Proof of Proposition 4.8}

\setcounter{MaxMatrixCols}{11}

\setlength{\parskip}{0.05in}

\title{Plabic R-matrices}
\author{Sunita Chepuri}
\date{ }							

\begin{document}
\maketitle
\begin{abstract}
Postnikov's plabic graphs in a disk are used to parametrize totally positive Grassmannians. In recent years plabic graphs have found numerous applications in math and physics. One of the key features of the theory is the fact that if a plabic graph is reduced, the face weights can be uniquely recovered from boundary measurements. On surfaces more complicated than a disk this property is lost. In this paper we undertake a comprehensive study of a certain semi-local transformation of weights for plabic networks on a cylinder that preserve boundary measurements. We call this a plabic R-matrix. We show that plabic R-matrices have underlying cluster algebra structure, generalizing recent work of Inoue-Lam-Pylyavskyy. Special cases of transformations we consider include geometric R-matrices appearing in Berenstein-Kazhdan theory of geometric crystals, and also certain transformations appearing in a recent work of Goncharov-Shen.
\end{abstract}

\section{Introduction}\label{sec:intro}
The relationship between total positivity and networks has been studied extensively (see \cite{ALT}, \cite{CM}, \cite{FZ}).  In his groundbreaking paper~\cite{P}, Postnikov develops a theory of plabic networks for studying the connection between the totally nonnegative Grassmannian and planar directed networks in a disk.  Plabic graphs have since been found to have many additional applications.  They have been used by Kodama and Williams to study soliton solutions to the KP equation~\cite{KW1, KW2}, by Arkani-Hamed, et. al., to study scattering amplitudes for $\mathcal{N}=4$ supersymmetric Yang-Mills~\cite{ABCGPT, ABCPT, ABCTPbook}, and by Gekhtman, Shapiro, and Vainshtein to study Poisson geometry~\cite{GSVbook, GSV2}.

Postnikov defines a set of local moves and reductions so that the boundary measurement map gives a bijection between move-reduction equivalence classes for plabic networks in a disk and the totally nonnegative Grassmannian.  However, there are plabic networks on a cylinder that are not move-reduction equivalent and yet have the same boundary measurements.  In particular, we define a semi-local transformation on weights for plabic networks on a cylinder that preserves boundary measurements.  We call this a \emph{plabic R-matrix}.  Plabic R-matrices are different from Postnikov's moves and reductions in that they do not alter the underlying graph.  

In the simplest case (see Example~\ref{ex:LP-edge}), we recover the geometric R-matrix.  The geometric R-matrix arises in Berenstein and Kazhdan's theory of geometric crystals~\cite{BK}.  It is studied by Etingof in exploring set-theoretical solutions to the Yang-Baxter equation~\cite{E}, and by Kajiwara, Nouni, and Yamada in the context of representations of affine Weyl groups~\cite{KNY}.  Kashiwara, Nakashima, and Okado give a thorough survey constructing the geometric R-matrix in different types~\cite{KNO}.  The geometric R-matrix also appears in Lam and Pylyavskyy's investigation of total positivity in loop groups~\cite{LP2}.

Another special case of the plabic R-matrix (see Examples~\ref{ex:GS-face},~\ref{ex:GS-y}, and~\ref{ex:GS-coincidence}) is a transformation used by Goncharov and Shen to study Donaldson-Thomas invariants~\cite{GS}.  Given a surface $\mathbb{S}$ with punctures, there is a birational Weyl group action on the moduli space $\mathcal{X}_{\text{PGLm},\mathbb{S}}$ for each puncture in $\mathbb{S}$.  Goncharov and Shen show that this action is given by cluster Poisson transformations.  We can obtain the plabic R-matrix from a generalization of these transformations (see Sections~\ref{sec:quivers} and~\ref{sec:R-mat-mutation}), and in specific cases recover the Weyl group action.

{\bf Section~\ref{sec:bg1}.} This section provides background on planar directed networks.  In Section~\ref{sec:bg1-disk}, we review Postinikov's boundary measurement map (Section 4 of~\cite{P}).  Section~\ref{sec:bg1-cyl} follows Gekhtman, Shapiro, and Vainshtein's lifting of this construction to planar directed networks on a cylinder \cite{GSV1}.

{\bf Section~\ref{sec:plabic-networks}.} Here we introduce plabic networks on a cylinder.  Section~\ref{sec:face-trail} details how to obtain face and trail weights from edge weights and how to calculate the weight of a path in a face weighted network.  In Section~\ref{sec:orientation}, we expose a major difficulty of lifting the theory of plabic networks to the cylinder: Postinikov's results (Section 10 of~\cite{P}) regarding changing the orientation of edges of a plabic network do not hold.  We do however show in Theorem~\ref{thm:orientation-invol} that plabic networks on a cylinder contain enough information that we can define involutions on the level of plabic networks, rather than planar directed networks.

{\bf Section~\ref{sec:R-mat}.} We begin by reviewing Postnikov's moves and reductions for plabic networks (Section 12 of~\cite{P}) and Postnikov diagrams, also known as alternating strand diagrams (Section 14 of~\cite{P}).  The definition of Postnikov diagrams can be generalized so that it lifts to a cylinder.  We prove in Theorem~\ref{thm:alt-str-reduced-gen} that Postnikov diagrams on a cylinder are in bijection with leafless reduced plabic graphs on a cylinder with no unicolored edges, an analogue of Corollary 14.2 of~\cite{P}.  Using alternating stand diagrams, we introduce a family of plabic networks on a cylinder called cylindric $k$-loop plabic networks.  We define an transformation called a plabic R-matrix on weights for such networks, in both an edge-weighted and face-weighted setting.  In Theorems~\ref{thm:Te} and~\ref{thm:Tf}, we show that plabic R-matrices preserve boundary measurements, are an involutions, give the only choices of weights that preserve the boundary measurements, and satisfy the braid relation.

{\bf Section~\ref{sec:bg2}.} This section gives a brief background on cluster algebras from quivers, including both $x$- and $y$-dynamics.

{\bf Section~\ref{sec:quivers}.} In this section, we define spider web quivers and a mutation sequence $\tau$ for these quivers.  Proposition~\ref{prop:mutation-seq} shows $\tau$ preserves the original quiver.  We give formulas for both the $x$ and $y$ variables after applying $\tau$ to a spider web quiver.  In Theorems~\ref{thm:x-involution},~\ref{thm:x-braid},~\ref{thm:y-involution}, and~\ref{thm:y-braid}, we show that $\tau$ is an involution and satisfies the braid relation for both $x$ and $y$ variables.

{\bf Section~\ref{sec:R-mat-mutation}.}  We show in Theorem~\ref{thm:coincidence} that for a cylindric $k$-loop plabic network the face weighted plabic R-matrix is realized by the $y$-dynamics of the dual quiver, under the involution $\tau$ from Section~\ref{sec:quivers}.

The rest of the sections provide proofs of the main theorems.  Section~\ref{sec:AltStr} proves some useful facts about plabic graphs on a cylinder using Postnikov diagrams, Section~\ref{sec:R-matThm} gives proofs about plabic R-matrices, and Section~\ref{sec:ClAlgPfs} contains proofs of the theorems from Section~\ref{sec:quivers}.

{\bf Acknowledgements.}  I would like to thank my adviser Pavlo Pylyavskyy for introducing me to this topic and for the many helpful conversations and suggestions.  I am also thankful for the support of RTG NSF grant DMS-1148634. 

\section{Planar directed networks}\label{sec:bg1}

\begin{defn}\label{defn:planar-directed-network}
We will assume a \emph{planar directed graph} on a surface with boundary, considered up to homotopy, has $n$ vertices on the boundary, $b_1,...,b_n$.  We will call these \emph{boundary vertices}, and all other vertices \emph{internal vertices}.  Additionally, we will assume that all boundary vertices are sources or sinks.  A \emph{planar directed network} is a planar directed graph with a weight $x_e\in\R_{>0}$ assigned to each edge.
\end{defn}

\begin{defn}\label{defn:source-sink}
The \emph{source set} of a planar directed graph or network is the set $I=\{i\in [n]\ |\ b_i\text{ is a source}\}$.  The \emph{sink set} is $\overline{I}=[n]\setminus I=\{j\in [n]\ |\ b_j\text{ is a sink}\}$.  
\end{defn}

\begin{defn}\label{defn:xP}
For any path $P$ in a planar directed network, the weight of $P$ is $$x_P=\prod_{e\in P}x_e.$$
\end{defn}

\subsection{Planar directed networks in a disk}\label{sec:bg1-disk}

The material in this section can be found in Section 4 of \cite{P}.

For a planar directed graph or network in a disk, we will label the boundary vertices $b_1,...,b_n$ in clockwise order.

\begin{defn}\label{defn:winding-index-disk}
For a path $P$ in a planar directed graph or network in a disk from $b_i$ to $b_j$, we define its \emph{winding index}, $wind(P)$.  First, we smooth any corners in $P$ and make the tangent vector of $P$ at $b_j$ have the same direction as the tangent vector at $b_i$.  Then $wind(P)$ is the full number of counterclockwise $360^\circ$ turns the tangent vector makes from $b_i$ to $b_j$, counting clockwise turns as negative.  For $C$ a closed curve, we can define $wind(C)$ similarly.  See Lemma 4.2 in~\cite{P} for a recursive formula for $wind(P)$ and $wind(C)$.
\end{defn}

\begin{figure}[htp]
\centering
\begin{tikzpicture}[scale=0.8]
\draw [line width=0.25mm] (0,0) circle (2cm);
\path[-,font=\large, >=angle 90, line width=0.5mm]
(-2,0) edge (0.2,-1)
(0.2,-1) edge (-0.2,0.5)
(-0.2,0.5) edge (-0.6,-1.5)
(-0.6,-1.5) edge (-0.6,1)
(-0.6,1) edge (1.2, 0)
(1.2, 0) edge (0.7, -0.1);
\path[->,font=\large, >=angle 90, line width=0.5mm]
(0.7, -0.1) edge (1.42,1.42);
\end{tikzpicture}
\caption{A path $P$ with $wind(P)=-1$.}
\label{fig:winding-disk}
\end{figure}

\begin{defn}\label{defn:formal-boundary-meas-disk}
Let $b_i$ be a source and $b_j$ be a sink in a planar directed network in a disk with graph $G$.  Let the edge weights be the formal variables $x_e$.  Then the \emph{formal boundary measurement} $M_{ij}^{\text{form}}$ is the formal power series $$M_{ij}^{\text{form}}:=\sum_{\substack{\text{paths }P\text{ from}\\b_i\text{ to }b_j}}(-1)^{wind(P)}x_P.$$
\end{defn}

\begin{lemma}\label{lemma:formal-boundary-meas-disk}
The formal power series $M_{ij}^{\text{form}}$ sum to subtraction-free rational expressions in the variables $x_e$.  Thus, $M_{ij}^{\text{form}}$ is well-defined function on $\R_{\geq0}^{|E(G)|}$, where $E(G)$ is the set of edges in the graph $G$.
\end{lemma}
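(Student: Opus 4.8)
The plan is to follow Postnikov's argument (Section 4 of \cite{P}) and exhibit $M_{ij}^{\text{form}}$ explicitly as a ratio of two polynomials in the edge weights with a denominator that is manifestly positive on $\R_{\geq 0}^{|E(G)|}$. First I would organize the paths from $b_i$ to $b_j$ by factoring out their ``loop-free part.'' Concretely, any path $P$ decomposes as a simple path $Q$ from $b_i$ to $b_j$ together with a collection of cycles hanging off the vertices of $Q$; iterating, the whole generating function of paths is governed by the generating function of closed walks in $G$. The standard device is the transfer matrix (or the Lindström--Gessel--Viennot-type setup adapted to a single source and sink): let $A$ be the weighted adjacency-type operator on internal vertices, so that $\sum_{k\ge 0} A^k = (\1 - A)^{-1}$ enumerates all walks. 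Then $M_{ij}^{\text{form}}$ becomes an entry of a matrix built from $(\1-A)^{-1}$, and by Cramer's rule this entry equals a polynomial in the $x_e$ divided by $\det(\1 - A)$.

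The key point is to control the signs so that the result is subtraction-free. Here is where the factor $(-1)^{wind(P)}$ does its work: by Postnikov's Lemma 4.2, winding indices behave additively under concatenation (up to the correction that makes the tangent directions match), so that when a cycle $C$ is spliced into a path one picks up exactly a factor $(-1)^{wind(C)}$. I would use this to show that $\det(\1 - A)$, expanded over collections of vertex-disjoint cycles (the usual permutation expansion / Coefficient theorem for $\det(\1-A)$), becomes a sum in which each monomial is the weight of a disjoint cycle collection times $(-1)$ to the sum of the winding numbers, and the combinatorics of planarity forces all these signs to be $+1$ (a disjoint union of cycles in a planar graph has all winding numbers of the ``correct'' sign, and the $(-1)^{\#\text{cycles}}$ from the determinant expansion is cancelled by the winding signs). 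Hence $\det(\1 - A)$ is a subtraction-free polynomial; similarly the numerator, being a signed sum over a path $Q$ times a disjoint-cycle collection in the complement, is subtraction-free once the winding signs are accounted for. Therefore $M_{ij}^{\text{form}} = (\text{subtraction-free polynomial})/(\text{subtraction-free polynomial})$, and in particular the formal power series converges to this rational function.

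For well-definedness on $\R_{\geq 0}^{|E(G)|}$ it then suffices to observe that the denominator $\det(\1 - A)$, being subtraction-free with constant term $1$, is strictly positive for every assignment of nonnegative edge weights, so the rational expression has no poles on that domain and the formal power series (which converges at least near the origin) agrees with it there and extends continuously.

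I expect the main obstacle to be the sign bookkeeping: proving rigorously that every monomial surviving in $\det(\1-A)$ and in the numerator carries a $+$ sign. This is exactly the content of Postnikov's Lemma 4.2 together with a planarity argument about winding numbers of disjoint cycles, and it is the only genuinely nontrivial step — the transfer-matrix formalism and Cramer's rule are routine, and the positivity of the denominator is immediate once it is known to be subtraction-free. I would therefore cite Lemma 4.2 of \cite{P} for the additivity of the winding index and supply the planarity sign argument in detail.
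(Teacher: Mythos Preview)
The paper does not supply its own proof of this lemma: it is stated as background material from Postnikov, with the section header noting that ``the material in this section can be found in Section 4 of \cite{P}.'' So there is no in-paper argument to compare against; the intended ``proof'' is simply the citation.

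Your proposal to reconstruct Postnikov's argument is therefore appropriate in spirit, but the specific route you sketch is not the one Postnikov actually takes. His proof (Lemma 4.3 in \cite{P}) proceeds by induction on the number of edges in the network: one picks a suitable edge $e$, deletes or contracts it, and expresses $M_{ij}^{\text{form}}$ as a subtraction-free rational combination of boundary measurements of the smaller network. Your transfer-matrix/Cramer's-rule approach can be made to work, and your instinct about the sign cancellation is essentially right --- a simple closed curve in the plane has $wind(C)=\pm 1$, so $(-1)^{wind(C)}=-1$ for each cycle, and this matches the $(-1)^{\#\text{cycles}}$ from the permutation expansion of $\det(\1-A)$. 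The gap is in the mechanism: the winding index is a global invariant of a path, not an edge-local quantity, so it does not sit inside the adjacency matrix $A$ as written. To run your argument rigorously you would need to localize $(-1)^{wind}$ (e.g.\ via the angle-increment formulation implicit in Lemma 4.2 of \cite{P}) so that the modified $A$ already carries the sign, and then check that the numerator from Cramer's rule is likewise subtraction-free. You correctly identify this as the only nontrivial step; just be aware that Postnikov sidesteps it entirely with the inductive edge-removal argument, which is shorter.
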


\begin{defn}\label{defn:boundary-meas-disk}
The \emph{boundary measurements} $M_{ij}$ for a planar directed network in a disk are nonnegative real numbers obtained by writing the formal boundary measurements $M_{ij}^{\text{form}}$ as subtraction-free rational expressions, and then specializing them by assigning $x_e$ the real weight of the edge $e$.
\end{defn}

\begin{ex}\label{ex:boundary-meas-disk}
Suppose we have the following network:

\begin{tikzpicture}
\node(A) at (-3,0) {$b_1$};
\node(B) at (3,0) {$b_2$};
\path[->,font=\large, >=angle 90, line width=0.4mm]
(A) edge node[above] {$x_1=1$} (-1,0)
(-1,0) edge[bend left=50] node[above] {$x_2=2$} (1,0)
(1,0) edge[bend left=50] node[below] {$x_3=1$} (-1,0)
(1, 0) edge node[above] {$x_4=1$} (B);
\end{tikzpicture}
\begin{align*}
M_{12}^{\text{form}}&=x_1 x_2 x_4-x_1 x_2 x_3 x_2 x_4 +x_1 x_2 x_3 x_2 x_3 x_2 x_4 -...\\
&=x_1 x_2 x_4 \sum_{i=0}^\infty (-x_2 x_3)^i\\
&=\frac{x_1 x_2 x_4 }{1+x_2 x_3 }
\end{align*}

Substituting our values for the $x_e$'s, we find $M_{12}=\frac{2}{3}$.
\end{ex}

\begin{defn}\label{defn:Grassmannian}
For $0\leq k\leq n$, the \emph{Grassmannian} $Gr_{kn}$ is the manifold of $k$-dimensional subspaces of $\R^n$.
\end{defn}

We can associate any full-rank real $k\times n$ matrix to a point in $Gr_{kn}$ by taking the span of its rows.  Let $Mat^*_{kn}$ be the space of full-rank real $k\times n$ matrices.  Since left-multiplying a matrix by an element of the general linear group is equivalent to performing row operations and row operations do not change the row-span of a matrix, we can think of $Gr_{kn}$ as the quotient $GL_k\backslash Mat^*_{kn}$.

\begin{defn}\label{defn:minor}
For a $k\times n$ matrix $M$, a \emph{maximal minor} is $\Delta(M)_{S}$ where $S\subseteq[n]$ and $|S|=k$.  $\Delta(M)_S$ is the determinant of the submatrix of $M$ obtained by taking only the columns indexed by $S$.
\end{defn}

\begin{defn}\label{defn:boundary-meas-map-disk}
If $Net_{kn}$ is the set of planar directed networks in a disk with $k$ boundary sources and $n-k$ boundary sinks,  then we can define the \emph{boundary measurement map} $Meas: Net_{kn}\to Gr_{kn}$.  $Meas(N)$ is the point in $Gr_{kn}$ represented by the matrix $A(N)$, which is defined as follows:
\begin{enumerate}[(1)]
\item $A(N)_I$, the submatrix of $A(N)$ containing only the columns in the source set, is the identity $Id_k$.

\item For $I=\{i_1<...<i_k\},\ r\in[k]$, and $b_j\in\overline{I}$, we define $a_{rj}=(-1)^sM_{i_r,j}$, where $s$ is the number of elements of $I$ strictly between $i_r$ and $j$.
\end{enumerate}

Note that the map $Meas$ is constructed so that $M_{ij}=\Delta(A(N))_{(I\setminus\{i\})\cup\{j\}}$. 
\end{defn}

\begin{ex}
Consider the network from the pervious example:

\begin{tikzpicture}
\node(A) at (-3,0) {$b_1$};
\node(B) at (3,0) {$b_2$};
\path[->,font=\large, >=angle 90, line width=0.4mm]
(A) edge node[above] {$x_1=1$} (-1,0)
(-1,0) edge[bend left=50] node[above] {$x_2=2$} (1,0)
(1,0) edge[bend left=50] node[below] {$x_3=1$} (-1,0)
(1, 0) edge node[above] {$x_4=1$} (B);
\end{tikzpicture}

In this case $I=\{1\}$, so we put $Id_1$ in the first column of $A(N)$.

We compute $a_{12}=(-1)^0M_{12}=M_{12}=\frac{2}{3}$.

So, we have $A(N)=\begin{bmatrix} 1 & \frac{2}{3} \end{bmatrix}$.
\end{ex}

\subsection{Planar directed networks on a cylinder}\label{sec:bg1-cyl}

Throughout this paper, we will draw a cylinder as a fundamental domain of its universal cover such that it is a rectangle with boundary components on the left and right (see Figure~\ref{fig:cylinder}).

\begin{figure}[htb]
\begin{center}
\begin{tikzpicture}[scale=2]
\fill[gray!40!white] (0,0) rectangle (1,1);
\path[-,font=\large, >=angle 90]
(0,0) edge (0,1)
(1,0) edge (1,1);
\path[dashed,font=\large, >=angle 90]
(0,0) edge (1,0)
(0,1) edge (1,1);
\end{tikzpicture}
\end{center}
\caption{A cylinder, as we will represent them in this paper.}
\label{fig:cylinder}
\end{figure}
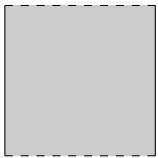

The constructions in this section may be found in~\cite{GSV1}.  They are based on Postnikov's theory of planar directed networks in a disk~\cite{P}, as seen in~\ref{sec:bg1-disk}.

For a planar directed graph or network on a cylinder, we will label the boundary vertices $b_1,...,b_n$ from the top of the left boundary component to the bottom and then from the bottom of the right boundary component to the top.

\begin{defn}\label{defn:cut}
A \emph{cut} $\gamma$ is an oriented non-self-intersecting curve from one boundary component to another, considered up to homotopy.  The endpoints of the cut are \emph{base points}.  We will always assume the cut is disjoint from the set of vertices of the graph and that it corresponds to the top and bottom of our rectangle when we draw a cylinder.  The cut is denoted by a directed dashed line.
\end{defn}

\begin{defn}\label{defn:intersection-number}
For a path $P$, the \emph{intersection number}, $int(P)$, is the number of times $P$ crosses $\gamma$ from the right minus the number where $P$ crosses $\gamma$ from the left.
\end{defn}

\begin{defn}\label{defn:CP}
If $P$ is a path from $b$ to $b'$ where $b,b'$ are on the same boundary component, then $C_P$ is the closed loop created from following the path $P$ and then going down along the boundary from $b'$ to $b$.  If $P$ is a path from $b$ to $b'$ where $b,b'$ are on the different boundary components, then $C_P$ is the closed loop created from following the path $P$ going down on the boundary from $b'$ to the base point of the cut, following the cut (or its reverse), and then down on the boundary from base point of the cut to $b$.
\end{defn}

\begin{defn}\label{defn:winding-index-cyl}
We can glue together the top and bottom of our rectangle, which represents a cylinder, in the plane to form an annulus.  Do this such that going up along the boundary of the rectangle corresponds to going clockwise around the boundary of the annulus (see Figure~\ref{fig:annulus}).  Then for a path $P$, the \emph{winding index} of $P$ is defined to be $wind(C_P)$, when $C_P$ is drawn on this annulus and $wind(C_P)$ is calculated as in Definition~\ref{defn:winding-index-disk}.
\end{defn}

\begin{figure}[htb]
\begin{center}
\begin{tikzpicture}
\fill[gray!40!white] (0,0) rectangle (1.5,1.5);
\path[-,font=\large, >=angle 90]
(0,0) edge (0,1.5)
(1.5,0) edge (1.5,1.5);
\path[dashed,font=\large, >=angle 90]
(0,0) edge (1.5,0)
(0,1.5) edge (1.5,1.5);
\path[->,font=\large, >=angle 90]
(2.5,1) edge (3,1);
\path [fill=gray!40!white] (4,0) to [out=90,in=180] (6,2) -- (6,1) to [out=180,in=90] (5,0);
\draw (4,0) arc [radius=2, start angle=180, end angle= 90];
\draw (5,0) arc [radius=1, start angle=180, end angle= 90];
\path[dashed,font=\large, >=angle 90]
(4,0) edge (5,0)
(6,2) edge (6,1);
\path[->,font=\large, >=angle 90]
(7,1) edge (7.5,1);
\fill[gray!40!white] (10,1.5) circle [radius=1.5];
\fill[white] (10,1.5) circle [radius=0.5];
\draw (10,1.5) circle [radius=1.5];
\draw (10,1.5) circle [radius=0.5];
\path[dashed,font=\large, >=angle 90]
(8.5,1.5) edge (9.5,1.5);
\end{tikzpicture}
\end{center}
\caption{Turning a cylinder into an annulus.}
\label{fig:annulus}
\end{figure}
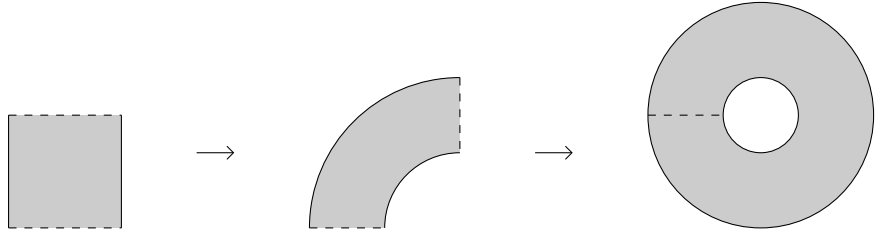

\vspace{2in}
\begin{ex}\label{ex:intersection-number-winding-index-cyl}
$ $
\begin{center}
\begin{tikzpicture}[scale=0.8]
\draw [line width=0.25mm] (0,0) circle (3cm);
\draw [line width=0.25mm] (0,0) circle (1cm);
\path[red, line width=0.5mm]
(-3,0) edge (-1,0);
\path[->, dashed, >=angle 90, line width=0.5mm]
(-3,0) edge (-1,0);
\path[-,font=\large, >=angle 90, line width=0.5mm, blue]
(0,3) edge (-1.5,1)
(-1.5,1) edge (0.5,2)
(0.5,2) edge (-2,1.5)
(-2,1.5) edge (-0.5,-2)
(-0.5,-2) edge (1.5,-1.5)
(1.5,-1.5) edge (1,-1)
(1,-1) edge (0.5,-2.5);
\path[->,font=\large, >=angle 90, line width=0.5mm, blue]
(0.5,-2.5) edge (0,-1);
\draw[line width=0.5mm, red] ([shift=(-90:1cm)]0,0) arc (-90:180:1cm);
\draw[line width=0.5mm, red] ([shift=(-180:3cm)]0,0) arc (-180:90:3cm);
\end{tikzpicture}
\end{center}

Here we have the cylinder depicted as an annulus.  The black dashed line is the cut.  A path $P$ is shown in blue.  $P$ crosses the cut once from left to right, so $int(P)=-1$.  The extension of $P$ to $C_P$ is shown in red.  We can see $wind(C_P)=-3$.
\end{ex}

\begin{defn}\label{defn:formal-boundary-meas-cyl}
Let $b_i$ be a source and $b_j$ be a sink in a planar directed network on a cylinder with graph $G$.  Let the edge weights be the formal variables $x_e$.  Then the \emph{formal boundary measurement} $M_{ij}^{\text{form}}$ is the formal power series $$M_{ij}^{\text{form}}:=\sum_{\substack{\text{paths }P\text{ from}\\b_i\text{ to }b_j}}(-1)^{wind(C_P)-1}\zeta^{int(P)}x_P.$$
\end{defn}

\begin{lemma}[Corollary 2.3 of~\cite{GSV1}]\label{lemma:formal-boundary-meas-cyl}
If $N$ is a planar network on a cylinder, then the formal power series $M_{ij}^{\text{form}}$ sum to rational expressions in the variables $x_e$ and $\zeta$.
\end{lemma}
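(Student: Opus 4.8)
The plan is to show that the path generating function $M_{ij}^{\text{form}}$ is a rational function of the $x_e$ and $\zeta$ by realizing it as an entry of a matrix resolvent $(I-A)^{-1}$, exactly as one proves the classical fact that walk generating functions in a finite weighted digraph are rational. The only thing preventing a completely mechanical argument is the coefficient $(-1)^{wind(C_P)-1}\zeta^{int(P)}$: the factor $\zeta^{int(P)}$ is a product of local (per-edge) contributions and so is trivial to encode, but $wind(C_P)$ is a global rotation number and is not additive under concatenation of paths, so the bulk of the work is to reduce this sign to local data. Throughout I would use Postnikov's recursive/local formula for the winding index (Lemma 4.2 of~\cite{P}) and the subtraction-free disk result, Lemma~\ref{lemma:formal-boundary-meas-disk}.

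First I would localize the winding sign. Fix a generic reference direction in the plane. For each internal vertex $v$ and each pair $(e,e')$ with $e$ a directed edge into $v$ and $e'$ a directed edge out of $v$, record a sign $\pm 1$ according to whether turning from $e$ to $e'$ sweeps the reference direction counterclockwise or clockwise, precisely as in Postnikov's treatment of the disk. The product of these signs along a path $P$, together with a correction from the turn at $b_i, b_j$ and from the closing-up curve used to build $C_P$, should reproduce $(-1)^{wind(C_P)-1}$. Verifying this on the annulus is the genuinely new point over the disk case: I would cut the annulus along the cut $\gamma$ to obtain a disk, where Postnikov's formula applies verbatim to the (now split) network, and then track exactly the correction incurred by re-gluing the two lips of $\gamma$ — this is where the contribution of the part of $C_P$ that runs along $\gamma$ enters. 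In parallel, to account for $\zeta^{int(P)}$ I would multiply the weight of each edge crossing $\gamma$ from the right by $\zeta$ and each edge crossing $\gamma$ from the left by $\zeta^{-1}$, so that a path accumulates precisely $\zeta^{int(P)}$.

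With the sign and $\zeta$ made local, I would assemble a transfer matrix $A$ whose states are the directed edges (germs) of $G$ and whose $(e,e')$ entry is the signed, $\zeta$-adjusted weight of traversing $e'$ immediately after $e$ when $e,e'$ share a vertex, and $0$ otherwise. Paths from $b_i$ to $b_j$ are then in bijection with walks in this edge-graph from the outgoing germ at $b_i$ to the incoming germ at $b_j$, and $M_{ij}^{\text{form}}$ equals the corresponding entry of $(I-A)^{-1}$ up to a boundary factor coming from the first and last edges. Every entry of $A$ carries a factor $x_{e'}$, so $A$ is topologically nilpotent with respect to the grading by total $x$-degree; hence $I-A$ is invertible over the completed weight ring, $\sum_{k\ge 0}A^k=(I-A)^{-1}$ as formal power series, and by Cramer's rule each entry of $(I-A)^{-1}$ is a rational function of the $x_e$ and $\zeta$. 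One then checks that this rational function, expanded as a power series, carries exactly the weights and signs of Definition~\ref{defn:formal-boundary-meas-cyl}; this is where the localization of the previous paragraph is invoked. (Alternatively, and perhaps more in the spirit of~\cite{GSV1}: cut along $\gamma$, introduce new boundary vertices on the two lips, apply Lemma~\ref{lemma:formal-boundary-meas-disk} to the resulting disk network, organize paths on the cylinder by their sequence of crossings of $\gamma$, and sum the matrix geometric series $B + R\,(I-Q)^{-1}\,C$, where $B,R,Q,C$ are blocks of disk boundary measurements decorated with powers of $\zeta$; rationality is then inherited from the disk case plus a single matrix inversion.)

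The main obstacle is exactly the winding-sign bookkeeping: because $wind(C_P)$ is a rotation number it does not split additively over the edges of $P$, so the reduction to an edge-by-edge sign rule — and in particular pinning down the contribution of the closing-up curve $C_P$, which may run along the boundary and along $\gamma$ — must be done carefully and is the place where the annulus genuinely differs from the disk. Once that reduction is established, the remainder is the standard resolvent/Cramer argument and is routine.
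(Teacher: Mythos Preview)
The paper does not prove this lemma at all: it is quoted verbatim as Corollary~2.3 of~\cite{GSV1} and no argument is given. So there is no ``paper's own proof'' to compare your proposal against; you are supplying an independent argument for a cited result.

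Your overall strategy is sound and is in fact close to how such statements are proved in the literature. Of your two variants, the second (cut along $\gamma$, apply Lemma~\ref{lemma:formal-boundary-meas-disk} to the resulting disk network, and assemble cylinder paths as $B + R(I-Q)^{-1}C$ with $\zeta$-decorations) is cleaner and closer to the spirit of~\cite{GSV1}: it reduces directly to the known disk rationality, and the only new bookkeeping is the finite block-matrix inversion. The transfer-matrix/resolvent route also works, but as you correctly flag, the nontrivial content is entirely in localizing $(-1)^{wind(C_P)-1}$ to per-corner signs; your sketch of this step (``should reproduce'') is the one place that is genuinely incomplete. The issue is not conceptual but clerical: you must check that the correction from the closing arc of $C_P$---which, by Definition~\ref{defn:CP}, may run along the boundary \emph{and} along $\gamma$---depends only on $i$, $j$, and $int(P)$, so that it factors out of the sum and does not spoil multiplicativity. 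Once that is verified, the Cramer argument is routine.

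In short: no gap in the approach, but the winding-sign localization you defer is exactly the lemma that carries all the weight, and you should prove it rather than assert it. The cut-and-glue alternative sidesteps most of that difficulty and would be my recommendation if you want a self-contained argument.
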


\begin{defn}\label{defn:boundary-meas-cyl}
The \emph{boundary measurements} $M_{ij}$ for a planar directed network on a cylinder are rational functions in $\zeta$ obtained by writing the formal boundary measurements $M_{ij}^{\text{form}}$ as rational expressions, and then specializing them by assigning $x_e$ the real weight of the edge $e$.
\end{defn}

\begin{ex}\label{ex:boundary-meas-cyl}
Suppose we have the following network:

\begin{tikzpicture}
\fill[gray!40!white] (0,0) rectangle (6,4);
\node at (-0.5,2) {$b_1$};
\node at (-0.5,1) {$b_2$};
\node at (6.5,2) {$b_3$};
\path[->,font=\large, >=angle 90, line width=0.4mm]
(0,2) edge node[above] {$x_1=1$} (2,2)
(4,2) edge node[above] {$x_2=1$} (2,2)
(4,2) edge node[above] {$x_3=1$} (6,2)
(3.5,1) edge node[above] {$x_4=1$} (0,1)
(2,2) edge node[left] {$x_5=2$} (3,4)
(3,0) edge (3.5,1)
(3.5,1) edge node[right] {$x_6=3$} (4,2);
\path[-,font=\large, >=angle 90, line width=0.4mm]
(0,0) edge (0,4)
(6,0) edge (6,4);
\path[->,dashed,font=\large, >=angle 90, line width=0.4mm]
(0,0) edge (6,0)
(0,4) edge (6,4);
\end{tikzpicture}
\begin{align*}
M_{12}^{\text{form}}&=x_1x_5x_4\zeta-x_1x_5x_6x_2x_5x_4\zeta^2+x_1x_5x_6x_2x_5x_6x_2x_5x_4\zeta^3...\\
&=x_1x_5x_6\zeta\sum_{i=0}^\infty(-x_5x_6x_2\zeta)^i\\
&=\frac{x_1x_5x_6\zeta}{1+x_5x_6x_2\zeta}\\
M_{13}^{\text{form}}&=-x_1x_5x_6x_3\zeta+x_1x_5x_6x_2x_5x_6x_3\zeta^2-x_1x_5x_6x_2x_5x_6x_2x_5x_6x_3\zeta^3...\\
&=-x_1x_5x_6x_3\zeta\sum_{i=0}^\infty(-x_5x_6x_2\zeta)^i\\
&=\frac{-x_1x_5x_6x_3\zeta}{1+x_5x_6x_2\zeta}
\end{align*}

Substituting our values for the $x_e$'s, we find $M_{12}=\frac{2\zeta}{1+6\zeta}$ and $M_{13}=\frac{-6\zeta}{1+6\zeta}$.

\end{ex}

\begin{defn}\label{defn:Gr-loops}
The \emph{space of Grassmannian loops}, $LGr_{kn}$ is the space of rational functions $X:\R\to Gr_{kn}$.  Elements in $LGr_{kn}$ can be represented as a full-rank $k\times n$ matrix where the entries are functions of a parameter $\zeta$.
\end{defn}

\begin{defn}\label{defn:boundary-mean-map-cyl}
If $Net_{kn}^C$ is the set of planar directed networks on a cylinder with $k$ boundary sources and $n-k$ boundary sinks, we can define the \emph{boundary measurement map} as $Meas^C:Net_{kn}^C\to LG_{kn}$ where $Meas^C(N)$ is represented by the matrix $A(N)$ such that:
\begin{enumerate}[(1)]
\item $A(N)_I$ is the identity $Id_k$.

\item For $I=\{i_1<...<i_k\}, r\in[k]$, and $b_j\in\overline{I}$, we define $a_{rj}=(-1)^sM_{i_r,j}$, where $s$ is the number of elements of $I$ strictly between $i_r$ and $j$.
\end{enumerate}

Note that the map $Meas^C$ is constructed so that $M_{ij}=\Delta(A(N))_{(I\setminus\{i\})\cup\{j\}}$.
\end{defn}

\begin{ex}
Consider the network from the previous example:

\begin{tikzpicture}
\fill[gray!40!white] (0,0) rectangle (6,4);
\node at (-0.5,2) {$b_1$};
\node at (-0.5,1) {$b_2$};
\node at (6.5,2) {$b_3$};
\path[->,font=\large, >=angle 90, line width=0.4mm]
(0,2) edge node[above] {$x_1=1$} (2,2)
(4,2) edge node[above] {$x_2=1$} (2,2)
(4,2) edge node[above] {$x_3=1$} (6,2)
(3.5,1) edge node[above] {$x_4=1$} (0,1)
(2,2) edge node[left] {$x_5=2$} (3,4)
(3,0) edge (3.5,1)
(3.5,1) edge node[right] {$x_6=3$} (4,2);
\path[-,font=\large, >=angle 90, line width=0.4mm]
(0,0) edge (0,4)
(6,0) edge (6,4);
\path[->,dashed,font=\large, >=angle 90, line width=0.4mm]
(0,0) edge (6,0)
(0,4) edge (6,4);
\end{tikzpicture}

In this case $I=\{1\}$, so we put $Id_1$ in the first column of $A(N)$.

We compute $a_{12}=(-1)^0M_{12}=M_{12}=\frac{2\zeta}{1+6\zeta}$ and $a_{13}=(-1)^0M_{13}=M_{13}=\frac{-6\zeta}{1+6\zeta}$.

So, we have $A(N)=\begin{bmatrix} 1 & \frac{2\zeta}{1+6\zeta} & \frac{-6\zeta}{1+6\zeta} \end{bmatrix}$.
\end{ex}

\section{Plabic networks}\label{sec:plabic-networks}

\begin{defn}\label{defn:plabic-graph}
A \emph{planar bicolored graph}, or \emph{plabic graph}, on a surface with boundary is a planar undirected graph such that each boundary vertex has degree 1 and each internal vertex is colored black or white.
\end{defn}

\begin{defn}[Definition 11.5 of~\cite{P}]\label{defn:plabic-network-disk}
A \emph{plabic network} in a disk is a plabic graph with a weight $y_f\in\R_{>0}$ assigned to each face.
\end{defn}

Postnikov~\cite{P} solves the inverse boundary problem for planar directed networks in a disk by turning them into plabic networks.  We will approach the problem for planar directed networks on a cylinder in the same way.

\subsection{Face and trail weights}\label{sec:face-trail}

\begin{defn}[Section 4 of~\cite{P}]\label{defn:gauge-trans}
A \emph{gauge transformation} is a rescaling of edge weights in a planar directed network so that all incoming edges of a particular vertex are multiplied by a positive real number $c$ and all outgoing edges of that vertex are divided by $c$.
\end{defn}

It is clear that gauge transformations preserve the boundary measurements, as they preserve the weight of each path.  This means that we can only ever hope to solve the inverse boundary problem up to gauge transformations.  To this end, we introduce the space of face and trail weights, which eliminates gauge transformations.  This space was introduced by Gekhtman, Shapiro, and Vainshtein in~\cite{GSV1}.  Here we present their results and also state explicitly how to obtain boundary measurements from a face weighted planar directed network on a cylinder.

\begin{defn}[Section 11 of~\cite{P}]\label{defn:face-weight-cyl}
For a face in a planar directed network, define the \emph{face weight} to be $$y_f:= \left(\prod_{e_i\in I^+_f} x_{e_i}\right)\left(\prod_{e_j\in I^-_f} x_{e_j}\right)$$ where $I^+_f$ is the set of edges on the outer boundary of $f$ oriented clockwise and edges on the inner boundary (if $f$ is not simply connected) oriented counterclockwise, and $I^-_f$ is the set of edges on the outer boundary of $f$ oriented counterclockwise and edges on the inner boundary (if $f$ is not simply connected) oriented clockwise.
\end{defn}

\begin{figure}
\begin{center}
\begin{tikzpicture}
\path[->,font=\large, >=angle 90, line width=0.4mm, every loop/.style={min distance=12mm,in=0,out=60,looseness=10}]
(-2,0) edge node[left] {$x_1$} (0,2)
(2,0) edge node[right] {$x_2$} (0,2)
(0,-2) edge node[right] {$x_3$} (2,0)
(-2,0) edge node[left] {$x_4$} (0,-2)
(-1,0) edge node[above] {$x_5$} (0,0)
(0,0) edge [loop right] node[above] {$x_6$} (0,0)
(-3,0) edge (-2,0)
(3,0) edge (2,0)
(0,2) edge (0,3)
(0,-2) edge (0,-3);
\node at (0.3,-0.7) {$f$};
\end{tikzpicture}
\caption{A face with $y_f=x_1 x_2^{-1} x_3^{-1} x_4^{-1} x_5 x_6^{-1} x_5^{-1}=\frac{x_1}{x_2 x_3 x_4 x_6}$.}
\label{fig:face-weight}
\end{center}
\end{figure}
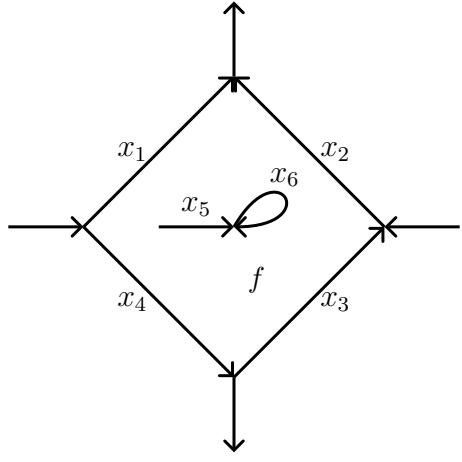

We can see that the product of weights of all the faces is 1, as each edge is counted once going clockwise and once going counterclockwise.

\begin{defn}[Section 2.3 of~\cite{GSV1}]\label{defn:trail}
A \emph{trail} in a planar directed network on a cylinder $N$ is a sequence of vertices $v_1,...,v_{m+1}$ where $v_1, v_{m+1}$ are boundary vertices on different boundary components and for each $i$, either $(v_i,v_{i+1})$ or $(v_{i+1},v_i)$ is an edge in $N$.  The \emph{weight} of a trail is $$t=\left(\prod_{(v_i,v_{i+1})\text{ an edge in }N} x_{(v_i,v_{i+1})}\right)\left(\prod_{(v_{i+1},v_i)\text{ an edge in }N} x^{-1}_{(v_{i+1},v_i)}\right).$$
\end{defn}

Notice that the face and trail weights are invariant under gauge transformations.

\begin{thm}[Section 2.3 of~\cite{GSV1}]\label{thm:mod-gauge-cyl}
For a planar directed graph $G$ on a cylinder with edge set $E$ and face set $F$, $$\R_{>0}^E/\{\text{gauge transformations}\}=\begin{cases} \R_{>0}^{F-1} & \text{if there is no trail,}\\ \R_{>0}^{F-1}\oplus\R_{>0} & \text{otherwise.}\end{cases}$$ where $\R_{>0}^{F-1}$ is generated by the face weights under the relation that their product be equal to 1 and $\R_{>0}$ in the second case is generated by the weight of a trail.
\end{thm}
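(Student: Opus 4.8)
The plan is to establish the isomorphism by exhibiting an explicit map in each direction and checking they are mutually inverse. First I would fix a spanning structure on $G$: choose a spanning tree $T$ of the underlying undirected graph (ignoring edge directions and treating boundary vertices as ordinary vertices), so that each non-tree edge $e$ determines a fundamental cycle $C_e$ in $T \cup \{e\}$. The key linear-algebra fact is that a gauge transformation assigns to each internal vertex $v$ a scalar $c_v \in \R_{>0}$, and the resulting action on the edge-weight vector, written additively via $\log$, is given by the (transpose of the) signed incidence matrix of $G$. So $\R_{>0}^E/\{\text{gauge}\}$ is, additively, the cokernel of this incidence map; its dimension is $|E| - (\text{number of internal vertices}) = |E| - |V| + n$ where we have used that no relations among the $c_v$ arise (an internal vertex with a nonzero $c_v$ genuinely changes some edge weight, since every internal vertex has positive degree). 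By Euler's formula for a graph embedded on a cylinder (genus-$0$ surface with boundary), this count equals $|F| - 1$ when the graph is connected and there is no trail, and $|F|$ when a trail exists; I would track the Euler characteristic bookkeeping carefully, noting that the presence or absence of a boundary-to-boundary path changes how the two boundary components contribute to the cycle space.

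Next I would identify the two claimed generating sets with a basis of this cokernel. The face weights $y_f$ are gauge-invariant (already observed in the text, and immediate from Definition~\ref{defn:face-weight-cyl} since a gauge factor $c_v$ contributes $c_v$ and $c_v^{-1}$ once each to any face whose boundary passes through $v$), and their product is $1$ (also already observed). The trail weight $t$ is likewise gauge-invariant for an internal vertex $v$ on the trail: the trail enters and leaves $v$ once, contributing $c_v \cdot c_v^{-1}$ or $c_v^{-1}\cdot c_v$; and $c_v$ for a boundary vertex does not exist. So both the $y_f$ and $t$ descend to well-defined functions on the quotient. The content is that together they form a coordinate system: I would show that the map $\R_{>0}^E/\{\text{gauge}\} \to \R_{>0}^{F-1}$ (resp.\ $\to \R_{>0}^{F-1}\oplus \R_{>0}$) sending an edge-weight vector to its tuple of face weights (and trail weight) is a bijection. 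Surjectivity plus the dimension count from the previous paragraph gives injectivity for free, so it suffices to prove surjectivity: given any prescribed values of the $y_f$ satisfying $\prod_f y_f = 1$ (and any prescribed $t$), construct edge weights realizing them. For this I would use the tree $T$: set all tree-edge weights to $1$, then solve for the non-tree-edge weights one at a time. Each fundamental cycle $C_e$ is the boundary of a union of faces (on the cylinder, a cycle either bounds a disk, giving a product of $y_f$'s, or is homotopic to the core circle), and the single non-tree edge $e$ appears in exactly those $C_{e}$'s, so the system is triangular and solvable; the homotopically nontrivial cycle is where $t$ (or, if no trail exists, the constraint that there is no such extra freedom) enters.

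The main obstacle I expect is the careful case analysis around the homotopy type of cycles on the cylinder and the precise Euler-characteristic count: one must correctly argue that the cycle space of $G$ (relative to the boundary, in the appropriate sense) has dimension $|F|-1$ versus $|F|$ according to whether $G$ has a boundary-to-boundary trail, i.e.\ whether the two boundary circles lie in the same connected component after suitable identification. Equivalently, one must show the face weights alone generate a codimension-$1$ subspace of the gauge quotient exactly when the core-circle class is not already a combination of face boundaries, which happens precisely when a trail exists. I would handle this by a direct dimension computation: the face boundaries span a subspace of the cycle lattice of rank $|F|-1$ (the single relation being $\prod y_f = 1$), the full relevant lattice has rank $|F|-1$ or $|F|$, and a trail supplies exactly the missing generator in the latter case; reconciling this with $|E|-|V|+n$ via Euler's formula $|V| - |E| + |F| = \chi(\text{cylinder}) = 0$ (with the $n$ boundary edges and the boundary structure accounted for) is the bookkeeping one must get exactly right. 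Everything else — gauge-invariance, the product relation, triangular solvability along a spanning tree — is routine once the combinatorial setup is fixed.
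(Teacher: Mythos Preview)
The paper does not give its own proof of this theorem: it is stated with attribution ``Section 2.3 of~\cite{GSV1}'' and no argument follows. So there is nothing in the paper to compare your proposal against.

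Your outline is the standard cohomological argument and is essentially correct. One point to tighten: you assert that ``no relations among the $c_v$ arise'' because every internal vertex has positive degree, but positive degree only shows that each individual $c_v$ acts nontrivially; it does not rule out a nontrivial \emph{combination} of $c_v$'s acting trivially. What you actually need is that the (internal-vertex) incidence map is injective, which holds provided every connected component of $G$ contains at least one boundary vertex (so that the usual rank-$(|V|-1)$ relation for the full incidence matrix is broken when you restrict to internal vertices). You should state and use this hypothesis explicitly. The other delicate point, which you already flag, is the Euler-characteristic bookkeeping: on the cylinder one must distinguish the case where some face is an annulus (no trail; the embedding is not cellular) from the case where all faces are disks (a trail exists), and this is precisely what produces the $|F|-1$ versus $|F|$ dichotomy. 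Once that is handled, the spanning-tree surjectivity argument goes through as you describe.
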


Having a network that falls under the first case is equivalent to having a network in a disk.  In this case, we recover Postnikov's face weight construction for planar directed networks in a disk. 

\begin{defn}\label{defn:face-weight-path}
We will define $wt(P,y,t)$ in three cases:
\begin{enumerate}[(1)]
\item For a path $P$ that begins and ends on the same boundary component, draw enough copies of the fundamental domain that we can draw $P$ as a connected curve.  If $P$ is path from $b_i$ to $b_j$, then $P$ along with a segment of the boundary between $b_i$ and $b_j$ form a closed shape $P_b$ on the universal cover.  When $P_b$ is to the right of $P$, $wt(P,y,t)$ is the product of the weights of the faces in the interior of $P_b$.  When $P_b$ is to the left of $P$, $wt(P,y,t)$ is the inverse of this product.

\item For a path $P$ that begins on the same boundary component as the trail and ends on the other boundary component, draw enough copies of the fundamental domain we that we can draw $P$ as a connected curve and that there is at least one copy of the trail that lies completely to the right of $P$.  Then $wt(P,y,t)$ is the product of the weights of the faces that lie to the right of $P$ and to the left of a copy of the trail that is completely to the right of $P$ times the weight of the trail.

\item For a path $P$ that begins on the boundary component where the trail ends and ends on the other boundary component, draw enough copies of the fundamental domain we that we can draw $P$ as a connected curve and that there is at least one copy of the trail that lies completely to the right of $P$.  Then $wt(P,y,t)$ is the product of the weights of the faces that lie to the right of $P$ and to the right of a copy of the trail that is completely to the right of $P$ times the inverse of the weight of the trail.
\end{enumerate}

In Cases 2 and 3, $wt(P,y,t)$ is well-defined because if we choose two trails that lie completely to the right of $P$, the product of the weights of faces between the trails is 1.
\end{defn}

\begin{ex}\label{ex:face-weight-cyl}
Suppose we have the following network with face and trail weights:

\begin{tabular}[t]{ c m{0.3cm} m{6cm} }
\raisebox{\dimexpr-\height + 7ex\relax}{\begin{tikzpicture}[scale=0.75]
\fill[gray!40!white] (0,0) rectangle (6,3);
\node at (1.5,2.75) {$a$};
\node at (1.5,2) {$b$};
\node at (1.5,1) {$c$};
\node at (4.5,2.5) {$\frac{1}{abcd}$};
\node at (4.5,1) {$d$};
\node at (6.5,0.5) {\textcolor{blue}{$t$}};
\node at (-0.05,0.5) {\textcolor{blue}{ }};
\path[->,font=\large, >=angle 90, line width=0.4mm]
(2,0.5) edge (0,0.5)
(0,1.5) edge (3,1.5)
(3,1.5) edge (6,1.5)
(3,2.5) edge (3.5,3)
(3.5,0) edge (4,0.5);
\path[->,font=\large, >=angle 90, line width=0.4mm,blue]
(6,0.5) edge (4,0.5)
(4, 0.5) edge (2,0.5)
(3,1.5) edge (2,0.5)
(3,1.5) edge (3,2.5)
(3,2.5) edge (0,2.5);
\path[-,font=\large, >=angle 90, line width=0.4mm]
(0,0) edge (0,3)
(6,0) edge (6,3);
\path[dashed,->,font=\large, >=angle 90, line width=0.4mm]
(0,0) edge (6,0)
(0,3) edge (6,3);
\end{tikzpicture}}
&
&
The trail and trail weight appear in blue, and the trail is oriented from right to left.
\end{tabular}

\vspace{0.1in}
\begin{tabular}[t]{ c m{0.3cm} m{6cm} }
\raisebox{\dimexpr-\height + 15ex\relax}{
\begin{tikzpicture}[scale=0.75]
\fill[gray!40!white] (0,3) rectangle (6,6);
\node at (1.5,5.75) {$a$};
\node at (1.5,5) {$b$};
\node at (1.5,4) {$c$};
\node at (4.5,5.5) {$\frac{1}{abcd}$};
\node at (4.5,4) {$d$};
\path[->,font=\large, >=angle 90, line width=0.4mm]
(0,4.5) edge (3,4.5)
(3,4.5) edge (6,4.5)
(3,5.5) edge (3.5,6)
(6,3.5) edge (4,3.5)
(3,4.5) edge (2,3.5)
(3,4.5) edge (3,5.5)
(3,5.5) edge (0,5.5);
\path[->,font=\large, >=angle 90, line width=0.4mm,red]
(4, 3.5) edge (2,3.5)
(2,3.5) edge (0,3.5);
\path[-,font=\large, >=angle 90, line width=0.4mm]
(0,3) edge (0,6)
(6,3) edge (6,6);
\path[dashed,->,font=\large, >=angle 90, line width=0.4mm]
(0,3) edge (6,3)
(0,6) edge (6,6);
\fill[gray!40!white] (0,0) rectangle (6,3);
\node at (1.5,2.75) {$a$};
\node at (1.5,2) {$b$};
\node at (1.5,1) {$c$};
\node at (4.5,2.5) {$\frac{1}{abcd}$};
\node at (4.5,1) {$d$};
\path[->,font=\large, >=angle 90, line width=0.4mm]
(2,0.5) edge (0,0.5)
(3,1.5) edge (6,1.5)
(3.5,0) edge (4,0.5)
(6,0.5) edge (4,0.5)
(4, 0.5) edge (2,0.5)
(3,1.5) edge (2,0.5)
(3,1.5) edge (3,2.5)
(3,2.5) edge (0,2.5);
\path[->,font=\large, >=angle 90, line width=0.4mm,red]
(0,1.5) edge (3,1.5)
(3,1.5) edge (3,2.5)
(3,2.5) edge (4,3.5);
\path[-,font=\large, >=angle 90, line width=0.4mm]
(0,0) edge (0,3)
(6,0) edge (6,3);
\path[dashed,->,font=\large, >=angle 90, line width=0.4mm]
(0,0) edge (6,0)
(0,3) edge (6,3);
\end{tikzpicture}}
&
&
For the path $P$ shown in red, the interior of $P_b$ is to the left of $P$.  So, $wt(P,y,t)=\frac{1}{ab}$.
\end{tabular}

\vspace{0.1in}
\begin{tabular}[t]{ c m{0.3cm} m{6cm} }
\raisebox{\dimexpr-\height + 15ex\relax}{
\begin{tikzpicture}[scale=0.75]
\fill[gray!40!white] (0,3) rectangle (6,6);
\node at (1.5,5.75) {$a$};
\node at (1.5,5) {$b$};
\node at (1.5,4) {$c$};
\node at (4.5,5.5) {$\frac{1}{abcd}$};
\node at (4.5,4) {$d$};
\path[->,font=\large, >=angle 90, line width=0.4mm]
(3,5.5) edge (3.5,6)
(6,3.5) edge (4,3.5)
(3,4.5) edge (2,3.5)
(3,4.5) edge (3,5.5)
(3,5.5) edge (0,5.5)
(4, 3.5) edge (2,3.5)
(2,3.5) edge (0,3.5);
\path[->,font=\large, >=angle 90, line width=0.4mm,red]
(0,4.5) edge (3,4.5)
(3,4.5) edge (6,4.5);
\path[-,font=\large, >=angle 90, line width=0.4mm]
(0,3) edge (0,6)
(6,3) edge (6,6);
\path[dashed,->,font=\large, >=angle 90, line width=0.4mm]
(0,3) edge (6,3)
(0,6) edge (6,6);
\fill[gray!40!white] (0,0) rectangle (6,3);
\node at (1.5,2.75) {$a$};
\node at (1.5,2) {$b$};
\node at (1.5,1) {$c$};
\node at (4.5,2.5) {$\frac{1}{abcd}$};
\node at (4.5,1) {$d$};
\path[->,font=\large, >=angle 90, line width=0.4mm]
(2,0.5) edge (0,0.5)
(3,1.5) edge (6,1.5)
(3.5,0) edge (4,0.5)
(3,1.5) edge (3,2.5)
(0,1.5) edge (3,1.5)
(3,2.5) edge (4,3.5);
\path[->,font=\large, >=angle 90, line width=0.4mm,blue]
(6,0.5) edge (4,0.5)
(4, 0.5) edge (2,0.5)
(3,1.5) edge (2,0.5)
(3,1.5) edge (3,2.5)
(3,2.5) edge (0,2.5);
\path[-,font=\large, >=angle 90, line width=0.4mm]
(0,0) edge (0,3)
(6,0) edge (6,3);
\path[dashed,->,font=\large, >=angle 90, line width=0.4mm]
(0,0) edge (6,0)
(0,3) edge (6,3);
\end{tikzpicture}}
&
&
For the path $P$ shown in red, $P$ is going in the opposite direction of the trail.  So, $wt(P,y,t)=\frac{d}{tb}$.
\end{tabular}
\end{ex}

\begin{thm}\label{thm:face-weight-equal-cyl}
For a path $P$ in a planar directed network on a cylinder, $$wt(P,y,t)=x_P.$$
\end{thm}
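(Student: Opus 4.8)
The plan is to reduce the whole statement to one bookkeeping identity relating face and trail weights to edge weights, and then apply it to the region cut out by $P$. First I would rewrite Definition~\ref{defn:face-weight-cyl} in the form $y_f=\prod_e x_e^{\langle\partial f,e\rangle}$, where $\partial f$ denotes the boundary walk of $f$ oriented so that $f$ lies to its right (equivalently: clockwise along the outer boundary of $f$, counterclockwise along the boundary of each hole), and $\langle\partial f,e\rangle$ is the number of times this walk runs along $e$ minus the number of times it runs against $e$. This is literally the definition, with the bonus that a dangling tree of edges attached to $f$ contributes nothing, since each such edge is traversed once in each direction. Likewise, orienting a trail from $v_1$ to $v_{m+1}$, Definition~\ref{defn:trail} reads $t=\prod_e x_e^{\langle T,e\rangle}$ with the same sign conventions. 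Since $x_P=\prod_e x_e^{(\text{number of times }P\text{ runs along }e)}$, the theorem becomes a purely combinatorial comparison of exponents of each $x_e$.

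The key step is a telescoping lemma. Passing to the universal cover of the cylinder, an infinite strip, I would lift $P$, the faces of the network, and (in Cases~2 and~3) a copy of the trail lying entirely to the right of $P$, exactly as Definition~\ref{defn:face-weight-path} instructs. I claim that for any finite collection $\mathcal R$ of lifted faces, $\prod_{f\in\mathcal R}y_f=\prod_e x_e^{c(e)}$, where $c(e)$ counts, with sign and with $\mathcal R$ kept to the right, the occurrences of $e$ on the topological boundary of $\bigcup_{f\in\mathcal R}f$. Indeed, summing the rewritten formula for $y_f$ over $f\in\mathcal R$, any edge interior to $\bigcup\mathcal R$ is shared by two faces of $\mathcal R$ and, as we walk clockwise around each of them, is run along once and against once, so its total exponent vanishes; only boundary edges survive, each with the asserted sign. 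Self-touchings of $P$ cause no trouble: one reads $\mathcal R$ and its boundary with multiplicity, i.e.\ as the integral $2$-chain whose boundary $1$-chain is $P$ together with the relevant boundary arcs.

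It then remains to assemble the three cases of Definition~\ref{defn:face-weight-path}. In each case I would take $\mathcal R$ to be the region enclosed by $P$ and a boundary arc (Case~1), or by $P$, the chosen copy of the trail, and two boundary arcs (Cases~2 and~3). Reading its boundary $1$-chain with $\mathcal R$ on the right, the graph-edge part is exactly $P$ in Case~1, $P$ followed by the reversed trail in Case~2, and $P$ followed by the trail in Case~3; the surface-boundary arcs contribute no graph edges, since a pendant edge at an intermediate boundary vertex lies wholly in the interior or the exterior of $\mathcal R$ and so cancels or does not appear. The telescoping lemma gives $\prod_{f\in\mathcal R}y_f$ equal to $x_P$, $x_P\,t^{-1}$, and $x_P\,t$ respectively, so multiplying by the prescribed factor $1$, $t$, or $t^{-1}$ yields $wt(P,y,t)=x_P$. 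When $P_b$ lies to the left of $P$ in Case~1, the orientation of $P$ in the boundary of $\mathcal R$ flips, giving $\prod_{f\in\mathcal R}y_f=x_P^{-1}$, which the inversion built into the definition corrects.

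I expect the main obstacle to be purely orientational rather than substantive: one must fix once and for all the convention ``region on the right $\Leftrightarrow$ clockwise boundary traversal'' and check that it is compatible both with the clockwise/counterclockwise split in Definition~\ref{defn:face-weight-cyl} and with the phrases ``to the right of $P$'', ``to the left of the trail'', and ``to the right of the trail'' in Definition~\ref{defn:face-weight-path}; and, relatedly, one must confirm that $\mathcal R$ is genuinely compact, which is precisely why the definition demands a copy of the trail lying completely to the right of $P$. Once these conventions are pinned down, everything reduces to the telescoping computation above.
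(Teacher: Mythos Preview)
Your proposal is correct and follows essentially the same approach as the paper: both arguments reduce to counting, for each edge $e$, the net exponent of $x_e$ in the product defining $wt(P,y,t)$, observing that interior edges cancel and only the edges along $P$ (and, in Cases~2 and~3, along the trail) survive. The paper states this in one sentence as a counting check; your version formalizes the same telescoping via boundary $1$-chains, which makes the orientation bookkeeping more explicit but does not introduce a genuinely different idea.
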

\begin{proof}
We can see this by counting how many times the weight of an edge and its inverse are in the product $wt(P,y,t)$ when the edge is in $P$, when it's is between $P$ and the boundary component $P$ that makes up part of $P_b$ (in Case 1), and when it's between $P$ and the trail (in Case 2 and 3).
\end{proof}

\subsection{Changing orientation}\label{sec:orientation}

\begin{defn}[Definition 9.2 of~\cite{P}]\label{defn:perfect}
A \emph{perfect network} is a planar directed network in which each boundary vertex has degree 1, and each internal vertex either has exactly one edge incoming (and all others outgoing) or exactly one edge outgoing (and all others incoming).
\end{defn}

\begin{prop}[Proposition 9.3 of~\cite{P}]\label{prop:perfect-disk}
Any planar directed network in a disk can be transformed into a perfect network without changing the boundary measurements.
\end{prop}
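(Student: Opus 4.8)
The plan is to turn the statement into a purely local surgery: I would repeatedly replace each vertex that violates the perfect condition by a small planar gadget built out of weight-$1$ edges and perfect vertices, arranged so that the family of directed paths — and hence every formal boundary measurement $M_{ij}^{\text{form}}$ — is literally unchanged. Call a vertex \emph{bad} if it is a boundary vertex of degree $\geq 2$ or an internal vertex with at least two incoming and at least two outgoing edges. (Internal vertices with no incoming, or no outgoing, edge lie on no directed path between two boundary vertices and can simply be deleted together with their edges; this changes nothing.) I would induct on the number of bad vertices. A bad boundary vertex is disposed of immediately: if $b$ is a source of degree $d\geq 2$, detach its $d$ edges, create a new internal vertex $u$ carrying them, and add an edge $b\to u$ of weight $1$; now $b$ has degree $1$ and $u$ is perfect (one in, $d$ out). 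Sinks are symmetric, so the real content is the internal bad vertices.

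So let $v$ be an internal bad vertex with incoming edges $e_1,\dots,e_a$ and outgoing edges $f_1,\dots,f_b$, $a,b\geq 2$, appearing in some cyclic order around $v$. I would excise $v$ and fill the little disk it occupied with a planar \emph{directed acyclic} gadget $H$ such that: (i) its terminals occur on the boundary of that disk in the same cyclic order in which $e_1,f_1,\dots$ met $v$; (ii) every internal vertex of $H$ is perfect; (iii) for each pair $(e_i,f_j)$ there is exactly one directed path in $H$ from the head of $e_i$ to the tail of $f_j$, it has weight $1$, and $H$ has no other directed paths. When $e_1,\dots,e_a$ form a contiguous arc around $v$ (equivalently $f_1,\dots,f_b$ do), take $H$ to be a binary ``merge tree'' of $2$-in-$1$-out vertices collapsing $e_1,\dots,e_a$ to a single weight-$1$ edge, followed by a binary ``split tree'' of $1$-in-$2$-out vertices expanding that edge out to $f_1,\dots,f_b$. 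For general cyclic interleavings a more delicate planar gadget is needed; the prototype is the $4$-valent vertex with edges in cyclic order $e_1,f_1,e_2,f_2$, where $H$ is the directed $4$-cycle whose vertices are the four corners $P_1,Q_1,P_2,Q_2$ (with $e_i$ reattached at $P_i$ and $f_j$ at $Q_j$) and whose weight-$1$ arcs are $P_1\to Q_1$, $P_2\to Q_1$, $P_2\to Q_2$, $P_1\to Q_2$.

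Granting such gadgets, the verification is routine. Since $H$ lies in a simply connected disk, replacing every passage of a directed path through $v$ (entering along some $e_i$, leaving along some $f_j$) by the unique weight-$1$ path of $H$ from the head of $e_i$ to the tail of $f_j$ gives a weight-preserving bijection between directed paths in the old network and in the new one which does not change any winding index; consequently each $M_{ij}^{\text{form}}$ is the same formal power series, and so by Lemma~\ref{lemma:formal-boundary-meas-disk} and Definition~\ref{defn:boundary-meas-disk} the boundary measurements are unchanged. Each surgery removes one bad vertex and creates only perfect vertices, so the induction terminates at a perfect network. The one genuine difficulty is the gadget construction for arbitrary interleaving — in particular for a fully alternating vertex, where no two consecutive edges carry the same orientation and any gadget that funnels all the flow through a single edge fails to be planar; one has to route the ``long'' connections through a planar arrangement of merge/split vertices, the directed $4$-cycle above being the smallest case. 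Everything else — the boundary vertices, the path bijection, invariance of weights and winding indices, termination — is bookkeeping.
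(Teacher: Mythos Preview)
The paper does not prove this; it cites Postnikov and illustrates his method in Figure~\ref{fig:perfect-network}. Your plan agrees with the first move shown there (peel off a contiguous block of same-direction edges into a merge or split vertex), but departs at the alternating case. Postnikov replaces an alternating $4$-valent vertex by a \emph{directed} $4$-cycle of weight-$1$ edges and multiplies each incoming edge weight by~$2$: the infinitely many passes around the cycle contribute, after the winding signs, a factor $\tfrac{1}{1+1}=\tfrac12$, and the $2$ restores the boundary measurements. The same directed-$2k$-cycle move handles a fully alternating $2k$-valent vertex for every $k$.

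Your acyclic $4$-gon is an elegant alternative when $k=2$, but the ``exactly one weight-$1$ path per input--output pair'' requirement cannot be pushed to $k\geq 3$, and this is the gap. For $k=3$ the three long connections $I_1\to O_2$, $I_2\to O_3$, $I_3\to O_1$ must pairwise cross inside the disk, and every acyclic resolution of such a crossing by perfect vertices --- a merge--split pair when the four half-edges form the pattern \emph{in,\,in,\,out,\,out}, or your $4$-gon when they alternate --- routes each incoming strand to \emph{both} outgoing strands, producing a second path to some $O_j$ already reached by a short edge. The funnel picture fails for the same reason: a merge tree followed by a split tree has all three $I$'s attached at one end of a path and all three $O$'s at the other, and no planar embedding of such a caterpillar realizes the cyclic leaf order $I_1,O_1,I_2,O_2,I_3,O_3$. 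So the ``more delicate planar gadget'' you defer does not exist in the acyclic category; Postnikov's directed cycle, with its winding-sign cancellation, is exactly the missing mechanism. Once you allow it, the rest of your outline (local surgery, induction on bad vertices) goes through, with the path bijection replaced by that geometric-series computation.
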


\begin{figure}[htb]
\begin{center}
\begin{tikzpicture}[scale=1.3]
\path[->,font=\large, >=angle 60, line width=0.4mm]
(0,1) edge node[below] {$x_1$} (1,0)
(0,-1) edge node[below] {$x_2$} (1,0)
(1,0) edge node[above] {$x_3$} (2,0)
(1,0) edge node[left] {$x_4$} (1.5,1)
(1.5,-1) edge node[left] {$x_5$} (1,0)
(2,0) edge node[right] {$x_6$} (1.5,1)
(2,0) edge node[right] {$x_7$} (1.5,-1)
(3,0) edge node[above] {$x_8$} (2,0)
(1.5,1) edge node[right] {$x_9$} (1.5,2)
(1.5,-1) edge node[right] {$x_{10}$} (1.5,-2);
\node at (3.5,0) {$\rightsquigarrow$};
\path[->,font=\large, >=angle 60, line width=0.4mm]
(4,1) edge node[left] {$x_1$} (4.5,-0.5)
(4,-1) edge node[left] {$x_2$} (4.5,-0.5)
(5,0) edge node[above] {$x_3$} (6,0)
(5,0) edge node[left] {$x_4$} (5.5,1)
(5.5,-1) edge node[left] {$x_5$} (4.5,-0.5)
(6,0) edge node[right] {$x_6$} (5.5,1)
(6,0) edge node[right] {$x_7$} (5.5,-1)
(7,0) edge node[above] {$x_8$} (6,0)
(5.5,1) edge node[right] {$x_9$} (5.5,2)
(5.5,-1) edge node[right] {$x_{10}$} (5.5,-2)
(4.5,-0.5) edge node[above] {1} (5,0);
\node at (7.5,0) {$\rightsquigarrow$};
\path[->,font=\large, >=angle 60, line width=0.4mm]
(8,1) edge node[left] {$x_1$} (8.5,-0.5)
(8,-1) edge node[left] {$x_2$} (8.5,-0.5)
(9,0) edge node[above] {$2x_3$} (10,0)
(9,0) edge node[left] {$x_4$} (9.5,1)
(9.5,-1) edge node[left] {$x_5$} (8.5,-0.5)
(10.5,0.5) edge node[right] {$x_6$} (9.5,1)
(10.5,-0.5) edge node[below] {$x_7$} (9.5,-1)
(11.5,0) edge node[above] {$2x_8$} (11,0)
(9.5,1) edge node[right] {$x_9$} (9.5,2)
(9.5,-1) edge node[right] {$x_{10}$} (9.5,-2)
(8.5,-0.5) edge node[above] {1} (9,0)
(10,0) edge node[left] {1} (10.5, 0.5)
(10.5, 0.5) edge node[above] {1} (11,0)
(11,0) edge node[right] {1} (10.5, -0.5)
(10.5, -0.5) edge node[left] {1} (10, 0);
\end{tikzpicture}
\end{center}
\caption{A planar directed network in a disk transformed into a perfect network.}
\label{fig:perfect-network}
\end{figure}
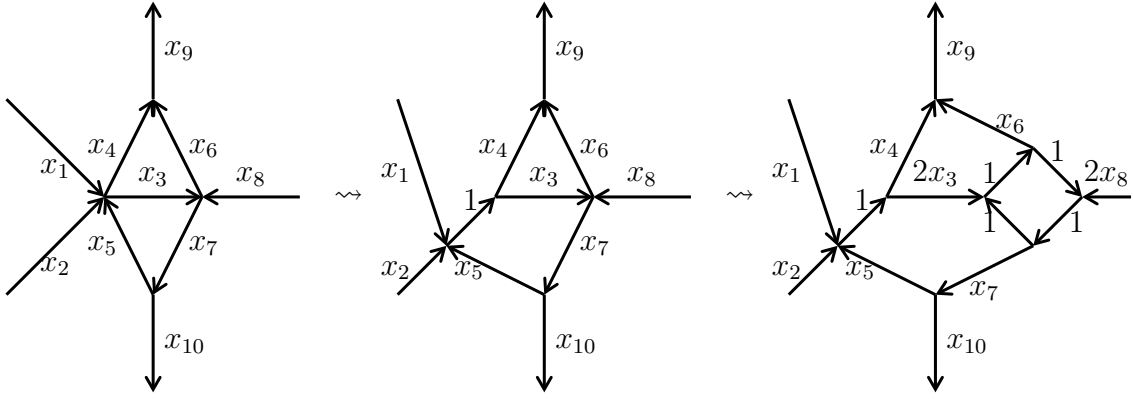

\begin{prop}\label{prop:perfect-cyl}
Any planar directed network on a cylinder can be transformed into a perfect network without changing the boundary measurements.
\end{prop}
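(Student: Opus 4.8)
The plan is to imitate the proof of Proposition~\ref{prop:perfect-disk} (Proposition~9.3 of~\cite{P}) essentially word for word, the point being that every modification used there is \emph{local}, and a local modification cannot interact with the cut $\gamma$ or with the winding data that distinguish the cylinder from the disk. First I would fix a cut $\gamma$ and, after an ambient homotopy, arrange that all the surgeries below take place inside small disks or boundary collars disjoint from $\gamma$. Then I would repeatedly apply three kinds of local moves until none is available. (i) If an internal vertex $v$ has at least two incoming and at least two outgoing edges, replace $v$ by two vertices $v', v''$ inside a small disk around $v$: attach all incoming edges of $v$ to $v'$, all outgoing edges to $v''$, and add a new edge $v' \to v''$ of weight $1$; this turns one ``bad'' vertex into two vertices that each satisfy the perfectness condition. (ii) If an internal vertex is a source (all edges outgoing) of degree $\geq 2$, or a sink (all edges incoming) of degree $\geq 2$, delete it together with all of its edges. (iii) If a boundary vertex $b_i$ has degree $\geq 2$, introduce a new internal vertex $w$ in a small collar near $b_i$, reconnect all edges of $b_i$ to $w$, and join $b_i$ to $w$ by a weight-$1$ edge (oriented $b_i\to w$ if $b_i$ is a source, $w\to b_i$ if it is a sink). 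Each move strictly decreases an obvious nonnegative integer statistic, so the process terminates, and at the end every internal vertex has exactly one incoming or exactly one outgoing edge and every boundary vertex has degree one; that is, the network is perfect in the sense of Definition~\ref{defn:perfect}.

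Next I would check that no move changes the boundary measurements. In each case the directed paths between boundary vertices, before and after the move, are in weight-preserving bijection: in (i) a path through $v$ becomes a path through $v', v''$ whose edge multiset differs only by the weight-$1$ edge; in (ii) the deleted vertex lies on no directed path between boundary vertices (a path cannot enter a source or leave a sink, and boundary vertices are the only endpoints a path may use), so the path set is literally unchanged; in (iii) the effect on each path through $b_i$ is again the insertion of one weight-$1$ edge. Hence $x_P$ is preserved. What remains, and this is the only genuine difference from the disk case, is to see that the cut-dependent quantities $int(P)$ and $wind(C_P)$ of Definitions~\ref{defn:intersection-number}, \ref{defn:CP}, and~\ref{defn:winding-index-cyl} are preserved as well. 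For $int(P)$ this is immediate: the new edges were placed in regions disjoint from $\gamma$, so they contribute no crossings, and the rest of $P$ is untouched.

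The hard part will be the claim about $wind(C_P)$. The idea is that, as a closed curve on the annulus obtained by gluing the fundamental domain, $C_P$ before a move and $C_P$ after it agree outside a contractible neighborhood and are homotopic rel that neighborhood's boundary; so after the corner-smoothing of Definition~\ref{defn:winding-index-disk} they are regularly homotopic closed curves and therefore have the same rotation number, and $wind$ of a closed curve is precisely this rotation number. I would make this rigorous by invoking the recursive formula for $wind$ quoted in Definition~\ref{defn:winding-index-disk} (Lemma~4.2 of~\cite{P}), which reduces the assertion to a finite, purely local verification near the surgered vertex. The one subtlety to watch is move~(iii): there the boundary itself is slightly altered near $b_i$, so one must re-examine the construction of $C_P$ in Definition~\ref{defn:CP} (the descent along the boundary and the passage through the base point of $\gamma$) and confirm that neither the new vertex $w$ nor the extra weight-$1$ edge changes which side of $\gamma$ the relevant boundary segments lie on; since $w$ and its collar may be chosen as close to $b_i$ and as far from $\gamma$ as we wish, this causes no trouble.
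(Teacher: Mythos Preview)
Your approach is the same as the paper's, which literally reads in full: ``The proof for Proposition~\ref{prop:perfect-disk} holds for planar directed networks on a cylinder.'' You have simply unpacked what that one line means, and your added justification that the local surgeries do not alter $int(P)$ or $wind(C_P)$ is exactly the content the paper leaves implicit.

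One small caveat on your move~(i): as written, splitting $v$ into ``all incoming to $v'$, all outgoing from $v''$'' can fail to be planar when the incoming and outgoing edges alternate in the cyclic order around $v$. This is precisely the situation in the right-hand vertex of Figure~\ref{fig:perfect-network}, where Postnikov replaces the vertex by a small oriented cycle (and rescales the adjacent edge weights) rather than by a single edge. Since you explicitly state you are following Postnikov's moves, this is a matter of recalling the correct local construction rather than a defect in your strategy; the resulting move is still confined to a contractible disk disjoint from $\gamma$, so your arguments about $x_P$, $int(P)$, and $wind(C_P)$ go through unchanged.
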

\begin{proof}
The proof for Proposition~\ref{prop:perfect-disk} holds for planar directed networks on a cylinder.
\end{proof}

\begin{defn}[Section 9 of~\cite{P}]\label{defn:color}
For an internal vertex $v$ in a perfect network, define the \emph{color} of $v$, $col(v)$, to be black if $v$ has exactly one outgoing edge and white if $v$ has exactly one incoming edge.
\end{defn}

\begin{thm}\label{thm:change-orientation-disk}
Let $N,N'$ be two perfect networks in a disk such that:
\begin{enumerate}[(1)]
\item The underlying graphs $G$ and $G'$ are isomorphic as undirected graphs.

\item Each internal vertex of degree $\neq 2$ has $col_N(v)=col_{N'}(v)$.

\item If the undirected edge $e$ is directed in the same way in $N$ and $N'$, then $x_e=x'_e$.  If the edge $e$ has opposite direction in $N$ and $N'$, the $x_e=(x'_e)^{-1}$.
\end{enumerate}

Then $Meas(N)=Meas(N')\in Gr_{kn}$.
\end{thm}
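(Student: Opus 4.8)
The plan is to reduce to the case in which $N'$ arises from $N$ by reversing the orientation along a single directed path or a single directed cycle, and then to treat those two cases. Let $D\subseteq E(G)$ be the set of (undirected) edges that are directed oppositely in $N$ and in $N'$, and orient each $e\in D$ by its direction in $N$. Every boundary vertex, having degree $1$, meets at most one edge of $D$. At an internal vertex $v$ of degree $2$ — a pass-through in both networks — either no edge at $v$ lies in $D$ or both do, and in the latter case $D$ contributes exactly one in- and one out-edge at $v$. At an internal vertex $v$ of degree $\ge 3$, hypothesis (2) gives $col_N(v)=col_{N'}(v)$, say black, so $N$ and $N'$ each have a unique out-edge at $v$; these either coincide (so no edge at $v$ lies in $D$) or are distinct edges $e\ne e'$, in which case, in the $N$-orientation, $e$ points out of $v$ while $e'$ points into $v$ — again one in and one out. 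So every internal vertex is balanced in $D$ with $\mathrm{in}_D=\mathrm{out}_D\le 1$, and therefore $(V,D)$ is a disjoint union of simple directed cycles and simple directed paths between boundary vertices. Reversing these pieces one at a time, inverting the corresponding weights, produces a chain $N=N_0,N_1,\dots,N_s=N'$ of perfect networks in which every consecutive pair satisfies (1)--(3): at each internal vertex the piece passes through, the reversal swaps the orientations of the two incident edges lying on the piece, which at a black (resp.\ white) vertex exchanges the unique outgoing (resp.\ incoming) edge with one incoming (resp.\ outgoing) edge and hence preserves the color, and at a degree-$2$ vertex preserves the pass-through structure, so all colors and the undirected graph are unchanged while weights are inverted only along the piece. (Reversing a path exchanges one source with one sink and reversing a cycle changes no boundary vertex, so $|I|$ stays equal to $k$ throughout.) By induction it suffices to prove the theorem when $N'$ reverses a single directed cycle $C$, or a single directed path $P\colon b_h\to b_l$, inverting weights along it.

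Consider first the cycle case; here $I'=I$. All face weights are preserved: an edge of $C$ on the boundary of a face $f$ moves between $I^+_f$ and $I^-_f$ under the reversal but has its weight inverted, so its contribution to $y_f$ is unchanged, while edges off $C$ are untouched. One then checks that each $M^{\text{form}}_{ab}$ is unchanged as a rational function — for example via the rational (loop-corrected Lindstr\"om--Gessel--Viennot) expression for $M^{\text{form}}_{ab}$ underlying Lemma~\ref{lemma:formal-boundary-meas-disk}, in which reversing $C$ and inverting its weights rescales the path- and cycle-contributions compatibly and fixes the ratio. Hence $A(N)=A(N')$, so $Meas(N)=Meas(N')$.

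In the path case we have $h=i_p\in I$ and $I'=(I\setminus\{h\})\cup\{l\}$. Since $P$ is itself a directed path $b_h\to b_l$ in $N$, the formal boundary measurement $M^{\text{form}}_{h,l}$ is nonzero, so the $k\times k$ minor of $A(N)$ on the columns $I'$ is nonzero and $Meas(N)$ has a unique representative matrix whose submatrix on the columns $I'$ is $Id_k$; as $A(N')$ is such a matrix by construction, the assertion $Meas(N)=Meas(N')$ is exactly the statement that $A(N')$ is the Gaussian-elimination pivot of $A(N)$ on its $(p,l)$ entry. Writing $m_{ab}$ and $m'_{ab}$ for the formal boundary measurements of $N$ and $N'$ and unwinding Definition~\ref{defn:boundary-meas-map-disk}, this amounts — up to the sign conventions recorded there — to the identities
\begin{align*}
m'_{l,c}&=\frac{m_{h,c}}{m_{h,l}}, & m'_{a,h}&=\frac{m_{a,l}}{m_{h,l}}, & m'_{a,c}&=m_{a,c}-\frac{m_{a,l}\,m_{h,c}}{m_{h,l}},
\end{align*}
for $a\in I\setminus\{h\}$ and $c\in\overline{I}\setminus\{l\}$. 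These are proved by analyzing how the directed paths of $N'$ are built from those of $N$: a directed path of $N'$ using no edge of $P$ is already a directed path of $N$ with the same weight and winding index, while one that does use edges of $P$ splits, at its first and last such uses, into a concatenation of directed subpaths in $N$ with a single run backwards along $P$ — the backward run contributing the factor $x_P^{-1}$ responsible for the denominators $m_{h,l}$.

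I expect the path case to be the main obstacle. The reduction and the cycle case are essentially bookkeeping, but identifying the boundary measurements of $N'$ with the pivot of those of $N$ requires a careful bijective analysis of how the directed paths of $N'$ decompose into configurations of directed paths of $N$ glued along a reversed copy of $P$, tracking simultaneously the edge-weight products and the parities of winding indices so that the signs prescribed by Definition~\ref{defn:boundary-meas-map-disk} come out correctly.
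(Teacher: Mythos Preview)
Your approach matches the paper's: the paper does not give a self-contained proof of this theorem but only remarks, after Example~\ref{ex:2.17}, that any $N'$ satisfying (1)--(3) is obtained from $N$ by reversing a collection of boundary-to-boundary paths and cycles, so the theorem reduces to showing that each such single reversal preserves $Meas$; for those two lemmas the paper simply defers to Postnikov~\cite{P} (the cycle case is cited explicitly as Lemma~10.5 of~\cite{P} in the proof of Theorem~\ref{thm:orientation-invol}).

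Your decomposition argument is more explicit than the paper's and is correct. Your path case is set up exactly as in Postnikov --- identifying the effect on $A(N)$ as a Gaussian pivot on the $(p,l)$ entry and reducing to the three displayed identities among boundary measurements --- and you are right that the delicate work is the sign/winding bookkeeping in verifying those identities. One caution on the cycle case: the observation that face weights are preserved is correct but does not by itself close the argument, since $N$ and $N'$ have different \emph{directed} graphs and the statement ``boundary measurements depend only on face weights'' is proved (via gauge equivalence) only for networks with the same orientation. You still need a direct comparison of the path sums before and after reversing $C$, which is what your appeal to the rational expression behind Lemma~\ref{lemma:formal-boundary-meas-disk} is gesturing at; that is precisely the content of Postnikov's Lemma~10.5.
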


\begin{ex}\label{ex:2.17}
Let $N,N'$ be as below:

\begin{tikzpicture}
\node at (-4,0) {$N=$};
\node(A) at (-3,0) {$b_1$};
\node(B) at (3,0) {$b_2$};
\node at (6.62,0) {$A(N)=\begin{bmatrix} 1 & x_1 x_4 (x_2 +x_3 )\end{bmatrix}$};
\path[->,font=\large, >=angle 90, line width=0.4mm]
(A) edge node[above] {$x_1$} (-1,0)
(-1,0) edge[bend left=50] node[above] {$x_2$} (1,0)
(-1,0) edge[bend right=50] node[below] {$x_3$} (1,0)
(1, 0) edge node[above] {$x_4$} (B);
\draw [line width=0.25mm, fill=white] (-1,0) circle (0.75mm);
\draw [line width=0.25mm, fill=black] (1,0) circle (0.75mm);
\end{tikzpicture}

\begin{tikzpicture}
\node at (-4,0) {$N'=$};
\node(A) at (-3,0) {$b_1$};
\node(B) at (3,0) {$b_2$};
\node at (7.1,0) {$A(N')=\begin{bmatrix} \frac{(x_1 x_3 x_4)^{-1}}{1+x_2x_3^{-1}} & 1\end{bmatrix}$};
\node at (6.8,-1) {$=\begin{bmatrix} \frac{1}{x_1 x_4 (x_2+x_3)}& 1\end{bmatrix}$};
\path[->,font=\large, >=angle 90, line width=0.4mm]
(-1,0) edge node[above] {$x_1^{-1}$} (A)
(-1,0) edge[bend left=50] node[above] {$x_2$} (1,0)
(1,0) edge[bend left=50] node[below] {$x_3^{-1}$} (-1,0)
(B) edge node[above] {$x_4^{-1}$} (1, 0);
\draw [line width=0.25mm, fill=white] (-1,0) circle (0.75mm);
\draw [line width=0.25mm, fill=black] (1,0) circle (0.75mm);
\end{tikzpicture}

Since left multiplication of $A(N')$ by $[x_1 x_4 (x_2 +x_3)]\in GL_1$ gives $A(N)$, these two matrices represent the same point in the Grassmannian.

\end{ex}

Notice that in our example, $N'$ could be obtained from $N$ by reversing a path from one boundary vertex to another.  In fact, for any two networks $N$ and $N'$ satisfying the conditions of Theorem~\ref{thm:change-orientation-disk}, $N'$ can be obtained from $N$ by reversing a set of paths between boundary vertices and a set of cycles.  Thus, the theorem can be proven by showing that reversing paths between boundary vertices and reversing cycles preserve the boundary measurement map.

As the edge reversal described in Theorem~\ref{thm:change-orientation-disk} does not affect face weights, we may use this theorem in conjunction with our definition of face weights to obtain a plabic network from any planar directed network on a disk.  Thus, plabic networks identify certain directed planar networks that have the same image under the boundary measurement map.

Unfortunately, the statement analogous to Theorem~\ref{thm:change-orientation-disk} for planar directed graphs on a cylinder does not hold.

\begin{thm}[Theorem 4.1 of~\cite{GSV1}]\label{thm:path-reversal-cyl}
Let $P$ be a path with no self-intersections from $b_i$ to $b_j$ in a planar directed network on a cylinder $N$ such that $M_{ij}\not\equiv0$ and $P$ does not intersect the cut.  Create $N'$ from $N$ by reversing the direction of all the edges in $P$ and inverting their weights.  Then $$(Meas^C(N'))(\zeta)=\begin{cases} (Meas^C(N))(\zeta) & b_i,b_j\text{ are on the same boundary component},\\ (Meas^C(N))(-\zeta) & \text{otherwise}.\end{cases}$$
\end{thm}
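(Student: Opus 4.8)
The plan is to imitate the disk case (Theorem~\ref{thm:change-orientation-disk}): reversing $P$ should amount to a change of basis on the boundary-measurement matrix, which on the cylinder must be tracked together with the winding-index signs and the $\zeta$-powers. Since $M_{ij}\not\equiv 0$, there is a directed path $b_i\to b_j$, so $b_i$ is a source and $b_j$ a sink in $N$; reversing $P$ exchanges these roles, so the source set of $N'$ is $I'=(I\setminus\{i\})\cup\{j\}$, whence $A(N')_{I'}=\mathrm{Id}_k$ while $\Delta_{I'}(A(N))=M_{ij}(N)\not\equiv 0$. (If $M_{ij}\equiv 0$ the claimed equality of Grassmannian loops cannot hold: its right-hand side would have a vanishing $I'$-th Pl\"ucker coordinate and its left-hand side would not; so that hypothesis is necessary.) A point of $LGr_{kn}$ is determined by the ratios of its Pl\"ucker coordinates, $A(N')$ is the reduced-echelon representative of $Meas^C(N')$ relative to $I'$, and the reduced-echelon representative of $Meas^C(N)(\pm\zeta)$ relative to $I'$ is $\bigl(A(N)_{I'}\bigr)^{-1}A(N)$ evaluated at $\pm\zeta$, whose $S$-minor is $\Delta_S(A(N))(\pm\zeta)/M_{ij}(N)(\pm\zeta)$. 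So the theorem is equivalent to the family of rational-function identities
$$\Delta_S\bigl(A(N)\bigr)(\zeta)\;=\;M_{ij}(N)(\zeta)\cdot\Delta_S\bigl(A(N')\bigr)(\pm\zeta)\qquad(S\subseteq[n],\ |S|=k),$$
with sign $+$ if $b_i,b_j$ lie on the same boundary component and $-$ otherwise. The remaining hypotheses---$P$ simple and disjoint from the cut---are what make the reversal clean: reversing the edges of a non-self-intersecting path is unambiguous, and every edge of $P$ stays on its side of the cut.

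To prove these identities I would expand every factor as a path sum. By the Lindstr\"om-type flow formula refining Lemma~\ref{lemma:formal-boundary-meas-cyl} (cf.\ Section~4 of~\cite{P} and~\cite{GSV1}), $\Delta_S(A(N))$ is a sum over \emph{flows}---families of pairwise vertex-disjoint directed paths in $N$ joining $I\setminus S$ to $S\setminus I$ together with pairwise vertex-disjoint directed closed loops---each flow weighted by the product of its edge weights, times $\zeta$ to the total intersection number, times a sign equal to a product of winding-index parities, inter-strand crossing parities, and a combinatorial sign depending on $S$. I would write $M_{ij}(N)$ in the same form (a sum over $b_i\to b_j$ paths) and $\Delta_S(A(N'))$ likewise, but over flows in $N'$, whose edge set is that of $N$ with the edges of $P$ reversed and their weights inverted.

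The heart of the argument is a path-surgery correspondence between the two sides of the displayed identity. Given a flow $\mathcal F$ for $\Delta_S(A(N))$ together with a $b_i\to b_j$ path $P_0$ for $M_{ij}(N)$, superimpose them, reverse along $P_0$ the stretches where it runs opposite to a strand of $\mathcal F$, and resolve the remaining transversal crossings; because $P$ (and $P_0$) is simple this is well defined and reversible up to a sign-reversing involution that cancels the leftover non-flow configurations in pairs. What survives is matched with a flow $\mathcal F'$ for $\Delta_S(A(N'))$: the edges of $P$ traversed against the flow become the reversed edges of $\bar P$, whose inverted weights $x_e^{-1}$ absorb the $x_e$ they are paired with, so edge weights agree exactly; the $\zeta$-exponents agree because $P$ is disjoint from the cut, so no rerouting crosses the cut an extra time; and the connector discrepancy $I'\triangle S=(I\triangle S)\triangle\{i,j\}$ is repaired precisely by absorbing $P_0$. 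What is left to check is the sign: the total winding parity is unchanged when $b_i,b_j$ lie on the same boundary component, and changes by the total intersection number mod~$2$ when they lie on opposite components---exactly the factor $(-1)^{(\mathrm{intersection\ number})}$ that realizes $\zeta\mapsto-\zeta$. Closed loops in the flows are handled by the parallel cycle-reversal argument underlying the remark after Theorem~\ref{thm:change-orientation-disk}.

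I expect the winding-parity computation---and in particular the different-component shift, which has no disk analogue---to be the main obstacle. My plan for it is to homotope $P$ into a normal form in which every other strand meets it transversally in finitely many points, then resolve one crossing at a time and use Postnikov's recursive winding formula (Lemma~4.2 of~\cite{P}, cf.\ Definition~\ref{defn:winding-index-disk}) to compute the local change of $(-1)^{(\mathrm{winding})}$ at each switch; the local factors telescope, and the net factor is governed by how many times the closed-up rerouted strand wraps the annulus. When $\bar P$ returns to the same boundary component this wrap count is even, so the factor is trivial; when $\bar P$ joins the two components the rerouting changes the homology class of the closure by a multiple of the annulus generator whose parity is that of the strand's signed cut-crossings, giving the factor $(-1)^{(\mathrm{intersection\ number})}$. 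Reconciling this $H_1$-bookkeeping with the turning-number count through the surgery is where I expect most of the technical work to lie.
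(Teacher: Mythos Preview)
The paper does not contain a proof of this statement: Theorem~\ref{thm:path-reversal-cyl} is quoted verbatim as Theorem~4.1 of~\cite{GSV1} and used as a black box (notably inside the proof of Theorem~\ref{thm:orientation-invol}). So there is nothing in the present paper to compare your proposal against.

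That said, your outline is broadly the right shape for how results of this type are established. Reducing the claim to an identity of Pl\"ucker coordinates, expanding each minor via a Lindstr\"om/Talaska-style flow expansion, and then constructing a weight-preserving surgery between flows in $N$ paired with a $b_i\to b_j$ path and flows in $N'$ is the standard template, and it is indeed how the argument in~\cite{GSV1} proceeds. You have correctly isolated the place where the cylinder differs from the disk: the sign discrepancy governed by $\zeta\mapsto -\zeta$ when $b_i,b_j$ lie on different boundary components. Your $H_1$ heuristic for that sign is the right intuition, but as written it is not yet a proof---you still need to make precise which closed curve you are computing the winding of after surgery (it is $C_{P'}$ in the sense of Definition~\ref{defn:CP}, not the homology class of the rerouted strand alone), and to check that the recursive winding formula interacts with the cut-connector portion of $C_{P'}$ exactly as claimed. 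The ``sign-reversing involution that cancels the leftover non-flow configurations'' is also doing real work and deserves an explicit description; in the disk case this is Postnikov's tail-swapping involution, and on the cylinder one must verify it respects both the $\zeta$-grading and the winding parity. If you want to turn this into a self-contained proof, those two points are where the details need to be filled in; otherwise, citing~\cite{GSV1} as the paper does is appropriate.
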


Since we cannot necessarily reverse paths that begin and end on different boundary components without changing the image of the network under the boundary measurement map, we cannot turn planar directed networks on a cylinder into plabic networks.  In particular, we have to keep track of the orientation of the edges.  However, as path reversal changes the boundary measurements in a predictable way, plabic networks will still prove useful to us (see Theorem~\ref{thm:orientation-invol}).

\begin{prop}[Proposition 2.1 of~\cite{GSV1}]\label{prop:cut-change}
Let $N,N'$ be two networks with the same graph and weights, where $N$ has cut $\gamma$ and $N'$ has cut $\gamma'$ obtained by interchanging one of the base points with $b$, the first boundary vertex below the base point.  Then $$(-1)^{wind(C_P')-1}\zeta^{int(P')}x_{P'}=((-1)^{\alpha(P)}\zeta)^{\beta(b,P)}(-1)^{wind(C_P)-1}\zeta^{int(P)}x_P,$$ where $x_P$ is as in Definition~\ref{defn:xP} and
\begin{align*}
\alpha(P)&=\begin{cases} 0 &\text{if the endpoints of $P$ are on the same boundary component,}\\ 1 &\text{otherwise,}\end{cases}\\
\beta(b,P)&=\begin{cases} 1 & \text{if }b\text{ is the sink of }P,\\ -1 & \text{if }b\text{ is the source of }P,\\ 0 & \text{otherwise.}\end{cases}
\end{align*}
\end{prop}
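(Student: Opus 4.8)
The plan is to prove the identity path-by-path. Since $N$ and $N'$ have the same underlying graph and the same edge weights, the factor $x_P$ in Definition~\ref{defn:formal-boundary-meas-cyl} is literally unchanged, so $x_{P'}=x_P$, and it suffices to establish the two auxiliary identities
$$int(P')=int(P)+\beta(b,P),\qquad wind(C_{P'})\equiv wind(C_P)+\alpha(P)\,\beta(b,P)\pmod 2 .$$
Plugging these into the definition of the summand $(-1)^{wind(C_P)-1}\zeta^{int(P)}x_P$ immediately produces the claimed prefactor $((-1)^{\alpha(P)}\zeta)^{\beta(b,P)}$, since $((-1)^{\alpha}\zeta)^{\beta}=(-1)^{\alpha\beta}\zeta^{\beta}$. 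Both identities are obtained by sweeping the moving base point continuously from its old position to its new one, i.e.\ past the single boundary vertex $b$. The hypothesis that $b$ is the \emph{first} boundary vertex below the base point is what makes this sweep local: it passes no vertex other than $b$. Without loss of generality the moving base point lies on the same boundary component as $b$, and by the left--right symmetry of the cylinder we may call this the ``left'' component.

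For the intersection number, note that along the sweep $int(P)$ — the signed number of crossings of $P$ with the moving cut — is locally constant and jumps only when the cut sweeps past an endpoint of $P$. If $b$ is neither the source nor the sink of $P$, then $P$ never comes near $b$ (recall $b$ has degree one), so the sweep passes no endpoint of $P$ and $int(P')=int(P)=int(P)+0$. If $b$ is an endpoint of $P$, a local computation at $b$ — splitting according to whether $P$ leaves (or enters) $b$ on the clockwise or the counterclockwise side, and reading off the sign from the orientation convention for the cut in Definition~\ref{defn:intersection-number} — shows that the jump is exactly $-1$ when $b$ is the source and $+1$ when $b$ is the sink, the value being independent of the side on which $P$ departs. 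This is precisely $\beta(b,P)$.

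For the winding number we use the description of $C_P$ in Definition~\ref{defn:CP}. If the endpoints of $P$ lie on the same boundary component ($\alpha(P)=0$), then $C_P$ is assembled from $P$ and a single boundary arc with no reference to the cut, so $C_{P'}=C_P$ as curves on the cylinder and $wind(C_{P'})=wind(C_P)$; since $\alpha(P)\beta(b,P)=0$ this is what is needed. If the endpoints lie on different components ($\alpha(P)=1$), then $C_P$ traverses a portion of the cut together with two boundary arcs running to the base points. When $b$ is not an endpoint of $P$, moving the base point past $b$ alters $C_P$ only by pushing it across a disk neighborhood of $b$ that meets neither endpoint of $P$; this leaves $wind$ unchanged, again matching $\alpha(P)\beta(b,P)=0$. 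When $b$ is an endpoint of $P$, the key point is that $b$ sits immediately adjacent to the base point, so one of the boundary arcs composing $C_P$ is forced to change from a short arc to a nearly complete loop around its boundary component (or vice versa); hence $C_{P'}$ differs from $C_P$ by exactly one loop around a boundary circle of the annulus, which contributes $\pm 1$ to the turning number, giving $wind(C_{P'})\equiv wind(C_P)+1\pmod 2$ and matching $\alpha(P)\beta(b,P)=1$. Combining the two identities finishes the proof.

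The step I expect to need the most care is the winding-number bookkeeping in the case $\alpha(P)=1$: one must verify, across the sub-cases determined by which base point moves, which boundary component carries $b$, and which of the source or sink equals $b$, that the loop which appears or disappears really is a single loop around a boundary circle and is not accidentally cancelled by a compensating change in the cut arc of $C_P$. This is exactly where the ``first vertex below the base point'' hypothesis is used essentially, and where the conventions of \cite{P, GSV1} for $wind$ and for ``right/left of the cut'' must be tracked so as to be consistent with the sign found in the intersection-number computation.
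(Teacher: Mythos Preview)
The paper does not supply its own proof of this proposition: it is quoted verbatim as Proposition~2.1 of \cite{GSV1} and used as a black box in the proof of Theorem~\ref{thm:orientation-invol}. There is therefore no in-paper argument to compare your proposal against.

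On its own merits, your plan is the natural one and is essentially how such a statement is established: reduce to the two bookkeeping identities for $int$ and for $wind\bmod 2$, and verify each by sweeping the base point across $b$. Your treatment of the intersection number is clean. For the winding number in the $\alpha(P)=1$ case, the point you flag at the end is real: when the base point moves past $b$, not only does one boundary arc in $C_P$ flip between ``short'' and ``nearly full loop,'' but the cut segment inside $C_P$ also moves (since $\gamma'$ differs from $\gamma$ by a small arc through $b$). You should check explicitly that these two changes do not cancel---concretely, that the homotopy class of $C_{P'}$ on the annulus differs from that of $C_P$ by exactly one boundary loop. This is a finite case check (which base point moves, whether $b$ is source or sink) and goes through, but it is worth writing out rather than asserting.
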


\begin{thm}\label{thm:orientation-invol}
Let $N,N'$ be two perfect networks on a cylinder such that:
\begin{enumerate}[(1)]
\item The underlying graphs $G$ and $G'$ are isomorphic as undirected graphs.

\item Each internal vertex of degree $\neq 2$ has $col_N(v)=col_{N'}(v)$.

\item If the undirected edge $e$ is directed in the same way in $N$ and $N'$, then $x_e=x'_e$.  If the edge $e$ has opposite direction in $N$ and $N'$, the $x_e=(x'_e)^{-1}$.
\end{enumerate}

Given an involution on the edge weights of $N$ that preserves the boundary measurement map, then there is a canonical way to define an involution of the edge weights of $N'$.
\end{thm}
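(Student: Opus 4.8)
The plan is to use the fact that $N'$ differs from $N$ only by reversing a collection of paths and cycles, transport the given involution through the resulting identification of weight spaces, and verify the transported map still preserves boundary measurements by means of Theorem~\ref{thm:path-reversal-cyl}. First, exactly as in the disk case recalled after Theorem~\ref{thm:change-orientation-disk}, I would show that the set $D$ of edges whose orientations in $N$ and $N'$ disagree is, with the orientation inherited from $N$, a disjoint union of simple directed paths joining boundary vertices together with a collection of directed cycles. The point is that conditions (1) and (2) force, at every internal vertex $v$, that either no edge at $v$ lies in $D$, or exactly one in-edge and one out-edge of $v$ lie in $D$ (the unique ``special'' edge of the perfect structure trades roles with one ordinary edge; a degree-$2$ vertex either has both incident edges in $D$ or neither, since a single flip would destroy the perfect structure). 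Hence each component of $D$ passes through each vertex at most once, so is simple, and each path component runs from an $N$-source to an $N$-sink. Reversing every edge of $D$ and inverting its weight carries $N$ to $N'$, and by condition (3) this reversal is precisely the canonical bijection $\psi\colon\R_{>0}^{E(G)}\to\R_{>0}^{E(G')}$ inverting the weight on each edge of $D$ and fixing all other weights; $\psi^{-1}$ has the same description, so $\psi$ is a bijection.

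Next, since the components of $D$ are vertex-disjoint, I would reverse them one at a time in any order; at each stage the component being reversed is still a simple path or cycle in the current network. Applying Theorem~\ref{thm:path-reversal-cyl} to each path component, and the analogous statement for each cycle, each reversal alters $Meas^C$ by a fixed reparametrization of $\zeta$ that depends only on the combinatorial type of the component (trivial for same-boundary-component paths and for contractible cycles, $\zeta\mapsto-\zeta$ for paths joining the two boundary components), and not on the weights. Composing, I obtain a bijection $T$ of $LGr_{kn}$, depending only on $N$ and $N'$, with
$$Meas^C(N')\circ\psi \;=\; T\circ Meas^C(N)$$
as maps $\R_{>0}^{E(G)}\to LGr_{kn}$.

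Finally I would set $\iota':=\psi\circ\iota\circ\psi^{-1}$. It is an involution, because $(\iota')^2=\psi\circ\iota^2\circ\psi^{-1}=\mathrm{id}$, and for any $x'=\psi(x)$ in $\R_{>0}^{E(G')}$,
$$Meas^C(N',\iota'(x')) \;=\; T\bigl(Meas^C(N,\iota(x))\bigr) \;=\; T\bigl(Meas^C(N,x)\bigr) \;=\; Meas^C(N',x'),$$
where the outer equalities are the displayed identity and the middle one is the hypothesis that $\iota$ preserves $Meas^C$ on $N$. Thus $\iota'$ preserves $Meas^C$ on $N'$. It is canonical: the identification $\psi$ is determined by conditions (1)--(3), and the definition of $\iota'$ makes no reference to the auxiliary decomposition of $D$ or to the map $T$.

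I expect the main obstacle to be the middle step, namely legitimately iterating Theorem~\ref{thm:path-reversal-cyl}. One must arrange, possibly by choosing the cut suitably (which affects only bookkeeping), that each reversed component avoids the cut and has nonvanishing boundary measurement, check that the intermediate networks remain perfect, and---most delicately---verify that the accumulated reparametrizations of $\zeta$ assemble into a single well-defined $T$ independent of the order in which components are reversed, as well as supply the cycle-reversal analogue of Theorem~\ref{thm:path-reversal-cyl} (including for non-contractible cycles) if it is not already available. Once $T$ is in hand, the transport of $\iota$ to $\iota'$ is purely formal.
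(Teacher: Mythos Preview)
Your proposal is correct and follows essentially the same approach as the paper: decompose the discrepancy set into simple paths and cycles, invoke path/cycle reversal to relate boundary measurements of $N$ and $N'$ by a weight-independent reparametrization, and transport the involution by conjugation through the canonical identification $\psi$. The two technical obstacles you flag are exactly what the paper fills in: it handles cut intersection by moving the cut via Proposition~\ref{prop:cut-change} and restoring it afterward, and it supplies the cycle-reversal analogue directly (contractible cycles via Postnikov's disk argument, non-contractible cycles by a winding-number count showing same-component boundary measurements are unchanged while opposite-component ones pick up a sign). Your global formulation $\iota'=\psi\circ\iota\circ\psi^{-1}$ is a slightly cleaner packaging than the paper's step-by-step commutative diagrams, but the content is the same.
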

\begin{proof}
As with planar directed networks on a disk, we can always obtain $N'$ from $N$ by reversing a set of paths and cycles.  Therefore, we only need to show the conclusion shows for $N'$ equal to $N$ with a cycle (with no self-intersections) reversed or $N$ with a path (with no self-intersections) reversed.

First consider a cycle $C$ with no self-intersections.  If $C$ is a contractible loop, then the proof that reversing cycles on a disk does not change the boundary measurements still holds (Lemma 10.5 of \cite{P}).

If $C$ is not a contractible loop, a similar proof holds, except that the winding number is more complicated.  We consider what happens for paths that have edges in $C$.  First, for paths that begin and end on the same boundary component, the winding numbers behave the same as for networks on a disk.  So, the boundary measurements for pairs of vertices on the same boundary component remain the same.  Now consider paths that begin and end on different components.  For vertices $v_i$ and $v_j$ in the cycle, going from $v_i$ to $v_j$ in one direction around the cycle, with as many loops as desired, crosses the cut the same number of times from each side and going from $v_i$ to $v_j$ in the other direction crosses the cut one more time from one side than from the other.  Crossing the cut an additional time adds or removes a loop in $C_P$ as we trace along the cut.  So, we get an extra factor of $-1$ in the boundary measurements for pairs of vertices on different boundary components.

For any cycle in $N'$ with no self-intersections, we can reverse the cycle, apply our involution, and reverse the cycle again.  Any boundary measurements that change when we reverse the cycle change only by a factor of $-1$, and they change again by the same factor when we reverse the cycle a second time.  So, for any cycle $C$, we have the following commutative diagram, where $f$ is defined to be the map that makes this diagram commute:

\begin{center}
\begin{tikzpicture}[scale=3]
\node(A) at (0,0.5) {$N$};
\node(B) at (0,0) {$\widetilde{N}$};
\node(C) at (1,0.5) {$N'$};
\node(D) at (1,0) {$\widetilde{N'}$};
\path[<->,font=\large, >=angle 90]
(A) edge node[above] {cycle reversal} (C)
(A) edge node[left] {involution} (B)
(B) edge node[below] {cycle reversal} (D);
\path[->,font=\large, >=angle 90]
(C) edge node[right] {$f$} (D);
\end{tikzpicture}
\end{center}

$f$ is an involution that preserves the boundary measurements.

Now consider a path $P$ with no self-intersections.  If $P$ does not intersect the cut, Theorem~\ref{thm:path-reversal-cyl} says we can reverse $P$, possibly at the expense of replacing $\zeta$ with $-\zeta$.  If $P$ does intersect the cut, we can move the cut so that $P$ no longer intersects it.  Moving the cut changes the weight of each path by a power of $\zeta$ and a power of $-1$.  These powers depend only on the source and sink of the path, so it changes the weight of each boundary measurement by a power of $\zeta$ and a power of $-1$.  Then reversing the path, since it no longer intersects the cut, either keeps the boundary measurement map the same, or replaces $\zeta$ with $-\zeta$.  Finally, we can move the cut back, which will cause each boundary measurement to again pick up a power of $-1$ and a power of $\zeta$.  This process gives us the following commutative diagram, where $f$ is defined to be the map that makes the diagram commute and $g$ is the composition of functions:

\begin{center}
\begin{tikzpicture}[scale=3]
\node(A) at (0,0.5) {$N$};
\node(B) at (0,0) {$\widetilde{N}$};
\node(C) at (1,0.5) {$M$};
\node(D) at (1,0) {$\widetilde{M}$};
\node(E) at (2,0.5) {$M'$};
\node(F) at (2,0) {$\widetilde{M'}$};
\node(G) at (3,0.5) {$N'$};
\node(H) at (3,0) {$\widetilde{N'}$};
\path[<->,font=\small, >=angle 90]
(A) edge node[above] {cut change} (C)
(A) edge node[left] {involution} (B)
(B) edge node[below] {cut change} (D)
(C) edge node[above] {path reversal} (E)
(D) edge node[below] {path reversal} (F)
(E) edge node[above] {cut change} (G)
(F) edge node[below] {cut change} (H);
\path[->,font=\large, >=angle 90]
(A) edge[bend left=35] node[below] {$g$} (G)
(B) edge[bend right=35] node[above] {$g$} (H)
(G) edge node[right] {$f$} (H);
\end{tikzpicture}
\end{center}

\parindent=0.25in It is clear $f$ is an involution, so we just need to check that $Meas^C(N')=Meas^C(\widetilde{N'})$.  Given a matrix representing the image of a network under the boundary measurement map, $g$ is equivalent to possibly switching $\zeta$ for $-\zeta$ and multiplying each entry by a power of $-1$ and a power of $\zeta$.  Since $Meas^C(N)=Meas^C(\widetilde{N})$, we can pick the same matrix representative for them, and we can see $Meas^C(g(N))=Meas^C(g(\widetilde{N}))$
\end{proof}

Since the plabic network structure is useful to us, but also we can't eliminate orientation, we will be working with directed plabic networks.

\begin{defn}\label{defn:directed-plabic-cyl}
A \emph{directed plabic graph} on a cylinder is a planar directed graph on a cylinder such that each boundary vertex has degree 1 and each internal vertex is colored black of white.  A \emph{directed plabic network} on a cylinder is a directed plabic graph with a weight $y_f\in\R_{>0}$ assigned to each face and a specified trail with weight $t$.
\end{defn}

\section{The Plabic R-Matrix}\label{sec:R-mat}

Postnikov solves the inverse boundary problem for plabic networks up to a set of local transformations which do not alter the boundary measurements (Theorem 12.1 of~\cite{P}).  These are as follows:

\begin{enumerate}
\item[(M1)] Square move.

\begin{tikzpicture}[scale=0.5]
\fill[gray!40!white] (-3,-3) rectangle (3,3);
\node at (0,0) {$y_0$};
\node at (0,2) {$y_1$};
\node at (2,0) {$y_2$};
\node at (0,-2) {$y_3$};
\node at (-2,0) {$y_4$};
\path[-,font=\large, >=angle 90, line width=0.4mm]
(-1,-1) edge (1,-1)
(-1,-1) edge (-1,1)
(1,1) edge (1,-1)
(1,1) edge (-1,1)
(1,1) edge (3,3)
(-1,1) edge (-3,3)
(1,-1) edge (3,-3)
(-1,-1) edge (-3,-3);
\draw [line width=0.25mm, fill=white] (-1,1) circle (1.5mm);
\draw [line width=0.25mm, fill=black] (1,1) circle (1.5mm);
\draw [line width=0.25mm, fill=white] (1,-1) circle (1.5mm);
\draw [line width=0.25mm, fill=black] (-1,-1) circle (1.5mm);
\node at (5,0) {$\leftrightarrow$};
\fill[gray!40!white] (7,-3) rectangle (17,3);
\node at (12,0) {$y_0^{-1}$};
\node at (12,2) {$\frac{y_1}{1+y_0^{-1}}$};
\node at (15,0) {$y_2(1+y_0)$};
\node at (12,-2) {$\frac{y_3}{1+y_0^{-1}}$};
\node at (9,0) {$y_4(1+y_0)$};
\path[-,font=\large, >=angle 90, line width=0.4mm]
(11,-1) edge (13,-1)
(11,-1) edge (11,1)
(13,1) edge (13,-1)
(13,1) edge (11,1)
(13,1) edge (17,3)
(11,1) edge (7,3)
(13,-1) edge (17,-3)
(11,-1) edge (7,-3);
\draw [line width=0.25mm, fill=black] (11,1) circle (1.5mm);
\draw [line width=0.25mm, fill=white] (13,1) circle (1.5mm);
\draw [line width=0.25mm, fill=black] (13,-1) circle (1.5mm);
\draw [line width=0.25mm, fill=white] (11,-1) circle (1.5mm);
\end{tikzpicture}

\item[(M2)] Unicolored edge contraction/uncontraction.

\begin{tikzpicture}[scale=0.5]
\fill[gray!40!white] (-3,-2) rectangle (3,2);
\node at (-2.5,0.75) {$y_1$};
\node at (0,1) {$y_2$};
\node at (2.5,0) {$y_3$};
\node at (0,-1) {$y_4$};
\node at (-2.5,-0.75) {$y_5$};
\path[-,font=\large, >=angle 90, line width=0.4mm]
(-1,0) edge (1,0)
(-3,0) edge (-1,0)
(-3,2) edge (-1,0)
(-3,-2) edge (-1,0)
(3,2) edge (1,0)
(3,-2) edge (1,0);
\draw [line width=0.25mm, fill=white] (-1,0) circle (1.5mm);
\draw [line width=0.25mm, fill=white] (1,0) circle (1.5mm);
\node at (5,0) {$\leftrightarrow$};
\fill[gray!40!white] (7,-2) rectangle (13,2);
\node at (7.5,0.75) {$y_1$};
\node at (10,1) {$y_2$};
\node at (12.5,0) {$y_3$};
\node at (10,-1) {$y_4$};
\node at (7.5,-0.75) {$y_5$};
\path[-,font=\large, >=angle 90, line width=0.4mm]
(9,0) edge (10,0)
(7,0) edge (10,0)
(7,2) edge (10,0)
(7,-2) edge (10,0)
(13,2) edge (10,0)
(13,-2) edge (10,0);
\draw [line width=0.25mm, fill=white] (10,0) circle (1.5mm);
\end{tikzpicture}

The unicolored edge may be white (as pictured) or black and there may be any number of edges on each of the vertices.  All of the face weights remain unchanged.

\item[(M3)] Middle vertex insertion/removal

\begin{tikzpicture}[scale=0.5]
\fill[gray!40!white] (-2,-2) rectangle (2,2);
\node at (0,1) {$y_1$};
\node at (0,-1) {$y_2$};
\path[-,font=\large, >=angle 90, line width=0.4mm]
(-2,0) edge (2,0);
\draw [line width=0.25mm, fill=white] (0,0) circle (1.5mm);
\node at (5,0) {$\leftrightarrow$};
\fill[gray!40!white] (8,-2) rectangle (12,2);
\node at (10,1) {$y_1$};
\node at (10,-1) {$y_2$};
\path[-,font=\large, >=angle 90, line width=0.4mm]
(8,0) edge (12,0);
\end{tikzpicture}

Vertex insertion/removal may be done with a vertex of either color.

\item[(R1)] Parallel edge reduction

\begin{tikzpicture}[scale=0.5]
\fill[gray!40!white] (-3,-2.5) rectangle (3,2.5);
\node at (0,0) {$y_0$};
\node at (0,1.5) {$y_1$};
\node at (0,-1.5) {$y_2$};
\path[-,font=\large, >=angle 90, line width=0.4mm]
(-3,0) edge (-1.5,0)
(-1.5,0) edge[bend left=50] (1.5,0)
(-1.5,0) edge[bend right=50] (1.5,0)
(1.5,0) edge (3,0);
\draw [line width=0.25mm, fill=white] (-1.5,0) circle (1.5mm);
\draw [line width=0.25mm, fill=black] (1.5,0) circle (1.5mm);
\node at (5,0) {$\rightarrow$};
\fill[gray!40!white] (7,-2.5) rectangle (13,2.5);
\node at (10,1) {$\frac{y_1}{1+y_0^{-1}}$};
\node at (10,-1) {$y_2(1+y_0)$};
\path[-,font=\large, >=angle 90, line width=0.4mm]
(7,0) edge (13,0);
\end{tikzpicture}

\item[(R2)] Leaf reduction

\begin{tikzpicture}[scale=0.5]
\fill[gray!40!white] (-3,-2.5) rectangle (3,2.5);
\node at (1,1) {$y_0$};
\node at (-2.5,0.75) {$y_1$};
\node at (-2.5,-0.75) {$y_2$};
\path[-,font=\large, >=angle 90, line width=0.4mm]
(-3,0) edge (-1,0)
(-3,2) edge (-1,0)
(-3,-2) edge (-1,0)
(-1,0) edge (1,0);
\draw [line width=0.25mm, fill=white] (-1,0) circle (1.5mm);
\draw [line width=0.25mm, fill=black] (1,0) circle (1.5mm);
\node at (5,0) {$\rightarrow$};
\fill[gray!40!white] (7,-2.5) rectangle (13,2.5);
\node at (11,1) {$y_0y_1y_2$};
\path[-,font=\large, >=angle 90, line width=0.4mm]
(7,2) edge (9,2)
(7,-2) edge (9,-2)
(7,0) edge (9,0);
\draw [line width=0.25mm, fill=black] (9,0) circle (1.5mm);
\draw [line width=0.25mm, fill=black] (9,2) circle (1.5mm);
\draw [line width=0.25mm, fill=black] (9,-2) circle (1.5mm);
\end{tikzpicture}

The vertex with degree 1, which we call a leaf, may be any color, and the vertex connected to the leaf (which is of the opposite color) may have any degree $\geq 2$.

\item[(R3)] Dipole reduction

\begin{tikzpicture}[scale=0.5]
\fill[gray!40!white] (-3,-2) rectangle (3,2);
\node at (0,1) {$y_0$};
\path[-,font=\large, >=angle 90, line width=0.4mm]
(-1,0) edge (1,0);
\draw [line width=0.25mm, fill=white] (-1,0) circle (1.5mm);
\draw [line width=0.25mm, fill=black] (1,0) circle (1.5mm);
\node at (5,0) {$\rightarrow$};
\fill[gray!40!white] (7,-2) rectangle (13,2);
\node at (10,0) {$y_0$};
\end{tikzpicture}

\end {enumerate}

All of these transformations specialize to directed edge weighted versions.  From here, we will be using these transformations freely, and considering (directed) plabic networks that differ by them as equivalent.  Our goal in this section is to define and explore a semi-local transformation for planar directed networks on a cylinder.

\begin{defn}\label{defn:moves-reductions}
The transformations (M1) - (M3) are called \emph{moves} and the transformations (R1) - (R3) are called \emph{reductions}.  Two plabic graphs or networks are \emph{move-equivalent} if they can be transformed into the same graph or network by moves.
\end{defn}

\begin{defn}\label{defn:reduced-leafless}
A plabic graph or network is \emph{reduced} if it has no isolated connected components and it is not move-equivalent to any graph or network to which we can apply (R1) or (R2).  A plabic graph or network is \emph{leafless} if it has no non-boundary leaves.
\end{defn}

\begin{defn}\label{defn:alt-strand-diag}
A \emph{Postnikov diagram}, also known as an alternating strand diagram, on a surface with boundary is a set of directed curves, called \emph{strands}, such that when we draw the strands on the universal cover of the surface we have the following:
\begin{enumerate}[(1)]
\item Each strand begins and ends at a boundary vertex or is infinite.

\item There is exactly one strand that enters and one strand that leaves each boundary vertex.

\item No three strands intersect at the same point.

\item All intersections are transverse (the tangent vectors are independent).

\item There is a finite number of intersections in each fundamental domain.

\item Along any strand, the strands that cross it alternate crossing from the left and crossing from the right.

\item Strands do not have self-intersections, except in the case where a strand is a loop attached to a boundary vertex.  Notice that this excludes the possibility of a closed cycle.

\item If two strands intersect at $u$ and $v$, then one strand is oriented from $u$ to $v$ and one strand is oriented from $v$ to $u$.
\end{enumerate}
\end{defn}

Postnikov diagrams are considered up to homotopy.  We can obtain a plabic graph from a Postnikov diagram as follows:
\begin{enumerate}[(1)]
\item Place a black vertex in every face oriented counterclockwise and a white vertex in every face oriented clockwise.

\item If two oriented faces share a corner, connect the vertices in these two faces.
\end{enumerate}

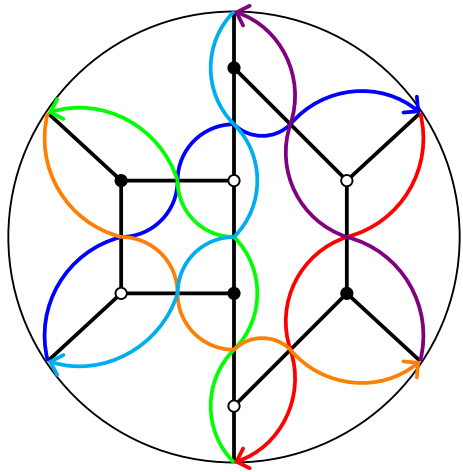
\begin{figure}[htb]
\begin{center}
\begin{tikzpicture}[scale=1.5]
\draw [line width=0.25mm] (0,0) circle (2cm);
\path[-,font=\large, >=angle 90, line width=0.5mm]
(-1,-0.5) edge (0,-0.5)
(-1,0.5) edge (0,0.5)
(0,-1.5) edge (0,-0.5)
(0,1.5) edge (0,0.5)
(0,1.5) edge (1,0.5)
(0,-1.5) edge (1,-0.5)
(1,0.5) edge (1,-0.5)
(0,0.5) edge (0,-0.5)
(-1,0.5) edge (-1,-0.5)
(0,-1.5) edge (0,-2)
(0,1.5) edge (0,2)
(1,0.5) edge (1.65,1.1)
(1,-0.5) edge (1.65,-1.1)
(-1,0.5) edge (-1.65,1.1)
(-1,-0.5) edge (-1.65,-1.1);
\draw [line width=0.25mm, fill=white] (-1,-0.5) circle (0.5mm);
\draw [line width=0.25mm, fill=white] (0,0.5) circle (0.5mm);
\draw [line width=0.25mm, fill=black] (-1,0.5) circle (0.5mm);
\draw [line width=0.25mm, fill=black] (0,-0.5) circle (0.5mm);
\draw [line width=0.25mm, fill=black] (0,1.5) circle (0.5mm);
\draw [line width=0.25mm, fill=white] (0,-1.5) circle (0.5mm);
\draw [line width=0.25mm, fill=black] (1,-0.5) circle (0.5mm);
\draw [line width=0.25mm, fill=white] (1,0.5) circle (0.5mm);
\path[-,font=\large, >=angle 90, line width=0.5mm, blue]
(-1.65,-1.1) edge[bend left=45] (-1,0)
(-0.5,0.5) edge[bend left=45] (-1,0)
(-0.5,0.5) edge[bend left=45] (0,1)
(0.5, 1) edge[bend left=45] (0,1);
\path[->,font=\large, >=angle 90, line width=0.5mm, blue]
(0.5, 1) edge[bend left=45] (1.65,1.1);
\path[-,font=\large, >=angle 90, line width=0.5mm, red]
(1.65,1.1) edge[bend left=45] (1,0)
(0.5,-1) edge[bend left=45] (1,0);
\path[->,font=\large, >=angle 90, line width=0.5mm, red]
(0.5,-1) edge[bend left=45] (0,-2);
\path[-,font=\large, >=angle 90, line width=0.5mm, green]
(0,-2) edge[bend left=45] (0,-1)
(0,0) edge[bend left=45] (0,-1)
(0,0) edge[bend left=45] (-0.5,0.5);
\path[<-,font=\large, >=angle 90, line width=0.5mm, green]
(-1.65,1.1) edge[bend left=45] (-0.5,0.5);
\path[-,font=\large, >=angle 90, line width=0.5mm, orange]
(-1,0) edge[bend left=45] (-1.65,1.1)
(-1,0) edge[bend left=45] (-0.5,-0.5)
(0,-1) edge[bend left=45] (-0.5,-0.5)
(0,-1) edge[bend left=45] (0.5,-1);
\path[<-,font=\large, >=angle 90, line width=0.5mm, orange]
(1.65,-1.1) edge[bend left=45] (0.5,-1);
\path[-,font=\large, >=angle 90, line width=0.5mm, violet]
(1.65,-1.1) edge[bend right=45] (1,0)
(0.5,1) edge[bend right=45] (1,0);
\path[->,font=\large, >=angle 90, line width=0.5mm, violet]
(0.5,1) edge[bend right=45] (0,2);
\path[-,font=\large, >=angle 90, line width=0.5mm, cyan]
(0,2) edge[bend right=45] (0,1)
(0,0) edge[bend right=45] (0,1)
(0,0) edge[bend right=45] (-0.5,-0.5);
\path[<-,font=\large, >=angle 90, line width=0.5mm, cyan]
(-1.65,-1.1) edge[bend right=45] (-0.5,-0.5);
\end{tikzpicture}
\caption{A plabic graph and its Postnikov diagram.}
\label{fig:alt-str}
\end{center}
\end{figure}

Notice that if our surface is a disk, we recover Definition 14.1 of~\cite{P} for a Postnikov diagram.

\begin{thm}[Corollary 14.2 of~\cite{P}]\label{thm:alt-str-reduced-disk}
Postnikov diagrams in a disk are in bijection with leafless reduced plabic graphs in a disk with no unicolored edges.
\end{thm}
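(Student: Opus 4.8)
The plan is to exhibit an explicit inverse to the map $D\mapsto G(D)$ described just before the statement (place a black vertex in each counterclockwise face of the strand diagram $D$, a white vertex in each clockwise face, and join two vertices whose faces share a corner), and then to check the two constructions are mutually inverse, with reducedness doing the essential work. First I would construct the reverse map by the \emph{rules of the road}: given a leafless reduced plabic graph $G$ with no unicolored edges, a strand enters $G$ along a boundary edge and, on reaching an internal vertex, leaves along the edge that is maximally counterclockwise at a white vertex and maximally clockwise at a black vertex (choosing the convention compatible with the face--coloring rule above). Each boundary edge is the start of exactly one strand and the end of exactly one strand, so this produces $n$ directed curves. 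The first thing to verify is that these really are finite curves running from a boundary vertex to a boundary vertex, i.e.\ that no strand closes up into a cycle and there are no ``roundtrips''; this is precisely where leaflessness and, crucially, reducedness enter, since a closed strand or a roundtrip forces a local configuration to which a reduction applies after finitely many moves, contradicting reducedness.

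Next I would check that the strand collection obtained from $G$ satisfies conditions (1)--(8) in the definition of a Postnikov diagram. Conditions (1)--(5) are combinatorial bookkeeping: crossings are transverse because edges of $G$ meet transversally, there are finitely many of them because $G$ is finite, no three coincide after a generic perturbation, and the start/end count is controlled by the boundary degree being $1$. Condition (6), the alternating property, is exactly the content of the ``no unicolored edges'' hypothesis: as a strand is traversed, at each internal vertex it passes through it is either the maximal-left turn (white vertex) or the maximal-right turn (black vertex) of the strand crossing it, and because the endpoints of each edge have opposite colors these alternate along the strand. Conditions (7) and (8) --- no self-intersections except boundary loops, and the correct relative orientation at each double crossing --- amount to saying that the strand collection of a reduced graph has no self-intersecting strand and no ``bad double crossing''; here I would invoke the standard analysis (the disk case of it) that a plabic graph is reduced if and only if its strand collection has neither pathology.

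Then I would prove mutual inverseness, up to the appropriate equivalences (homotopy of diagrams on one side; planar isotopy together with middle-vertex moves (M3) on the other). For recovering $D$ from $G(D)$: the oriented faces of the strand arrangement of $D$ are, by construction, in bijection with the vertices of $G(D)$, and running the rules of the road in $G(D)$ traces, around each oriented face, precisely the strands bounding that face, so $D$ is recovered. For recovering $G$ from the strand diagram it produces: around a white vertex $v$ of $G$ the strands that turn left there bound a small clockwise region whose only interior vertex (after collapsing any degree-$2$ vertices) is $v$, and likewise for black vertices; two vertices of $G$ are joined by an edge exactly when the corresponding regions meet at a crossing; hence the round trip returns $G$. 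The boundary source/sink data and the positions of the boundary vertices match on the nose from the conventions chosen.

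The main obstacle I anticipate is the equivalence ``$G$ reduced $\iff$ its strand collection has no self-intersecting strand and no bad double crossing,'' together with the care needed about which equivalence relations are in force on each side --- in particular, explaining why passing to the leafless, no-unicolored-edge normal form turns the correspondence into a genuine bijection rather than a surjection with nontrivial fibers. Making that equivalence precise requires the lemma that a self-intersecting strand or a bad double crossing can always be eliminated by a reduction after finitely many moves, and conversely that a strand collection free of these features admits no reduction; this is essentially Postnikov's bookkeeping of the moves (M1)--(M3) and reductions (R1)--(R3) at the level of strand diagrams, and it is where the real work of the proof is concentrated.
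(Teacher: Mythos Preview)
The paper does not actually prove this statement: it is stated as Corollary~14.2 of~\cite{P} and cited without proof. Your outline is a reasonable reconstruction of Postnikov's argument, and it is essentially the same framework the paper adopts for the cylinder analogue (Theorem~\ref{thm:alt-str-reduced-gen}). There, the paper defines \emph{trips} in a plabic graph (turn right at black, left at white), observes that trips trace out exactly the strands of the associated Postnikov diagram, and then invokes the characterization (Theorem~\ref{thm:reduced-trips}, itself lifted from Theorem~13.2 of~\cite{P}) that a plabic graph is reduced if and only if its trips have no round trips, no essential self-intersections, no bad double crossings, and the boundary-leaf condition holds---precisely the conditions (7) and (8) in the definition of a Postnikov diagram. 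So the key lemma you anticipate as the ``main obstacle'' is exactly what the paper isolates as Theorem~\ref{thm:reduced-trips} and defers to Postnikov; the bijection then follows in one line.

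One small point: your ``rules of the road'' convention (maximally counterclockwise at white, maximally clockwise at black) is the standard trip rule, but note that the paper and Postnikov phrase it as ``turn right at black, turn left at white,'' which is the same thing. Also, your treatment of mutual inverseness is more explicit than anything the paper writes down; the paper is content to observe that strands and trips are the same curves and that the diagram--graph correspondence is then determined, without separately verifying both directions.
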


\begin{thm}\label{thm:alt-str-reduced-gen}
Postnikov diagrams on a cylinder are in bijection with leafless reduced plabic graphs on a cylinder with no unicolored edges.
\end{thm}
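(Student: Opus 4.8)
The plan is to adapt Postnikov's proof of Theorem~\ref{thm:alt-str-reduced-disk} (Corollary 14.2 of~\cite{P}) to the cylinder, exploiting the fact that both Postnikov diagrams and plabic graphs on a cylinder are defined by lifting to the universal cover, so that all the \emph{local} arguments transfer verbatim. First I would make precise the two maps between the sets. In one direction, the construction already stated in the text (place a black vertex in each counterclockwise face and a white vertex in each clockwise face, connect vertices of oriented faces sharing a corner) sends a Postnikov diagram to a plabic graph; I would check that axioms (1)--(8) of Definition~\ref{defn:alt-strand-diag} force this graph to be leafless, reduced, and to have no unicolored edges — each of these is a purely local statement about a neighborhood of a vertex or an edge, and since a fundamental domain contains only finitely many crossings (axiom (5)), the disk-case verification in~\cite{P} applies on the cover and descends. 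In the other direction, given a leafless reduced plabic graph with no unicolored edges, I would define the strands by the usual ``rules of the road'': a strand enters a black vertex and turns maximally right, enters a white vertex and turns maximally left, and I would show this produces a collection of curves satisfying (1)--(8).

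The key steps, in order, are: (i) show the graph-to-diagram construction is well-defined and lands in the target set; (ii) show the diagram-to-graph (road) construction is well-defined and lands in the target set; (iii) show the two are mutually inverse. For (iii), the cleanest route is to check that starting from a Postnikov diagram, building its plabic graph, and then re-running the road rules recovers the original strands up to homotopy — this is again local (a strand segment through a single crossing corresponds to a single corner of the graph), plus a global connectivity/homotopy bookkeeping argument on the cover that is equivariant under the deck transformation $\Z$, hence descends to the cylinder. I would lean on Theorem~\ref{thm:alt-str-reduced-disk} itself as a black box wherever the argument is genuinely identical: cut the cylinder along the cut $\gamma$ to get a disk (with the caveat that strands and graph edges may cross $\gamma$), apply the disk statement to the finitely many features in one fundamental domain, and then reassemble.

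The main obstacle is the reducedness claim, specifically showing that the road-rule construction applied to a \emph{reduced} plabic graph yields a diagram with no self-intersecting strands other than boundary loops (axiom (7), which explicitly forbids closed cycles), and conversely that a diagram with this property yields a reduced graph. On a disk, a closed strand cycle or a strand self-intersection signals an (R1)/(R2)-type redundancy after moves; on a cylinder one must rule out the new phenomenon of a strand that is a non-contractible closed loop winding around the cylinder. I would handle this by the same move-equivalence reasoning as Postnikov — a non-contractible closed strand would, via the correspondence, produce a configuration admitting parallel-edge or leaf reduction after (M1)--(M3), contradicting reducedness — but verifying that the catalogue of local moves interacts correctly with winding around the cylinder, and that no \emph{new} obstruction appears, is the delicate part. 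A secondary subtlety is matching the ``leafless'' and ``no unicolored edges'' conditions with axioms (2) and (8) respectively in the presence of the boundary-attached loop exception in axiom (7); these I expect to be routine once the framework is set up, since they are local to a single vertex.
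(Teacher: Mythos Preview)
Your overall strategy---lift to the universal cover and reuse Postnikov's disk arguments, factoring through the ``rules of the road'' (trips)---is exactly the paper's approach. The paper packages this as an intermediate theorem (the cylinder analogue of Theorem~13.2 of~\cite{P}): a plabic graph $G$ on a cylinder is reduced if and only if its lift $\widetilde{G}$ to the universal cover has no round trips, no essential self-intersections, no bad double crossings, and no trip from a boundary vertex back to itself except at a boundary leaf. Since $G$ is reduced iff $\widetilde{G}$ is reduced, Postnikov's disk proof applies verbatim to $\widetilde{G}$, and then one simply observes that these four trip conditions on $\widetilde{G}$ are precisely the axioms of Definition~\ref{defn:alt-strand-diag}.

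However, there is a genuine misconception in what you identify as the ``main obstacle.'' You say you must \emph{rule out} a strand that is a non-contractible closed loop winding around the cylinder, and you propose to show such a loop forces a reduction. This is backwards: non-contractible loops are \emph{allowed}, and indeed the cylindric $k$-loop graphs of Definition~\ref{defn:cyl-k-loop} are built around exactly such strands. On the universal cover, a non-contractible loop lifts to an \emph{infinite} strand, which is explicitly permitted by axiom~(1) of Definition~\ref{defn:alt-strand-diag}; it is not a round trip in $\widetilde{G}$ and does not witness any reduction. The ``no closed cycle'' clause in axiom~(7) is a condition on the universal cover, so it forbids only \emph{contractible} closed strands on the cylinder---and those are exactly the ones Postnikov's argument already handles. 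Once you drop the attempt to exclude non-contractible loops and instead work entirely with trips in $\widetilde{G}$, the remaining verification is the routine axiom-matching you describe, and no genuinely new obstruction appears.
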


See Section~\ref{sec:AltStr-reduced} for proof.

\begin{defn}\label{defn:cyl-k-loop}
A cylindric $k$-loop plabic graph is a plabic graph on a cylinder that arises from a Postnikov diagram where exactly $k$ of the strands are loops around the cylinder with the same orientation.
\end{defn}

Cylindric $k$-loop plabic graphs have $k$ strings of vertices around the cylinder.  Those strings alternate black and white vertices, and the black vertices only have additional edges on the left of the strand while the white vertices only have additional edges to the right of the strand (see Figure~\ref{fig:cyl-2-loop}).

\begin{figure}[htp]
\begin{center}
\begin{tikzpicture}[scale=0.75]
\fill[gray!40!white] (0,0) rectangle (5,7);
\path[-,font=\large, >=angle 90, line width=0.4mm]
(0,4) edge (1,4)
(0,6) edge (1,6)
(4,1) edge (5,1)
(4,3) edge (5,3)
(4,5) edge (5,5)
(1,0) edge (1,7)
(4,0) edge (4,7)
(1,3) edge (2,4)
(1,5) edge (2,4)
(2,4) edge (3,3)
(3,3) edge (4,2)
(3,3) edge (4,4)
(1,5) edge (4,6);
\path[dashed,->,font=\large, >=angle 90, line width=0.4mm]
(0,0) edge (5,0)
(0,7) edge (5,7);
\draw [line width=0.25mm, fill=white] (1,3) circle (1mm);
\draw [line width=0.25mm, fill=white] (1,5) circle (1mm);
\draw [line width=0.25mm, fill=white] (3,3) circle (1mm);
\draw [line width=0.25mm, fill=white] (4,1) circle (1mm);
\draw [line width=0.25mm, fill=white] (4,3) circle (1mm);
\draw [line width=0.25mm, fill=white] (4,5) circle (1mm);
\draw [line width=0.25mm, fill=black] (1,4) circle (1mm);
\draw [line width=0.25mm, fill=black] (1,6) circle (1mm);
\draw [line width=0.25mm, fill=black] (2,4) circle (1mm);
\draw [line width=0.25mm, fill=black] (4,2) circle (1mm);
\draw [line width=0.25mm, fill=black] (4,4) circle (1mm);
\draw [line width=0.25mm, fill=black] (4,6) circle (1mm);
\node at (2.5,-1) {plabic graph};
\fill[gray!40!white] (8,0) rectangle (14,7);
\node at (11,-1) {Postnikov diagram};
\draw [line width=0.4mm,blue] plot [smooth, tension=1] coordinates { (14,6) (10.5,5.25) (9.5,2) (8,1) };
\path[->,font=\large, >=angle 90, line width=0.4mm, blue]
(8.1,1) edge (8,1);
\draw [line width=0.4mm,blue] plot [smooth, tension=1] coordinates { (14,4) (12.5,3.4) (12.5,1.6) (14,1) };
\path[->,font=\large, >=angle 90, line width=0.4mm, blue]
(13.9,1) edge (14,1);
\draw [line width=0.4mm,blue] plot [smooth, tension=1] coordinates { (14,2) (11.25,2.4) (10.75,3.6) (8,4) };
\path[->,font=\large, >=angle 90, line width=0.4mm, blue]
(8.1,4) edge (8,4);
\path[->,font=\large, >=angle 90, line width=0.4mm, blue]
(8,3) edge (14,3)
(8,5) edge (14,5)
(9,0) edge (9,7)
(13,0) edge (13,7);
\path[dashed,->,font=\large, >=angle 90, line width=0.4mm]
(8,0) edge (14,0)
(8,7) edge (14,7);
\end{tikzpicture}
\end{center}
\caption{A cylindric 2-loop plabic graph and its Postnikov diagram.}
\label{fig:cyl-2-loop}
\end{figure}
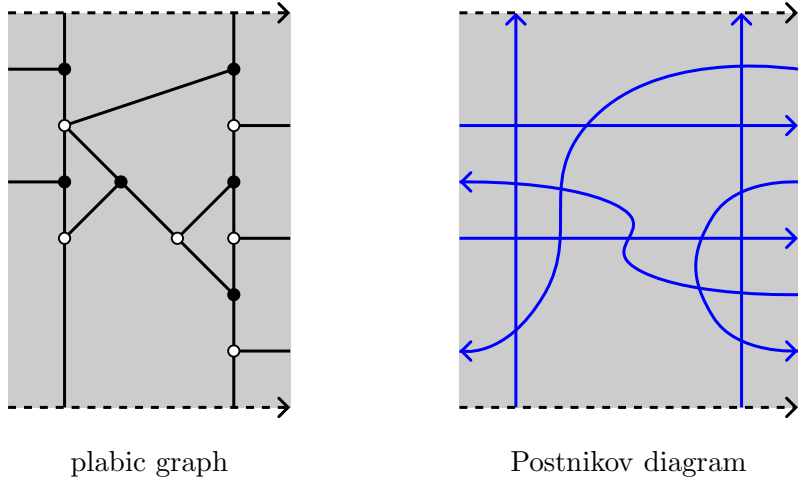

\begin{defn}\label{defn:interior-vertices}
For a cylindric $k$-loop plabic graph, any vertices that are not on one of the strings of vertices defined by the $k$ loops and lie between two of these strings are called \emph{interior vertices}.
\end{defn}

\begin{thm}\label{thm:int-vert-k-loop}
Any cylindric $k$-loop plabic graph can be transformed by moves to one that has no interior vertices.
\end{thm}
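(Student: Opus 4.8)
The plan is to induct on the number of interior vertices, using the Postnikov diagram underlying the graph. First I would normalise: since moves (M2) and (M3) let us pass freely between a plabic graph and its leafless, reduced, unicolored‑edge‑free representative, and such a representative is exactly the plabic graph of a Postnikov diagram by Theorem~\ref{thm:alt-str-reduced-gen}, we may assume $G$ arises from a Postnikov diagram $D$, so that every internal vertex has degree $\ge 3$; none of these normalising moves creates an interior vertex, so it suffices to remove the interior vertices in this normalised setting. In $D$ the $k$ loop strands $L_1,\dots,L_k$ are pairwise disjoint non‑contractible simple closed curves (they are disjoint because they are homotopic and identically oriented, so two of them crossing would violate Definition~\ref{defn:alt-strand-diag}(8)), and they cut the cylinder into $k$ cyclically ordered annular strips $R_1,\dots,R_k$. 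Under the vertices‑to‑oriented‑faces correspondence, a vertex of $G$ is a string vertex on $L_i$ precisely when the corresponding oriented face of $D$ meets $L_i$, and it is an interior vertex precisely when that oriented face lies in the open interior of some strip $R_i$.

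The engine of the induction is a sweeping lemma inside a single strip. Fix a strip $R=R_i$ bounded by loop strands $L$ and $L'$, and suppose $R$ contains an oriented face of $D$ in its interior. To the oriented faces of $D$ inside $R$ I would assign the potential $\Phi=\sum_F \operatorname{dist}(F,\partial R)$, the sum over interior oriented faces $F$ of the distance, in the dual graph of $D$ restricted to $R$, from $F$ to the nearer of $L$ and $L'$. The claim is that whenever $\Phi>0$ there is a square‑move configuration in $G$ whose execution strictly decreases $\Phi$; hence after finitely many square moves every oriented face of $D$ in $R$ meets $L$ or $L'$, and applying this to each strip in turn and invoking the induction hypothesis on the (strictly smaller) number of interior vertices finishes the proof. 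To produce the move, take an interior oriented face $F$ realising the minimal positive distance to, say, $L$: tracing the strand segments separating $F$ from $L$, and using that strands crossing $L$ alternate sides (Definition~\ref{defn:alt-strand-diag}(6)), that distinct non‑loop strands meet transversally in finitely many points (Definition~\ref{defn:alt-strand-diag}(4)--(5)), and that no non‑loop strand self‑intersects inside $R$ (Definition~\ref{defn:alt-strand-diag}(7)), one shows that the region between $F$ and $L$ closest to $F$ is an empty triangle of $D$. The square move attached to this triangle — the plabic‑graph image of sliding the third strand across the crossing of the other two — pushes $F$ one step toward $L$ without increasing any other term of $\Phi$.

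The main obstacle is precisely this last point: ruling out a \emph{locked} arrangement of non‑loop strand segments inside a strip in which every interior oriented face is shielded from both $L$ and $L'$, so that no admissible square move makes progress. Establishing that no such arrangement occurs is the combinatorial heart of the argument, and it is where axioms (6)--(8) of Definition~\ref{defn:alt-strand-diag} together with the absence of closed strands do the real work: one argues that an innermost crossing of a strip (one of smallest distance to a bounding loop) always cuts off an empty triangle on the loop side, furnishing the required move, with finiteness of crossings per fundamental domain (Definition~\ref{defn:alt-strand-diag}(5)) guaranteeing termination. A minor additional wrinkle is that a square move is literally available only at a genuine quadrilateral face, so in degenerate situations one first applies (M2) or (M3) to expose the needed square; since these auxiliary moves neither create interior vertices nor alter the ambient strip decomposition, the induction is unaffected.
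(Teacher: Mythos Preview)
Your plan has the right architecture --- work strip by strip, find a square move that makes progress, iterate --- and this is exactly what the paper does. But the step you yourself flag as ``the main obstacle'' is never actually overcome: you assert that an innermost interior face is separated from the nearer loop by an empty triangle of $D$, and that the corresponding square move lowers $\Phi$ without raising any other term, but you give no argument. This is the entire content of the theorem. The paper replaces your potential $\Phi$ by the cruder count ``number of strand crossings between the two loops'' and proves the availability of a progress move through two concrete structural lemmas: first (Lemma~\ref{lemma:mult-edges}) that whenever interior vertices exist, some interior vertex has at least two edges to the \emph{same} string; second (Lemma~\ref{lemma:same-int-vert}) that those edges can be taken to land on consecutive string vertices of the appropriate colour, so that together with the string they bound a genuine square face. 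Lemma~\ref{lemma:sq-move} then shows the square move on that face drops the crossing count. These lemmas are where axioms (6)--(8) of Definition~\ref{defn:alt-strand-diag} are actually deployed, and their proofs are not short; your sketch does not supply an analogue.

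There is a second gap in the passage from one strip to $k$. A square move inside strip $R_i$ does not stay inside $R_i$: in the paper's picture (Lemma~\ref{lemma:sq-move}) the crossing is pushed \emph{across} the bounding loop into the neighbouring strip, so clearing $R_i$ can create new interior vertices in $R_{i-1}$. Your line ``applying this to each strip in turn'' ignores this, and your induction on the total number of interior vertices need not terminate. The paper handles this with an additional lemma (Lemma~\ref{lemma:black-left}): in a $2$-loop strip, some progress move is always available on a \emph{black} interior vertex adjacent to the \emph{left} string, and that particular move pushes the crossing leftward only. One then processes strips from right to left, so already-cleared strips are never repopulated. You need either this directionality argument or some global potential that accounts for crossings migrating between strips.
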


See Section~\ref{sec:IntVert} for proof.

Consider a cylindric $k$-loop plabic graph.  By Theorem \ref{thm:no-int-vert}, we can choose two adjacent strings and assume there are no interior vertices between them.  We will be describing an involution on the edge weights of these two adjacent strings and the edges between them, so we'll ignore the rest of the graph.  That is, we'll assume we have a cylindric 2-loop plabic graph with no vertices other than those on the 2 strings.

\begin{defn}\label{defn:cannonical-orientation}
The \emph{canonical orientation} of a cylindric 2-loop plabic graph is the orientation where the edges on the strings are oriented up and the other edges are oriented from left to right.
\end{defn}

Let us choose the edges from white vertices to black vertices to be variables, and set all the other edges to have weight 1.  We do this to have a canonical way to kill the gauge transformations on our network (notice the number of variables is the number of faces, or the dimension of $\R^E$/\{gauge transformations\}).  In our diagrams, any edges that are not labeled are assumed to have weight 1.  We now have a directed plabic network.

\begin{defn}\label{defn:expanded-directed-plabic-network}
We can expand the directed plabic network by splitting each vertex that has multiple edges to the other string into that many vertices, and inserting vertices of the opposite color between them.  Let any new edges created have weight 1.  Thus, we have a new network that is equivalent to the old one, but all the interior faces are hexagons where the colors of the vertices alternate and where there are 2 white vertices on the left string and 2 black vertices on the right string.  We'll call this the \emph{expanded directed plabic network}.
\end{defn}

Choose a white vertex on the left string of a cylindric 2-loop expanded directed plabic network.  Call the weight of the edge from this vertex to the black vertex above it on the string $x_1$.  For the white vertex on the left string that is part of the same face as these two vertices, call the weight of the edge from this vertex to the black vertex above it on the string $y_1$.  Call the weight of the edge that makes up the upper boundary of the face containing these vertices $z_1$.  Moving up the left string, give the next white to black edge the weight $x_2$, and so on.  Do the same on the right.  Moving in the same direction, label all the edges between the strings with weights $z_2, z_3$, etc.  For a particular network, some of these values might be set to 1, because we created the edges when we expanded the network.  Let $x=\{x_1,x_2,...,x_n\},y=\{y_1,y_2,...,y_n\},z=\{z_1,z_2,...,z_n\}$.  We will consider all of these indices to be modular.

Consider the network on the universal cover of the cylinder.  Choose a fundamental domain.  Label all the edge weights in the fundamental domain with a superscript (1), so the weights are $x_1^{(1)}, y_1^{(1)}, z_1^{(1)}, x_2^{(1)}$, etc.  Label all the edge weights in the copy of the fundamental domain that lies above with a superscript (2), and so on.  Define $\lambda_i(x,y,z)$ = sum of the path weights from vertex $a$ to vertex $b$, where vertex $a$ is a the highest vertex on the left string of the interior face that has an edge labeled $x_i^{(1)}$ and $b$ is the lowest vertex on the right string of the interior face that has an edge labeled $x_i^{(2)}$.  Define $A_i=\{i-j\ |\ j\geq 1,\ x_{i-1}=...=x_{i-j}=1\}$ and $\alpha_i=|A_i|$.  Define, $B_i=\{i+j\ |\ j\geq 1,\ y_{i+1}=...=y_{i+j}=1\}$ and $\beta_i=|B_i|$.

\begin{figure}[tbp]
\begin{center}
\begin{tikzpicture}[scale=0.75]
\fill[gray!40!white] (0,0) rectangle (6,13);
\path[->,font=\large, >=angle 90, line width=0.4mm]
(0,3) edge (2,3)
(0,6) edge (2,6)
(0,9) edge (2,9)
(0,12) edge (2,12)
(2,0) edge (2,1.5)
(2,1.5) edge node[left] {$a$} (2,3)
(2,3) edge (2,4.5)
(2,4.5) edge node[left] {$b$} (2,6)
(2,6) edge (2,7.5)
(2,7.5) edge node[left] {$c$} (2,9)
(2,9) edge (2,10.5)
(2,10.5) edge node[left] {$d$} (2,12)
(2,12) edge (2,13)
(4,0) edge (4,1.5)
(4,1.5) edge node[right] {$e$} (4,3)
(4,3) edge (4,4.5)
(4,4.5) edge node[right] {$f$} (4,6)
(4,6) edge (4,7.5)
(4,7.5) edge node[right] {$g$} (4,9)
(4,9) edge (4,10.5)
(4,10.5) edge node[right] {$h$} (4,12)
(4,12) edge (4,13)
(4,1.5) edge (6,1.5)
(4,4.5) edge (6,4.5)
(4,7.5) edge (6,7.5)
(4,10.5) edge (6,10.5)
(2,1.5) edge node[above] {$i$} (4,3)
(2,4.5) edge node[above] {$j$} (4,3)
(2,7.5) edge node[above] {$k$} (4,6)
(2,10.5) edge node[right] {$\ell$} (4,6)
(2,10.5) edge node[above] {$m$} (4,9)
(2,10.5) edge node[above] {$n$} (4,12);
\path[dashed,->,font=\large, >=angle 90, line width=0.4mm]
(0,0) edge (6,0)
(0,13) edge (6,13);
\draw [line width=0.25mm, fill=white] (2,1.5) circle (1mm);
\draw [line width=0.25mm, fill=white] (2,4.5) circle (1mm);
\draw [line width=0.25mm, fill=white] (2,7.5) circle (1mm);
\draw [line width=0.25mm, fill=white] (2,10.5) circle (1mm);
\draw [line width=0.25mm, fill=white] (4,1.5) circle (1mm);
\draw [line width=0.25mm, fill=white] (4,4.5) circle (1mm);
\draw [line width=0.25mm, fill=white] (4,7.5) circle (1mm);
\draw [line width=0.25mm, fill=white] (4,10.5) circle (1mm);
\draw [line width=0.25mm, fill=black] (2,3) circle (1mm);
\draw [line width=0.25mm, fill=black] (2,6) circle (1mm);
\draw [line width=0.25mm, fill=black] (2,9) circle (1mm);
\draw [line width=0.25mm, fill=black] (2,12) circle (1mm);
\draw [line width=0.25mm, fill=black] (4,3) circle (1mm);
\draw [line width=0.25mm, fill=black] (4,6) circle (1mm);
\draw [line width=0.25mm, fill=black] (4,9) circle (1mm);
\draw [line width=0.25mm, fill=black] (4,12) circle (1mm);
\node at (3,-1) {directed plabic network};
\path[<->,font=\large, >=angle 90]
(7,6.5) edge (8,6.5);
\fill[gray!40!white] (9,0) rectangle (15,13);
\path[->,font=\large, >=angle 90, line width=0.4mm]
(9,2) edge (11,2)
(9,4) edge (11,4)
(9,6) edge (11,6)
(9,12) edge (11,12)
(13,1) edge (15,1)
(13,5) edge (15,5)
(13,9) edge (15,9)
(13,11) edge (15,11)
(11,0) edge (11,1)
(11,1) edge node[left] {$a$}(11,2)
(11,2) edge (11,3)
(11,3) edge node[left] {$b$} (11,4)
(11,4) edge (11,5)
(11,5) edge node[left] {$c$} (11,6)
(11,6) edge (11,7)
(11,7) edge (11,8)
(11,8) edge (11,9)
(11,9) edge (11,10)
(11,10) edge (11,11)
(11,11) edge node[left] {$d$} (11,12)
(11,12) edge (11,13)
(13,0) edge (13,1)
(13,1) edge node[right] {$e$} (13,2)
(13,2) edge (13,3)
(13,3) edge (13,4)
(13,4) edge (13,5)
(13,5) edge node[right] {$f$} (13,6)
(13,6) edge (13,7)
(13,7) edge (13,8)
(13,8) edge (13,9)
(13,9) edge node[right] {$g$} (13,10)
(13,10) edge (13,11)
(13,11) edge node[right] {$h$} (13,12)
(13,12) edge (13,13)
(11,1) edge node[above] {$i$} (13,2)
(11,3) edge node[above] {$j$} (13,4)
(11,5) edge node[above] {$k$} (13,6)
(11,7) edge node[above] {$\ell$} (13,8)
(11,9) edge node[above] {$m$} (13,10)
(11,11) edge node[above] {$n$} (13,12);
\path[dashed,->,font=\large, >=angle 90, line width=0.4mm]
(9,0) edge (15,0)
(9,13) edge (15,13);
\draw [line width=0.25mm, fill=white] (11,1) circle (1mm);
\draw [line width=0.25mm, fill=white] (11,3) circle (1mm);
\draw [line width=0.25mm, fill=white] (11,5) circle (1mm);
\draw [line width=0.25mm, fill=white] (11,7) circle (1mm);
\draw [line width=0.25mm, fill=white] (11,9) circle (1mm);
\draw [line width=0.25mm, fill=white] (11,11) circle (1mm);
\draw [line width=0.25mm, fill=white] (13,1) circle (1mm);
\draw [line width=0.25mm, fill=white] (13,3) circle (1mm);
\draw [line width=0.25mm, fill=white] (13,5) circle (1mm);
\draw [line width=0.25mm, fill=white] (13,7) circle (1mm);
\draw [line width=0.25mm, fill=white] (13,9) circle (1mm);
\draw [line width=0.25mm, fill=white] (13,11) circle (1mm);
\draw [line width=0.25mm, fill=black] (11,2) circle (1mm);
\draw [line width=0.25mm, fill=black] (11,4) circle (1mm);
\draw [line width=0.25mm, fill=black] (11,6) circle (1mm);
\draw [line width=0.25mm, fill=black] (11,8) circle (1mm);
\draw [line width=0.25mm, fill=black] (11,10) circle (1mm);
\draw [line width=0.25mm, fill=black] (11,12) circle (1mm);
\draw [line width=0.25mm, fill=black] (13,2) circle (1mm);
\draw [line width=0.25mm, fill=black] (13,4) circle (1mm);
\draw [line width=0.25mm, fill=black] (13,6) circle (1mm);
\draw [line width=0.25mm, fill=black] (13,8) circle (1mm);
\draw [line width=0.25mm, fill=black] (13,10) circle (1mm);
\draw [line width=0.25mm, fill=black] (13,12) circle (1mm);
\node at (12,-1) {expanded directed plabic network};
\end{tikzpicture}
\caption{A directed plabic network and the corresponding expanded directed plabic network.}
\label{fig:expanded-directed-plabic-network}
\end{center}
\end{figure}
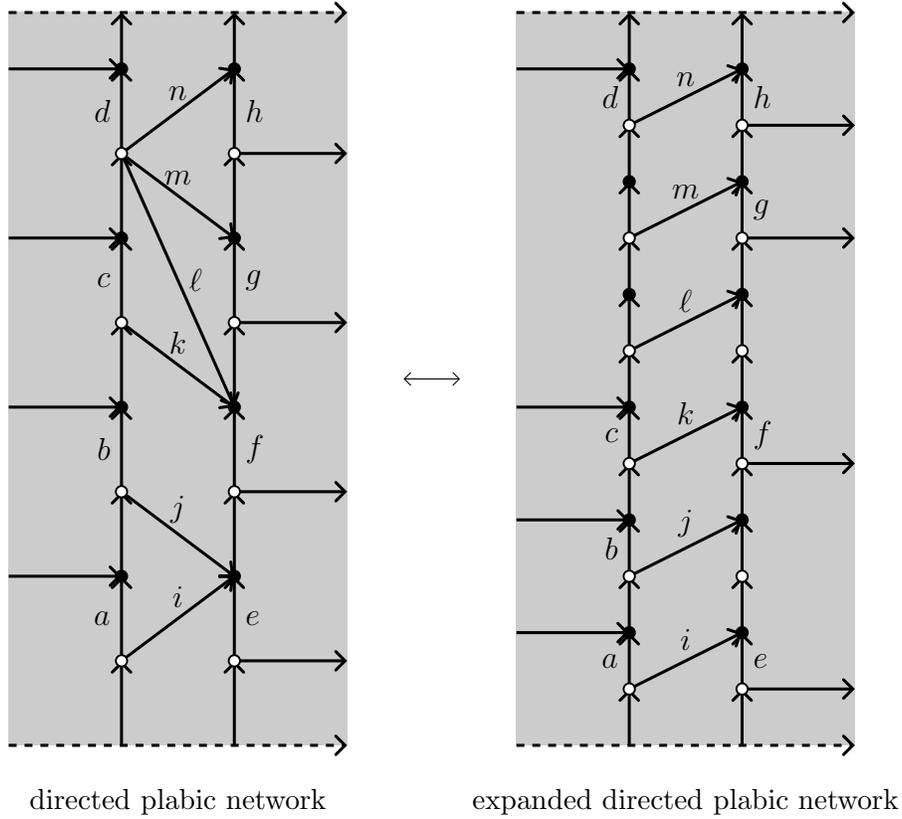

\newpage
\begin{ex}\label{ex:lambda}
Consider the network below:

\begin{tabular}[t]{ c m{0.3cm} m{6cm} }
\raisebox{\dimexpr-\height + 53ex\relax}{
\begin{tikzpicture}[scale=0.75]
\fill[gray!40!white] (9,-3) rectangle (15,20);
\path[->,font=\large, >=angle 90, line width=0.4mm]
(9,2) edge (11,2)
(9,15) edge (11,15)
(9,4) edge (11,4)
(9,17) edge (11,17)
(9,6) edge (11,6)
(9,19) edge (11,19)
(9,12) edge (11,12)
(9,-1) edge (11,-1)
(13,1) edge (15,1)
(13,14) edge (15,14)
(13,5) edge (15,5)
(13,18) edge (15,18)
(13,9) edge (15,9)
(13,11) edge (15,11)
(13,-2) edge (15,-2)
(11,-1) edge (11,1)
(11,1) edge node[left] {$x_1^{(1)}$}(11,2)
(11,14) edge node[left] {$x_1^{(2)}$}(11,15)
(11,2) edge (11,3)
(11,15) edge (11,16)
(11,3) edge node[left] {$x_2^{(1)}$} (11,4)
(11,16) edge node[left] {$x_2^{(2)}$} (11,17)
(11,4) edge (11,5)
(11,17) edge (11,18)
(11,5) edge node[left] {$x_3^{(1)}$} (11,6)
(11,18) edge node[left] {$x_3^{(2)}$} (11,19)
(11,6) edge (11,7)
(11,19) edge (11,20)
(11,7) edge node[left] {$x_4^{(1)}$}(11,8)
(11,8) edge (11,9)
(11,9) edge node[left] {$x_5^{(1)}$}(11,10)
(11,10) edge (11,11)
(11,-3) edge (11,-2)
(11,11) edge node[left] {$x_6^{(1)}$} (11,12)
(11,-2) edge node[left] {$x_6^{(0)}$} (11,-1)
(11,12) edge (11,14)
(13,-1) edge (13,1)
(13,1) edge node[right] {$y_6^{(1)}$} (13,2)
(13,14) edge node[right] {$y_6^{(2)}$} (13,15)
(13,2) edge (13,3)
(13,15) edge (13,16)
(13,3) edge node[right] {$y_1^{(1)}$}(13,4)
(13,16) edge node[right] {$y_1^{(2)}$}(13,17)
(13,4) edge (13,5)
(13,17) edge (13,18)
(13,5) edge node[right] {$y_2^{(1)}$} (13,6)
(13,18) edge node[right] {$y_2^{(2)}$} (13,19)
(13,6) edge (13,7)
(13,19) edge (13,20)
(13,7) edge node[right] {$y_3^{(1)}$}(13,8)
(13,8) edge (13,9)
(13,9) edge node[right] {$y_4^{(1)}$} (13,10)
(13,10) edge (13,11)
(13,-3) edge (13,-2)
(13,11) edge node[right] {$y_5^{(1)}$} (13,12)
(13,-2) edge node[right] {$y_5^{(0)}$} (13,-1)
(13,12) edge (13,14)
(11,1) edge node[above] {$z_6^{(1)}$} (13,2)
(11,14) edge node[above] {$z_6^{(1)}$} (13,15)
(11,3) edge node[above] {$z_1^{(1)}$} (13,4)
(11,16) edge node[above] {$z_1^{(1)}$} (13,17)
(11,5) edge node[above] {$z_2^{(1)}$} (13,6)
(11,18) edge node[above] {$z_2^{(1)}$} (13,19)
(11,7) edge node[above] {$z_3^{(1)}$} (13,8)
(11,9) edge node[above] {$z_4^{(1)}$} (13,10)
(11,11) edge node[above] {$z_5^{(1)}$} (13,12)
(11,-2) edge node[above] {$z_5^{(0)}$} (13,-1);
\path[dashed,->,font=\large, >=angle 90, line width=0.4mm]
(9,0) edge (15,0)
(9,13) edge (15,13);
\draw [line width=0.25mm, fill=white] (11,1) circle (1mm);
\draw [line width=0.25mm, fill=white] (11,3) circle (1mm);
\draw [line width=0.25mm, fill=white] (11,5) circle (1mm);
\draw [line width=0.25mm, fill=white] (11,7) circle (1mm);
\draw [line width=0.25mm, fill=white] (11,9) circle (1mm);
\draw [line width=0.25mm, fill=white] (11,11) circle (1mm);
\draw [line width=0.25mm, fill=white] (13,1) circle (1mm);
\draw [line width=0.25mm, fill=white] (13,3) circle (1mm);
\draw [line width=0.25mm, fill=white] (13,5) circle (1mm);
\draw [line width=0.25mm, fill=white] (13,7) circle (1mm);
\draw [line width=0.25mm, fill=white] (13,9) circle (1mm);
\draw [line width=0.25mm, fill=white] (13,11) circle (1mm);
\draw [line width=0.25mm, fill=white] (11,14) circle (1mm);
\draw [line width=0.25mm, fill=white] (11,16) circle (1mm);
\draw [line width=0.25mm, fill=white] (11,18) circle (1mm);
\draw [line width=0.25mm, fill=white] (13,14) circle (1mm);
\draw [line width=0.25mm, fill=white] (13,16) circle (1mm);
\draw [line width=0.25mm, fill=white] (13,18) circle (1mm);
\draw [line width=0.25mm, fill=white] (11,-2) circle (1mm);
\draw [line width=0.25mm, fill=white] (13,-2) circle (1mm);
\draw [line width=0.25mm, fill=black] (11,2) circle (1mm);
\draw [line width=0.25mm, fill=black] (11,4) circle (1mm);
\draw [line width=0.25mm, fill=black] (11,6) circle (1mm);
\draw [line width=0.25mm, fill=black] (11,8) circle (1mm);
\draw [line width=0.25mm, fill=black] (11,10) circle (1mm);
\draw [line width=0.25mm, fill=black] (11,12) circle (1mm);
\draw [line width=0.25mm, fill=black] (13,2) circle (1mm);
\draw [line width=0.25mm, fill=black] (13,4) circle (1mm);
\draw [line width=0.25mm, fill=black] (13,6) circle (1mm);
\draw [line width=0.25mm, fill=black] (13,8) circle (1mm);
\draw [line width=0.25mm, fill=black] (13,10) circle (1mm);
\draw [line width=0.25mm, fill=black] (13,12) circle (1mm);
\draw [line width=0.25mm, fill=black] (13,-1) circle (1mm);
\draw [line width=0.25mm, fill=black] (13,15) circle (1mm);
\draw [line width=0.25mm, fill=black] (13,17) circle (1mm);
\draw [line width=0.25mm, fill=black] (13,19) circle (1mm);
\draw [line width=0.25mm, fill=black] (11,-1) circle (1mm);
\draw [line width=0.25mm, fill=black] (11,15) circle (1mm);
\draw [line width=0.25mm, fill=black] (11,17) circle (1mm);
\draw [line width=0.25mm, fill=black] (11,19) circle (1mm);
\end{tikzpicture}}
& & $\lambda_1(x,y,z)=$ paths from the vertex at the bottom of $x_2^{(1)}$ to the vertex at the top of $y_6^{(2)}$, so $\lambda_1(x,y,z)=z_1y_2y_3y_4y_5y_6+x_2z_2y_3y_4y_5y_6+x_2x_3z_3y_4y_5y_6+x_2x_3x_4z_4y_5y_6+x_2x_3x_4x_5z_5y_6+x_2x_3x_4x_5x_5z_6$.

$A_1=\emptyset$ because there are no weights directly below $x_1$ set to 1 from expanding the network.

$B_1=\emptyset$ because there are no weights directly above $y_1$ set to 1 from expanding the network.

$\alpha_1=\beta_1=0$.

\vspace{0.25in}
$\lambda_2(x,y,z)=z_2y_3y_4y_5y_6y_1+x_3z_3y_4y_5y_6y_1+x_3x_4z_4y_5y_6y_1+x_3x_4x_5z_5y_6y_1+x_3x_4x_5x_5z_6y_1+x_3x_4x_5x_5x_6z_1$.

$A_2=\emptyset$ because there are no weights directly below $x_2$ set to 1 from expanding the network.

$B_2=\{3\}$ because $y_3$, which is directly above $y_2$ is set to 1 from expanding the network, but $y_4$ is not set to 1.

$\alpha_2=0,\beta_2=1$.
\end{tabular}
\end{ex}

\vspace{2in}
\begin{defn}\label{defn:Te}
Define $T_e$ to be the transformation on edge weights from $(x,y,z)$ to $(x',y',z')$ where
\begin{align*}
x_i'&=\begin{cases} 1 & x_i\text{ set to }1, \\ \frac{\left(\prod_{k=0}^{\alpha_i} y_{i-k-1}\right)\lambda_{i-\alpha_i-1}(x,y,z)}{\lambda_i(x,y,z)} & \text{otherwise,} \end{cases}\\
y_i'&=\begin{cases} 1 & y_i\text{ set to }1, \\ \frac{\left(\prod_{k=0}^{\beta_i} x_{i+k+1}\right)\lambda_{i+\beta_i+1}(x,y,z)}{\lambda_i(x,y,z)} & \text{otherwise,} \end{cases}\\
z_i'&=\frac{z_i\left(\prod_{k=1}^{\alpha_i} y_{i-k-1}\right)\lambda_{i-\alpha_i-1}(x,y,z)\left(\prod_{k=1}^{\beta_i} x_{i+k+1}\right)\lambda_{i+\beta_i+1}(x,y,z)}{\lambda_{i-1}(x,y,z)\lambda_{i+1}(x,y,z)}.
\end{align*}
We call $T_e$ the edge weighted \emph{plabic R-matrix}.
\end{defn}

\begin{thm}\label{thm:Te}
$T_e$ has the following properties:
\begin{enumerate}
\item It preserves the boundary measurements.

\item It is an involution.

\item $(x,y,z)$ and $(x',y',z')$ are the only choices of weights on a fixed cylindric 2-loop plabic graph that preserve the boundary measurements.

\item It satisfies the braid relation.
\end{enumerate}
\end{thm}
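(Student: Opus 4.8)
The plan is to work on the fixed cylindric $2$-loop plabic graph (with no interior vertices, via Theorem~\ref{thm:int-vert-k-loop}) and exploit the rigid shape forced by the canonical orientation: a directed path that enters the two-string region does so at a black vertex on the left string, climbs the left string (possibly winding around the cylinder), crosses to the right string through \emph{exactly one} horizontal $z$-edge, then climbs the right string (again possibly winding) and leaves at a white vertex; once it has crossed it can never return to the left string. Consequently the entire contribution of the region to any path sum is controlled by the functions $\lambda_i$ introduced before Definition~\ref{defn:Te}, together with the two ``winding parameters'' $\zeta\prod_j x_j$ and $\zeta\prod_j y_j$: the generating function of traversing paths with prescribed entry and exit edges is a geometric series in these two parameters whose coefficients are explicit sums of the $\lambda_i$. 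The heart of the argument is therefore a \textbf{Core Lemma} describing the effect of $T_e$ on this data. I expect it to say that $\prod_j x'_j=\prod_j y_j$ and $\prod_j y'_j=\prod_j x_j$ (so $T_e$ interchanges the two colors' winding parameters while fixing their product), and that $\lambda'_i=\mu_i\,\lambda_{\pi(i)}(x,y,z)$ for an explicit Laurent monomial $\mu_i$ in $x,y,z$ and an explicit permutation $\pi$ of the index set built from the cyclic ``next non-$1$ index'' maps implicit in the definitions of $\alpha_i$ and $\beta_i$. Proving the Core Lemma is a direct computation: one enumerates traversing paths on the universal cover and carefully tracks which fundamental domain each edge lies in, and this bookkeeping is the principal technical chore.

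Granting the Core Lemma, statement (1) follows by expanding each boundary measurement as a path sum, grouping paths by their portion \emph{outside} the two-string region, and noting that each group contributes (a weight depending only on the exterior) $\times$ (a traversing generating function); the Core Lemma makes each traversing generating function invariant, hence each $M_{ij}^{\text{form}}$, and hence each $M_{ij}$, is unchanged. Statement (2) is then purely formal: substitute $(x',y',z')$ into the formulas of Definition~\ref{defn:Te}, use the Core Lemma to rewrite every $\lambda_i(x',y',z')$ as $\mu_i\lambda_{\pi(i)}(x,y,z)$, observe that the ``set to $1$'' pattern is unchanged so that the maps $\alpha,\beta,\pi$ are the same on the second application, and check that the monomial prefactors and the composed index maps telescope back to $(x,y,z)$.

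For (4), I would run the two competing threefold compositions on a cylindric $3$-loop network, where each elementary $T_e$ acts on one pair of adjacent strings; using the Core Lemma to follow how the $\lambda$'s of each pair get reindexed, one checks that both $(T_e)_{12}(T_e)_{23}(T_e)_{12}$ and $(T_e)_{23}(T_e)_{12}(T_e)_{23}$ effect the same total order-reversal of the three strings and land on the same weights. The cleanest way to organize both (2) and (4) is to rewrite the traversing generating functions as an \emph{ordered product} of elementary transfer factors read off around the loop and to recognize $T_e$ as the operation that reverses two neighboring factors; then (2) is the statement that a transposition squares to the identity and (4) is the statement that adjacent transpositions braid.

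The remaining statement (3) is, I expect, the genuine obstacle. One must show that on the fixed graph the only weight assignments producing the given boundary measurements are $(x,y,z)$ and $(x',y',z')$, which requires \emph{inverting} the boundary-measurement map rather than just checking an invariance. The strategy is: from the ambient boundary measurements recover the full traversing generating function of the two-string region as a rational function of $\zeta$; from its geometric-series structure recover the unordered pair $\{\prod_j x_j,\prod_j y_j\}$ and all of the $\lambda_i$; read the $z_i$ off residues or leading terms; and then reconstruct $x$ and $y$, showing that the only residual freedom is the swap of the two members of that unordered pair --- which is exactly $T_e$. Pinning down ``only this swap survives'' is the delicate point: it calls for a dimension count matching the number of free edge weights (one per face) against the number of independent boundary-measurement data, together with the observation that the residual ambiguity is cut out by a single quadratic whose two solutions are precisely $(x,y,z)$ and $(x',y',z')$. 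I expect this inversion, and not any of (1), (2), (4), to demand the most work.
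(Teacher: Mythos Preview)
Your approach is genuinely different from the paper's, and it is worth spelling out the contrast.

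The paper never computes a boundary measurement directly.  Instead it \emph{realizes $T_e$ as a sequence of local moves that individually preserve boundary measurements}: one inserts a pair of edges of weights $p$ and $-p$ between the strings (their contributions cancel in every path sum), then propagates the $p$-edge once around the cylinder by a chain of square moves (each of which preserves boundary measurements by Postnikov's (M1)), and finally applies gauge transformations to restore the ``set to $1$'' pattern.  A short calculation (Lemma~\ref{lemma:lambda-rel}, Theorem~\ref{thm:tilde-vars}) shows that the unique value of $p$ for which the edge returns with its original weight is $p_j=(\prod x_i-\prod y_i)/\lambda_j$, and that the resulting weights are exactly the $T_e$ formulas.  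Part~(1) is then immediate; part~(2) follows because pushing $p$ and then $-p$ around the cylinder manifestly undoes the first pass (organized by the commutative diagram of Theorem~\ref{thm:comm-diag}); part~(4) is obtained by inserting three pairs $p,q,r$ for the three elementary $T_e$'s and invoking an explicit $8$-cycle of square moves (Lemma~\ref{lemma:8-cycle}) to commute the order of propagation.  Only part~(3) is argued in the spirit you propose, via a quadratic relation among edge weights extracted from boundary measurements.

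Your plan, by contrast, is a direct attack on the path sums through a ``Core Lemma'' for the $\lambda_i$.  Two remarks.  First, the permutation $\pi$ you posit is actually the identity: the paper's Lemma~\ref{lemma:lambdas-equal} gives $\lambda_i(\widetilde{x},\widetilde{y},\widetilde{z})=\lambda_i(x,y,z)$, and the subsequent gauge transformations only introduce a monomial prefactor, so $\lambda_i(x',y',z')=\mu_i\,\lambda_i(x,y,z)$.  This simplifies your bookkeeping considerably.  Second, your proposed route to~(4) via an ``ordered product of elementary transfer factors'' is natural in the fully symmetric case of Example~\ref{ex:LP-edge} (where it is the Lam--Pylyavskyy whurl picture), but for a general expanded graph with many weights set to $1$ the factorization is less transparent; the paper's $8$-cycle argument sidesteps this by working locally with square moves and is more robust.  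What your approach buys is explicitness: if carried through, it would exhibit the invariance of each traversing generating function by formula, whereas the paper's argument is structural and never writes those generating functions down.  What the paper's approach buys is economy --- parts~(1), (2), (4) fall out with almost no computation once $T_e$ is recognized as a composite of boundary-preserving local moves.
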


See Section~\ref{sec:R-matThm} for proof.

\begin{ex}\label{ex:LP-edge}
Consider the network below:

\begin{center}
\begin{tikzpicture}[scale=0.75]
\fill[gray!40!white] (0,0) rectangle (6,9);
\path[->,font=\large, >=angle 90, line width=0.4mm]
(2,0) edge (2,1)
(2,1) edge node[left] {$x_1$} (2,2)
(2,2) edge (2,3)
(2,3) edge node[left] {$x_2$} (2,4)
(2,4) edge (2,5)
(2,5) edge node[left] {$x_3$} (2,6)
(2,6) edge (2,7)
(2,7) edge node[left] {$x_4$} (2,8)
(2,8) edge (2,9)
(4,0) edge (4,1)
(4,1) edge node[right] {$y_4$} (4,2)
(4,2) edge (4,3)
(4,3) edge node[right] {$y_1$} (4,4)
(4,4) edge (4,5)
(4,5) edge node[right] {$y_2$} (4,6)
(4,6) edge (4,7)
(4,7) edge node[right] {$y_3$} (4,8)
(4,8) edge (4,9)
(2,1) edge node[above] {$z_4$} (4,2)
(2,3) edge node[above] {$z_1$} (4,4)
(2,5) edge node[above] {$z_2$} (4,6)
(2,7) edge node[above] {$z_3$} (4,8)
(0,2) edge (2,2)
(0,4) edge (2,4)
(0,6) edge (2,6)
(0,8) edge (2,8)
(4,1) edge (6,1)
(4,3) edge (6,3)
(4,5) edge (6,5)
(4,7) edge (6,7);
\path[dashed,->,font=\large, >=angle 90, line width=0.4mm]
(0,0) edge (6,0)
(0,9) edge (6,9);
\draw [line width=0.25mm, fill=white] (2,1) circle (1mm);
\draw [line width=0.25mm, fill=white] (2,3) circle (1mm);
\draw [line width=0.25mm, fill=white] (2,5) circle (1mm);
\draw [line width=0.25mm, fill=white] (2,7) circle (1mm);
\draw [line width=0.25mm, fill=white] (4,1) circle (1mm);
\draw [line width=0.25mm, fill=white] (4,3) circle (1mm);
\draw [line width=0.25mm, fill=white] (4,5) circle (1mm);
\draw [line width=0.25mm, fill=white] (4,7) circle (1mm);
\draw [line width=0.25mm, fill=black] (2,2) circle (1mm);
\draw [line width=0.25mm, fill=black] (2,4) circle (1mm);
\draw [line width=0.25mm, fill=black] (2,6) circle (1mm);
\draw [line width=0.25mm, fill=black] (2,8) circle (1mm);
\draw [line width=0.25mm, fill=black] (4,2) circle (1mm);
\draw [line width=0.25mm, fill=black] (4,4) circle (1mm);
\draw [line width=0.25mm, fill=black] (4,6) circle (1mm);
\draw [line width=0.25mm, fill=black] (4,8) circle (1mm);
\end{tikzpicture}
\end{center}

$T_e$ gives us the following values:

$\displaystyle x_1'=\frac{y_4\lambda_4(x,y,z)}{\lambda_1(x,y,z)}=\frac{y_4(x_2x_3x_4z_4+x_2x_3z_3y_4+x_2z_2y_3y_3+z_1y_2y_3y_4)}{x_2x_3x_4z_4+x_2x_3z_3y_4+x_2z_2y_3y_4+z_1y_2y_3y_4}$

$\displaystyle x_2'=\frac{y_1\lambda_1(x,y,z)}{\lambda_2(x,y,z)}=\frac{y_1(x_2x_3x_4z_4+x_2x_3z_3y_4+x_2z_2y_3y_4+z_1y_2y_3y_4)}{x_3x_4x_1z_1+x_3x_4z_4y_1+x_3z_3y_4y_1+z_2y_3y_1}$

$\displaystyle x_3'=\frac{y_2\lambda_2(x,y,z)}{\lambda_3(x,y,z)}=\frac{y_2(x_3x_4x_1z_1+x_3x_4z_4y_1+x_3z_3y_4y_1+z_2y_3y_1)}{x_4x_1x_2z_2+x_4x_1z_1y_2+x_4z_4y_1y_2+z_3y_4y_1y_2}$

$\displaystyle x_4'=\frac{y_3\lambda_3(x,y,z)}{\lambda_4(x,y,z)}=\frac{y_3(x_4x_1x_2z_2+x_4x_1z_1y_2+x_4z_4y_1y_2+z_3y_4y_1y_2)}{x_2x_3x_4z_4+x_2x_3z_3y_4+x_2z_2y_3y_3+z_1y_2y_3y_4}$

$\displaystyle y_1'=\frac{x_2\lambda_2(x,y,z)}{\lambda_1(x,y,z)}=\frac{x_2(x_3x_4x_1z_1+x_3x_4z_4y_1+x_3z_3y_4y_1+z_2y_3y_1)}{x_2x_3x_4z_4+x_2x_3z_3y_4+x_2z_2y_3y_4+z_1y_2y_3y_4}$

$\displaystyle y_2'=\frac{x_3\lambda_3(x,y,z)}{\lambda_2(x,y,z)}=\frac{x_3(x_4x_1x_2z_2+x_4x_1z_1y_2+x_4z_4y_1y_2+z_3y_4y_1y_2)}{x_3x_4x_1z_1+x_3x_4z_4y_1+x_3z_3y_4y_1+z_2y_3y_1}$

$\displaystyle y_3'=\frac{x_4\lambda_4(x,y,z)}{\lambda_3(x,y,z)}=\frac{x_4(x_2x_3x_4z_4+x_2x_3z_3y_4+x_2z_2y_3y_3+z_1y_2y_3y_4)}{x_4x_1x_2z_2+x_4x_1z_1y_2+x_4z_4y_1y_2+z_3y_4y_1y_2}$

$\displaystyle y_3'=\frac{x_1\lambda_1(x,y,z)}{\lambda_4(x,y,z)}=\frac{x_1(x_2x_3x_4z_4+x_2x_3z_3y_4+x_2z_2y_3y_4+z_1y_2y_3y_4)}{x_2x_3x_4z_4+x_2x_3z_3y_4+x_2z_2y_3y_3+z_1y_2y_3y_4}$

$z_1'=z_1$

$z_2'=z_2$

$z_3'=z_3$

$z_4'=z_4$

Notice that in this example, the directed plabic network is the same as the expanded directed plabic network.  These types of networks are of particular interest to us, both because of their simplicity and because they correspond to the wiring diagrams studied in Section 6 of~\cite{LP1} where the horizontal wires are all in the same direction and either all the wire cycles are whirls or all the wire cycles are curls.  Thus, if we let all of our $z$ variables equal 1, we have recovered the whurl relation of~\cite{LP1}, which is also the geometric R-matrix.
\end{ex}

\vspace{2in}
\begin{ex}\label{ex:GS-edge}
Consider the network below:

\begin{center}
\begin{tikzpicture}[scale=0.75]
\fill[gray!40!white] (0,0) rectangle (6,13);
\path[->,font=\large, >=angle 90, line width=0.4mm]
(2,0) edge (2,1)
(2,1) edge node[left] {$x_1$} (2,2)
(2,2) edge (2,3)
(2,3) edge node[left] {$x_2$} (2,4)
(2,4) edge (2,5)
(2,5) edge node[left] {$x_3$} (2,6)
(2,6) edge (2,7)
(4,0) edge (4,1)
(4,1) edge node[right] {$y_6=1$} (4,2)
(4,2) edge (4,3)
(4,3) edge node[right] {$y_1$} (4,4)
(4,4) edge (4,5)
(4,5) edge node[right] {$y_2=1$} (4,6)
(4,6) edge (4,7)
(2,1) edge node[above] {$z_6$} (4,2)
(2,3) edge node[above] {$z_1$} (4,4)
(2,5) edge node[above] {$z_2$} (4,6)
(0,2) edge (2,2)
(0,4) edge (2,4)
(0,6) edge (2,6)
(4,3) edge (6,3)
(2,7) edge node[left] {$x_4$} (2,8)
(2,8) edge (2,9)
(2,9) edge node[left] {$x_5$} (2,10)
(2,10) edge (2,11)
(2,11) edge node[left] {$x_6$} (2,12)
(2,12) edge (2,13)
(4,7) edge node[right] {$y_3$} (4,8)
(4,8) edge (4,9)
(4,9) edge node[right] {$y_4=1$} (4,10)
(4,10) edge (4,11)
(4,11) edge node[right] {$y_5$} (4,12)
(4,12) edge (4,13)
(2,7) edge node[above] {$z_3$} (4,8)
(2,9) edge node[above] {$z_4$} (4,10)
(2,11) edge node[above] {$z_5$} (4,12)
(0,8) edge (2,8)
(0,10) edge (2,10)
(0,12) edge (2,12)
(4,7) edge (6,7)
(4,11) edge (6,11);
\path[dashed,->,font=\large, >=angle 90, line width=0.4mm]
(0,0) edge (6,0)
(0,13) edge (6,13);
\draw [line width=0.25mm, fill=white] (2,1) circle (1mm);
\draw [line width=0.25mm, fill=white] (2,3) circle (1mm);
\draw [line width=0.25mm, fill=white] (2,5) circle (1mm);
\draw [line width=0.25mm, fill=white] (4,1) circle (1mm);
\draw [line width=0.25mm, fill=white] (4,3) circle (1mm);
\draw [line width=0.25mm, fill=white] (4,5) circle (1mm);
\draw [line width=0.25mm, fill=black] (2,2) circle (1mm);
\draw [line width=0.25mm, fill=black] (2,4) circle (1mm);
\draw [line width=0.25mm, fill=black] (2,6) circle (1mm);
\draw [line width=0.25mm, fill=black] (4,2) circle (1mm);
\draw [line width=0.25mm, fill=black] (4,4) circle (1mm);
\draw [line width=0.25mm, fill=black] (4,6) circle (1mm);
\draw [line width=0.25mm, fill=white] (2,7) circle (1mm);
\draw [line width=0.25mm, fill=white] (2,9) circle (1mm);
\draw [line width=0.25mm, fill=white] (2,11) circle (1mm);
\draw [line width=0.25mm, fill=white] (4,7) circle (1mm);
\draw [line width=0.25mm, fill=white] (4,9) circle (1mm);
\draw [line width=0.25mm, fill=white] (4,11) circle (1mm);
\draw [line width=0.25mm, fill=black] (2,8) circle (1mm);
\draw [line width=0.25mm, fill=black] (2,10) circle (1mm);
\draw [line width=0.25mm, fill=black] (2,12) circle (1mm);
\draw [line width=0.25mm, fill=black] (4,8) circle (1mm);
\draw [line width=0.25mm, fill=black] (4,10) circle (1mm);
\draw [line width=0.25mm, fill=black] (4,12) circle (1mm);
\end{tikzpicture}
\end{center}

$T_e$ gives us the following values:
\begin{align*}
x_1'&=\frac{y_6\lambda_6(x,y,z)}{\lambda_1(x,y,z)}\\
&=\frac{x_1x_2x_3x_4x_5z_5+x_1x_2x_3x_4z_4y_5+x_1x_2x_3z_3y_5+x_1x_2z_2y_3y_5+x_1z_1y_3y_5+z_6y_1y_3y_5}{x_2x_3x_4x_5x_6z_6+x_2x_3x_4x_5z_5+x_2x_3x_4z_4y_5+x_2x_3z_3y_5+x_2z_2y_3y_5+z_1y_3y_5}\\
x_2'&=\frac{y_1\lambda_1(x,y,z)}{\lambda_2(x,y,z)}\\
&=\frac{y_1(x_2x_3x_4x_5x_6z_6+x_2x_3x_4x_5z_5+x_2x_3x_4z_4y_5+x_2x_3z_3y_5+x_2z_2y_3y_5+z_1y_3y_5)}{x_3x_4x_5x_6x_1z_1+x_3x_4x_5x_6z_6y_1+x_3x_4x_5z_5y_1+x_3x_4z_4y_5y_1+x_3z_3y_5y_1+z_2y_3y_5y_1}\\
x_3'&=\frac{y_2\lambda_2(x,y,z)}{\lambda_3(x,y,z)}\\
&=\frac{x_3x_4x_5x_6x_1z_1+x_3x_4x_5x_6z_6y_1+x_3x_4x_5z_5y_1+x_3x_4z_4y_5y_1+x_3z_3y_5y_1+z_2y_3y_5y_1}{x_4x_5x_6x_1x_2z_2+x_4x_5x_6x_1z_1+x_4x_5x_6z_6y_1+x_4x_5z_5y_1+x_4z_4y_5y_1+z_3y_5y_1}
\end{align*}
\begin{align*}
x_4'&=\frac{y_3\lambda_3(x,y,z)}{\lambda_4(x,y,z)}\\
&=\frac{y_3(x_4x_5x_6x_1x_2z_2+x_4x_5x_6x_1z_1+x_4x_5x_6z_6y_1+x_4x_5z_5y_1+x_4z_4y_5y_1+z_3y_5y_1)}{x_5x_6x_1x_2x_3z_3+x_5x_6x_1x_2z_2y_3+x_5x_6x_1z_1y_3+x_5x_6z_6y_1y_3+x_5z_5y_1y_3+z_4y_5y_1y_3}\\
x_5'&=\frac{y_4\lambda_4(x,y,z)}{\lambda_5(x,y,z)}\\
&=\frac{x_5x_6x_1x_2x_3z_3+x_5x_6x_1x_2z_2y_3+x_5x_6x_1z_1y_3+x_5x_6z_6y_1y_3+x_5z_5y_1y_3+z_4y_5y_1y_3}{x_6x_1x_2x_3x_4z_4+x_6x_1x_2x_3z_3+x_6x_1x_2z_2y_3+x_6x_1z_1y_3+x_6x_1z_1y_3+x_6z_6y_1y_3+z_5y_1y_3}\\
x_6'&=\frac{y_5\lambda_5(x,y,z)}{\lambda_6(x,y,z)}\\
&=\frac{y_5(x_6x_1x_2x_3x_4z_4+x_6x_1x_2x_3z_3+x_6x_1x_2z_2y_3+x_6x_1z_1y_3+x_6x_1z_1y_3+x_6z_6y_1y_3+z_5y_1y_3)}{x_1x_2x_3x_4x_5z_5+x_1x_2x_3x_4z_4y_5+x_1x_2x_3z_3y_5+x_1x_2z_2y_3y_5+x_1z_1y_3y_5+z_6y_1y_3y_5}\\
y_1'&=\frac{x_2x_3\lambda_3(x,y,z)}{\lambda_1(x,y,z)}\\
&=\frac{x_2x_3(x_4x_5x_6x_1x_2z_2+x_4x_5x_6x_1z_1+x_4x_5x_6z_6y_1+x_4x_5z_5y_1+x_4z_4y_5y_1+z_3y_5y_1)}{x_2x_3x_4x_5x_6z_6+x_2x_3x_4x_5z_5+x_2x_3x_4z_4y_5+x_2x_3z_3y_5+x_2z_2y_3y_5+z_1y_3y_5}\\
y_2'&=1\\
y_3'&=\frac{x_4x_5\lambda_5(x,y,z)}{\lambda_3(x,y,z)}\\
&=\frac{x_4x_5(x_6x_1x_2x_3x_4z_4+x_6x_1x_2x_3z_3+x_6x_1x_2z_2y_3+x_6x_1z_1y_3+x_6x_1z_1y_3+x_6z_6y_1y_3+z_5y_1y_3)}{x_4x_5x_6x_1x_2z_2+x_4x_5x_6x_1z_1+x_4x_5x_6z_6y_1+x_4x_5z_5y_1+x_4z_4y_5y_1+z_3y_5y_1}\\
y_4'&=1\\
y_5'&=\frac{x_6x_1\lambda_1(x,y,z)}{\lambda_5(x,y,z)}\\
&=\frac{x_6x_1(x_2x_3x_4x_5x_6z_6+x_2x_3x_4x_5z_5+x_2x_3x_4z_4y_5+x_2x_3z_3y_5+x_2z_2y_3y_5+z_1y_3y_5)}{x_6x_1x_2x_3x_4z_4+x_6x_1x_2x_3z_3+x_6x_1x_2z_2y_3+x_6x_1z_1y_3+x_6x_1z_1y_3+x_6z_6y_1y_3+z_5y_1y_3}\\
y_6'&=1\\
z_1'&=\frac{z_1x_3\lambda_3(x,y,z)}{\lambda_2(x,y,z)}\\
&=\frac{z_1x_3(x_4x_5x_6x_1x_2z_2+x_4x_5x_6x_1z_1+x_4x_5x_6z_6y_1+x_4x_5z_5y_1+x_4z_4y_5y_1+z_3y_5y_1)}{x_3x_4x_5x_6x_1z_1+x_3x_4x_5x_6z_6y_1+x_3x_4x_5z_5y_1+x_3x_4z_4y_5y_1+x_3z_3y_5y_1+z_2y_3y_5y_1}\\
z_2'&=z_2\\
z_3'&=\frac{z_3x_5\lambda_5(x,y,z)}{\lambda_4(x,y,z)}\\
&=\frac{z_3x_5(x_6x_1x_2x_3x_4z_4+x_6x_1x_2x_3z_3+x_6x_1x_2z_2y_3+x_6x_1z_1y_3+x_6x_1z_1y_3+x_6z_6y_1y_3+z_5y_1y_3)}{x_5x_6x_1x_2x_3z_3+x_5x_6x_1x_2z_2y_3+x_5x_6x_1z_1y_3+x_5x_6z_6y_1y_3+x_5z_5y_1y_3+z_4y_5y_1y_3}\\
z_4'&=z_4
\end{align*}
\begin{align*}
z_5'&=\frac{z_5x_1\lambda_1(x,y,z)}{\lambda_6(x,y,z)}\\
&=\frac{z_5x_1(x_2x_3x_4x_5x_6z_6+x_2x_3x_4x_5z_5+x_2x_3x_4z_4y_5+x_2x_3z_3y_5+x_2z_2y_3y_5+z_1y_3y_5)}{x_1x_2x_3x_4x_5z_5+x_1x_2x_3x_4z_4y_5+x_1x_2x_3z_3y_5+x_1x_2z_2y_3y_5+x_1z_1y_3y_5+z_6y_1y_3y_5}\\
z_6'&=z_6
\end{align*}
\end{ex}

We will now rewrite $T_e$ in terms of face and trail weights.  We begin with a cylindric 2-loop plabic graph with the canonical orientation.  Choose an edge adjacent to the left boundary.  Follow this edge, and then go up the left string.  At the first opportunity, make a right to cross to the right string.  Follow the right string up and at the first opportunity make a right to the right boundary.  If there is more than one edge on the right string in this path, then the path passes a face between strings that has no edge to the right boundary.  This face must have an edge to the left boundary.  Choose this edge to the left boundary to start with and repeat.  When this process yields a path that has only 5 edges, we will select that to be our trail (going from the left to right boundary).  Let the trail weight be $t$.  Label the faces on the left $a_1,a_2,...,a_\ell$ starting above the trail and going up.  In the same way, label the faces on the right $b_1, b_2,...,b_m$ and label the faces in the center $c_1,c_2,...,c_{n-1},c_n=\frac{1}{a_1a_2...a_\ell b_1b_2...b_mc_1c_2...c_{n-1}}$.  We will consider all of these indices to be modular.

We will say $a_j$ is associated to $i$ if the highest edge on the left string bordering the face labeled $a_j$ also borders the face $c_i$.  Similarly, $b_j$ is associated to $i$ if the lowest edge on the right string bordering the face labeled $b_j$ also borders the face $c_i$.

\begin{lemma}\label{lemma:face-to-edge}
Suppose we have an expanded cylindric 2-loop plabic graph with the face and trail weights as above.  We can turn this into a directed graph with the canonical orientation and the following edge weights:
\begin{itemize}
\item For a face labeled $a_j$, give the edge from a white vertex to a black vertex on the left string that is highest on the face the weight $a_j^{-1}$.  If $a_j$ is associated to $i$, this should be the edge that borders both the face labeled $a_j$ and the face labeled $c_i$.

\item For a face labeled $b_j$, give the edge from a white vertex to a black vertex on the left string that is lowest on the face the weight $b_j$.  If $b_j$ is associated to $i$, this should be the edge that borders both the face labeled $b_j$ and the face labeled $c_i$.

\item Give all other edges on the strings weight 1.

\item Give the edge between the two strings that is part of the trail the weight $t$.

\item Give each edge between the two strings the weight of the edge below it multiplied by the weight of the face in between and the weight of the edges of the face on the right string, and then divided by the weight of the edges of the face on the left string.  That is, if an edge is between the faces labeled $c_i$ and $c_{i+1}$, give it the weight $$t\left(\prod_{\substack{j\text{ where }a_j\text{ is }\\\text{associated to }k\leq i}}a_j\right)\left(\prod_{\substack{j\text{ where }b_j\text{ is }\\\text{associated to }k\leq i}}b_j\right)\left(\prod_{j \leq i}c_j\right).$$
\end{itemize}
\end{lemma}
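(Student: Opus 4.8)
The content of the lemma is that the prescribed edge weighting actually \emph{realizes} the given face weights $a_j,b_j,c_i$ and trail weight $t$; by Theorem~\ref{thm:mod-gauge-cyl} (in the case with a trail) any two edge weightings with the same face and trail weights are related by gauge transformations, so it is enough to check that this one particular gauge--fixed assignment has the desired face and trail weights. The plan is a direct verification, face by face, using the rigidity of the expanded graph (Definition~\ref{defn:expanded-directed-plabic-network}).

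First I would record the local combinatorics. Since the graph is expanded, each interior face $c_i$ is a hexagon bounded by two consecutive between--string ($z$--)edges, one white--to--black and one black--to--white edge of the left string, and one black--to--white and one white--to--black edge of the right string; the white--to--black left--string edge of $c_i$ is the highest edge of the (at most one) $a_j$ associated to $i$, and the white--to--black right--string edge of $c_i$ is the lowest edge of the (at most one) $b_j$ associated to $i$. Each boundary face $a_j$ is bounded by left--boundary edges and a run of left--string edges of which exactly the highest --- necessarily white--to--black --- carries the weight $a_j^{-1}$, every other string edge carrying weight $1$; symmetrically for the $b_j$. Finally, the distinguished $z$--edge of the trail is the one separating $c_n$ from $c_1$, and the $z$--edge formula assigns it weight $t$ because the two products over associated faces are empty there.

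Then I would compute the weights from Definitions~\ref{defn:face-weight-cyl} and~\ref{defn:trail}. The trail consists of five edges --- a left--boundary edge, a black--to--white left--string edge, the weight--$t$ $z$--edge, a black--to--white right--string edge, and a right--boundary edge --- all traversed forward in the canonical orientation, and all of weight $1$ except the $z$--edge, so the trail weight is $t$. For a boundary face $a_j$, a clockwise traversal of $\partial a_j$ meets every boundary edge and every non--highest string edge with weight $1$ and meets the highest left--string edge against its orientation, so $y_{a_j}=(a_j^{-1})^{-1}=a_j$, and likewise $y_{b_j}=b_j$. For an interior hexagon $c_i$, a clockwise traversal places the top $z$--edge and the white--to--black left--string edge into $I^+_{c_i}$ and the bottom $z$--edge and the white--to--black right--string edge into $I^-_{c_i}$; writing $W_i$ for the weight of the $z$--edge between $c_i$ and $c_{i+1}$,
\[
y_{c_i}=\frac{W_i}{W_{i-1}}\Bigl(\prod_{a_j\text{ assoc.\ to }i}a_j\Bigr)^{-1}\Bigl(\prod_{b_j\text{ assoc.\ to }i}b_j\Bigr)^{-1}.
\]
Because the closed form of $W_i$ is exactly the telescoping product over $k\le i$ of the factors $\bigl(\prod_{a_j\text{ assoc.\ to }k}a_j\bigr)\bigl(\prod_{b_j\text{ assoc.\ to }k}b_j\bigr)c_k$, the ratio $W_i/W_{i-1}$ is the single factor with $k=i$, which cancels the two product corrections in the displayed formula and leaves $y_{c_i}=c_i$. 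The last face $c_n$ comes out the same way once one invokes that the product of all face weights is $1$ (which is also precisely what makes the two descriptions of the trail $z$--edge agree), and all the constructed weights are manifestly positive, so we genuinely land in a directed plabic network.

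The only real work is the bookkeeping: deciding, for each face, which boundary edges fall in $I^+_f$ and which in $I^-_f$ under a clockwise traversal, and verifying that the $a_j$'s and $b_j$'s telescope cleanly out of $W_i/W_{i-1}$. The one genuinely fiddly case is a boundary face meeting its string in several white--to--black edges: then only the highest (resp.\ lowest) carries the face weight while the others carry $1$, and the interior faces those others bound have no associated boundary face on that side --- which the displayed formula accommodates without change, its products ranging over the possibly empty sets of associated faces.
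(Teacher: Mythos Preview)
Your proposal is correct and is precisely the ``computation'' the paper asserts is clear: you verify directly, using Definitions~\ref{defn:face-weight-cyl} and~\ref{defn:trail} and the hexagonal structure of the expanded graph, that the specified edge weights reproduce each $a_j$, $b_j$, $c_i$ and the trail weight $t$, the key step being the telescoping of $W_i/W_{i-1}$ against the string-edge contributions. This is the same approach as the paper, only spelled out in full.
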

\begin{proof}
Clear by computation.
\end{proof}

Using this, we can define $\widehat{\lambda}_i(a,b,c)$ to be $\lambda_i(z,y,z)$, where $x,y,z$ are defined from $a,b,c$ as in Lemma~\ref{lemma:face-to-edge} and we choose our indexing so that $y_1=b_1$.

\begin{figure}[htp]
\centering
\begin{tikzpicture}[scale=0.75]
\fill[gray!40!white] (0,0) rectangle (5,13);
\path[->,font=\large, >=angle 90, line width=0.4mm]
(0,4) edge (1,4)
(0,6) edge (1,6)
(0,12) edge (1,12)
(1,0) edge (1,1)
(1,1) edge (1,2)
(1,3) edge (1,4)
(1,4) edge (1,5)
(1,5) edge (1,6)
(1,6) edge (1,7)
(1,7) edge (1,8)
(1,8) edge (1,9)
(1,9) edge (1,10)
(1,10) edge (1,11)
(1,11) edge (1,12)
(1,12) edge (1,13)
(4,0) edge (4,1)
(4,1) edge (4,2)
(4,2) edge (4,3)
(4,3) edge (4,4)
(4,5) edge (4,6)
(4,6) edge (4,7)
(4,7) edge (4,8)
(4,8) edge (4,9)
(4,9) edge (4,10)
(4,10) edge (4,11)
(4,11) edge (4,12)
(4,12) edge (4,13)
(4,1) edge (5,1)
(4,9) edge (5,9)
(4,11) edge (5,11)
(1,1) edge (4,2)
(1,5) edge (4,6)
(1,7) edge (4,8)
(1,9) edge (4,10)
(1,11) edge (4,12);
\path[->,font=\large, >=angle 90, line width=0.4mm, blue]
(0,2) edge (1,2)
(1,2) edge (1,3)
(1,3) edge (4,4)
(4,4) edge (4,5)
(4,5) edge (5,5);
\node at (0.5,3) {$a_1$};
\node at (0.5,5) {$a_2$};
\node at (0.5,9) {$a_3$};
\node at (0.5,1) {$a_4$};
\node at (4.5,3) {$b_4$};
\node at (4.5,7) {$b_1$};
\node at (4.5,10) {$b_2$};
\node at (4.5,12) {$b_3$};
\node at (2.5,0.5) {$c_5$};
\node at (2.5,2.5) {$c_6$};
\node at (2.5,4.5) {$c_1$};
\node at (2.5,6.5) {$c_2$};
\node at (2.5,8.5) {$c_3$};
\node at (2.5,10.5) {$c_4$};
\node at (5.5,5) {\textcolor{blue}{$t$}};
\path[dashed,->,font=\large, >=angle 90, line width=0.4mm]
(0,0) edge (5,0)
(0,13) edge (5,13);
\draw [line width=0.25mm, fill=white] (1,1) circle (1mm);
\draw [line width=0.25mm, fill=white] (1,3) circle (1mm);
\draw [line width=0.25mm, fill=white] (1,5) circle (1mm);
\draw [line width=0.25mm, fill=white] (1,7) circle (1mm);
\draw [line width=0.25mm, fill=white] (1,9) circle (1mm);
\draw [line width=0.25mm, fill=white] (1,11) circle (1mm);
\draw [line width=0.25mm, fill=white] (4,1) circle (1mm);
\draw [line width=0.25mm, fill=white] (4,3) circle (1mm);
\draw [line width=0.25mm, fill=white] (4,5) circle (1mm);
\draw [line width=0.25mm, fill=white] (4,7) circle (1mm);
\draw [line width=0.25mm, fill=white] (4,8) circle (1mm);
\draw [line width=0.25mm, fill=white] (4,11) circle (1mm);
\draw [line width=0.25mm, fill=black] (1,2) circle (1mm);
\draw [line width=0.25mm, fill=black] (1,4) circle (1mm);
\draw [line width=0.25mm, fill=black] (1,6) circle (1mm);
\draw [line width=0.25mm, fill=black] (1,8) circle (1mm);
\draw [line width=0.25mm, fill=black] (1,10) circle (1mm);
\draw [line width=0.25mm, fill=black] (1,12) circle (1mm);
\draw [line width=0.25mm, fill=black] (4,2) circle (1mm);
\draw [line width=0.25mm, fill=black] (4,4) circle (1mm);
\draw [line width=0.25mm, fill=black] (4,6) circle (1mm);
\draw [line width=0.25mm, fill=black] (4,8) circle (1mm);
\draw [line width=0.25mm, fill=black] (4,10) circle (1mm);
\draw [line width=0.25mm, fill=black] (4,12) circle (1mm);
\path[<->,font=\large, >=angle 90]
(6.5,6.5) edge (7.5,6.5);
\fill[gray!40!white] (8.5,0) rectangle (20,13);
\path[->,font=\large, >=angle 90, line width=0.4mm]
(8.5,2) edge (11,2)
(8.5,4) edge (11,4)
(8.5,6) edge (11,6)
(8.5,12) edge (11,12)
(18,1) edge (20,1)
(18,5) edge (20,5)
(18,9) edge (20,9)
(18,11) edge (20,11)
(11,0) edge (11,1)
(11,1) edge node[left] {$x_6=a_4^{-1}$}(11,2)
(11,2) edge (11,3)
(11,3) edge node[left] {$x_1=a_1^{-1}$} (11,4)
(11,4) edge (11,5)
(11,5) edge node[left] {$x_2=a_2^{-1}$} (11,6)
(11,6) edge (11,7)
(11,7) edge node[left] {$x_3=1$} (11,8)
(11,8) edge (11,9)
(11,9) edge node[left] {$x_4=1$} (11,10)
(11,10) edge (11,11)
(11,11) edge node[left] {$x_5=a_3^{-1}$} (11,12)
(11,12) edge (11,13)
(18,0) edge (18,1)
(18,1) edge node[right] {$y_5=b_4$} (18,2)
(18,2) edge (18,3)
(18,3) edge node[right] {$y_6=1$} (18,4)
(18,4) edge (18,5)
(18,5) edge node[right] {$y_1=b_1$} (18,6)
(18,6) edge (18,7)
(18,7) edge node[right] {$y_2=1$} (18,8)
(18,8) edge (18,9)
(18,9) edge node[right] {$y_3=b_2$} (18,10)
(18,10) edge (18,11)
(18,11) edge node[right] {$y_4=b_3$} (18,12)
(18,12) edge (18,13)
(11,1) edge (18,2)
(11,3) edge (18,4)
(11,5) edge (18,6)
(11,7) edge (18,8)
(11,9) edge (18,10)
(11,11) edge (18,12);
\node at (14.5,2) {\rotatebox{8}{$z_5=a_1a_2a_3b_1b_2b_3b_4c_1c_2c_3c_4c_5t$}};
\node at (14.5,4) {\rotatebox{8}{$z_6=t$}};
\node at (14.5,6) {\rotatebox{8}{$z_1=a_1b_1c_1t$}};
\node at (14.5,8) {\rotatebox{8}{$z_2=a_1a_2b_1c_1c_2t$}};
\node at (14.5,10) {\rotatebox{8}{$z_3=a_1a_2b_1b_2c_1c_2c_3t$}};
\node at (14.5,12) {\rotatebox{8}{$z_4=a_1a_2b_1b_2b_3c_1c_2c_3c_4t$}};
\path[dashed,->,font=\large, >=angle 90, line width=0.4mm]
(8.5,0) edge (20,0)
(8.5,13) edge (20,13);
\draw [line width=0.25mm, fill=white] (11,1) circle (1mm);
\draw [line width=0.25mm, fill=white] (11,3) circle (1mm);
\draw [line width=0.25mm, fill=white] (11,5) circle (1mm);
\draw [line width=0.25mm, fill=white] (11,7) circle (1mm);
\draw [line width=0.25mm, fill=white] (11,9) circle (1mm);
\draw [line width=0.25mm, fill=white] (11,11) circle (1mm);
\draw [line width=0.25mm, fill=white] (18,1) circle (1mm);
\draw [line width=0.25mm, fill=white] (18,3) circle (1mm);
\draw [line width=0.25mm, fill=white] (18,5) circle (1mm);
\draw [line width=0.25mm, fill=white] (18,7) circle (1mm);
\draw [line width=0.25mm, fill=white] (18,8) circle (1mm);
\draw [line width=0.25mm, fill=white] (18,11) circle (1mm);
\draw [line width=0.25mm, fill=black] (11,2) circle (1mm);
\draw [line width=0.25mm, fill=black] (11,4) circle (1mm);
\draw [line width=0.25mm, fill=black] (11,6) circle (1mm);
\draw [line width=0.25mm, fill=black] (11,8) circle (1mm);
\draw [line width=0.25mm, fill=black] (11,10) circle (1mm);
\draw [line width=0.25mm, fill=black] (11,12) circle (1mm);
\draw [line width=0.25mm, fill=black] (18,2) circle (1mm);
\draw [line width=0.25mm, fill=black] (18,4) circle (1mm);
\draw [line width=0.25mm, fill=black] (18,6) circle (1mm);
\draw [line width=0.25mm, fill=black] (18,8) circle (1mm);
\draw [line width=0.25mm, fill=black] (18,10) circle (1mm);
\draw [line width=0.25mm, fill=black] (18,12) circle (1mm);
\end{tikzpicture}
\caption{Changing an expanded cylindric 2-loop plabic network with face and trail weights into one with edge weights as in Lemma~\ref{lemma:face-to-edge}.}
\label{fig:face-to-edge}
\end{figure}
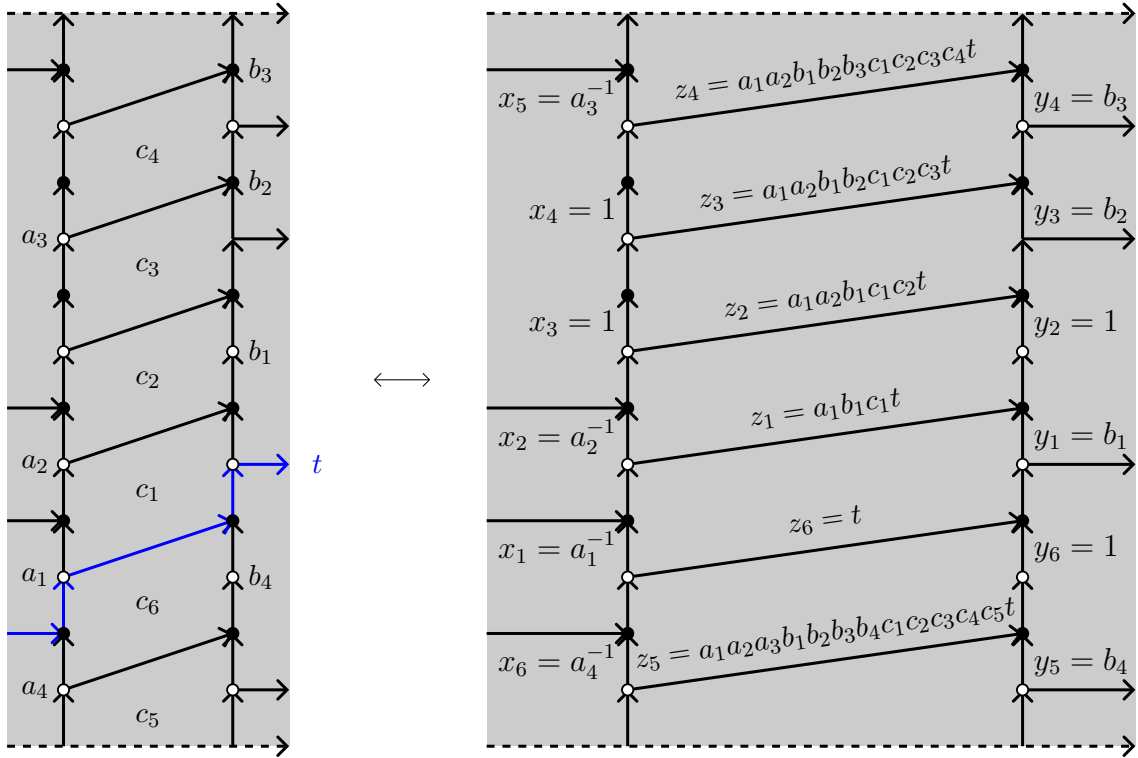

\vspace{2in}
\begin{defn}\label{defn:Tf}
Define $T_f$ to be the transformation on face weights from $(a,b,c)$ to $(a',b',c')$ where
\begin{align*}
a_i'&=\frac{\widehat{\lambda}_j(a,b,c)}{\widehat{\lambda}_p(a,b,c)\left(\prod_{\substack{b_r\text{ associated to }s\\\text{where }p\leq s<j}} b_r\right)}&&\hspace{-1.3in}\text{where }a_i\text{ is associated to }j,a_{i-1}\text{ is associated to }p\\
b_i'&=\frac{\widehat{\lambda}_q(a,b,c)}{\widehat{\lambda}_j(a,b,c)\left(\prod_{\substack{a_r\text{ associated to }s\\\text{where }j< s\leq q}} a_r\right)}&&\hspace{-1.3in}\text{where }b_i\text{ is associated to }j, b_{i+1}\text{ is associated to }q\\
c_i'&=\frac{c_i\widehat{\lambda}_{i-1}(a,b,c)\left(\prod_{\substack{a_r\text{ associated to }s\\\text{where }i\leq s\leq i+1}} a_r\right)\left(\prod_{\substack{b_r\text{ associated to }s\\\text{where }i-1\leq s\leq i}} b_r\right)}{\widehat{\lambda}_{i+1}(a,b,c)}
\end{align*}
We call $T_f$ the face weighted \emph{plabic R-matrix}.
\end{defn}

\begin{thm}\label{thm:Tf}
$T_f$ has the following properties:
\begin{enumerate}
\item It preserves the boundary measurements.

\item It is an involution.

\item $(a,b,c,t)$ and $(a',b',c',t)$ are the only choices of face and trail weights on a fixed cylindric 2-loop plabic graph that preserve the boundary measurements.

\item It satisfies the braid relation.
\end{enumerate}
\end{thm}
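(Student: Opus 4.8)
The plan is to deduce all four claims from the corresponding statements for the edge-weighted $R$-matrix (Theorem~\ref{thm:Te}) via the change of coordinates supplied by Lemma~\ref{lemma:face-to-edge}. First I would make the dictionary between the two coordinate systems precise. By Theorem~\ref{thm:mod-gauge-cyl}, on an expanded cylindric $2$-loop plabic graph the edge weights modulo gauge transformations are parametrized both by the ``normalized'' weights $(x,y,z)$ used in Definition~\ref{defn:Te} (white-to-black edges free, every other edge equal to $1$) and by the face-and-trail weights $(a,b,c,t)$. Lemma~\ref{lemma:face-to-edge} writes down an explicit gauge representative of $(a,b,c,t)$, and applying the unique gauge transformation that kills the weights of the non-white-to-black edges produces an explicit rational bijection $\Psi\colon(a,b,c,t)\mapsto(x,y,z)$ that fixes $t$. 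By construction $\lambda_i(x,y,z)$ and $\widehat\lambda_i(a,b,c)$ agree up to a common monomial factor: each term in $\lambda_i$ crosses between the strings exactly once, so the power of $t$ and the running products of $a$'s, $b$'s and $c$'s carried by the crossing edge are the same for every term and cancel in all the ratios appearing in Definition~\ref{defn:Tf}.

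The heart of the argument is the identity $\Psi\circ T_f=T_e\circ\Psi$, i.e.\ that the formulas of Definition~\ref{defn:Tf} are precisely the formulas of Definition~\ref{defn:Te} rewritten in $(a,b,c,t)$-coordinates. This is a direct but bookkeeping-heavy computation. One checks that the index sets $A_i,B_i$ and the exponents $\alpha_i,\beta_i$ of Definition~\ref{defn:Te} translate into the ``associated to'' relations of Definition~\ref{defn:Tf}; that the products $\prod_{k=0}^{\alpha_i}y_{i-k-1}$ and $\prod_{k=0}^{\beta_i}x_{i+k+1}$ become the products of $b_r$ and $a_r$ over the relevant blocks; and that the monomial prefactors hidden in the crossing-edge weights $z$ reassemble into the $c$-factors in the formula for $c_i'$ (and that $t'=t$). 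I expect this to be the main obstacle, not because any single manipulation is hard, but because keeping the modular indexing, the expansion bookkeeping, and the $t$-prefactors straight simultaneously is delicate; I would organize it by first verifying the correspondence in the unexpanded case, where no $x_i$ or $y_i$ is set to $1$ and the translation is cleanest, and then checking that inserting weight-$1$ edges when passing to the expanded network is compatible on both sides.

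Granting $\Psi\circ T_f=T_e\circ\Psi$ with $\Psi$ a bijection fixing $t$, the four claims follow formally. For (1): boundary measurements are gauge-invariant — they are computed from path weights, and by Theorem~\ref{thm:face-weight-equal-cyl} one has $wt(P,y,t)=x_P$, so $M_{ij}^{\text{form}}$ is the same function of $(a,b,c,t)$ as of the associated edge weights — hence, since $T_e$ preserves them and $\Psi$ intertwines, so does $T_f$. For (2): $\Psi\circ T_f^{2}=T_e^{2}\circ\Psi=\Psi$ by Theorem~\ref{thm:Te}(2), and injectivity of $\Psi$ forces $T_f^{2}=\mathrm{id}$. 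For (3): face-and-trail weights on a fixed cylindric $2$-loop plabic graph with prescribed boundary measurements correspond under $\Psi$ to edge weights (modulo gauge) with those boundary measurements, of which Theorem~\ref{thm:Te}(3) says there are exactly the two given ones, so pulling back along $\Psi$ yields exactly $(a,b,c,t)$ and $(a',b',c',t)$. For (4): the same dictionary, set up now on a cylindric $3$-loop plabic graph, identifies $T_f^{(12)}$ and $T_f^{(23)}$ acting on consecutive pairs of strings with $T_e^{(12)}$ and $T_e^{(23)}$; applying $\Psi$ on the $3$-loop graph intertwines both, so the braid relation $T_f^{(12)}T_f^{(23)}T_f^{(12)}=T_f^{(23)}T_f^{(12)}T_f^{(23)}$ is the image under $\Psi^{-1}$ of the braid relation for $T_e$ from Theorem~\ref{thm:Te}(4).
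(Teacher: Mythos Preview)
Your proposal is correct and follows essentially the same route as the paper: the paper proves Theorem~\ref{thm:Tf} by establishing (as Theorem~\ref{thm:face-prime}) that converting face weights to edge weights via Lemma~\ref{lemma:face-to-edge}, applying $T_e$, and converting back yields exactly the formulas of $T_f$, so all four properties are inherited from Theorem~\ref{thm:Te}. The only refinement in the paper you might incorporate is that it computes the new face weights directly from the intermediate $(\widetilde x,\widetilde y,\widetilde z)$ of Theorem~\ref{thm:tilde-vars} rather than from $(x',y',z')$, since these differ only by gauge transformations and face weights are gauge-invariant; this sidesteps some of the bookkeeping you flagged as delicate.
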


See Section~\ref{sec:R-matThm} for proof.

\begin{ex}\label{ex:LP-face}
We revisit Example~\ref{ex:LP-edge}, but with face variables this time.

\begin{center}
\begin{tikzpicture}[scale=0.75]
\fill[gray!40!white] (0,0) rectangle (6,9);
\node at (1,3) {$a_1$};
\node at (1,5) {$a_2$};
\node at (1,7) {$a_3$};
\node at (1,1) {$a_4$};
\node at (5,6) {$b_1$};
\node at (5,8) {$b_2$};
\node at (5,2) {$b_3$};
\node at (5,4) {$b_4$};
\node at (3,4.5) {$c_1$};
\node at (3,6.5) {$c_2$};
\node at (3,8.5) {$c_3$};
\node at (3,2.5) {$c_4$};
\node at (6.5,3) {\textcolor{blue}{$t$}};
\path[->,font=\large, >=angle 90, line width=0.4mm]
(2,1) edge (2,2)
(2,3) edge (2,4)
(2,4) edge (2,5)
(2,5) edge (2,6)
(2,6) edge (2,7)
(2,7) edge (2,8)
(2,2) edge (2,3)
(4,0) edge (4,1)
(4,1) edge (4,2)
(4,4) edge (4,5)
(4,3) edge (4,4)
(4,5) edge (4,6)
(4,6) edge (4,7)
(4,7) edge (4,8)
(4,8) edge (4,9)
(2,3) edge (4,4)
(2,5) edge (4,6)
(2,7) edge (4,8)
(0,4) edge (2,4)
(0,6) edge (2,6)
(0,2) edge (2,2)
(4,1) edge (6,1)
(4,5) edge (6,5)
(4,7) edge (6,7);
\path[->,font=\large, >=angle 90, line width=0.4mm, blue]
(0,8) edge (2,8)
(2,8) edge (2,9)
(2,0) edge (2,1)
(2,1) edge (4,2)
(4,2) edge (4,3)
(4,3) edge (6,3);
\path[dashed,->,font=\large, >=angle 90, line width=0.4mm]
(0,0) edge (6,0)
(0,9) edge (6,9);
\draw [line width=0.25mm, fill=white] (2,1) circle (1mm);
\draw [line width=0.25mm, fill=white] (2,3) circle (1mm);
\draw [line width=0.25mm, fill=white] (2,5) circle (1mm);
\draw [line width=0.25mm, fill=white] (2,7) circle (1mm);
\draw [line width=0.25mm, fill=white] (4,1) circle (1mm);
\draw [line width=0.25mm, fill=white] (4,3) circle (1mm);
\draw [line width=0.25mm, fill=white] (4,5) circle (1mm);
\draw [line width=0.25mm, fill=white] (4,7) circle (1mm);
\draw [line width=0.25mm, fill=black] (2,2) circle (1mm);
\draw [line width=0.25mm, fill=black] (2,4) circle (1mm);
\draw [line width=0.25mm, fill=black] (2,6) circle (1mm);
\draw [line width=0.25mm, fill=black] (2,8) circle (1mm);
\draw [line width=0.25mm, fill=black] (4,2) circle (1mm);
\draw [line width=0.25mm, fill=black] (4,4) circle (1mm);
\draw [line width=0.25mm, fill=black] (4,6) circle (1mm);
\draw [line width=0.25mm, fill=black] (4,8) circle (1mm);
\end{tikzpicture}
\end{center}

$T_f$ gives us the following values:

$\displaystyle a_1'=\frac{\widehat{\lambda}_1(a,b,c)}{b_4\widehat{\lambda}_4(a,b,c)}=\frac{a_1a_2a_3a_4b_1b_2b_3b_4(c_1+c_1c_2+c_1c_2c_3)+1}{a_2a_3a_4b_1b_2b_3b_4(1+c_1+c_1c_2+c_1c_2c_3)}$

$\displaystyle a_2'=\frac{\widehat{\lambda}_2(a,b,c)}{b_1\widehat{\lambda}_1(a,b,c)}=\frac{a_2(a_1a_2a_3a_4b_1b_2b_3b_4(c_1c_2+c_1c_2c_3)+1+c_1)}{a_1a_2a_3a_4b_1b_2b_3b_4(c_1+c_1c_2+c_1c_2c_3)+1}$

$\displaystyle a_3'=\frac{\widehat{\lambda}_3(a,b,c)}{b_2\widehat{\lambda}_2(a,b,c)}=\frac{a_3(a_1a_2a_3a_4b_1b_2b_3b_4c_1c_2c_3+1+c_1+c_1c_2)}{a_1a_2a_3a_4b_1b_2b_3b_4(c_1c_2+c_1c_2c_3)+1+c_1}$

$\displaystyle a_4'=\frac{\widehat{\lambda}_4(a,b,c)}{b_3\widehat{\lambda}_3(a,b,c)}=\frac{a_1a_2a_3a_4^2b_1b_2b_3b_4(1+c_1+c_1c_2+c_1c_2c_3)}{a_1a_2a_3a_4b_1b_2b_3b_4c_1c_2c_3+1+c_1+c_1c_2}$

$\displaystyle b_1'=\frac{\widehat{\lambda}_2(a,b,c)}{a_2\widehat{\lambda}_1(a,b,c)}=\frac{b_1(a_1a_2a_3a_4b_1b_2b_3b_4(c_1c_2+c_1c_2c_3)+1+c_1}{a_1a_2a_3a_4b_1b_2b_3b_4(c_1+c_1c_2+c_1c_2c_3)+1}$

$\displaystyle b_2'=\frac{\widehat{\lambda}_3(a,b,c)}{a_3\widehat{\lambda}_2(a,b,c)}=\frac{b_2(a_1a_2a_3a_4b_1b_2b_3b_4c_1c_2c_3+1+c_1+c_1c_2)}{a_1a_2a_3a_4b_1b_2b_3b_4(c_1c_2+c_1c_2c_3)+1+c_1}$

$\displaystyle b_3'=\frac{\widehat{\lambda}_4(a,b,c)}{a_4\widehat{\lambda}_3(a,b,c)}=\frac{b_3(1+c_1+c_1c_2+c_1c_2c_3)}{a_1a_2a_3a_4b_1b_2b_3b_4c_1c_2c_3+1+c_1+c_1c_2}$

$\displaystyle b_4'=\frac{\widehat{\lambda}_1(a,b,c)}{a_1\widehat{\lambda}_4(a,b,c)}=\frac{a_1a_2a_3a_4b_1b_2b_3b_4(c_1+c_1c_2+c_1c_2c_3)+1}{a_1a_2a_3a_4b_1b_2b_3(1+c_1+c_1c_2+c_1c_2c_3)}$

$\displaystyle c_1'=\frac{a_1a_2b_1b_4c_1\widehat{\lambda}_4(a,b,c)}{\widehat{\lambda}_2(a,b,c)}=\frac{a_1a_2a_3a_4b_1b_2b_3b_4c_1(1+c_1+c_1c_2+c_1c_2c_3)}{a_1a_2a_3a_4b_1b_2b_3b_4(c_1c_2+c_1c_2c_3)+1+c_1}$

$\displaystyle c_2'=\frac{a_2a_3b_1b_2c_2\widehat{\lambda}_1(a,b,c)}{\widehat{\lambda}_3(a,b,c)}=\frac{c_2(a_1a_2a_3a_4b_1b_2b_3b_4(c_1+c_1c_2+c_1c_2c_3)+1)}{a_1a_2a_3a_4b_1b_2b_3b_4c_1c_2c_3+1+c_1+c_1c_2}$

$\displaystyle c_3'=\frac{a_3a_4b_2b_3c_3\widehat{\lambda}_2(a,b,c)}{\widehat{\lambda}_4(a,b,c)}=\frac{c_3(a_1a_2a_3a_4b_1b_2b_3b_4(c_1c_2+c_1c_2c_3)+1+c_1)}{1+c_1+c_1c_2+c_1c_2c_3}$

$\displaystyle c_4'=\frac{a_1a_4b_3b_4c_4\widehat{\lambda}_3(a,b,c)}{\widehat{\lambda}_1(a,b,c)}=\frac{a_1a_2a_3a_4b_1b_2b_3b_4c_4(a_1a_2a_3a_4b_1b_2b_3b_4c_1c_2c_3+1+c_1+c_1c_2)}{a_1a_2a_3a_4b_1b_2b_3b_4(c_1+c_1c_2+c_1c_2c_3)+1}$
\end{ex}

\vspace{2in}
\begin{ex}\label{ex:GS-face}
We revisit Example~\ref{ex:GS-edge}, but with face variables this time.

\begin{center}
\begin{tikzpicture}[scale=0.75]
\fill[gray!40!white] (0,0) rectangle (6,13);
\node at (1,1) {$a_1$};
\node at (1,3) {$a_2$};
\node at (1,5) {$a_3$};
\node at (1,7) {$a_4$};
\node at (1,9) {$a_5$};
\node at (1,11) {$a_6$};
\node at (5,5) {$b_1$};
\node at (5,9) {$b_2$};
\node at (5,1) {$b_3$};
\node at (3,2.5) {$c_1$};
\node at (3,4.5) {$c_2$};
\node at (3,6.5) {$c_3$};
\node at (3,8.5) {$c_4$};
\node at (3,10.5) {$c_5$};
\node at (3,12.5) {$c_6$};
\node at (6.5,3) {\textcolor{blue}{$t$}};
\path[->,font=\large, >=angle 90, line width=0.4mm]
(2,1) edge (2,2)
(2,2) edge (2,3)
(2,3) edge (2,4)
(2,4) edge (2,5)
(2,5) edge (2,6)
(2,6) edge (2,7)
(4,0) edge (4,1)
(4,1) edge (4,2)
(4,3) edge (4,4)
(4,4) edge (4,5)
(4,5) edge (4,6)
(4,6) edge (4,7)
(2,3) edge (4,4)
(2,5) edge (4,6)
(0,2) edge (2,2)
(0,4) edge (2,4)
(0,6) edge (2,6)
(2,7) edge (2,8)
(2,8) edge (2,9)
(2,9) edge (2,10)
(2,10) edge (2,11)
(2,11) edge (2,12)
(4,7) edge (4,8)
(4,8) edge (4,9)
(4,9) edge (4,10)
(4,10) edge (4,11)
(4,11) edge (4,12)
(4,12) edge (4,13)
(2,7) edge (4,8)
(2,9) edge (4,10)
(2,11) edge (4,12)
(0,8) edge (2,8)
(0,10) edge (2,10)
(4,7) edge (6,7)
(4,11) edge (6,11);
\path[->,font=\large, >=angle 90, line width=0.4mm, blue]
(0,12) edge (2,12)
(2,12) edge (2,13)
(2,0) edge (2,1)
(2,1) edge (4,2)
(4,2) edge (4,3)
(4,3) edge (6,3);
\path[dashed,->,font=\large, >=angle 90, line width=0.4mm]
(0,0) edge (6,0)
(0,13) edge (6,13);
\draw [line width=0.25mm, fill=white] (2,1) circle (1mm);
\draw [line width=0.25mm, fill=white] (2,3) circle (1mm);
\draw [line width=0.25mm, fill=white] (2,5) circle (1mm);
\draw [line width=0.25mm, fill=white] (4,1) circle (1mm);
\draw [line width=0.25mm, fill=white] (4,3) circle (1mm);
\draw [line width=0.25mm, fill=white] (4,5) circle (1mm);
\draw [line width=0.25mm, fill=black] (2,2) circle (1mm);
\draw [line width=0.25mm, fill=black] (2,4) circle (1mm);
\draw [line width=0.25mm, fill=black] (2,6) circle (1mm);
\draw [line width=0.25mm, fill=black] (4,2) circle (1mm);
\draw [line width=0.25mm, fill=black] (4,4) circle (1mm);
\draw [line width=0.25mm, fill=black] (4,6) circle (1mm);
\draw [line width=0.25mm, fill=white] (2,7) circle (1mm);
\draw [line width=0.25mm, fill=white] (2,9) circle (1mm);
\draw [line width=0.25mm, fill=white] (2,11) circle (1mm);
\draw [line width=0.25mm, fill=white] (4,7) circle (1mm);
\draw [line width=0.25mm, fill=white] (4,9) circle (1mm);
\draw [line width=0.25mm, fill=white] (4,11) circle (1mm);
\draw [line width=0.25mm, fill=black] (2,8) circle (1mm);
\draw [line width=0.25mm, fill=black] (2,10) circle (1mm);
\draw [line width=0.25mm, fill=black] (2,12) circle (1mm);
\draw [line width=0.25mm, fill=black] (4,8) circle (1mm);
\draw [line width=0.25mm, fill=black] (4,10) circle (1mm);
\draw [line width=0.25mm, fill=black] (4,12) circle (1mm);
\end{tikzpicture}
\end{center}

$T_f$ gives us the following values:

$\displaystyle a_1'=\frac{\widehat{\lambda}_1(a,b,c)}{\widehat{\lambda}_6(a,b,c)}=\frac{a_1a_2a_3a_4a_5a_6b_1b_2b_3(c_1+c_1c_2+c_1c_2c_3+c_1c_2c_3c_4+c_1c_2c_3c_4c_5)+1}{a_2a_3a_4a_5a_6b_1b_2b_3(1+c_1+c_1c_2+c_1c_2c_3+c_1c_2c_3c_4+c_1c_2c_3c_4c_5)}$

$\displaystyle a_2'=\frac{\widehat{\lambda}_2(a,b,c)}{b_1\widehat{\lambda}_1(a,b,c)}=\frac{a_2(a_1a_2a_3a_4a_5a_6b_1b_2b_3(c_1c_2+c_1c_2c_3+c_1c_2c_3c_4+c_1c_2c_3c_4c_5)+1+c_1)}{a_1a_2a_3a_4a_5a_6b_1b_2b_3(c_1+c_1c_2+c_1c_2c_3+c_1c_2c_3c_4+c_1c_2c_3c_4c_5)+1}$

$\displaystyle a_3'=\frac{\widehat{\lambda}_3(a,b,c)}{\widehat{\lambda}_2(a,b,c)}=\frac{a_3(a_1a_2a_3a_4a_5a_6b_1b_2b_3(c_1c_2c_3+c_1c_2c_3c_4+c_1c_2c_3c_4c_5)+1+c_1+c_1c_2)}{a_1a_2a_3a_4a_5a_6b_1b_2b_3(c_1c_2+c_1c_2c_3+c_1c_2c_3c_4+c_1c_2c_3c_4c_5)+1+c_1}$

$\displaystyle a_4'=\frac{\widehat{\lambda}_4(a,b,c)}{b_2\widehat{\lambda}_3(a,b,c)}=\frac{a_4(a_1a_2a_3a_4a_5a_6b_1b_2b_3(c_1c_2c_3c_4+c_1c_2c_3c_4c_5)+1+c_1+c_1c_2+c_1c_2c_3)}{a_1a_2a_3a_4a_5a_6b_1b_2b_3(c_1c_2c_3+c_1c_2c_3c_4+c_1c_2c_3c_4c_5)+1+c_1+c_1c_2}$

$\displaystyle a_5'=\frac{\widehat{\lambda}_5(a,b,c)}{\widehat{\lambda}_4(a,b,c)}=\frac{a_5(a_1a_2a_3a_4a_5a_6b_1b_2b_3c_1c_2c_3c_4c_5+1+c_1+c_1c_2+c_1c_2c_3+c_1c_2c_3c_4)}{a_1a_2a_3a_4a_5a_6b_1b_2b_3(c_1c_2c_3c_4+c_1c_2c_3c_4c_5)+1+c_1+c_1c_2+c_1c_2c_3}$

$\displaystyle a_6'=\frac{\widehat{\lambda}_6(a,b,c)}{b_3\widehat{\lambda}_5(a,b,c)}=\frac{a_6(1+c_1+c_1c_2+c_1c_2c_3+c_1c_2c_3c_4+c_1c_2c_3c_4c_5)}{a_1a_2a_3a_4a_5a_6b_1b_2b_3c_1c_2c_3c_4c_5+1+c_1+c_1c_2+c_1c_2c_3+c_1c_2c_3c_4}$

$\displaystyle b_1'=\frac{\widehat{\lambda}_3(a,b,c)}{a_2a_3\widehat{\lambda}_1(a,b,c)}=\frac{b_1(a_1a_2a_3a_4a_5a_6b_1b_2b_3(c_1c_2c_3+c_1c_2c_3c_4+c_1c_2c_3c_4c_5)+1+c_1+c_1c_2)}{a_1a_2a_3a_4a_5a_6b_1b_2b_3(c_1+c_1c_2+c_1c_2c_3+c_1c_2c_3c_4+c_1c_2c_3c_4c_5)+1}$

$\displaystyle b_2'=\frac{\widehat{\lambda}_5(a,b,c)}{a_4a_5\widehat{\lambda}_3(a,b,c)}=\frac{b_2(a_1a_2a_3a_4a_5a_6b_1b_2b_3c_1c_2c_3c_4c_5+1+c_1+c_1c_2+c_1c_2c_3+c_1c_2c_3c_4)}{a_1a_2a_3a_4a_5a_6b_1b_2b_3(c_1c_2c_3+c_1c_2c_3c_4+c_1c_2c_3c_4c_5)+1+c_1+c_1c_2}$

$\displaystyle b_3'=\frac{\widehat{\lambda}_1(a,b,c)}{a_6a_1\widehat{\lambda}_5(a,b,c)}$

\hspace{5mm}$\displaystyle=\frac{b_3(a_1a_2a_3a_4a_5a_6b_1b_2b_3(c_1+c_1c_2+c_1c_2c_3+c_1c_2c_3c_4+c_1c_2c_3c_4c_5)+1)}{a_1a_2a_3a_4a_5a_6b_1b_2b_3(a_1a_2a_3a_4a_5a_6b_1b_2b_3c_1c_2c_3c_4c_5+1+c_1+c_1c_2+c_1c_2c_3+c_1c_2c_3c_4)}$

$\displaystyle c_1'=\frac{c_1a_1a_2b_1\widehat{\lambda}_6(a,b,c)}{\widehat{\lambda}_2(a,b,c)}=\frac{c_1a_1a_2a_3a_4a_5a_6b_1b_2b_3(1+c_1+c_1c_2+c_1c_2c_3+c_1c_2c_3c_4+c_1c_2c_3c_4c_5)}{a_1a_2a_3a_4a_5a_6b_1b_2b_3(c_1c_2+c_1c_2c_3+c_1c_2c_3c_4+c_1c_2c_3c_4c_5)+1+c_1}$

$\displaystyle c_2'=\frac{c_2a_2a_3b_1\widehat{\lambda}_1(a,b,c)}{\widehat{\lambda}_3(a,b,c)}=\frac{c_2(a_1a_2a_3a_4a_5a_6b_1b_2b_3(c_1+c_1c_2+c_1c_2c_3+c_1c_2c_3c_4+c_1c_2c_3c_4c_5)+1)}{a_1a_2a_3a_4a_5a_6b_1b_2b_3(c_1c_2c_3+c_1c_2c_3c_4+c_1c_2c_3c_4c_5)+1+c_1+c_1c_2}$

$\displaystyle c_3'=\frac{c_3a_3a_4b_2\widehat{\lambda}_2(a,b,c)}{\widehat{\lambda}_4(a,b,c)}=\frac{c_3(a_1a_2a_3a_4a_5a_6b_1b_2b_3(c_1c_2+c_1c_2c_3+c_1c_2c_3c_4+c_1c_2c_3c_4c_5)+1+c_1)}{a_1a_2a_3a_4a_5a_6b_1b_2b_3(c_1c_2c_3c_4+c_1c_2c_3c_4c_5)+1+c_1+c_1c_2+c_1c_2c_3}$

$\displaystyle c_4'=\frac{c_4a_4a_5b_2\widehat{\lambda}_3(a,b,c)}{\widehat{\lambda}_5(a,b,c)}=\frac{c_4(a_1a_2a_3a_4a_5a_6b_1b_2b_3(c_1c_2c_3+c_1c_2c_3c_4+c_1c_2c_3c_4c_5)+1+c_1+c_1c_2)}{a_1a_2a_3a_4a_5a_6b_1b_2b_3c_1c_2c_3c_4c_5+1+c_1+c_1c_2+c_1c_2c_3+c_1c_2c_3c_4}$

$\displaystyle c_5'=\frac{c_5a_5a_6b_3\widehat{\lambda}_4(a,b,c)}{\widehat{\lambda}_6(a,b,c)}=\frac{c_5(a_1a_2a_3a_4a_5a_6b_1b_2b_3(c_1c_2c_3c_4+c_1c_2c_3c_4c_5)+1+c_1+c_1c_2+c_1c_2c_3)}{1+c_1+c_1c_2+c_1c_2c_3+c_1c_2c_3c_4+c_1c_2c_3c_4c_5}$

$\displaystyle c_6'=\frac{c_6a_6a_1b_3\widehat{\lambda}_5(a,b,c)}{\widehat{\lambda}_1(a,b,c)}$

\hspace{5mm}$\displaystyle=\frac{a_1a_2a_3a_4a_5a_6b_1b_2b_3c_1c_2c_3c_4c_5+1+c_1+c_1c_2+c_1c_2c_3+c_1c_2c_3c_4}{c_1c_2c_3c_4c_5(a_1a_2a_3a_4a_5a_6b_1b_2b_3(c_1+c_1c_2+c_1c_2c_3+c_1c_2c_3c_4+c_1c_2c_3c_4c_5)+1)}$
\end{ex}

\section{Cluster Algebra Background}\label{sec:bg2}

\begin{defn}\label{defn:quiver}
A \emph{quiver} $Q$ is a directed graph with vertices labeled $1,...,n$ and no loops or 2-cycles.
\end{defn}

\begin{defn}\label{defn:quiver-mutation}
If $k$ is a vertex in a quiver $Q$, a \emph{quiver mutation} at $k$, $\mu_k(Q)$ is defined from $Q$ as follows:
\begin{enumerate}[(1)]
\item for each pair of edges $i\to k$ and $k\to j$, add a new edge $i\to j$,

\item reverse any edges incident to $k$,

\item remove any 2-cycles.
\end{enumerate}
\end{defn}

\begin{defn}\label{defn:seed}
A \emph{seed} is a pair $(Q,{\bf x})$ where $Q$ is a quiver and ${\bf x}=(x_1,...,x_n)$ with $n$ the number of vertices of $Q$.
\end{defn}

\begin{defn}\label{defn:seed-mutation}
If $k$ is a vertex in $Q$, we can define a \emph{seed mutation} of $(Q,{\bf x})$ at $k$, $\mu_k(Q,{\bf x})=(Q',{\bf x}')$ by $Q'=\mu_k(Q)$ and ${\bf x}'=(x_1',...,x_n')$ where $$x_i'=\begin{cases} x_i & i\neq k, \\ \prod_{j=1}^n x_j^{\#\{\text{edges }k\to j\text{ in }Q\}}+\prod_{j=1}^n x_j^{\#\{\text{edges }j\to k\text{ in }Q\}} & i=k. \end{cases}$$  Recall that $Q$ has no 2-cycles, so $x_j$ shows up in at most one of the products in the formula for $x_k'$.
\end{defn}

\begin{defn}\label{defn:seed-with-coeff}
A \emph{seed with coefficients} is a triple $(Q,{\bf x},{\bf y})$ where $Q$ is a quiver, ${\bf x}=(x_1,...,x_n)$, and ${\bf y}=(y_1,...,y_n)$  with $n$ the number of vertices of $Q$.
\end{defn}

\begin{defn}\label{defn:seed-with-coeff-mutation}
If $k$ is a vertex in $Q$, we can define a \emph{seed mutation} of $(Q,{\bf x},{\bf y})$ at $k$, $\mu_k(Q,{\bf x},{\bf y})=(Q',{\bf x}',{\bf y}')$ by $Q'=\mu_k(Q),{\bf x}'=(x_1',...,x_n')$, and ${\bf y}'=(y_1',...,y_n')$ where $$x_i'=\begin{cases} x_i & i\neq k, \\ \prod_{j=1}^n x_j^{\#\{\text{edges }k\to j\text{ in }Q\}}+\prod_{j=1}^n x_j^{\#\{\text{edges }j\to k\text{ in }Q\}} & i=k, \end{cases}$$ $$y_i'=\begin{cases} y_k^{-1} & i=k \\ y_i(1+y_k^{-1})^{-\#\{\text{edges }k\to i\text{ in }Q\}} & i\neq k, \#\{\text{edges }k\to i\text{ in }Q\}\geq 0, \\ y_i(1+y_k)^{\#\{\text{edges }i\to k\text{ in }Q\}} & i\neq k, \#\{\text{edges }i\to k\text{ in }Q\}\geq 0. \end{cases}$$
\end{defn}

\begin{defn}\label{defn:y-seed}
A \emph{$y$-seed}, $(Q,{\bf y})$ and its mutations are defined as above, but without the {\bf x} variables.
\end{defn}

Frequently the $y$-dynamics above will be defined over a semifield that may have a different addition.  However, for our purposes, we will use these less general definitions.

\section{Spider Web Quivers}\label{sec:quivers}

In~\cite{ILP}, Inoue, Lam, and Pylyavskyy obtain the geometric R-matrix from a sequence of cluster mutations of a triangular grid quiver.  Triangular grid quivers are exactly the dual quivers to the plabic graph associated to certain wiring diagrams, such as the plabic graph in Example~\ref{ex:LP-edge}.  In~\cite{GS}, Goncharov and Shen show that in another family of quivers, the same mutation sequence gives a Weyl group action.  The rest of this section will explore how these cases generalize.

\begin{defn}\label{defn:spider-web-quiver}
A \emph{spider web quiver} is a quiver constructed as follows.  We begin with three or more concentric circles.  Place as many vertices as desired (at least 2 so as to avoid loops) on each circle.  Orient the edges of each circle counter clockwise.  Then add edges in the diagram between vertices on adjacent circles so that each face is oriented, contains two edges between circles, and has at least 3 sides.

For the purposes of this paper, we will assume a spider web quiver has 3 circles, unless stated otherwise.  Label the vertices of the middle circle $1,...,n$ following the arrows around the circle.  If the lowest index vertex in the middle circle with an edge to the outer circle is $i$, label the vertex in the outer circle which has an arrow to $i$ as $1^-$.  Continue labeling the vertices of the outer circle $2^-,3^-,...$ following the arrows around the circle.  If the lowest index vertex in the middle circle with an edge to the inner circle is $j$, label the vertex in the inner circle which has an arrow to $j$ as $1^+$.  Continue labeling the vertices of the outer circle $2^+,3^+,...$ following the arrows around the circle.  See Figure~\ref{fig:spider-web-quiver-ex} for examples.
\end{defn}

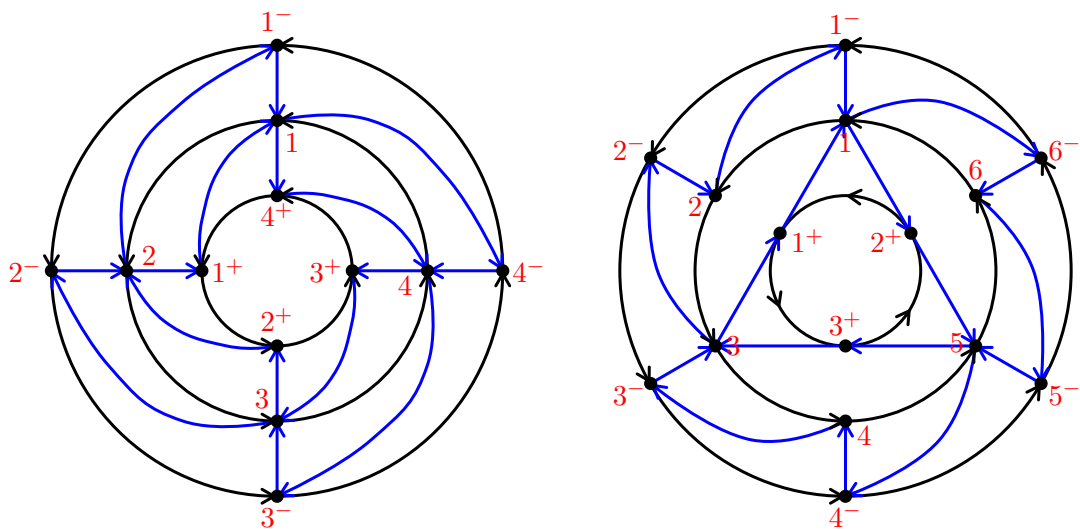
\begin{figure}[htp]
\begin{center}
\begin{tikzpicture}
\draw [line width=0.4mm] (3,3) circle (1cm);
\draw [line width=0.4mm] (3,3) circle (2cm);
\draw [line width=0.4mm] (3,3) circle (3cm);
\draw [line width=0.4mm, blue] plot [smooth, tension=0.8] coordinates { (3,5) (4.25,5) (5.25,4.25) (6,3) };
\draw [line width=0.4mm,blue] plot [smooth, tension=0.8] coordinates { (5,3) (5,1.75) (4.25,0.75) (3,0) };
\draw [line width=0.4mm,blue] plot [smooth, tension=0.8] coordinates { (3,1) (1.75,1) (0.75,1.75) (0,3) };
\draw [line width=0.4mm,blue] plot [smooth, tension=0.8] coordinates { (1,3) (1,4.25) (1.75,5.25) (3,6) };
\draw [line width=0.4mm,blue] plot [smooth, tension=0.8] coordinates { (3,4) (3.67,4) (4.5,3.67) (5,3) };
\draw [line width=0.4mm,blue] plot [smooth, tension=0.8] coordinates { (4,3) (4,2.33) (3.67,1.5) (3,1) };
\draw [line width=0.4mm,blue] plot [smooth, tension=0.8] coordinates { (3,2) (2.33,2) (1.5,2.33) (1,3) };
\draw [line width=0.4mm,blue] plot [smooth, tension=0.8] coordinates { (2,3) (2,3.67) (2.33,4.5) (3,5) };
\path[->,font=\large, >=angle 60, line width=0.4mm,blue]
(3,6) edge (3,5)
(3,5) edge (3,4)
(0,3) edge (1,3)
(1,3) edge (2,3)
(3,0) edge (3,1)
(3,1) edge (3,2)
(6,3) edge (5,3)
(5,3) edge (4,3)
(5.95,3.1) edge (6,3)
(3.1,0.05) edge (3,0)
(4.95,3.1) edge (5,3)
(3.1,1.05) edge (3,1)
(0.05,2.9) edge (0,3)
(2.9,5.95) edge (3,6)
(1.05,2.9) edge (1,3)
(2.9,4.95) edge (3,5);
\path[->,font=\large, >=angle 60, line width=0.4mm]
(0,3.1) edge (0,3)
(2.9,0) edge (3,0)
(6,2.9) edge (6,3)
(3.1,6) edge (3,6)
(1,3.1) edge (1,3)
(2.9,1) edge (3,1)
(5,2.9) edge (5,3)
(3.1,5) edge (3,5)
(2,3.1) edge (2,3)
(2.9,2) edge (3,2)
(4,2.9) edge (4,3)
(3.1,4) edge (3,4);
\draw [line width=0.25mm, fill=black] (3,6) circle (0.75mm);
\draw [line width=0.25mm, fill=black] (0,3) circle (0.75mm);
\draw [line width=0.25mm, fill=black] (3,0) circle (0.75mm);
\draw [line width=0.25mm, fill=black] (6,3) circle (0.75mm);
\draw [line width=0.25mm, fill=black] (3,5) circle (0.75mm);
\draw [line width=0.25mm, fill=black] (1,3) circle (0.75mm);
\draw [line width=0.25mm, fill=black] (3,1) circle (0.75mm);
\draw [line width=0.25mm, fill=black] (5,3) circle (0.75mm);
\draw [line width=0.25mm, fill=black] (3,4) circle (0.75mm);
\draw [line width=0.25mm, fill=black] (2,3) circle (0.75mm);
\draw [line width=0.25mm, fill=black] (3,2) circle (0.75mm);
\draw [line width=0.25mm, fill=black] (4,3) circle (0.75mm);
\node at (3.2,4.7) {\textcolor{red}{$1$}};
\node at (1.3,3.2) {\textcolor{red}{$2$}};
\node at (2.8,1.3) {\textcolor{red}{$3$}};
\node at (4.7,2.8) {\textcolor{red}{$4$}};
\node at (2.35,3) {\textcolor{red}{$1^+$}};
\node at (3,2.35) {\textcolor{red}{$2^+$}};
\node at (3.65,3) {\textcolor{red}{$3^+$}};
\node at (3,3.75) {\textcolor{red}{$4^+$}};
\node at (3,6.35) {\textcolor{red}{$1^-$}};
\node at (-0.35,3) {\textcolor{red}{$2^-$}};
\node at (3,-0.25) {\textcolor{red}{$3^-$}};
\node at (6.35,3) {\textcolor{red}{$4^-$}};
\end{tikzpicture}
\hspace{0.15in}
\begin{tikzpicture}
\draw [line width=0.4mm] (3,3) circle (1cm);
\draw [line width=0.4mm] (3,3) circle (2cm);
\draw [line width=0.4mm] (3,3) circle (3cm);
\draw [line width=0.4mm, blue] plot [smooth, tension=0.8] coordinates { (3,5) (4.25,5.25) (5.6,4.5) };
\draw [line width=0.4mm, blue] plot [smooth, tension=0.8] coordinates { (4.73,4) (5.5,3) (5.6,1.5) };
\draw [line width=0.4mm, blue] plot [smooth, tension=0.8] coordinates { (4.73,2) (4.25,0.75) (3,0) };
\draw [line width=0.4mm, blue] plot [smooth, tension=0.8] coordinates { (3,1) (1.75,0.75) (0.41,1.5) };
\draw [line width=0.4mm, blue] plot [smooth, tension=0.8] coordinates { (1.27,2) (0.5,3) (0.41,4.5) };
\draw [line width=0.4mm, blue] plot [smooth, tension=0.8] coordinates { (1.27,4) (1.75,5.25) (3,6) };
\path[->,font=\large, >=angle 60, line width=0.4mm,blue]
(3,6) edge (3,5)
(5.6,4.5) edge (4.73,4)
(5.6,1.5) edge (4.73,2)
(3,0) edge (3,1)
(0.41,1.5) edge (1.27,2)
(0.41,4.5) edge (1.27,4)
(2.9,5.95) edge (3,6)
(5.5,4.58) edge (5.6,4.5)
(5.6,1.6) edge (5.6,1.5)
(3.1,0.05) edge (3,0)
(0.51,1.42) edge (0.41,1.5)
(0.41,4.4) edge (0.41,4.5)
(1.27,2) edge (2.13,3.5)
(2.13,3.5) edge (3,5)
(3,5) edge (3.87,3.5)
(3.87,3.5) edge (4.73,2)
(4.73,2) edge (3,2)
(3,2) edge (1.27,2);
\path[->,font=\large, >=angle 60, line width=0.4mm]
(3.1,6) edge (3,6)
(0.44,4.55) edge (0.41,4.5)
(0.37,1.57) edge (0.41,1.5)
(2.9,0) edge (3,0)
(5.54,1.41) edge (5.6,1.5)
(5.65,4.41) edge (5.6,4.5)
(1.3,4.05) edge (1.27,4)
(1.24,2.05) edge (1.27,2)
(2.9,1) edge (3,1)
(4.7,1.95) edge (4.73,2)
(4.76,3.95) edge (4.73,4)
(3.1,5) edge (3,5)
(3.1,3.99) edge (3,4)
(2.09,2.6) edge (2.13,2.5)
(3.8,2.4) edge (3.87,2.5);
\draw [line width=0.25mm, fill=black] (3,6) circle (0.75mm);
\draw [line width=0.25mm, fill=black] (0.41,4.5) circle (0.75mm);
\draw [line width=0.25mm, fill=black] (0.41,1.5) circle (0.75mm);
\draw [line width=0.25mm, fill=black] (3,0) circle (0.75mm);
\draw [line width=0.25mm, fill=black] (5.6,1.5) circle (0.75mm);
\draw [line width=0.25mm, fill=black] (5.6,4.5) circle (0.75mm);
\draw [line width=0.25mm, fill=black] (1.27,2) circle (0.75mm);
\draw [line width=0.25mm, fill=black] (3,5) circle (0.75mm);
\draw [line width=0.25mm, fill=black] (1.27,4) circle (0.75mm);
\draw [line width=0.25mm, fill=black] (3,1) circle (0.75mm);
\draw [line width=0.25mm, fill=black] (4.73,2) circle (0.75mm);
\draw [line width=0.25mm, fill=black] (4.73,4) circle (0.75mm);
\draw [line width=0.25mm, fill=black] (3,2) circle (0.75mm);
\draw [line width=0.25mm, fill=black] (2.13,3.5) circle (0.75mm);
\draw [line width=0.25mm, fill=black] (3.87,3.5) circle (0.75mm);
\node at (3,6.3) {\textcolor{red}{$1^-$}};
\node at (0.13,4.63) {\textcolor{red}{$2^-$}};
\node at (0.13,1.37) {\textcolor{red}{$3^-$}};
\node at (3,-0.25) {\textcolor{red}{$4^-$}};
\node at (5.92,1.37) {\textcolor{red}{$5^-$}};
\node at (5.92,4.63) {\textcolor{red}{$6^-$}};
\node at (3,4.7) {\textcolor{red}{$1$}};
\node at (1,3.85) {\textcolor{red}{$2$}};
\node at (1.51,2) {\textcolor{red}{$3$}};
\node at (3.25,0.8) {\textcolor{red}{$4$}};
\node at (4.48,2.05) {\textcolor{red}{$5$}};
\node at (4.73,4.35) {\textcolor{red}{$6$}};
\node at (2.5,3.4) {\textcolor{red}{$1^+$}};
\node at (3.55,3.4) {\textcolor{red}{$2^+$}};
\node at (3,2.3) {\textcolor{red}{$3^+$}};
\end{tikzpicture}
\end{center}
\caption{Two spider web quivers.  The quiver on the left is a triangular grid quiver, as in~\cite{ILP}.  The quiver on the right is a 3-triangulation with 3 ideal triangles, as in Section 8 of~\cite{GS}.}
\label{fig:spider-web-quiver-ex}
\end{figure}

We will always let $n$ be the number of vertices on the middle circle of a spider web quiver.  We'll be considering the transformation $\tau:=\mu_1\mu_2...\mu_{n-2}s_{n-1,n}\mu_n\mu_{n-1}...\mu_1$ where $s_{n-1,n}$ is the operation that transposes vertices $n$ and $n-1$.

\begin{prop}\label{prop:mutation-seq}
If $Q$ is a spider web quiver, then $\tau(Q)=Q.$
\end{prop}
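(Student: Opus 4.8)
The plan is to use the near-palindromic structure of $\tau$ to collapse the $2n-2$ mutations to a short local check, and then to obtain that check from an explicit description of an intermediate quiver.

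First, note that $\tau$ performs $\mu_1,\mu_2,\dots,\mu_{n-2}$, then $\mu_{n-1}$, then $\mu_n$, then the label swap $s_{n-1,n}$, and finally $\mu_{n-2},\mu_{n-3},\dots,\mu_1$ again in the reverse order. Writing $A$ for the composition ``perform $\mu_1,\dots,\mu_{n-2}$ in that order'' and using that each $\mu_i$ is an involution, the closing block $\mu_1\mu_2\cdots\mu_{n-2}$ is exactly $A^{-1}$, so
$$\tau \;=\; A^{-1}\circ s_{n-1,n}\circ\mu_n\circ\mu_{n-1}\circ A .$$
Consequently $\tau(Q)=Q$ is equivalent to the single identity
$$ s_{n-1,n}\!\bigl(\mu_n(\mu_{n-1}(Q'))\bigr)=Q', \qquad Q':=A(Q), $$
where $Q'$ is the quiver obtained from $Q$ by mutating at the vertices $1,2,\dots,n-2$ of the middle circle in order. (With the opposite reading of the composition order the reduction is the same, with the two mutations and the swap appearing in the reverse order inside the conjugation by $A$.) So it remains to compute $Q'$ and then to verify that, in $Q'$, mutating at $n-1$ and then at $n$ is undone by transposing the labels $n-1$ and $n$.

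To compute $Q'$ I would first pin down the arrow set of a general spider web quiver: the three counterclockwise circles together with the radial arrows, noting that the conditions ``every face is oriented, has exactly two radial edges, and has at least three sides'' force the radial arrows at each middle vertex to alternate in direction around that vertex and force each annular face to be a ``fan'' over a single vertex of one of its two bounding circles. Then I would prove by induction on $k$ a normal form for $Q_k:=\mu_k\circ\cdots\circ\mu_1(Q)$. The inductive step is a single mutation: read off the neighbours and orientations at vertex $k+1$ in $Q_k$, apply $\mu_{k+1}$, and check agreement with the predicted $Q_{k+1}$. The data to track are the middle-circle arrows that are reversed and then restored, a family of arrows that propagates from vertex $n$ (and from the inner/outer neighbours of vertex $1$) across the mutated segment, the new arrows created among inner and outer vertices, and the cancellation of all $2$-cycles. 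Setting $k=n-2$ gives $Q'$, whose restriction to $\{n-1,n\}$ and the few vertices joined to them I expect to be small and symmetric under the swap $n-1\leftrightarrow n$; the final verification is then a finite two-mutation computation on that neighbourhood, after which $\tau(Q)=A^{-1}(A(Q))=Q$.

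The main obstacle is the inductive normal form for $Q_k$: one must push it through all of $\mu_1,\dots,\mu_{n-2}$ for a completely general spider web quiver (arbitrary number, placement, and orientation pattern of radial edges), and at each step each mutation both reverses and creates many arrows, so the real work is checking that the propagating long-range arrows together with the newly created inner/outer arrows assemble into exactly the claimed quiver with no leftover $2$-cycles, uniformly over all the ways the radial edges can attach.
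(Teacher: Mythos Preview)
Your proposal is correct and follows essentially the same route as the paper: the paper likewise uses the conjugation structure to reduce $\tau(Q)=Q$ to the single identity $s_{n-1,n}\mu_n\mu_{n-1}(Q_{n-2})=Q_{n-2}$ with $Q_{n-2}=\mu_{n-2}\cdots\mu_1(Q)$, establishes an explicit inductive description of the arrows of each intermediate $Q_r$ (separately for middle--middle, middle--outer, middle--inner, outer--outer, inner--inner, and outer--inner arrows), and then verifies the two-mutation-plus-swap identity by a direct case check on $Q_{n-2}$. Your identification of the inductive normal form as the main workload is accurate; the paper handles it by splitting into the cases $1\in A$ versus $1\notin A$ (and similarly for $B$) and tracking the propagating zig-zag of arrows between the circles.
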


For proof, see Section~\ref{sec:proof-mutation-seq}.

For a spider web quiver, let $\alpha$ be the largest vertex on the outer circle connected to in middle circle and $\beta$ be the largest vertex on the inner circle connected to the middle circle.

Let $x_{[i]^-}:=\begin{cases} x_a & \text{if there is at least one vertex on the outer circle with an edge to an}\\ & \text{element of }\{1,...,i\},\text{ and the largest such vertex is }a, \\ 1 & \text{otherwise}.\end{cases}$

Let $x_{[i]^+}:=\begin{cases} x_b & \text{if there is at least one vertex on the inner circle with an edge to an}\\ & \text{element of }\{1,...,i\},\text{ and the largest such vertex is }b, \\ 1 & \text{otherwise}.\end{cases}$

Let $x_{[i]^-_*}:=\begin{cases} x_a & \text{if there is at least one vertex on the outer circle with an edge to an}\\ & \text{element of }\{1,...,i\},\text{ and the largest such vertex is }a, \\ x_\alpha & \text{otherwise}.\end{cases}$

Let $x_{[i]^+_*}:=\begin{cases} x_b & \text{if there is at least one vertex on the inner circle with an edge to an}\\ & \text{element of }\{1,...,i\},\text{ and the largest such vertex is }b, \\ x_\beta & \text{otherwise}.\end{cases}$

\begin{thm}\label{thm:x-vars}
For a spider web quiver $Q$, let $$\overline{x}=\frac{\sum_{j=1}^{n-1}\left(\left(\prod_{k=1}^{j-1} x_k\right)\left(\prod_{k=j+2}^n x_k\right)x_{[j]^-_*}x_{[j]^+_*}\right)+\left(\prod_{k=2}^{n-1} x_k\right)x_{[n]^-}x_{[n]^+}}{\prod_{k=1}^n x_k}.$$  Then, $\tau(x_i)=x_i\overline{x}.$
\end{thm}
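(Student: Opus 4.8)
The plan is to compute the cluster variables directly, following $x_1,\dots,x_n$ through the $2n-1$ steps of $\tau$ and invoking Proposition~\ref{prop:mutation-seq} to know that the underlying quiver is back to $Q$ at the end. Since $\tau$ only mutates or transposes vertices of the middle circle, the inner- and outer-circle variables $x_{i^{\pm}}$ are never changed; thus it suffices to track the middle variables, and each individual step replaces a single $x_i$ by $\bigl(\prod_{i\to j}x_j+\prod_{j\to i}x_j\bigr)/x_i$. So the whole argument reduces to maintaining, after each prefix of $\tau$, a precise enough description of the intermediate quiver to read off the arrows at the vertex about to be mutated.

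First I would treat the right half $\mu_n\mu_{n-1}\cdots\mu_1$ (mutating at $1$, then $2$, \dots, then $n$). The claim, proved by induction on $i$, is that after $\mu_i\cdots\mu_1$ the cycle arrows $1\!\to\!2,\dots,(i-1)\!\to\!i$ have been reversed, a single ``long'' arrow now joins the part of the quiver near vertex $1$ to vertex $i+1$, and the variable at vertex $i$ equals $P_i/x_i$, where $P_i=\sum_{j=1}^{i}\bigl(\prod_{k<j}x_k\bigr)\bigl(\prod_{j<k\le i}x_k\bigr)x_{[j]^-}x_{[j]^+}$ is the partial sum obtained by telescoping the mutation relation against the previous stage. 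The appearance of $x_{[j]^{-}}$ and $x_{[j]^{+}}$ is forced: at the $i$-th mutation, the incoming product $\prod_{j\to i}x_j$ sees exactly the most recently attached outer and inner neighbours among $\{1,\dots,i\}$ (and nothing, i.e.\ an empty product equal to $1$, if there is no such attachment). Establishing this invariant requires reading off from the spider-web axioms — each face oriented, exactly two cross-circle edges per face, at least three sides — which middle vertices are sources and which are sinks toward the inner and the outer circle; that local structural analysis is the technical core of this step.

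Next I would run the transposition $s_{n-1,n}$ and the left half $\mu_1\mu_2\cdots\mu_{n-2}$ (mutating at $n-2$, then $n-3$, \dots, then $1$). Since $\tau(Q)=Q$, these mutations must undo all the quiver changes of the right half; but each one again multiplies its variable by a freshly accumulated Laurent polynomial, and the same telescoping bookkeeping upgrades the partial sums $P_i$ into the full numerator of $\overline{x}$. The two apparent asymmetries of the statement are explained here: the exceptional last summand $\bigl(\prod_{k=2}^{n-1}x_k\bigr)x_{[n]^-}x_{[n]^+}$ records the single pass through vertices $n-1$ and $n$ (which, because of the swap, are mutated only once), and the starred quantities $x_{[j]^-_*}$, $x_{[j]^+_*}$ — reading $x_\alpha$, $x_\beta$ in place of the empty product $1$ — arise precisely because in the terms contributed after the swap the ``most recent cross-circle edge'' is found only by wrapping around past vertex $n$, so the default value there is $x_\alpha$ (resp.\ $x_\beta$) rather than $1$. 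Finally I would check that the contributions of the two halves assemble into a single common factor independent of $i$, giving $\tau(x_i)=x_i\overline{x}$, paying attention to the wrap-around indices and to degenerate middle vertices that have no inner or no outer neighbour.

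The main obstacle is the combinatorial bookkeeping: formulating an inductive invariant for the intermediate quivers that is at once strong enough to pin down the next mutation's incident arrows, stable under the relabelling caused by $s_{n-1,n}$, and compatible with the several case distinctions (empty product $1$ versus $x_\alpha/x_\beta$, and the behaviour at the wrap-around $n\to1$). Once the invariant is stated correctly the individual mutation computations are routine Laurent-polynomial manipulations; the real content is in getting that invariant — in particular, reconciling the starred and unstarred versions of $x_{[j]^{\pm}}$ — exactly right. I would finish with sanity checks: specializing to the triangular grid quiver to recover the whurl/geometric $R$-matrix factor of Example~\ref{ex:LP-edge}, and to a $3$-triangulation to match the Goncharov--Shen formulas.
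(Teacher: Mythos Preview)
Your outline is sound and the first half --- tracking the middle-circle variables through $\mu_1,\mu_2,\dots$ by an induction that simultaneously pins down the intermediate quiver and the new cluster variable --- is exactly what the paper does: Lemmas~\ref{lemma:edges-middle-middle}--\ref{lemma:edges-outer-inner} give the structural invariant you describe, and Proposition~\ref{prop:x-vars-half} is the resulting closed form for $\mu_{n-2}\cdots\mu_1(x_i)$ (your $P_i$, though the precise formula carries extra $x_{k,A},x_{k,B}$ factors you have suppressed). Where the paper diverges from your plan is in the second half. You intend to keep pushing the induction through $s_{n-1,n}$ and then $\mu_{n-2},\dots,\mu_1$, upgrading the partial sums to the full numerator of $\overline{x}$; this works but is laborious. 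The paper instead uses a cancellation shortcut: for $i\le n-2$ one has, since mutations are involutions, $\mu_i\cdots\mu_1\circ\tau=\mu_{i+1}\cdots\mu_{n-2}s_{n-1,n}\mu_n\cdots\mu_1$, and the right-hand side leaves the variable at vertex $i$ untouched after step $i$, so it equals $\mu_i\cdots\mu_1(x_i)$. Thus, since $\tau(Q)=Q$, one only needs to check that substituting the \emph{ansatz} $\widetilde{x}_k=x_k\overline{x}$ into the Proposition~\ref{prop:x-vars-half} formula reproduces that same formula --- a three-line homogeneity computation (the $\overline{x}$'s cancel because numerator and denominator are homogeneous of the same degree in the middle variables). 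The cases $i=n-1,n$ are then handled by two more explicit mutations. So your approach and the paper's agree on the inductive engine, but the paper trades your second-half bookkeeping for a short homogeneity check; your route would recover the same result at the cost of maintaining a more intricate invariant through the reverse pass.
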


For proof, see Section~\ref{sec:proof-x-vars}.

\begin{ex}\label{ex:LP-x}
Consider the quiver on the left of Figure~\ref{fig:spider-web-quiver-ex}.  In this case, we have
\begin{align*}
\overline{x}&=\frac{x_3x_4x_{[1]^-_*}x_{[1]^+_*}+x_1x_4x_{[2]^-_*}x_{[2]^+_*}+x_1x_2x_{[3]^-_*}x_{[3]^+_*}+x_2x_3x_{[4]^-}x_{[4]^+}}{x_1x_2x_3x_4}\\
&=\frac{x_3x_4x_{1^-}x_{1^+}+x_1x_4x_{2^-}x_{2^+}+x_1x_2x_{3^-}x_{3^+}+x_2x_3x_{4^-}x_{4^+}}{x_1x_2x_3x_4}
\end{align*}
So, the $x$-variables after applying $\tau$ are as follows:

$\displaystyle x_1'=\frac{x_3x_4x_{1^-}x_{1^+}+x_1x_4x_{2^-}x_{2^+}+x_1x_2x_{3^-}x_{3^+}+x_2x_3x_{4^-}x_{4^+}}{x_2x_3x_4}$

$\displaystyle x_2'=\frac{x_3x_4x_{1^-}x_{1^+}+x_1x_4x_{2^-}x_{2^+}+x_1x_2x_{3^-}x_{3^+}+x_2x_3x_{4^-}x_{4^+}}{x_1x_3x_4}$

$\displaystyle x_3'=\frac{x_3x_4x_{1^-}x_{1^+}+x_1x_4x_{2^-}x_{2^+}+x_1x_2x_{3^-}x_{3^+}+x_2x_3x_{4^-}x_{4^+}}{x_1x_2x_4}$

$\displaystyle x_4'=\frac{x_3x_4x_{1^-}x_{1^+}+x_1x_4x_{2^-}x_{2^+}+x_1x_2x_{3^-}x_{3^+}+x_2x_3x_{4^-}x_{4^+}}{x_1x_2x_3}$

All other $x$-variables remain the same because there are no mutations are the corresponding vertices.
\end{ex}

\begin{ex}\label{ex:GS-x}
Consider the quiver on the right of Figure~\ref{fig:spider-web-quiver-ex}.  In this case, we have

$\overline{x}=\frac{x_3x_4x_5x_6x_{1^-}x_{1^+}+x_1x_4x_5x_6x_{2^-}x_{1^+}+x_1x_2x_5x_6x_{3^-}x_{2^+}+x_1x_2x_3x_6x_{4^-}x_{2^+}+x_1x_2x_3x_4x_{5^-}x_{3^+}+x_2x_3x_4x_5x_{6^-}x_{3^+}}{x_1x_2x_3x_4x_5x_6}$

So, the $x$-variables after applying $\tau$ are as follows:

$x_1'=\frac{x_3x_4x_5x_6x_{1^-}x_{1^+}+x_1x_4x_5x_6x_{2^-}x_{1^+}+x_1x_2x_5x_6x_{3^-}x_{2^+}+x_1x_2x_3x_6x_{4^-}x_{2^+}+x_1x_2x_3x_4x_{5^-}x_{3^+}+x_2x_3x_4x_5x_{6^-}x_{3^+}}{x_2x_3x_4x_5x_6}$

$x_2'=\frac{x_3x_4x_5x_6x_{1^-}x_{1^+}+x_1x_4x_5x_6x_{2^-}x_{1^+}+x_1x_2x_5x_6x_{3^-}x_{2^+}+x_1x_2x_3x_6x_{4^-}x_{2^+}+x_1x_2x_3x_4x_{5^-}x_{3^+}+x_2x_3x_4x_5x_{6^-}x_{3^+}}{x_1x_3x_4x_5x_6}$

$x_3'=\frac{x_3x_4x_5x_6x_{1^-}x_{1^+}+x_1x_4x_5x_6x_{2^-}x_{1^+}+x_1x_2x_5x_6x_{3^-}x_{2^+}+x_1x_2x_3x_6x_{4^-}x_{2^+}+x_1x_2x_3x_4x_{5^-}x_{3^+}+x_2x_3x_4x_5x_{6^-}x_{3^+}}{x_1x_2x_4x_5x_6}$

$x_4'=\frac{x_3x_4x_5x_6x_{1^-}x_{1^+}+x_1x_4x_5x_6x_{2^-}x_{1^+}+x_1x_2x_5x_6x_{3^-}x_{2^+}+x_1x_2x_3x_6x_{4^-}x_{2^+}+x_1x_2x_3x_4x_{5^-}x_{3^+}+x_2x_3x_4x_5x_{6^-}x_{3^+}}{x_1x_2x_3x_5x_6}$

$x_5'=\frac{x_3x_4x_5x_6x_{1^-}x_{1^+}+x_1x_4x_5x_6x_{2^-}x_{1^+}+x_1x_2x_5x_6x_{3^-}x_{2^+}+x_1x_2x_3x_6x_{4^-}x_{2^+}+x_1x_2x_3x_4x_{5^-}x_{3^+}+x_2x_3x_4x_5x_{6^-}x_{3^+}}{x_1x_2x_3x_4x_6}$

$x_6'=\frac{x_3x_4x_5x_6x_{1^-}x_{1^+}+x_1x_4x_5x_6x_{2^-}x_{1^+}+x_1x_2x_5x_6x_{3^-}x_{2^+}+x_1x_2x_3x_6x_{4^-}x_{2^+}+x_1x_2x_3x_4x_{5^-}x_{3^+}+x_2x_3x_4x_5x_{6^-}x_{3^+}}{x_1x_2x_3x_4x_5}$

All other $x$-variables remain the same because there are no mutations are the corresponding vertices.
\end{ex}

\begin{thm}\label{thm:x-involution}
For a spider web quiver, $\tau$ is an involution on the $x$-variables.
\end{thm}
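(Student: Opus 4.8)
The plan is to obtain the involution property essentially for free from the explicit formula of Theorem~\ref{thm:x-vars}, by exploiting a homogeneity property of the common factor $\overline{x}$. First I would record that by Proposition~\ref{prop:mutation-seq} we have $\tau(Q)=Q$, so $\tau$ can be iterated and it suffices to prove $\tau^{2}(x_i)=x_i$ for every vertex $i$. The variables on the inner and outer circles need no argument: $\tau$ is assembled only from mutations at middle-circle vertices together with the transposition $s_{n-1,n}$ of two middle-circle vertices, and a seed mutation at $k$ changes only the variable $x_k$; hence $\tau$, and therefore $\tau^{2}$, fixes every inner- and outer-circle variable.

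The heart of the proof is a degree count for $\overline{x}$. Its denominator $\prod_{k=1}^{n}x_k$ is a monomial of degree $n$ in the middle-circle variables $x_1,\dots,x_n$, while each summand of its numerator is a monomial of degree exactly $n-2$ in those same variables: the products $\bigl(\prod_{k=1}^{j-1}x_k\bigr)\bigl(\prod_{k=j+2}^{n}x_k\bigr)$ and $\prod_{k=2}^{n-1}x_k$ each omit precisely two of the $n$ factors, and the remaining symbols $x_{[j]^{-}_{*}},x_{[j]^{+}_{*}},x_{[n]^{-}},x_{[n]^{+}}$ are, by their definitions, variables attached to the outer or inner circle (or the constant $1$), hence contribute nothing to the middle-circle degree. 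Consequently $\overline{x}$ is homogeneous of degree $-2$ in $x_1,\dots,x_n$ and of degree $0$ in all the other variables.

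Granting this, the conclusion is immediate. Applying $\tau$ a second time and using $\tau(x_i)=x_i\overline{x}$ for $1\le i\le n$ while $\tau$ fixes the inner/outer variables occurring in $\overline{x}$, the degree $-2$ homogeneity gives $\tau(\overline{x})=\overline{x}^{-2}\cdot\overline{x}=\overline{x}^{-1}$, whence $\tau^{2}(x_i)=\tau(x_i\overline{x})=(x_i\overline{x})\,\overline{x}^{-1}=x_i$, as desired. The only point requiring care is the homogeneity bookkeeping of the previous paragraph — in particular, verifying that none of the $x_{[\cdot]}$-symbols in $\overline{x}$ is secretly a middle-circle variable (by their definitions they are always outer/inner-circle variables or $1$, and here I would note that $x_{[n]^{-}}=x_\alpha$ and $x_{[n]^{+}}=x_\beta$ since the outer and inner circles each have an edge to the middle circle), and treating the empty products that arise for $j=1$ and $j=n-1$ as equal to $1$. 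I do not anticipate any genuine obstacle beyond this routine check.
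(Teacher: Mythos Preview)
Your proof is correct but takes a different route from the paper. The paper's argument is purely structural and does not use Theorem~\ref{thm:x-vars} at all: since each $\mu_k$ and the transposition $s_{n-1,n}$ are involutions, the composite $\mu_1\cdots\mu_n\, s_{n-1,n}\,\mu_{n-2}\cdots\mu_1$ followed by $\mu_1\cdots\mu_{n-2}\,s_{n-1,n}\,\mu_n\cdots\mu_1$ is the identity, and then the commutation relations $\mu_n s_{n-1,n}=s_{n-1,n}\mu_{n-1}$ and $\mu_{n-1}s_{n-1,n}=s_{n-1,n}\mu_n$ convert this into $\tau^2=\mathrm{id}$. Your approach instead leverages the explicit formula and the degree $-2$ homogeneity of $\overline{x}$ in the middle-circle variables --- precisely the mechanism the paper later uses to prove the braid relation in Theorem~\ref{thm:x-braid}. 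So the paper's version is logically lighter here (independent of the explicit computation of $\overline{x}$), while yours has the virtue of unifying the involution and braid proofs under a single homogeneity idea. One small quibble: your side remark that $\overline{x}$ has degree $0$ in the outer and inner variables is not quite accurate (each summand carries exactly one outer and one inner factor, so $\overline{x}$ has total degree $1$ in each family), but this plays no role in your argument since those variables are fixed by $\tau$ and the only homogeneity you actually use is the degree $-2$ in $x_1,\dots,x_n$.
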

\begin{proof}
We know that for each $k$, $\mu_k$ is an involution and also $s_{n-1,n}$ is an involution.  This means that for each $i$, $$\mu_1\mu_2...\mu_ns_{n-1,n}\mu_{n-2}\mu_{n-3}...\mu_1\mu_1\mu_2...\mu_{n-2}s_{n-1,n}\mu_n\mu_{n-1}...\mu_1(x_i)=x_i.$$  Since $\mu_ns_{n-1,n}=s_{n-1,n}\mu_{n-1}$ and $\mu_{n-1}s_{n-1,n}=s_{n-1,n}\mu_n$, we have $$\left(\mu_1\mu_2...\mu_{n-2}s_{n-1,n}\mu_n\mu_{n-1}...\mu_1\right)^2(x_i)=x_i.$$ 
\end{proof}

\begin{thm}\label{thm:x-braid}
Suppose we have a spider web quiver with any number of concentric circles.  Choose 2 adjacent circles.  If we apply $\tau$ to one circle, then the other, then the first again, we obtain the same $x$-variables as if we had applied $\tau$ to the second circle, then the first, then the second again.
\end{thm}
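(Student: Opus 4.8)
The plan is to reduce the braid relation to the explicit formula of Theorem~\ref{thm:x-vars} together with a homogeneity property of the factor appearing there, after which the statement collapses to a short computation with scalars. First, localize: applying $\tau$ at a circle $C$ mutates only the vertices lying on $C$, an $x$-mutation at a vertex changes only that vertex's variable, and the new values depend only on the quiver restricted to $C$ and its two neighbouring circles. Hence, for two adjacent circles $P$ and $R$, both triple composites $\tau_P\tau_R\tau_P$ and $\tau_R\tau_P\tau_R$ take place entirely among the four circles $O,P,R,S$, where $O$ is the neighbour of $P$ other than $R$ and $S$ the neighbour of $R$ other than $P$; we may therefore assume the spider web quiver consists of exactly these four circles. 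Write $u,v,o,s$ for the tuples of $x$-variables on $P,R,O,S$; then $o$ and $s$ are never mutated, and by Theorem~\ref{thm:x-vars} applying $\tau$ at $P$ is the map $(u,v)\mapsto(u\,\overline{x}^{P},v)$ and applying $\tau$ at $R$ is $(u,v)\mapsto(u,v\,\overline{x}^{R})$, where $\overline{x}^{P}=\overline{x}^{P}(u;o,v)$ and $\overline{x}^{R}=\overline{x}^{R}(v;u,s)$ are the common scalar factors of Theorem~\ref{thm:x-vars} (the same for every component of the circle being changed).

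\emph{Homogeneity of $\overline{x}$.} Reading the formula of Theorem~\ref{thm:x-vars}, each of the $n$ numerator summands of $\overline{x}$ carries exactly one corner factor from the outer neighbour ($x_{[j]^-_*}$, or $x_{[n]^-}$ in the last summand) and exactly one from the inner neighbour ($x_{[j]^+_*}$, or $x_{[n]^+}$), each to the first power, the remaining $n-2$ factors being variables of the circle itself, while the denominator $\prod_k x_k$ involves only the circle's own variables. Since adjacent circles of a spider web quiver are always joined by an edge, these corner factors are genuine variables, so $\overline{x}^{C}$ is homogeneous of degree $+1$ in the outer-neighbour variables, degree $+1$ in the inner-neighbour variables, and degree $-2$ in the variables of $C$ itself. (This is the bookkeeping behind $\tau_C$ being an involution, Theorem~\ref{thm:x-involution}.)

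\emph{The computation.} Now track the scalars through each composite. In $\tau_P\tau_R\tau_P$: the first $\tau_P$ gives $\big(u\,\overline{x}^{P}(u;o,v),\,v\big)$; the next $\tau_R$ multiplies $v$ by $\overline{x}^{R}\big(v;\,u\,\overline{x}^{P}(u;o,v),\,s\big)$, which by homogeneity of degree $+1$ in the outer variables equals $\overline{x}^{P}(u;o,v)\,\overline{x}^{R}(v;u,s)$; the last $\tau_P$ multiplies the current $u$-tuple by $\overline{x}^{P}$ evaluated at the rescaled $u$ and $v$, which by the degrees $-2$ (own) and $+1$ (inner) equals $\overline{x}^{R}(v;u,s)$. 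Collecting factors, $\tau_P\tau_R\tau_P$ multiplies every variable on $P$ and on $R$ by the single scalar $\overline{x}^{P}(u;o,v)\,\overline{x}^{R}(v;u,s)$ evaluated at the original $u,v$, and fixes $o,s$. The same argument with $P$ and $R$ interchanged shows $\tau_R\tau_P\tau_R$ does exactly the same thing (scalars commute), so the two composites coincide; this is the braid relation. As a sanity check, the common map squares to the identity, i.e.\ $(\tau_P\tau_R)^3=\mathrm{id}$, consistent with both $\tau_P,\tau_R$ being involutions.

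\emph{Main obstacle.} The only point needing care is the homogeneity reading: one must verify that the degrees $(+1,-2,+1)$ in (outer, own, inner) variables are \emph{uniform} across all $n$ numerator summands of $\overline{x}$ — including the anomalous-looking last summand ($j=n$) and the cases where the largest connected outer/inner vertex jumps — so that the scalars really pull through cleanly in the computation above; once this is pinned down the rest is routine scalar bookkeeping. If one preferred to avoid this, an alternative is to establish an edge-weighted analogue of Theorem~\ref{thm:coincidence}, identifying the $x$-dynamics of $\tau$ between two adjacent circles with the edge-weighted plabic $R$-matrix $T_e$ of the corresponding cylindric $2$-loop network under ``adjacent circles $\leftrightarrow$ adjacent strings'', and then to quote the braid relation for $T_e$, Theorem~\ref{thm:Te}(4).
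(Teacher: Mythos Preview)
Your proposal is correct and follows essentially the same approach as the paper. The paper computes directly that $\overline{b'}=\overline{a}\,\overline{b}$ and then $\overline{a''}=\overline{b}$ by substituting the rescaled variables into the formula of Theorem~\ref{thm:x-vars}; your homogeneity observation (degree $-2$ in own variables, $+1$ in each neighbour) is exactly the principle underlying those substitutions, just stated in abstract form rather than written out term by term.
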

\begin{proof}
Suppose the first circle has vertices labeled with variables $a_1,...,a_n$ and the second circle has vertices labeled with $b_1,...,b_m$.  Without loss of generality, we can assume the first circle is outside the second.  If we apply the involution to the first circle, the only variables that change are $a_1,...,a_n$, which get replaced with $\overline{a}a_1,...,\overline{a}a_n$.  We denote the new variables with primes, that is $a_i'=\overline{a}a_i$ and $b_i'=b_i$.

Next we apply the involution to the second circle.  This means $b_1',...,b_m'$ get replaced with $\overline{b'}b_1',...,\overline{b'}b_m'$.  Let's compute $\overline{b'}$.
\begin{align*}
\overline{b'}&=\frac{\sum_{j=1}^{m-1}\left(\left(\prod_{k=1}^{j-1} b_k'\right)\left(\prod_{k=j+2}^m b_k'\right)b_{[j]^-_*}'b_{[j]^+_*}'\right)+\left(\prod_{k=2}^{m-1} b_k'\right)b_{[m]^-}'b_{[m]^+}'}{\prod_{k=1}^m b_k'}\\
&=\frac{\sum_{j=1}^{m-1}\left(\left(\prod_{k=1}^{j-1} b_k\right)\left(\prod_{k=j+2}^m b_k\right)\overline{a}b_{[j]^-_*}b_{[j]^+_*}\right)+\left(\prod_{k=2}^{m-1} b_k\right)\overline{a}b_{[m]^-}b_{[m]^+}}{\prod_{k=1}^m b_k}\\
&=\overline{a}\overline{b}
\end{align*}
So, $\overline{b'}b_i'=\overline{a}\overline{b}b_i$.  We denote the new variables with additional primes, that is $a_i''=\overline{a}a_i$ and $b_i''=\overline{a}\overline{b}b_i$.

Finally, we apply the involution to the first circle again.  This means $a_1'',...,a_n''$ get replaced with $\overline{a''}a_1'',...,\overline{a''}a_n''$.  Let's compute $\overline{a''}$.
\begin{align*}
\overline{a''}&=\frac{\sum_{j=1}^{m-1}\left(\left(\prod_{k=1}^{j-1} a_k''\right)\left(\prod_{k=j+2}^m a_k''\right)a_{[j]^-_*}''a_{[j]^+_*}''\right)+\left(\prod_{k=2}^{m-1} a_k''\right)a_{[m]^-}''a_{[m]^+}''}{\prod_{k=1}^m a_k''}\\
&=\frac{\sum_{j=1}^{m-1}\left(\left(\prod_{k=1}^{j-1} \overline{a}a_k\right)\left(\prod_{k=j+2}^m \overline{a}a_k\right)a_{[j]^-_*}\overline{a}\overline{b}a_{[j]^+_*}\right)+\left(\prod_{k=2}^{m-1} \overline{a}a_k\right)a_{[m]^-}\overline{a}\overline{b}a_{[m]^+}}{\prod_{k=1}^m \overline{a}a_k}\\
&=\frac{\sum_{j=1}^{m-1}\left(\left(\prod_{k=1}^{j-1} a_k\right)\left(\prod_{k=j+2}^m a_k\right)a_{[j]^-_*}\overline{b}a_{[j]^+_*}\right)+\left(\prod_{k=2}^{m-1} a_k\right)a_{[m]^-}\overline{b}a_{[m]^+}}{\overline{a}\prod_{k=1}^m a_k}\\
&=\overline{b}
\end{align*}
So, $\overline{a''}a_i''=\overline{a}\overline{b}a_i$.

Thus, our variables after applying the involution three times are $\overline{a}\overline{b}a_1,...,\overline{a}\overline{b}a_n$ and $\overline{a}\overline{b}b_1,...,\overline{a}\overline{b}b_m$.  By symmetry, we can see that if we had applied the involution to the second circle, then the first, then the second again, we would have the same variables.
\end{proof}

Now we will discuss the $y$-dynamics of the quiver.  Define $y_0:=1$ for ease of notation.

\begin{thm}\label{thm:y-vars}
For any vertex $i^-$ connected to the middle circle, let $d_i$ be the minimal $j$ so that $i^-$ is connected to $j$ and $e_i$ be the maximal $j$ so that $i^-$ is connected to $j$.  Similarly, For any vertex $i^+$ connected to the middle circle, let $f_i$ be the minimal $j$ so that $i^+$ is connected to $j$ and $g_i$ be the maximal $j$ so that $i^+$ is connected to $j$.  If we apply $\tau$ to a spider web quiver $Q$, we obtain the following $y$-variables:
\begin{enumerate}[(1)]
\item If $1\leq i\leq n-1$, $$y_i'=\frac{\sum_{j=i}^n\prod_{k=i}^{j-1}y_k+\left(\prod_{k=i}^n y_k\right)\left(\sum_{j=0}^{i-2}\prod_{k=0}^j y_k\right)}{\sum_{j=i+1}^{n-1}\prod_{k=i+1}^j y_k+\left(\prod_{k=i+1}^n y_k\right)\left(\sum_{j=0}^{i}\prod_{k=0}^j y_k\right)}$$
\item $\displaystyle y_n'=\frac{1+y_n\sum_{j=0}^{n-2}\prod_{k=0}^j y_k}{\sum_{j=1}^{n-1}\prod_{k=1}^j y_k+\prod_{k=1}^n y_k}$
\item If there are no edges between $i^-$ and the middle circle, then $y_{i^-}'=y_{i^-}$
\item If $i$ is maximal so that there are edges between $i^-$ and the middle circle, then $$y_{i^-}'=y_{i^-}\frac{\left(\prod_{k=e_i+1}^n y_k\right)\left(\sum_{j=d_1}^{n-1}\prod_{k=0}^j y_k+\left(\prod_{k=0}^n y_k\right)\left(\sum_{j=0}^{d_1-1}\prod_{k=0}^j y_k\right)\right)}{\sum_{j=e_i+1}^n\prod_{k=e_i+1}^{j-1} y_k+\left(\prod_{k=e_i+1}^n y_k\right)\left(\sum_{j=0}^{e_i-1}\prod_{k=0}^j y_k\right)}$$
\item Otherwise, $$y_{i^-}'=y_{i^-}\frac{\sum_{j=e_i}^{n-1}\prod_{k=d_i+1}^j y_k+\left(\prod_{k=d_i+1}^n y_k\right)\left(\sum_{j=0}^{e_i-1}\prod_{k=0}^j y_k\right)}{\sum_{j=d_i+1}^n\prod_{k=d_i+1}^{j-1} y_k+\left(\prod_{k=d_i+1}^n y_k\right)\left(\sum_{j=0}^{d_i-1}\prod_{k=0}^j y_k\right)}$$
\item If there are no edges between $i^+$ and the middle circle, then $y_{i^+}'=y_{i^+}$
\item If $i$ is maximal so that there are edges between $i^+$ and the middle circle, then $$y_{i^+}'=y_{i^+}\frac{\left(\prod_{k=g_i+1}^n y_k\right)\left(\sum_{j=f_1}^{n-1}\prod_{k=0}^j y_k+\left(\prod_{k=0}^n y_k\right)\left(\sum_{j=0}^{f_1-1}\prod_{k=0}^j y_k\right)\right)}{\sum_{j=g_i+1}^n\prod_{k=g_i+1}^{j-1} y_k+\left(\prod_{k=g_i+1}^n y_k\right)\left(\sum_{j=0}^{g_i-1}\prod_{k=0}^j y_k\right)}$$
\item Otherwise, $$y_{i^+}'=y_{i^+}\frac{\sum_{j=g_i}^{n-1}\prod_{k=f_i+1}^j y_k+\left(\prod_{k=f_i+1}^n y_k\right)\left(\sum_{j=0}^{g_i-1}\prod_{k=0}^j y_k\right)}{\sum_{j=f_i+1}^n\prod_{k=f_i+1}^{j-1} y_k+\left(\prod_{k=f_i+1}^n y_k\right)\left(\sum_{j=0}^{f_i-1}\prod_{k=0}^j y_k\right)}$$
\end{enumerate}
\end{thm}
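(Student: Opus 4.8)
The plan is to prove Theorem~\ref{thm:y-vars} by direct computation, working through the mutation sequence $\tau=\mu_1\mu_2\cdots\mu_{n-2}\,s_{n-1,n}\,\mu_n\mu_{n-1}\cdots\mu_1$ one step at a time and applying the $y$-seed mutation rule of Definition~\ref{defn:seed-with-coeff-mutation}, parallel to the computation establishing Theorem~\ref{thm:x-vars}. Recall that the sequence mutates first at $1,2,\dots,n$ in increasing order, then performs the transposition $s_{n-1,n}$, and finally mutates at $n-2,n-3,\dots,1$ in decreasing order. Since $y$-mutation at a vertex $k$ replaces $y_k$ by $y_k^{-1}$ and multiplies every other $y_j$ by $(1+y_k)^{\#\{j\to k\}}$ or by $(1+y_k^{-1})^{-\#\{k\to j\}}$, the computation reduces to two tasks: (a) knowing the exchange quiver $Q^{(m)}$ obtained after the first $m$ mutations, so that all the arrow multiplicities in the rule are available; and (b) maintaining, by induction on $m$, a closed subtraction-free form for each $y$-variable that is stable under the next mutation. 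One could also try to deduce the $y$-dynamics from the $x$-dynamics of Theorem~\ref{thm:x-vars} via the $\hat y$-variables $\hat y_j=y_j\prod_i x_i^{b_{ij}}$, but this still requires tracking the intermediate exchange matrices $b^{(m)}$ through the sequence, which is essentially the same bookkeeping as (a).

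For (a) I would first establish, as a lemma proved by an easy induction using only Definition~\ref{defn:quiver-mutation}, the shape of the intermediate quivers: as one mutates $\mu_1,\mu_2,\dots$ along the middle circle, the middle cycle is successively ``flipped'', the currently mutated vertex acquires a chord to its neighbours on the inner and outer circles, and the arrows from the middle circle to the inner and outer circles are rerouted in a controlled, local way. Proposition~\ref{prop:mutation-seq} guarantees that at the end everything returns to $Q$, which is a convenient consistency check at each stage. The transposition $s_{n-1,n}$ only relabels two vertices, and the second block $\mu_{n-2},\dots,\mu_1$ admits a mirror-image description. With these intermediate quivers in hand, at the moment a vertex $k$ is mutated one knows exactly which $y_j$ get multiplied by which power of $1+y_k^{\pm1}$.

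For (b) I would choose the inductive invariant so that the ``partial-sum'' structure of the final answer is already visible halfway through the sequence. Iterating a string of multiplications by factors of the form $1+(\text{running product of }y\text{'s})^{\pm1}$ telescopes into the geometric-type sums $\sum_{j=i}^n\prod_{k=i}^{j-1}y_k$ and $\sum_{j=0}^{i-2}\prod_{k=0}^{j}y_k$ appearing in the statement (with the convention $y_0:=1$), and I would phrase the invariant so that after $\mu_1\cdots\mu_{i}$ the variable on vertex $i$, and on the relevant vertices $i^{\pm}$, already has this form; the remaining mutations and the swap then assemble items (1)--(8). The cases organize themselves naturally: (1)--(2) track the middle circle, with $y_n$ special because it is the last vertex of the cycle and the one involved in $s_{n-1,n}$, which accounts for its asymmetric formula; (3)--(5) track the outer circle, case (4) being the maximal outer vertex whose formula ``wraps around'' the whole cycle and hence contains the full product $\prod_{k=0}^{n}y_k$; and (6)--(8) are the mirror statements for the inner circle.

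The main obstacle will be the simultaneous bookkeeping in (a) and (b): each intermediate mutation alters both the arrow multiplicities near the mutated vertex and the shape of several $y$-variables at once, and one must verify that when vertex $k$ is reached the arrow set is precisely what the invariant predicts, so that exactly the right powers of $1+y_k^{\pm1}$ are introduced and each sum extends by exactly one term. A secondary subtlety is the wrap-around interaction between the maximal inner and outer vertices, the vertex $n$, and the transposition $s_{n-1,n}$, which is what breaks the symmetry in items (2), (4) and (7); I would isolate the last two mutations of each block and treat them by hand. Once the invariant is verified, collapsing the telescoped products into the stated closed forms is routine algebra, and the involution and braid properties of the $y$-dynamics then follow by arguments parallel to those of Theorem~\ref{thm:x-involution} and Theorem~\ref{thm:x-braid}.
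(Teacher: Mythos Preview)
Your proposal is correct and follows essentially the same route as the paper: the paper also establishes the intermediate quivers $Q_r$ by induction (Lemmas~\ref{lemma:edges-middle-middle}--\ref{lemma:edges-outer-inner}), then tracks the $y$-variables through the two halves of the mutation sequence by a chain of inductive lemmas (one for each block of mutations and for the transposition), with separate case analyses for the middle, outer, and inner circles. The only organizational detail you might anticipate is that the paper further splits the outer-circle computation according to whether $n\in A$ or $n\notin A$ (and likewise $n\in B$ for the inner circle), which is exactly the ``wrap-around'' distinction you flagged for items (4) and (7).
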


For proof, see Section~\ref{sec:proof-y-vars}.

\begin{thm}\label{thm:y-involution}
For a spider web quiver, $\tau$ is an involution on the $y$-variables.
\end{thm}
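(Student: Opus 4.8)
The plan is to run the same formal argument used in the proof of Theorem~\ref{thm:x-involution}: nothing there used any property of the $x$-variables beyond the facts that each mutation $\mu_k$ and the transposition $s_{n-1,n}$ are involutions and that mutation commutes with relabeling, and all of these facts hold equally well for $y$-seeds. Concretely, I would first record the three structural inputs. \emph{(i)} For every vertex $k$, the map $\mu_k$ is an involution on $y$-seeds $(Q,\mathbf{y})$; this is the standard fact about coefficient ($y$-) dynamics and follows directly from the formulas in Definition~\ref{defn:seed-with-coeff-mutation} (specialized as in Definition~\ref{defn:y-seed}): the sign flip $y_k\mapsto y_k^{-1}\mapsto y_k$ is immediate, and for $i\neq k$ one checks that, after the arrows incident to $k$ have been reversed by the quiver mutation, a second mutation at $k$ undoes the factor $(1+y_k^{-1})^{-m}$. \emph{(ii)} $s_{n-1,n}$, being a transposition, is an involution. \emph{(iii)} Mutation is natural with respect to relabeling: for any permutation $\sigma$, $\mu_{\sigma(k)}\circ\sigma=\sigma\circ\mu_k$ as operators on $y$-seeds. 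Specializing \emph{(iii)} to $\sigma=s_{n-1,n}$ and $k\in\{n-1,n\}$ gives $\mu_n\,s_{n-1,n}=s_{n-1,n}\,\mu_{n-1}$ and $\mu_{n-1}\,s_{n-1,n}=s_{n-1,n}\,\mu_n$.

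Next I would write $\tau$ as the word $W=\mu_1\mu_2\cdots\mu_{n-2}\,s_{n-1,n}\,\mu_n\mu_{n-1}\cdots\mu_1$ of involutions. Since every letter of $W$ is its own inverse, the reversed word $W^{-1}=\mu_1\mu_2\cdots\mu_{n-1}\mu_n\,s_{n-1,n}\,\mu_{n-2}\cdots\mu_2\mu_1$ satisfies $W\circ W^{-1}=\mathrm{id}$ on $y$-seeds, hence on the $y$-variables. Using the two identities from \emph{(iii)} one rewrites the central block of $W^{-1}$: from $\mu_n\,s_{n-1,n}=s_{n-1,n}\,\mu_{n-1}$ and $\mu_{n-1}\,s_{n-1,n}=s_{n-1,n}\,\mu_n$ we get $\mu_{n-1}\mu_n\,s_{n-1,n}=s_{n-1,n}\,\mu_n\mu_{n-1}$, so $W^{-1}=\mu_1\cdots\mu_{n-2}\,s_{n-1,n}\,\mu_n\mu_{n-1}\mu_{n-2}\cdots\mu_1=W$. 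Therefore $\tau^2=W\circ W=W\circ W^{-1}=\mathrm{id}$ on the $y$-variables. I would also invoke Proposition~\ref{prop:mutation-seq} to note that $\tau$ returns to the same quiver, so that the assertion ``$\tau$ is an involution on the $y$-variables of a spider web quiver'' is well posed (the $y$-tuples before and after are attached to the same quiver).

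I do not anticipate a genuine obstacle: the only point requiring care is input \emph{(i)}, that $y$-seed mutation is itself an involution, which one should either cite from the cluster-algebra literature on coefficient dynamics or verify directly from Definition~\ref{defn:seed-with-coeff-mutation}; input \emph{(iii)} is essentially tautological once one spells out how $\sigma$ acts on seeds. Everything else is the same manipulation of a word of involutions that underlies the proof of Theorem~\ref{thm:x-involution}, with the relabeling identities $\mu_n s_{n-1,n}=s_{n-1,n}\mu_{n-1}$ and $\mu_{n-1}s_{n-1,n}=s_{n-1,n}\mu_n$ again providing the rearrangement. As an optional sanity check one could confirm $\tau^2=\mathrm{id}$ directly on the explicit formulas of Theorem~\ref{thm:y-vars}, but this is not needed.
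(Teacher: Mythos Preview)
Your argument is correct, but it differs from the paper's route. The paper does not rerun the word-of-involutions computation on $y$-seeds; instead it invokes Theorem~4.1 of \cite{N}, which asserts that periodicity of $x$-variables and of $y$-variables under a mutation sequence are equivalent, and then simply cites Theorem~\ref{thm:x-involution}. Your approach is more self-contained: it observes that the proof of Theorem~\ref{thm:x-involution} uses nothing about $x$-dynamics beyond the facts that each $\mu_k$ and $s_{n-1,n}$ are involutions and that mutation is natural under relabeling, and these hold verbatim for $y$-seeds. The advantage of the paper's method is brevity and uniformity (the same citation also dispatches Theorem~\ref{thm:y-braid}); the advantage of yours is that it avoids an external reference and makes transparent that the argument is purely about the combinatorics of the word $\tau$, not about the specific $x$- or $y$-exchange formulas.
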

\begin{proof}
Theorem 4.1 of~\cite{N} tells us that periodicity of $x$- and $y$-variables are equivalent.  So, this follows from Theorem~\ref{thm:x-involution}.
\end{proof}

\begin{thm}\label{thm:y-braid}
Suppose we have a spider web quiver with any number of concentric circles.  Choose 2 adjacent circles.  If we apply $\tau$ to one circle, then the other, then the first again, we obtain the same $y$-variables as if we had applied $\tau$ to the second circle, then the first, then the second again.
\end{thm}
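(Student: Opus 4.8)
The plan is to deduce the $y$-variable braid relation from the already-established $x$-variable braid relation (Theorem~\ref{thm:x-braid}) by way of the equivalence between periodicity of $x$-dynamics and periodicity of $y$-dynamics, in exactly the spirit of how Theorem~\ref{thm:y-involution} was deduced from Theorem~\ref{thm:x-involution}. Write $\sigma_1$ for the operation ``apply $\tau$ to the first of the two chosen circles'' and $\sigma_2$ for ``apply $\tau$ to the second''; each $\sigma_i$ is a composition of quiver mutations together with the relabeling transpositions $s_{n-1,n}$ appearing in the definition of $\tau$. By Proposition~\ref{prop:mutation-seq} (in the form used already in Theorem~\ref{thm:x-braid}, for a chosen circle inside a spider web quiver with arbitrarily many circles), each $\sigma_i$ carries the spider web quiver back to itself, and by Theorems~\ref{thm:x-involution} and~\ref{thm:y-involution} each $\sigma_i$ is an involution on both $x$- and $y$-variables.

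First I would reformulate the braid relation as a periodicity statement. On $x$-variables, Theorem~\ref{thm:x-braid} gives $\sigma_1\sigma_2\sigma_1=\sigma_2\sigma_1\sigma_2$, and combined with $\sigma_i^2=\mathrm{id}$ this is equivalent to $(\sigma_1\sigma_2)^3=\mathrm{id}$ acting on $x$-variables: indeed $(\sigma_1\sigma_2)^3=(\sigma_1\sigma_2\sigma_1)(\sigma_2\sigma_1\sigma_2)=(\sigma_1\sigma_2\sigma_1)(\sigma_1\sigma_2\sigma_1)=\mathrm{id}$ using the braid identity and then $\sigma_1^2=\sigma_2^2=\mathrm{id}$. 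Conversely, once we know $(\sigma_1\sigma_2)^3=\mathrm{id}$ on $y$-variables, the involution property $\sigma_i^2=\mathrm{id}$ immediately yields $\sigma_1\sigma_2\sigma_1=(\sigma_2\sigma_1\sigma_2)^{-1}=\sigma_2\sigma_1\sigma_2$ on $y$-variables, which is the assertion of the theorem. So it suffices to transfer the periodicity $(\sigma_1\sigma_2)^3=\mathrm{id}$ from $x$- to $y$-dynamics.

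Next I would push the relabeling transpositions through the mutations (using $\pi\mu_k=\mu_{\pi(k)}\pi$ and the fact that a relabeling acts in the same way on $x$- and on $y$-labels) so as to write $(\sigma_1\sigma_2)^3=\rho\circ\mu$, where $\mu$ is a genuine mutation sequence and $\rho$ is an overall relabeling; since $(\sigma_1\sigma_2)^3$ fixes the (labeled) quiver and fixes every cluster variable by the previous paragraph, $\rho\circ\mu$ is a mutation-plus-relabeling self-map of the seed that is the identity on $x$-variables. Then I would invoke Theorem~4.1 of~\cite{N}, which says that for a mutation sequence preserving the exchange matrix, periodicity of the cluster variables is equivalent to periodicity of the coefficients; this gives that $(\sigma_1\sigma_2)^3$ is also the identity on $y$-variables, and unwinding the reformulation above completes the proof. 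As with Theorem~\ref{thm:y-involution}, one may alternatively bypass the $\rho$--$\mu$ splitting by noting that $\tau$ is designed (via the $s_{n-1,n}$) to fix the \emph{labeled} quiver on the nose, so $(\sigma_1\sigma_2)^3$ is literally a self-map of the labeled seed and Nakanishi's equivalence applies directly.

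The main obstacle I anticipate is precisely the bookkeeping around the relabeling transpositions: Theorem~4.1 of~\cite{N} is phrased for pure mutation sequences fixing the exchange matrix, so one must either verify that the relabelings can be commuted to one end harmlessly (and that the resulting $\rho$ acts trivially, which follows because a relabeling fixing all cluster variables of a generic seed must be trivial on the vertices involved) or cite a ``labeled periodicity'' version of the statement. Everything else is a formal consequence of Theorems~\ref{thm:x-braid}, \ref{thm:x-involution}, \ref{thm:y-involution} and the structure already in place; in particular no new computation with the explicit $y$-formulas of Theorem~\ref{thm:y-vars} should be needed.
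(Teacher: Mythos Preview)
Your proposal is correct and follows exactly the paper's approach: the paper's proof is the single line ``By Theorem 4.1 of~\cite{N}, this follows from Theorem~\ref{thm:x-braid},'' and your reformulation of the braid relation as the periodicity $(\sigma_1\sigma_2)^3=\mathrm{id}$ together with the bookkeeping on the transpositions is precisely the unpacking of that one-line argument. The concern you flag about relabelings versus pure mutation sequences is legitimate but, as you note, resolved by the fact that $\tau$ fixes the labeled quiver (Proposition~\ref{prop:mutation-seq}), so Nakanishi's equivalence applies directly.
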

\begin{proof}
By Theorem 4.1 of~\cite{N}, this follows from Theorem~\ref{thm:x-braid}.
\end{proof}

\begin{ex}\label{ex:LP-y}
Consider the quiver on the left of Figure~\ref{fig:spider-web-quiver-ex}.  In this case, we have:

$\displaystyle y_1'=\frac{1+y_1+y_1y_2+y_1y_2y_3}{y_2+y_2y_3+y_2y_3y_4+y_1y_2y_3y_4}$

$\displaystyle y_2'=\frac{1+y_2+y_2y_3+y_2y_3y_4}{y_3+y_3y_4+y_1y_3y_4+y_1y_2y_3y_4}$

$\displaystyle y_3'=\frac{1+y_3+y_3y_4+y_1y_3y_4}{y_4+y_1y_4+y_1y_2y_4+y_1y_2y_3y_4}$

$\displaystyle y_4'=\frac{1+y_4+y_1y_4+y_1y_2y_4}{y_1+y_1y_2+y_1y_2y_3+y_1y_2y_3y_4}$

$\displaystyle y_{1^-}'=\frac{y_{1^-}(y_2+y_2y_3+y_2y_3y_4+y_1y_2y_3y_4)}{1+y_2+y_2y_3+y_2y_3y_4}$

$\displaystyle y_{2^-}'=\frac{y_{2^-}(y_3+y_3y_4+y_1y_3y_4+y_1y_2y_3y_4)}{1+y_3+y_3y_4+y_1y_3y_4}$

$\displaystyle y_{3^-}'=\frac{y_{3^-}(y_4+y_1y_4+y_1y_2y_4+y_1y_2y_3y_4)}{1+y_4+y_1y_4+y_1y_2y_4}$

$\displaystyle y_{4^-}'=\frac{y_{4^-}(y_1+y_1y_2+y_1y_2y_3+y_1y_2y_3y_4)}{1+y_1+y_1y_2+y_1y_2y_3}$

$\displaystyle y_{1^+}'=\frac{y_{1^+}(y_2+y_2y_3+y_2y_3y_4+y_1y_2y_3y_4)}{1+y_2+y_2y_3+y_2y_3y_4}$

$\displaystyle y_{2^+}'=\frac{y_{2^+}(y_3+y_3y_4+y_1y_3y_4+y_1y_2y_3y_4)}{1+y_3+y_3y_4+y_1y_3y_4}$

$\displaystyle y_{3^+}'=\frac{y_{3^+}(y_4+y_1y_4+y_1y_2y_4+y_1y_2y_3y_4)}{1+y_4+y_1y_4+y_1y_2y_4}$

$\displaystyle y_{4^+}'=\frac{y_{1^+}(y_1+y_1y_2+y_1y_2y_3+y_1y_2y_3y_4)}{1+y_1+y_1y_2+y_1y_2y_3}$
\end{ex}

\begin{ex}\label{ex:GS-y}
Consider the quiver on the right of Figure~\ref{fig:spider-web-quiver-ex}.  In this case, we have:

$\displaystyle y_1'=\frac{1+y_1+y_1y_2+y_1y_2y_3+y_1y_2y_3y_4+y_1y_2y_3y_4y_5}{y_2+y_2y_3+y_2y_3y_4+y_2y_3y_4y_5+y_2y_3y_4y_5y_6+y_1y_2y_3y_4y_5y_6}$

$\displaystyle y_2'=\frac{1+y_2+y_2y_3+y_2y_3y_4+y_2y_3y_4y_5+y_2y_3y_4y_5y_6}{y_3+y_3y_4+y_3y_4y_5+y_3y_4y_5y_6+y_1y_3y_4y_5y_6+y_1y_2y_3y_4y_5y_6}$

$\displaystyle y_3'=\frac{1+y_3+y_3y_4+y_3y_4y_5+y_3y_4y_5y_6+y_1y_3y_4y_5y_6}{y_4+y_4y_5+y_4y_5y_6+y_1y_4y_5y_6+y_1y_2y_4y_5y_6+y_1y_2y_3y_4y_5y_6}$

$\displaystyle y_4'=\frac{1+y_4+y_4y_5+y_4y_5y_6+y_1y_4y_5y_6+y_1y_2y_4y_5y_6}{y_5+y_5y_6+y_1y_5y_6+y_1y_2y_5y_6+y_1y_2y_3y_5y_6+y_1y_2y_3y_4y_5y_6}$

$\displaystyle y_5'=\frac{1+y_5+y_5y_6+y_1y_5y_6+y_1y_2y_5y_6+y_1y_2y_3y_5y_6}{y_6+y_1y_6+y_1y_2y_6+y_1y_2y_3y_6+y_1y_2y_3y_4y_6+y_1y_2y_3y_4y_5y_6}$

$\displaystyle y_6'=\frac{1+y_6+y_1y_6+y_1y_2y_6+y_1y_2y_3y_6+y_1y_2y_3y_4y_6}{y_1+y_1y_2+y_1y_2y_3+y_1y_2y_3y_4+y_1y_2y_3y_4y_5+y_1y_2y_3y_4y_5y_6}$

$\displaystyle y_{1^-}'=\frac{y_{1^-}(y_2+y_2y_3+y_2y_3y_4+y_2y_3y_4y_5+y_2y_3y_4y_5y_6+y_1y_2y_3y_4y_5y_6)}{1+y_2+y_2y_3+y_2y_3y_4+y_2y_3y_4y_5+y_2y_3y_4y_5y_6}$

$\displaystyle y_{2^-}'=\frac{y_{2^-}(y_3+y_3y_4+y_3y_4y_5+y_3y_4y_5y_6+y_1y_3y_4y_5y_6+y_1y_2y_3y_4y_5y_6)}{1+y_3+y_3y_4+y_3y_4y_5+y_3y_4y_5y_6+y_1y_3y_4y_5y_6}$

$\displaystyle y_{3^-}'=\frac{y_{3^-}(y_4+y_4y_5+y_4y_5y_6+y_1y_4y_5y_6+y_1y_2y_4y_5y_6+y_1y_2y_3y_4y_5y_6)}{1+y_4+y_4y_5+y_4y_5y_6+y_1y_4y_5y_6+y_1y_2y_4y_5y_6}$

$\displaystyle y_{4^-}'=\frac{y_{4^-}(y_5+y_5y_6+y_1y_5y_6+y_1y_2y_5y_6+y_1y_2y_3y_5y_6+y_1y_2y_3y_4y_5y_6)}{1+y_5+y_5y_6+y_1y_5y_6+y_1y_2y_5y_6+y_1y_2y_3y_5y_6}$

$\displaystyle y_{5^-}'=\frac{y_{5^-}(y_6+y_1y_6+y_1y_2y_6+y_1y_2y_3y_6+y_1y_2y_3y_4y_6+y_1y_2y_3y_4y_5y_6)}{1+y_6+y_1y_6+y_1y_2y_6+y_1y_2y_3y_6+y_1y_2y_3y_4y_6}$

$\displaystyle y_{6^-}'=\frac{y_{6^-}(y_1+y_1y_2+y_1y_2y_3+y_1y_2y_3y_4+y_1y_2y_3y_4y_5+y_1y_2y_3y_4y_5y_6)}{1+y_1+y_1y_2+y_1y_2y_3+y_1y_2y_3y_4+y_1y_2y_3y_4y_5}$

$\displaystyle y_{1^+}'=\frac{y_{1^+}(y_2y_3+y_2y_3y_4+y_2y_3y_4y_5+y_2y_3y_4y_5y_6+y_1y_2y_3y_4y_5y_6+y_1y_2^2y_3y_4y_5y_6)}{1+y_2+y_2y_3+y_2y_3y_4+y_2y_3y_4y_5+y_2y_3y_4y_5y_6}$

$\displaystyle y_{2^+}'=\frac{y_{2^+}(y_4y_5+y_4y_5y_6+y_1y_4y_5y_6+y_1y_2y_4y_5y_6+y_1y_2y_3y_4y_5y_6+y_1y_2y_3y_4^2y_5y_6)}{1+y_4+y_4y_5+y_4y_5y_6+y_1y_4y_5y_6+y_1y_2y_4y_5y_6}$

$\displaystyle y_{3^+}'=\frac{y_{3^+}(y_1y_6+y_1y_2y_6+y_1y_2y_3y_6+y_1y_2y_3y_4y_6+y_1y_2y_3y_4y_5y_6+y_1y_2y_3y_4y_5y_6^2)}{1+y_6+y_1y_6+y_1y_2y_6+y_1y_2y_3y_6+y_1y_2y_3y_4y_6}$
\end{ex}

\section{Coincidence Between R-matrix and Mutation Sequence}\label{sec:R-mat-mutation}

Recall in Section~\ref{sec:R-mat} that we labeled the faces in the left column of a cylindric 2-loop plabic network $a_1,...,a_\ell$ starting above the trail and going up.  We labeled the faces on the right $b_1,...,b_m$ and the faces in the center $c_1,...,c_{n-1},c_n=\frac{1}{a_1...a_\ell b_1...b_mc_1...c_{n-1}}$.

\begin{prop}
Let $Q$ be the dual quiver to a cylindric 2-loop plabic network.  Label the face and trail weights of the plabic network as in Section~\ref{sec:R-mat}.  Let $c_0=1$ for ease of notation.  Setting each $y$-variable equal to the corresponding face weight and applying $\tau$ yields the following $y$-variables:
\begin{enumerate}[(1)]
\item For $i\leq n$, $$y_i=c_i\frac{\left(\prod_{k=1}^{i-1} c_k\right)\left(\prod_{k=1}^\ell a_k\right)\left(\prod_{k=1}^m b_k\right)\left(\sum_{j=i}^n \prod_{k=i}^{j-1} c_k\right)+\sum_{j=0}^{i-2}\prod_{k=0}^j c_k}{\left(\prod_{k=1}^{i+1} c_k\right)\left(\prod_{k=1}^\ell a_k\right)\left(\prod_{k=1}^m b_k\right)\left(\sum_{j=i+2}^n\prod_{k=i+2}^{j-1} c_k\right)+\sum_{j=0}^i \prod_{k=0}^j c_k}$$
\item $\displaystyle y_n=\frac{\left(\prod_{k=1}^{n-1} c_k\right)\left(\prod_{k=1}^\ell a_k\right)\left(\prod_{k=1}^m b_k\right)+\sum_{j=0}^{n-2}\prod_{k=0}^j c_k}{\left(\prod_{k=1}^{n-1} c_k\right)\left(\prod_{k=1}^\ell a_k\right)\left(\prod_{k=1}^m b_k\right)\left(\sum_{j=1}^{n-1}\prod_{k=1}^j c_k\right)+\prod_{k=1}^k c_k}$
\item If $i$ is maximal, $$y_{i^-}=\frac{\sum_{j=0}^{d_1-1}\prod_{k=0}^j c_k+\left(\prod_{k=1}^\ell a_k\right)\left(\prod_{k=1}^m b_k\right)\left(\sum_{j=d_1}^{n-1} \prod_{k=0}^j c_k\right)}{\left(\prod_{k=1}^\ell a_k\right)\left(\prod_{k=1}^m b_k\right)\left(\sum_{j=0}^{n-1} \prod_{k=0}^j c_k\right)}$$
\item If $i$ is second-largest, $$y_{i^-}=\frac{\sum_{j=0}^{n-1} \prod_{k=0}^j c_k}{\left(\prod_{k=1}^\ell a_k\right)\left(\prod_{k=1}^m b_k\right)\left(\sum_{j=d_i+1}^n \prod_{k=1}^{j-1} c_k\right)+\sum_{j=0}^{d_i-1}\prod_{k=1}^j c_k}$$
\item For other $i$, $$y_{i^-}=\frac{\left(\prod_{k=1}^{n-1} c_k\right)\left(\prod_{k=1}^\ell a_k\right)\left(\prod_{k=1}^m b_k\right)\left(\sum_{j=e_i+1}^n\prod_{k=d_i+1}^{j-1}c_k\right)+\left(\prod_{k=d_i+1}^{n-1} c_k\right)\left(\sum_{j=0}^{e_i-1} \prod_{k=0}^j c_k\right)}{\left(\prod_{k=1}^{n-1} c_k\right)\left(\prod_{k=1}^\ell a_k\right)\left(\prod_{k=1}^m b_k\right)\left(\sum_{j=d_i+1}^n \prod_{k=d_i+1}^{j-1}c_k\right)+\left(\prod_{k=d_i+1}^{n-1} c_k\right)\left(\sum_{j=0}^n\prod_{k=d_i-1}^j c_k\right)}$$
\item If $i$ is maximal and $n\in B$, $$y_{i^+}=\frac{1+\left(\prod_{k=1}^\ell a_k\right)\left(\prod_{k=1}^m b_k\right)\left(\sum_{j=1}^{n-1} \prod_{k=0}^j c_k\right)}{\left(\prod_{k=1}^\ell a_k\right)\left(\prod_{k=1}^m b_k\right)\left(\sum_{j=0}^{n-1} \prod_{k=0}^j c_k\right)}$$
\item If $i$ is second largest and $n\in B$, $$y_{i^+}=\frac{\sum_{j=0}^{n-1}\prod_{k=0}^s c_k}{\left(\prod_{k=1}^\ell a_k\right)\left(\prod_{k=1}^m b_k\right)\left(\sum_{j=f_i+1}^n\prod_{k=1}^{j-1} c_k\right)+\sum_{j=0}^{f_i-1}\prod_{k=1}^s c_k}$$
\item If $i$ is maximal and $n\not\in B$, $$y_{i^+}=\frac{1+\left(\prod_{k=1}^\ell a_k\right)\left(\prod_{k=1}^m b_k\right)\left(\sum_{j=1}^{n-1} \prod_{k=0}^j c_k\right)}{\left(\prod_{k=1}^\ell a_k\right)\left(\prod_{k=1}^m b_k\right)\left(\left(\prod_{k=1}^\ell a_k\right)\left(\prod_{k=1}^m b_k\right)\left(\sum_{j=g_i}^{n-1} \prod_{k=0}^j c_k\right)+\sum_{j=0}^{g_i-1}\prod_{k=0}^j c_k\right)}$$
\item For other $i$, $$y_{i^+}=\frac{\left(\prod_{k=1}^{n-1} c_k\right)\left(\prod_{k=1}^\ell a_k\right)\left(\prod_{k=1}^m b_k\right)\left(\sum_{j=g_i+1}^n\prod_{k=f_i+1}^{j-1}c_k\right)+\left(\prod_{k=f_i+1}^{n-1} c_k\right)\left(\sum_{j=0}^{g_i-1} \prod_{k=0}^j c_k\right)}{\left(\prod_{k=1}^{n-1} c_k\right)\left(\prod_{k=1}^\ell a_k\right)\left(\prod_{k=1}^m b_k\right)\left(\sum_{j=f_i+1}^n\prod_{k=f_i+1}^{j-1}c_k\right)+\left(\prod_{j=f_i+1}^{n-1} c_k\right)\left(\sum_{j=0}^n\prod_{k=f_i-1}^j c_k\right)}$$
\end{enumerate}
\end{prop}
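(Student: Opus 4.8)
The plan is to derive the proposition as a specialization of the general spider web $y$-variable formulas in Theorem~\ref{thm:y-vars}, using only one extra fact: the product of all face weights of a plabic network is $1$, so that $\left(\prod_{k=1}^\ell a_k\right)\left(\prod_{k=1}^m b_k\right)\left(\prod_{k=1}^n c_k\right)=1$.

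First I would spell out the identification of the dual quiver $Q$ of a cylindric $2$-loop plabic network with a spider web quiver. In the expanded network with the canonical orientation the faces fall into the three families $a_1,\dots,a_\ell$, $c_1,\dots,c_n$, $b_1,\dots,b_m$, and I would check that under the standard dual-quiver construction the $c_i$ form the middle circle (oriented counterclockwise as required by Definition~\ref{defn:spider-web-quiver}), the $a_j$ the outer circle, and the $b_j$ the inner circle, with indices matching the cyclic orders of Section~\ref{sec:R-mat}. Under this dictionary ``$a_j$ associated to $i$'' means precisely that the outer vertex of $a_j$ is joined to middle vertex $i$, and likewise for $b_j$, so the data $d_i,e_i,f_i,g_i$ of Theorem~\ref{thm:y-vars} translate directly into the ``associated to'' ranges of the faces. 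Setting each $y$-variable equal to the corresponding face weight then means $y_i=c_i$ for $1\le i\le n$ (with $y_0=c_0=1$), $y_{i^-}$ equal to the left face labeled by $i^-$, and $y_{i^+}$ equal to the right face labeled by $i^+$.

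Next I would substitute these values into each of the nine formulas of Theorem~\ref{thm:y-vars}. The recurring simplification is that a ``long'' product $\prod_{k=i}^{n}y_k$ occurring there becomes $\prod_{k=i}^{n}c_k$, which by the face-product relation equals $\big(\prod_{k=1}^\ell a_k\big)^{-1}\big(\prod_{k=1}^m b_k\big)^{-1}\big(\prod_{k=1}^{i-1}c_k\big)^{-1}$; clearing the resulting reciprocals by multiplying numerator and denominator by the appropriate monomial (for case~(1), $\big(\prod_k a_k\big)\big(\prod_k b_k\big)\prod_{k=1}^{i}c_k$) turns the expression into exactly the form stated in the proposition, with the prefactor $c_i$, $y_{i^-}$, or $y_{i^+}$ surviving intact. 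For the vertices $i^\pm$ not joined to the middle circle, Theorem~\ref{thm:y-vars}(3) and (6) already give $y_{i^\pm}'=y_{i^\pm}$, matching the claim. The remaining cases split according to whether the relevant outer or inner vertex is the last one joined to the middle circle, or the next-to-last; this is the source of the separate items (3)--(5) and (6)--(9), since the wrap-around of the center face $c_n$ forces the minor variations among them.

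The obstacle is bookkeeping rather than anything conceptual: I must keep the index ranges consistent across all nine cases while juggling the cyclic conventions on $c_1,\dots,c_n$, the $d_i,e_i,f_i,g_i$, and the ``associated to'' ranges, and I must confirm that the ``maximal'' and ``second-largest'' subcases land on precisely the stated forms. I would minimize this by proving the generic middle-circle case~(1) and the generic outer case~(5) in full, then deducing (2) from (1), the endpoint cases (3) and (4) from (5) by tracking how $c_n$ enters, and the inner-circle cases (6)--(9) from (5) and its analogues via the left--right symmetry of the spider web quiver.
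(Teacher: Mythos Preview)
Your approach is essentially the same as the paper's: the paper's proof is the single sentence ``Notice that because of the way the faces were numbered, $n\in A$ and $f_1=1$ for any such quiver. Then we can find the formulas for the $y$-variables by computation and Theorem~\ref{thm:y-vars}.'' You are spelling out exactly that computation, including the key simplification via $\prod_k a_k\prod_k b_k\prod_k c_k=1$. The one ingredient you should make explicit is the observation $n\in A$ (forced by the face labeling of Section~\ref{sec:R-mat}), since this is why the outer-circle cases (3)--(5) do not need a further split on whether $n\in A$, unlike the inner-circle cases (6)--(9) which genuinely depend on whether $n\in B$.
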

\begin{proof}
Notice that because of the way the faces were numbered, $n\in A$ and $f_1=1$ for any such quiver.  Then we can find the formulas for the $y$-variables by computation and Theorem~\ref{thm:y-vars}.
\end{proof}

\begin{thm}\label{thm:coincidence}
Let $Q$ be the dual quiver to a cylindric 2-loop plabic network.  Label the face and trail weights of the plabic network as in Section~\ref{sec:R-mat}.  If we set each $y$-variable equal to the corresponding face weight and apply $\tau$, the $y$-variables we obtain are the same as the face variables with the transformation $T_f$ applied to them.
\end{thm}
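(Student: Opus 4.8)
The plan is to prove the identity by a direct comparison of two explicit families of rational functions: the $y$-variables produced by $\tau$, which are already written out in terms of the face and trail weights in the Proposition immediately preceding the theorem, and the face variables $a_i',b_i',c_i'$ produced by $T_f$ (Definition~\ref{defn:Tf}), once the quantities $\widehat{\lambda}_i(a,b,c)$ appearing in the latter are expanded into closed form.

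First I would fix the dictionary between the dual quiver $Q$ and the labelled faces of the cylindric 2-loop plabic network. Since the faces of the network fall into the three families $a_1,\dots,a_\ell$ (left), $c_1,\dots,c_n$ (center), $b_1,\dots,b_m$ (right), the dual quiver $Q$ is a spider web quiver with three concentric circles: the middle circle carries the center faces, and the two remaining circles carry the $a$- and $b$-faces. Under this identification the quantities $d_i,e_i$ and $f_i,g_i$ of Theorem~\ref{thm:y-vars} become the extreme indices of the center faces ``associated to'' a given outer face in the sense of Section~\ref{sec:R-mat}, and the remark in the preceding Proposition's proof ($n\in A$ and $f_1=1$) records the normalization of the indexing forced by the placement of the trail. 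I would also translate $\alpha_i,\beta_i,A_i,B_i$ from Definition~\ref{defn:Te} into the language of the ``associated to'' relation, so that the $T_f$ formulas and the $\tau$ formulas are written over the same index sets.

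Second I would compute $\widehat{\lambda}_i(a,b,c)$ in closed form. By definition $\widehat{\lambda}_i(a,b,c)=\lambda_i(x,y,z)$ with $x,y,z$ the edge weights obtained from $a,b,c$ via Lemma~\ref{lemma:face-to-edge}; each of those edge weights is a monomial in the face and trail weights, and $\lambda_i$ is the sum over the finitely many monotone paths crossing one fundamental domain, so $\widehat{\lambda}_i$ expands as an explicit sum of monomials in the $a_k,b_k,c_k$ of exactly the shape visible in Examples~\ref{ex:LP-face} and~\ref{ex:GS-face}. Substituting these expansions into Definition~\ref{defn:Tf}, cancelling the product factors $\prod b_r$ and $\prod a_r$ against the matching factors inside the $\widehat{\lambda}$'s, and comparing term by term with the nine parts of the preceding Proposition then yields the claimed equality; since that Proposition has already rewritten Theorem~\ref{thm:y-vars} entirely in face-weight language, this last step is a matching of rational expressions rather than a fresh computation.

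The main obstacle is the bookkeeping in the second step: producing the closed form for $\widehat{\lambda}_i$ and checking that the ``associated to'' combinatorics on the plabic side reproduces the index ranges ($e_i$, $d_i+1$, $g_i$, $f_i+1$, and so on) on the quiver side, including all the boundary cases ($i$ maximal, $i$ second-largest, $n\in B$ versus $n\notin B$) that occur in both statements. A conceptually cleaner but not obviously shorter alternative would be to invoke uniqueness: Theorem~\ref{thm:Tf}(3) says that $(a',b',c',t)$ is the only reweighting of a fixed cylindric 2-loop plabic graph that preserves boundary measurements and fixes the trail weight, so it would suffice to check that the $\tau$-transformed face weights also preserve boundary measurements and fix $t$; but verifying the boundary-measurement invariance of the cluster $y$-dynamics directly appears to require essentially the same network analysis, so I would carry out the direct comparison.
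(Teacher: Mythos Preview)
Your proposal is correct and follows essentially the same approach as the paper: compute a closed form for $\widehat{\lambda}_i(a,b,c)$ as a sum of $n$ monomials in the face weights (indexed by which slanted edge the path uses to cross), substitute this into the $T_f$ formulas, and compare with the Proposition immediately preceding the theorem. The paper's proof is just a terse execution of exactly this plan, so your identification of the bookkeeping in the second step as the main obstacle is accurate.
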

\begin{proof}
Let's investigate $\widehat{\lambda}_i(a,b,c)$.  There are $n$ terms in this sum; each one crosses from the left string to the right string at a different edge.  If we first calculate the term where we go across as soon as possible, then our first term is $$\left(\prod_{k=1}^i c_k\right)\left(\prod_{k=1}^m b_k\right)\left(\prod_{b_k\text{ assoc. to }j<i} b_k\right)\left(\prod_{a_k\text{ assoc. to }j\leq i} a_k\right).$$  If we compute the rest of our terms, each time crossing one slanted edge later, then each time we pick up one additional face variable.  First we pick up $c_{i+1}$, then $c_{i+2}$, all the way throughout $c_n$, and then we cycle back to the beginning and pick up $c_1$, then $c_2$, up to $c_{i-1}$.  Since multiplying by $c_n$ is the same as dividing by $a_1...a_\ell b_1...b_mc_1...c_{n-1}$, we get the following expression for $\widehat{\lambda}_i(a,b,c)$: $$\left(\prod_{k=1}^i c_k\right)\left(\prod_{k=1}^m b_k\right)\left(\prod_{b_k\text{ assoc. to }j<i} b_k\right)\left(\prod_{a_k\text{ assoc. to }j\leq i} a_k\right)\left(\sum_{j=i+1}^{n}\prod_{k=i+1}^{j-1} c_k\right)+$$$$\frac{\left(\prod_{b_k\text{ assoc. to }j<i} b_k\right)\left(\sum_{j=0}^{i-1}\prod_{k=0}^j c_k\right)}{\left(\prod_{b_k\text{ assoc. to }j> i} a_k\right)}$$  Substituting this into our expressions for the face variables under $T_f$ and using the previous proposition proves the theorem.
\end{proof}

\begin{ex}\label{ex:LP-coincidence}
Notice that the quiver in Example~\ref{ex:LP-y} is dual to the plabic graph in Example~\ref{ex:LP-face}.  We can see that the formulas we have calculated are the same if we make the following substitutions:

$y_1=c_1$, $y_2=c_2$, $y_3=c_3$, $y_4=\frac{1}{a_1a_2a_3a_4b_1b_2b_3b_4c_1c_2c_3}$

$y_{1^-}=a_2$, $y_{2^-}=a_3$, $y_{3^-}=a_4$, $y_{4^-}=a_1$

$y_{1^+}=b_1$, $y_{2^+}=b_2$, $y_{3^+}=b_3$, $y_{4^+}=b_4$
\end{ex}

\begin{ex}\label{ex:GS-coincidence}
Notice that the quiver in Example~\ref{ex:GS-y} is dual to the plabic graph in Example~\ref{ex:GS-face}.  We can see that the formulas we have calculated are the same if we make the following substitutions:

$y_1=c_1$, $y_2=c_2$, $y_3=c_3$, $y_4=c_4$, $y_5=c_5$, $y_6=\frac{1}{a_1a_2a_3a_4a_5a_6b_1b_2b_3c_1c_2c_3c_4c_5}$

$y_{1^-}=a_2$, $y_{2^-}=a_3$, $y_{3^-}=a_4$, $y_{4^-}=a_5$, $y_{5^-}=a_6$, $y_{6^-}=a_1$

$y_{1^+}=b_1$, $y_{2^+}=b_2$, $y_{3^+}=b_3$
\end{ex}

\section{Postnikov Diagram Proofs}\label{sec:AltStr}

\subsection{Proof of Theorem~\ref{thm:alt-str-reduced-gen}}\label{sec:AltStr-reduced}

\begin{defn}[Section 13 of~\cite{P}]\label{defn:trip}
For a plabic graph $G$, a \emph{trip} is a walk in $G$ that turns right at each black vertex and left at each white vertex.
\end{defn}

\begin{defn}[Section 13 of~\cite{P}]\label{defn:round-trip}
A trip in a plabic graph is a \emph{round trip} if it is a closed cycle.
\end{defn}

\begin{defn}[Section 13 of~\cite{P}]\label{defn:essential-int}
Two trips in a plabic graph have an \emph{essential intersection} if there is an edge $e$ with vertices of different colors such that the two trips pass through $e$ in different directions.  A trip in a plabic graph has an \emph{essential self-intersection} if there is an edge $e$ with vertices of different colors such that the trip passes through $e$ in different directions.
\end{defn}

\begin{defn}[Section 13 of~\cite{P}]\label{defn:bad-double-crossing}
Two trips in a plabic graph have an \emph{bad double crossing} if they have essential intersections at edges $e_1$ and $e_2$ where both trips are directed from $e_1$ to $e_2$.
\end{defn}

\begin{thm}\label{thm:reduced-trips}
Let $G$ be a leafless reduced plabic graph on a cylinder without isolated components.  We will consider $\widetilde{G}$ to be $G$ drawn on the universal cover of the cylinder.  Then $G$ is reduced if and only if the following are true:
\begin{enumerate}[(1)]
\item There are no round trips in $\widetilde{G}$.

\item $\widetilde{G}$ has no trips with essential self-intersections.

\item There are no pairs of trips in $\widetilde{G}$ with a bad double crossing.

\item If a trip begins and ends at the same boundary vertex, then either $\widetilde{G}$ has a boundary leaf at that vertex.
\end{enumerate}
\end{thm}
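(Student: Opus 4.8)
The plan is to adapt Postnikov's characterization of reduced plabic graphs in the disk (Section 13 of \cite{P}) by carrying the argument out on the universal cover $\widetilde{G}$, which is a plabic graph on the infinite strip $\R\times[0,1]$ equipped with a free $\Z$-action by deck transformations. Since the strip embeds in the plane, the Jordan curve theorem and Postnikov's ``innermost region'' arguments remain available; the only genuinely new bookkeeping is to ensure that every move and reduction produced upstairs is supported in a compact region, so that it descends to an honest move or reduction of $G$.

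The first step is to check that conditions (1)--(4) are invariant under the moves (M1)--(M3). Each such move is local and lifts $\Z$-equivariantly to $\widetilde{G}$, and it changes the family of trips---viewed as strands with their crossing pattern at bicolored edges---only by a local isotopy together with, in the case of (M1), a Yang--Baxter-type rerouting of three strands; by the same local analysis as in \cite{P}, such changes neither create nor destroy round trips, essential self-intersections, or bad double crossings. Hence $\widetilde{G}$ satisfies (1)--(4) if and only if $\widetilde{G'}$ does whenever $G$ and $G'$ are move-equivalent, so it suffices to work with convenient representatives.

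For the implication ``(1)--(4) $\Rightarrow$ $G$ reduced'' I would argue by contradiction. If $G$ were not reduced it would be move-equivalent to some $G'$ admitting (R1) or (R2), and by the previous step $\widetilde{G'}$ would still satisfy (1)--(4). But a non-boundary leaf (the input of (R2)) is incident to a single bicolored edge that the trip through it traverses in both directions, an essential self-intersection contradicting (2); and a pair of parallel bicolored edges (the input of (R1)) bounds a bigon whose two boundary strands form either a bad double crossing, contradicting (3), or---if they are the same strand---an essential self-intersection, contradicting (2). This establishes reducedness.

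For the converse, ``$G$ reduced $\Rightarrow$ (1)--(4)'', I would again argue by contradiction, handling the four failure modes of $\widetilde{G}$ in turn: a round trip; a trip with an essential self-intersection; two trips with a bad double crossing; or a trip from $b_i$ to $b_i$ with $b_i$ not a boundary leaf, which since $G$ is leafless is the only possibility. In each case one passes to a minimal witness---an innermost round trip bounding a compact disk region, a minimal lens cut out by two arcs of a self-crossing trip between consecutive essential crossings, a minimal bigon realizing the bad double crossing, or the region cut off by a $b_i\to b_i$ trip together with a boundary arc---and such a region is compact, hence meets only finitely many $\Z$-translates and contains finitely many faces. The analysis of \cite{P} then shows that minimality forces this region to present a bigon or a leaf after a bounded sequence of square moves and contractions, all supported in a bounded part of $\widetilde{G}$ and hence descending to moves of $G$, so that applying (R1) or (R2) contradicts the reducedness of $G$. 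I expect the main obstacle to be exactly this direction: one must verify that the minimal reducible configuration found upstairs genuinely witnesses non-reducedness of $G$---that the moves exposing a bigon or leaf, and the reduction itself, can be chosen supported in a region small enough to be the lift of an embedded subgraph of $G$ rather than one wrapping around the cylinder, and that the ``minimal enclosed area'' induction of the disk setting survives the infinitude of $\widetilde{G}$. Both points are handled by compactness of the enclosed region but require care, whereas the internal bigon/lens case analysis is routine and can be imported essentially verbatim from \cite{P}.
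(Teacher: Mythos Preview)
Your proposal is correct and follows essentially the same route as the paper: lift to the universal cover and invoke Postnikov's Theorem 13.2. The paper packages the argument more tersely by first observing that $G$ is reduced if and only if $\widetilde{G}$ is reduced, which absorbs all of your descent bookkeeping into a single clean statement; once that equivalence is granted, Postnikov's planar proof applies to $\widetilde{G}$ verbatim, and there is no further need to track whether individual moves descend.
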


The above theorem is analogous to Theorem 13.2 of~\cite{P}.
\begin{proof}
Notice that $G$ is reduced if and only if $\widetilde{G}$ is reduced.  The proof for Theorem 13.2 of~\cite{P} holds to show that $\widetilde{G}$ is reduced if and only if conditions (1) - (4) hold.
\end{proof}

Now we can prove Theorem~\ref{thm:alt-str-reduced-gen}.
\begin{proof}
Notice that the trips in a plabic graph follow the same paths as the strands in the associated Postnikov diagram.  The conditions from Theorem~\ref{thm:reduced-trips} correspond exactly to the conditions we require in the definition of a Postnikov diagram.
\end{proof}

\subsection{Proof of Theorem~\ref{thm:int-vert-k-loop}}\label{sec:IntVert}

\begin{lemma}\label{lemma:sq-move}
Suppose a cylindric 2-loop plabic graph has an interior vertex that has one edge to a vertex on a string and one edge to a different vertex on the same string, such that there is only one vertex on the string between these two vertices, and the square formed by these four vertices is the boundary of a single face.  Then, we can reduce the number of strand crossings in between the two loops in the associated Postnikov diagram using the square move.
\end{lemma}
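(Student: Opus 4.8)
The plan is to move the statement to the side of Postnikov diagrams, using Theorem~\ref{thm:alt-str-reduced-gen} to transport the square move to a local modification of the alternating strand diagram, and then to count strand crossings directly. First I would fix the local picture: write $u_1,u_2,u_3$ for the three consecutive string vertices and $v$ for the interior vertex. Since these four vertices bound a single face and cylindric $k$-loop plabic graphs have no unicoloured edges, they must alternate in colour around the face, so the face is a genuine square face and the square move (M1) applies to it. I would record that (M1) only interchanges the colours of $v,u_1,u_2,u_3$, leaving all incidences — and hence the numbers of vertices, edges and faces of the plabic graph — unchanged; it may, however, produce a unicoloured edge at $v$ (or leave $v$ two-valent), which is then cleared away by (M2)/(M3) and possibly a parallel-edge reduction (R1), each of which deletes one edge of the plabic graph.

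Next I would identify the strands involved. The two loops of the cylindric $2$-loop configuration are exactly the two periodic strands running down the two strings, and the square face is dual to a quadrilateral region $\widetilde F$ of the Postnikov diagram whose four corners (the four edges of the face) and four boundary arcs (the four vertices of the face) I would list explicitly. The loop strand $L$ hugging the string that carries $u_1,u_2,u_3$ borders the oriented faces dual to $u_1,u_2,u_3$, so it meets $\widetilde F$, while the strand $\sigma$ through $v$ supplies the remaining boundary data. I would then describe the local modification of the strand diagram produced by (M1): it slides $\widetilde F$ across $L$, pulling the piece of $\sigma$ that ran between $L$ and the opposite loop through $L$ to the far side, with any subsequent cleanup reductions simply absorbing $v$ into a neighbour on that far side.

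The crux is then the crossing count. The number of crossings of a Postnikov diagram equals the number of edges of its plabic graph — at each crossing exactly one of the two pairs of opposite faces is coherently oriented (one clockwise, one counterclockwise), and every plabic edge arises from such a pair — so (M1) preserves the total number of crossings while each cleanup reduction strictly lowers it. It therefore suffices to check, from the explicit local picture, that (M1) does not push any crossing \emph{into} the open annulus strictly between the two loops; in fact the slide of $\sigma$ across $L$ carries one crossing \emph{out} of that annulus. Combined with the cleanup reductions, which delete only edges lying in the annulus (as $v$ lies there), this yields a strict decrease of the number of crossings strictly between the two loops, as claimed.

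The step I expect to be the main obstacle is precisely this explicit local analysis: determining which of the four boundary arcs of $\widetilde F$ lie on $L$ and which on $\sigma$, fixing the strand orientations so that $\widetilde F$ is an alternating rather than an oriented region (which is what makes (M1) a square move rather than a reduction), and confirming that the resulting slide genuinely transports a crossing across $L$ instead of merely permuting crossings within the annulus. Once the picture is drawn correctly this is routine, and the remaining colour case and the mirror case (interior vertex attached to the other string) follow by symmetry.
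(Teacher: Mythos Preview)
Your overall strategy---translate the square move to the Postnikov diagram and track where the crossings go---is the right one, and is essentially what the paper does. But several pieces of your scaffolding are either wrong or unnecessary.

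First, the mention of (R1) is an error. Cylindric $k$-loop plabic graphs arise from Postnikov diagrams and are therefore reduced (Theorem~\ref{thm:alt-str-reduced-gen}); moves preserve reducedness, so after (M1) and any (M2)/(M3) cleanup you will \emph{never} be in a position to apply (R1). Your argument that ``cleanup reductions \ldots\ delete only edges lying in the annulus'' thus cannot be part of the mechanism.

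Second, the edge-counting detour is more delicate than you suggest and ultimately unnecessary. The bijection ``crossings $=$ edges'' holds only for plabic graphs with no unicoloured edges; once you apply (M1) you may create unicoloured edges, so the bookkeeping becomes awkward precisely at the moment you want to use it. The cleaner viewpoint is that (M2)/(M3) do not change the Postnikov diagram at all, and (M1) corresponds to a single local rearrangement of strands; so you should just compare the strand pictures before and after.

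Third---and this is the main point---the step you flag as ``the main obstacle'' is not a residual technicality but the entire content of the lemma. The paper's proof simply draws the local plabic graph, performs (M2)-uncontractions to isolate a genuine alternating square, applies (M1), contracts back, and then draws the two resulting local Postnikov diagrams side by side. One sees directly that two of the visible crossings lie between the loops before, and only one lies between them after; the unseen crossings are untouched. There is no abstract argument: the proof \emph{is} the explicit picture. Your plan would eventually reduce to drawing the same picture, so you may as well do that from the start and drop the surrounding machinery.
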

\begin{proof}
Without loss of generality, assume the interior vertex is black.  Then we can apply transformations to our plabic graph as follows:

\begin{center}
\begin{tikzpicture}[scale=0.55]
\fill[gray!40!white] (0,-1) rectangle (4,5);
\path[->,font=\large, >=angle 90, line width=0.4mm,blue]
(1,-1) edge (1,5)
(1.5,1) edge (0,1)
(0,2) edge (4,2)
(4,3) edge (0,3)
(0,4) edge (1.5,4)
(3,3.5) edge (3,1.5);
\node at (1.65,0.16) {left};
\node at (1.8,-0.55) {loop};
\node at (2.5,6) {\rotatebox{90}{$=$}};
\fill[gray!40!white] (0,7) rectangle (4,12);
\path[-,font=\large, >=angle 90, line width=0.4mm,]
(1,7) edge (1,12)
(1,8) edge (3,9)
(1,10) edge (3,9)
(1,10) edge (1.75,9.9)
(1,10) edge (1.25,10.75)
(1,8) edge (1.75,8.1)
(1,8) edge (1.25,7.25)
(3,9) edge (3.4,9.6)
(3,9) edge (3.4,8.4)
(1,9) edge (0.6,9.6)
(1,9) edge (0.6,8.4)
(1,11) edge (0.6,11.6)
(1,11) edge (0.6,10.4);
\node at (1.4,10.55) {\rotatebox{30}{$\vdots$}};
\node at (1.65,7.87) {\rotatebox{-30}{$\vdots$}};
\node at (3.45,9.2) {$\vdots$};
\node at (0.55,9.2) {$\vdots$};
\node at (0.55,11.2) {$\vdots$};
\draw [line width=0.25mm, fill=white] (1,8) circle (1mm);
\draw [line width=0.25mm, fill=black] (1,9) circle (1mm);
\draw [line width=0.25mm, fill=white] (1,10) circle (1mm);
\draw [line width=0.25mm, fill=black] (1,11) circle (1mm);
\draw [line width=0.25mm, fill=black] (3,9) circle (1mm);
\node at (5,9.5) {$\leftrightarrow$};
\fill[gray!40!white] (6,7) rectangle (11,12);
\path[-,font=\large, >=angle 90, line width=0.4mm,]
(8,7) edge (8,12)
(8,8.5) edge (9,9)
(8,9.5) edge (9,9)
(10,9) edge (9,9)
(8,10) edge (8.75,9.9)
(8,10) edge (8.25,10.75)
(8,8) edge (8.75,8.1)
(8,8) edge (8.25,7.25)
(10,9) edge (10.4,9.6)
(10,9) edge (10.4,8.4)
(7,9) edge (6.6,9.6)
(7,9) edge (6.6,8.4)
(8,11) edge (7.6,11.6)
(8,11) edge (7.6,10.4)
(7,9) edge (8,9);
\node at (8.4,10.55) {\rotatebox{30}{$\vdots$}};
\node at (8.65,7.87) {\rotatebox{-30}{$\vdots$}};
\node at (10.45,9.2) {$\vdots$};
\node at (6.55,9.2) {$\vdots$};
\node at (7.55,11.2) {$\vdots$};
\draw [line width=0.25mm, fill=white] (8,8) circle (1mm);
\draw [line width=0.25mm, fill=white] (8,8.5) circle (1mm);
\draw [line width=0.25mm, fill=black] (8,9) circle (1mm);
\draw [line width=0.25mm, fill=white] (8,9.5) circle (1mm);
\draw [line width=0.25mm, fill=white] (8,10) circle (1mm);
\draw [line width=0.25mm, fill=black] (8,11) circle (1mm);
\draw [line width=0.25mm, fill=black] (9,9) circle (1mm);
\draw [line width=0.25mm, fill=black] (10,9) circle (1mm);
\draw [line width=0.25mm, fill=black] (7,9) circle (1mm);
\node at (12,9.5) {$\leftrightarrow$};
\fill[gray!40!white] (13,7) rectangle (18,12);
\path[-,font=\large, >=angle 90, line width=0.4mm,]
(15,7) edge (15,12)
(15,8.5) edge (16,9)
(15,9.5) edge (16,9)
(17,9) edge (16,9)
(15,10) edge (15.75,9.9)
(15,10) edge (15.25,10.75)
(15,8) edge (15.75,8.1)
(15,8) edge (15.25,7.25)
(17,9) edge (17.4,9.6)
(17,9) edge (17.4,8.4)
(14,9) edge (13.6,9.6)
(14,9) edge (13.6,8.4)
(15,11) edge (14.6,11.6)
(15,11) edge (14.6,10.4)
(14,9) edge (15,9);
\node at (15.4,10.55) {\rotatebox{30}{$\vdots$}};
\node at (15.65,7.87) {\rotatebox{-30}{$\vdots$}};
\node at (17.45,9.2) {$\vdots$};
\node at (13.55,9.2) {$\vdots$};
\node at (14.55,11.2) {$\vdots$};
\draw [line width=0.25mm, fill=white] (15,8) circle (1mm);
\draw [line width=0.25mm, fill=black] (15,8.5) circle (1mm);
\draw [line width=0.25mm, fill=white] (15,9) circle (1mm);
\draw [line width=0.25mm, fill=black] (15,9.5) circle (1mm);
\draw [line width=0.25mm, fill=white] (15,10) circle (1mm);
\draw [line width=0.25mm, fill=black] (15,11) circle (1mm);
\draw [line width=0.25mm, fill=white] (16,9) circle (1mm);
\draw [line width=0.25mm, fill=black] (17,9) circle (1mm);
\draw [line width=0.25mm, fill=black] (14,9) circle (1mm);
\node at (19,9.5) {$\leftrightarrow$};
\fill[gray!40!white] (20,7) rectangle (26,12);
\path[-,font=\large, >=angle 90, line width=0.4mm,]
(23,7) edge (23,12)
(25,9) edge (23,9)
(23,10) edge (23.75,9.9)
(23,10) edge (23.25,10.75)
(23,8) edge (23.75,8.1)
(23,8) edge (23.25,7.25)
(25,9) edge (25.4,9.6)
(25,9) edge (25.4,8.4)
(21,9) edge (20.6,9.6)
(21,9) edge (20.6,8.4)
(23,11) edge (22.6,11.6)
(23,11) edge (22.6,10.4)
(22,9) edge (21,9)
(22,9) edge (23,9.5)
(22,9) edge (23,8.5);
\node at (23.4,10.55) {\rotatebox{30}{$\vdots$}};
\node at (23.65,7.87) {\rotatebox{-30}{$\vdots$}};
\node at (25.45,9.2) {$\vdots$};
\node at (20.55,9.2) {$\vdots$};
\node at (22.55,11.2) {$\vdots$};
\draw [line width=0.25mm, fill=white] (23,8) circle (1mm);
\draw [line width=0.25mm, fill=black] (23,8.5) circle (1mm);
\draw [line width=0.25mm, fill=white] (23,9) circle (1mm);
\draw [line width=0.25mm, fill=black] (23,9.5) circle (1mm);
\draw [line width=0.25mm, fill=white] (23,10) circle (1mm);
\draw [line width=0.25mm, fill=black] (23,11) circle (1mm);
\draw [line width=0.25mm, fill=black] (25,9) circle (1mm);
\draw [line width=0.25mm, fill=white] (22,9) circle (1mm);
\draw [line width=0.25mm, fill=black] (21,9) circle (1mm);
\node at (23,6) {\rotatebox{90}{$=$}};
\fill[gray!40!white] (20,-1) rectangle (26,5);
\path[->,font=\large, >=angle 90, line width=0.4mm,blue]
(23,-1) edge (23,5)
(24,0.5) edge (22,0.5)
(22,4.5) edge (24,4.5);
\draw [line width=0.4mm,blue] plot [smooth, tension=0.8] coordinates { (24,4) (22.25,3.5) (22.25,1.5) (24,1) };
\path[->,font=\large, >=angle 90, line width=0.4mm, blue]
(23.9,1) edge (24,1);
\draw [line width=0.4mm,blue] plot [smooth, tension=0.8] coordinates { (26,3) (25,2.5) (23,2) (21,2.5) (20,3)};
\path[->,font=\large, >=angle 90, line width=0.4mm, blue]
(20.1,3) edge (20,3.1);
\draw [line width=0.4mm,blue] plot [smooth, tension=0.8] coordinates { (26,2) (25,2.5) (23,3) (21,2.5) (20,2)};
\path[->,font=\large, >=angle 90, line width=0.4mm, blue]
(25.9,2.07) edge (26,2);
\node at (23.65,0.16) {left};
\node at (23.8,-0.55) {loop};
\end{tikzpicture}
\end{center}

The Postnikov diagram on the left has two crossings between the left and the right loop, in addition to those we can't see in the picture.  The Postnikov diagram on the right has one crossing between the left and the right loop, aside from those we can't see in the picture.  So, we have reduced the number of crossings.
\end{proof}

\begin{lemma}\label{lemma:mult-edges}
Suppose a cylindric 2-loop plabic graph has at least one interior vertex.  Assume no vertices in the plabic graph have degree two.  Then at least one interior vertex must have multiple edges to vertices on a string.
\end{lemma}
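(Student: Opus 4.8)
The plan is to argue by contradiction: suppose every interior vertex has at most one edge to each string. A cylindric $2$-loop plabic graph arises from a Postnikov diagram, so by Theorem~\ref{thm:alt-str-reduced-gen} it is leafless, reduced, and has no unicolored edges; together with the hypothesis that no vertex has degree $2$, this forces every internal vertex to have degree at least $3$ and every edge to join a black vertex to a white one. The colour convention for the strings — the non-string edges of a string's black vertices all lie on one side of the parallel strand and those of its white vertices all lie on the other — means that for one string, say $S_L$, exactly its white vertices have edges into the region $R$ between the two strings, and for the other string $S_R$ exactly its black vertices do. Consequently a black interior vertex can only be adjacent to white vertices of $S_L$ and to white interior vertices, and a white interior vertex only to black vertices of $S_R$ and to black interior vertices. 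Under our assumption each interior vertex thus sends at most one edge to a string, and hence — since its degree is at least $3$ and a reduced plabic graph has no repeated edges — at least two edges to interior vertices of the opposite colour. Therefore the graph $G$ induced on the interior vertices has minimum degree at least $2$ and so contains a cycle $C$, drawn in the open annulus $R$ between $S_L$ and $S_R$.

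The remaining task is to show that such a cycle $C$ of interior vertices cannot occur. When $C$ is contractible in $R$ I would take it innermost, so that the disk it bounds is a single face of the plabic graph whose boundary consists entirely of interior vertices, each carrying at least one edge that leaves this disk. Following the trips (walks turning right at each black vertex and left at each white) that run along this face, and using crucially that every vertex involved lies strictly between the two loop strands — so that the relevant trips are confined to $R$ rather than escaping to the boundary of the cylinder — I would force either a round trip, an essential self-intersection, or a bad double crossing, contradicting the reducedness criterion of Theorem~\ref{thm:reduced-trips}. When $C$ is non-contractible it winds around the cylinder and separates the two strings; I would then cut along $C$ and reapply the previous analysis to the sub-annulus between $C$ and $S_L$ (whose inner boundary $C$ again consists of interior vertices carrying outward edges), or conclude directly from Theorem~\ref{thm:reduced-trips} that a second non-contractible plabic cycle in $R$ is incompatible with the two loop strands.

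The main obstacle is precisely this last step. It is \emph{not} enough to invoke that a face bounded by high-degree internal vertices is impossible — an interior hexagonal face of a large bipartite grid in a disk is such a face, and that grid is reduced — so the contradiction genuinely requires the loop-strand structure of a cylindric $2$-loop diagram, used to confine the trips associated with the offending cycle until one of them closes up (respectively until a bad double crossing is forced). Carrying out this confinement argument carefully, together with the matching treatment of the non-contractible case, is the substantive part of the proof; the structural reductions that produce the cycle $C$ are routine.
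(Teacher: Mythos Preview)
Your graph-theoretic reduction (minimum degree $\geq 2$ on the interior-vertex subgraph $G$, hence a cycle $C$) is correct and is a genuinely different opening from the paper's. The problem is the endgame. The confinement claim is not true as stated: each interior vertex $v$ on $\partial F$ is permitted one edge to a string vertex $s$, and a trip entering $v$ along a face edge may turn onto $vs$; from $s$ it can continue across the adjacent loop strand and out to the boundary of the cylinder. Interior vertices lie between the loop strands, but the trips through them do not, so there is no mechanism forcing a round trip or a bad double crossing inside $R$. You anticipate this when you note that a face bounded by high-degree internal vertices is not by itself impossible, but the extra input you would need --- that the two loop strands trap the strands meeting $F$ --- is false: non-loop strands in a cylindric $2$-loop diagram freely cross the loops and reach the boundary. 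The non-contractible case has the same defect. Separately, ``innermost contractible cycle of $G$'' does not immediately bound a single face of the plabic graph, so that step also needs work.

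The paper never attempts a global contradiction of this kind. It fixes one white vertex on the left string adjacent to an interior vertex, reads off three strands $s_1,s_2,s_3$ from the local picture, and uses the Postnikov-diagram rules (no self-intersection, no bad double crossing) to force $s_1$ to cross the left loop. It then looks at the next strand $s_4$ crossing the left loop just above $s_1$: either $s_4$ meets $s_1$ before $s_2$ does, which immediately exhibits a black interior vertex with two edges to the left string, or $s_4=s_2$, which carves out a strictly smaller bounded region still containing interior vertices, and one recurses into it. The argument is local and constructive, so it never needs to confine any trips.
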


\begin{proof}
Note that only white vertices on the left string and black vertices on the right string can have edges to interior vertices.  As the graph is bipartite, this means an interior vertex cannot have edges to vertices on both the left and right string.

Since there are interior vertices, there must be a vertex on a string attached to an interior vertex.  Without loss of generality, assume there's such a vertex on the left string.  Expand this vertex so that we have a vertex on the left string that is attached only to one interior vertex and two vertices on the left string.  We allow some vertices of degree 2 to be created to keep the graph bipartite.  Now we have an interior vertex must arise from a part of the alternating strand diagram that looks as follows:
\begin{center}
\begin{tikzpicture}[scale=0.75]
\fill[gray!40!white] (0,-1) rectangle (6,3);
\path[->,font=\large, >=angle 90, line width=0.4mm,blue]
(1,-1) edge (1,3)
(6,2) edge (0,2)
(3,3) edge (3,1)
(5,1) edge (5,3);
\path[-,font=\large, >=angle 90, line width=0.4mm]
(4,1.5) edge (2,2.5);
\draw [line width=0.25mm, fill=white] (2,2.5) circle (1mm);
\draw [line width=0.25mm, fill=black] (4,1.5) circle (1mm);
\node at (1.5,0) {left};
\node at (1.6,-0.5) {loop};
\node at (5,0.75) {$s_1$};
\node at (3,0.75) {$s_2$};
\node at (0.5,1.75) {$s_3$};
\end{tikzpicture}
\end{center}

Call the strands $s_1, s_2$, and $s_3$, as denoted in the diagram.  The black vertex in the diagram is not on the right string, so $s_1$ cannot be the right loop.  Suppose $s_1$ starts and ends on the right boundary of the cylinder.  If $s_1$ does not make a turn and head downward, then $s_1$ and the right loop would intersect in two places on the universal cover, and both would be oriented in the same direction (from one crossing to the other).  So, $s_1$ must turn downwards at some point.  If $s_1$ turns to the right to do this, it will have a self-crossing, which is not allowed.  If $s_1$ turns to the left to do this, then $s_2=s_1$.  Then $s_3$ and $s_1$ intersect twice, and both are oriented from the crossing on the right to the crossing on the left.  This is not allowed, so $s_1$ must cross the left loop.

Suppose $s_1$ crosses the left loop from left to right (the argument is very similar for $s_1$ crossing from right to left).  There must be a strand oriented from right to left that crosses the left loop just above $s_1$.  Call this strand $s_4$.  Either $s_4$ must cross $s_1$ or $s_4=s_2$.  Suppose it's the former.  The face above $s_4$ and to the right of the left loop corresponds to a white vertex on the left string, which must be connected to an interior vertex.  Suppose there is another strand that crosses $s_4$ between the left loop and $s_1$ in the same direction as $s_1$.  Since this strand did not cross $s_3$ in the same direction as $s_1$ between the left loop and $s_1$, it must cross $s_1$ somewhere above $s_4$.  However, this strand would also have to cross $s_1$ below $s_4$, as nothing crosses the left loop between $s_1$ and $s_4$.  This would introduce two crossings of $s_1$ and the additional upward pointing strand, and both of the strands would be oriented in the same direction.  This cannot happen, so we have this section of the alternating strand diagram that looks as follows:
\begin{center}
\begin{tikzpicture}[scale=0.75]
\fill[gray!40!white] (-1,-1) rectangle (6,3);
\path[->,font=\large, >=angle 90, line width=0.4mm,blue]
(1,-1) edge (1,3)
(6,2) edge (-1,2)
(3,3) edge (3,-1);
\draw [line width=0.4mm,blue] plot [smooth, tension=0.8] coordinates { (-1,0.7) (3.75,1) (5,3) };
\path[->,font=\large, >=angle 90, line width=0.4mm, blue]
(5,2.9) edge (5,3);
\path[-,font=\large, >=angle 90, line width=0.4mm]
(4,1.5) edge (2,2.5)
(4,1.5) edge (2,0.35);
\draw [line width=0.25mm, fill=white] (2,0.35) circle (1mm);
\draw [line width=0.25mm, fill=white] (2,2.5) circle (1mm);
\draw [line width=0.25mm, fill=black] (4,1.5) circle (1mm);
\node at (1.5,0) {left};
\node at (1.6,-0.5) {loop};
\node at (-0.25,0.4) {$s_1$};
\node at (-0.25,2.25) {$s_4$};
\end{tikzpicture}
\end{center}

Now suppose we are in the second case: $s_4=s_2$.  We get a sequence of connected vertices beginning with the white vertex in the face bounded by $s_2,s_3$, and the left loop and ending with the white vertex in the face bounded by $s_2$ and the left loop such that vertices alternate being in faces to the left and right of $s_2$.  This sequence and the section of the string bounded below by $s_2$ and above by $s_3$ form a closed cycle, where there may or may not be vertices shared by sequence zig-zagging across $s_2$ and the left string:
\begin{center}
\begin{tikzpicture}[scale=0.75]
\fill[gray!40!white] (-1,-1) rectangle (5,10);
\path[->,font=\large, >=angle 90, line width=0.4mm,blue]
(1,-1) edge (1,10)
(5,9) edge (-1,9)
(2,1) edge (-1,1);
\draw [line width=0.4mm,blue] plot [smooth, tension=0.8] coordinates { (4,5) (4,10) };
\draw [line width=0.4mm,blue] plot [smooth, tension=0.8] coordinates { (2,1) (3.6,2) (4,5) };
\path[-,font=\large, >=angle 90, line width=0.4mm]
(2.5,9.5) edge (4.5,8)
(4.5,8) edge (3.5,6.75)
(3.5,6.75) edge (4.5,5.5)
(4.5,5.5) edge (2.5,4.25)
(2.5,4.25) edge (4.5,3)
(4,1.5) edge (2,1.5)
(2,1.5) edge (0.5,2)
(0.5,3.5) edge (2.5,4.25)
(2.5,4.25) edge (0.5,5.5)
(0.5,5.5) edge (1.5,6.125)
(1.5,6.125) edge (0.5,6.75)
(0.5,6.75) edge (1.5,7.375) 
(1.5,7.375)  edge (0.5,8)
(0.5,8) edge (2.5,9.5);
\draw [line width=0.25mm, fill=white] (2.5,9.5) circle (1mm);
\draw [line width=0.25mm, fill=black] (4.5,8) circle (1mm);
\draw [line width=0.25mm, fill=white] (3.5,6.75) circle (1mm);
\draw [line width=0.25mm, fill=black] (4.5,5.5) circle (1mm);
\draw [line width=0.25mm, fill=white] (2.5,4.25) circle (1mm);
\draw [line width=0.25mm, fill=black] (4.5,3) circle (1mm);
\draw [line width=0.25mm, fill=black] (4,1.5) circle (1mm);
\draw [line width=0.25mm, fill=white] (2,1.5) circle (1mm);
\draw [line width=0.25mm, fill=black] (0.5,2) circle (1mm);
\draw [line width=0.25mm, fill=black] (0.5,3.5) circle (1mm);
\draw [line width=0.25mm, fill=black] (0.5,5.5) circle (1mm);
\draw [line width=0.25mm, fill=white] (1.5,6.125) circle (1mm);
\draw [line width=0.25mm, fill=black] (0.5,6.75) circle (1mm);
\draw [line width=0.25mm, fill=white] (1.5,7.375) circle (1mm);
\draw [line width=0.25mm, fill=black] (0.5,8) circle (1mm);
\node at (1.5,0) {left};
\node at (1.6,-0.5) {loop};
\node at (-0.25,0.75) {$s_2$};
\node at (-0.25,9.25) {$s_3$};
\node at (4.3,2.3) {\rotatebox{-20}{$\vdots$}};
\node at (0.5,2.85) {$\vdots$};
\end{tikzpicture}
\end{center}

If there are no interior vertices inside this cycle, then we are done, as any black vertex in this cycle not on the left string, of which there is at least one, is adjacent to multiple vertices on the left string.  Otherwise, we can repeat our process from the beginning, this time choosing an interior vertex that is inside the cycle.  We will obtain an $s_1',s_2',s_3'$, and $s_4'$.  If $s_2'\neq s_4'$, then we get a contradiction, as above.  Otherwise the cycle of vertices we obtain is inside the cycle we obtained from $s_1,s_2$ and $s_3$.  Since the graph is finite, this process must eventually terminate, and we will have found an interior vertex with multiple non-parallel edges to vertices on the left string.
\end{proof}

\begin{lemma}\label{lemma:same-int-vert}
Suppose a cylindric 2-loop plabic graph with no vertices of degree 2 has an interior vertex that has (at least) two edges to vertices on a string.  If there are any other vertices of the same color on the string between these two vertices, they must also have edges to the same interior vertex.
\end{lemma}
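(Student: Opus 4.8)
The plan is to argue on the universal cover $\widetilde G$, as in the proofs of Lemmas~\ref{lemma:sq-move} and~\ref{lemma:mult-edges}, using the associated Postnikov diagram together with the reducedness criterion of Theorem~\ref{thm:reduced-trips}. After fixing a lift, I may assume without loss of generality that the string in question is the left string $L$ (also viewed as the left loop strand), that the interior vertex $v$ is black, and hence that the two chosen neighbours $u_1,u_2$ of $v$ on $L$ are white, with $u_1$ below $u_2$. Write the arc of $L$ between them as $u_1=w_0,\,b_1,\,w_1,\,b_2,\dots,b_{r+1},\,w_{r+1}=u_2$, with the $w_i$ white and the $b_i$ black; since $G$ has no vertex of degree $2$ and no non-boundary leaves, each $w_i$ has at least one edge leaving $L$, and since only white vertices of $L$ have such edges and the graph is bicolored, every one of these edges runs into the band. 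We must show each $w_i$ with $1\le i\le r$ is adjacent to $v$.

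First I would isolate the relevant region. The edges $e_1=(v,u_1)$ and $e_2=(v,u_2)$ together with the above arc of $L$ bound a disk $D$ in the band. The black vertices $b_1,\dots,b_{r+1}$ contribute no edge into the interior of $D$, because their only non-string edges point left, away from $D$; hence inside $D$ the faces incident to each $b_i$ are squeezed between the right-edges of $w_{i-1}$ and of $w_i$, and $\partial D$ meets $v$ only along $e_1$ and $e_2$. Translating to the strand diagram: the $r+1$ string edges $(w_{i-1},b_i)$, $(b_i,w_i)$ correspond to transverse crossings of $L$ with $r+1$ consecutive strands $s_1,\dots,s_{r+1}$ which then enter $D$; $e_1$ and $e_2$ correspond to crossings of two strands each bounding the counterclockwise face $G_v$ of the diagram carrying $v$; and the $w_i$ sit in the clockwise faces bordering $L$ from the right between $u_1$ and $u_2$.

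The core step is to show that no completion of $s_1,\dots,s_{r+1}$ inside $D$ is admissible unless $G_v$ is precisely the face whose corners are the successive mutual crossings of $s_1,\dots,s_{r+1}$ lying just to the right of $L$ — equivalently, unless $v$ is joined to every one of $u_1,w_1,\dots,w_r,u_2$. Here I would use that inside the disk $D$ the strands $s_1,\dots,s_{r+1}$ cannot form a round trip, cannot have essential self-intersections, and admit no bad double crossing (Theorem~\ref{thm:reduced-trips}), together with the alternating-crossing axiom along $L$ and the fact that each $w_i$ has already committed all of its right-edges to $D$. Concretely, assuming some $w_i$ is \emph{not} joined to $v$, one traces the strand(s) through that vertex's right-edges together with $s_i,s_{i+1}$ and produces in $D$ either a bad double crossing, a forced self-intersection, or a plabic-graph vertex of degree $2$, contradicting reducedness or the standing hypothesis. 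This finite but delicate case analysis — ruling out every way the fan of strands entering $D$ could be closed off — is exactly where the hypotheses ``no degree $2$'' and ``reduced'' are used, and it is the step I expect to be the main obstacle. Once $G_v$ is known to have the asserted corners the lemma is immediate. Should the strand bookkeeping become unwieldy, I would fall back on a direct planar argument on $G$: among all counterexamples choose one minimizing the number of faces inside $D$, peel off the face of $D$ incident to $b_1$, and show the leftover configuration is a strictly smaller counterexample of the same type, again using that the $b_i$ send no edges into $D$ and that degree-$2$ vertices are forbidden.
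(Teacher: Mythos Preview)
Your setup and overall strategy coincide with the paper's: a contradiction argument on the Postnikov diagram inside the disk $D$ bounded by $e_1$, $e_2$, and the arc of $L$ between $u_1$ and $u_2$, using that the black $b_i$ send no edges into $D$. The gap is exactly the step you flag as ``the main obstacle'' --- and in the paper that step \emph{is} the proof; everything else is setup.

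Here is the paper's resolution, which is sharper than a generic appeal to Theorem~\ref{thm:reduced-trips}. Consider any strand that enters $D$ by crossing $L$. If it exits $D$ through the top or bottom of the region (i.e., through one of the two strands bounding the counterclockwise face $G_v$) and does not return, it separates the crossing corresponding to $e_1$ from that corresponding to $e_2$, so $v$ would no longer be joined to both $u_1$ and $u_2$. If instead it exits through the downward strand forming the right side of $G_v$ and does not return, it splits $G_v$ into two counterclockwise faces, so the single interior vertex $v$ becomes several. Hence every strand crossing $L$ into $D$ must leave $D$ by crossing $L$ again. The alternating-crossing axiom along $L$ then forces the unique matching: each right-going crossing is paired with the left-going crossing \emph{immediately below it}. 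But with this pairing each clockwise face along $L$ inside $D$ has only two corners, so each $w_i$ has degree~$2$ --- contradicting the hypothesis (and any attempt to avoid this by letting strands wander further produces self-crossings).

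So the missing idea is not a long case analysis but the single observation that strands cannot escape $D$ except back through $L$, after which alternation plus the no-degree-$2$ hypothesis finishes immediately. Your inductive fallback is not needed.
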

\begin{proof}
Assume for contradiction we have a plabic graph with an interior vertex that has (at least) two edges to vertices on a string and the vertices of the same color on the string between these two vertices do not have edges to the same interior vertex. Without loss of generality assume the interior vertex is a black vertex.  We know this part of the Postnikov diagram looks as shown, where the interior of the darker box is unknown:
\begin{center}
\begin{tikzpicture}[scale=0.75]
\fill[gray!40!white] (0,-1) rectangle (6,8);
\fill[gray!70!white] (1,1) rectangle (4,7);
\path[->,font=\large, >=angle 90, line width=0.4mm,blue]
(1,-1) edge (1,8)
(4,8) edge (4,-1)
(0,1) edge (6,1)
(6,7) edge (0,7)
(2,2) edge (0,2)
(0,3) edge (2,3)
(2,5) edge (0,5)
(0,6) edge (2,6);
\path[-,font=\large, >=angle 90, line width=0.4mm]
(0.5,1.5) edge (1.5,2.5)
(0.5,6.5) edge (1.5,5.5)
(0.5,1.5) edge (2.5,0.5) 
(0.5,6.5) edge (2.5,7.5)
(5,4) edge (2.5,0.5) 
(5,4) edge (2.5,7.5);
\draw [line width=0.25mm, fill=white] (1.5,2.5) circle (1mm);
\draw [line width=0.25mm, fill=white] (1.5,5.5) circle (1mm);
\draw [line width=0.25mm, fill=white] (2.5,7.5) circle (1mm);
\draw [line width=0.25mm, fill=white] (2.5,0.5) circle (1mm);
\draw [line width=0.25mm, fill=black] (0.5,1.5) circle (1mm);
\draw [line width=0.25mm, fill=black] (0.5,6.5) circle (1mm);
\draw [line width=0.25mm, fill=black] (5,4) circle (1mm);
\node at (1.5,0) {left};
\node at (1.6,-0.5) {loop};
\node at (0.5,4) {$\vdots$};
\end{tikzpicture}
\end{center}

If any of the strands on the left exit the shaded face out the top or bottom and don't return, they'll disconnect the original edges from the interior vertex to the two vertices on the string.  If they exit out the strand on the right and don't return, they'll split the one interior vertex into multiple.  Thus, all the strands crossing the left loop from the left must also be the same strands that cross from the right.  All strands like this must be oriented the opposite direction of the loop.  The only way to pair up these strands in that way is for each strand going to the right is paired with the strand immediately below it.  There is no way to do this without self-crossings and without creating vertices of degree 2.  So, we have a contradiction.
\end{proof}

\begin{thm}\label{thm:no-int-vert}
Any cylindric 2-loop plabic graph can be transformed by moves to one that has no interior vertices.
\end{thm}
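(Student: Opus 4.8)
The plan is to induct on $N$, the number of crossings between the two loop strands of the associated Postnikov diagram inside a single fundamental domain; this is a well-defined nonnegative integer by part~(5) of Definition~\ref{defn:alt-strand-diag}. Since it suffices to produce \emph{some} graph in the move-equivalence class with no interior vertices, we may apply moves freely at every stage. First I would use the moves (M2) and (M3) to pass to a move-equivalent cylindric 2-loop plabic graph with no vertices of degree $2$; the two loop strands, and hence $N$, are unaffected, and we are now in the hypotheses of Lemmas~\ref{lemma:mult-edges} and~\ref{lemma:same-int-vert}. If this graph has no interior vertices we are done. Otherwise, by Lemma~\ref{lemma:mult-edges} some interior vertex $v$ --- say, up to the symmetry interchanging the two strings and the two colors, a black one joined to the left string --- has at least two edges to that string, and by Lemma~\ref{lemma:same-int-vert} every white vertex of the left string lying between the two outermost of these neighbours is also joined to $v$.

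The key step is to locate, inside the ``fan'' of $v$, a single-face square to which Lemma~\ref{lemma:sq-move} applies. Consider all pairs $(u,\{w,w'\})$ where $u$ is an interior vertex and $w,w'$ are consecutive neighbours of $u$ on one string (so exactly one string vertex $b$ lies between them), and let $Q(u,w,w')$ be the quadrilateral bounded by the two string edges $w\!-\!b$, $b\!-\!w'$ and the two edges $w\!-\!u$, $w'\!-\!u$; at least one such pair exists by the previous paragraph. Pick one whose quadrilateral $Q$ encloses the fewest faces, and prove that $Q$ then bounds a single face. No string vertex can lie strictly inside $Q$, and no edge can have both endpoints on $\partial Q$ (an edge $w\!-\!w'$ or $b\!-\!u$ would be unicolored, and a second copy of $w\!-\!u$, $w'\!-\!u$, or a string edge would form a reducible pair of parallel edges, contradicting reducedness). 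Hence if $Q$ were not a single face there would be an interior vertex strictly inside it; being leafless and degree-$2$-free it has degree $\ge 3$, and analyzing which vertices of $\partial Q$ it can meet --- using bipartiteness, the absence of unicolored edges, and the structural fact that the string vertex $b$ sends no edge into $Q$ --- one locates a pair $(p,\{w,w'\})$ whose quadrilateral is strictly contained in $Q$, contradicting minimality. Carrying this last implication out fully is the technical heart of the argument, and, as in Lemmas~\ref{lemma:mult-edges} and~\ref{lemma:same-int-vert}, is most cleanly done by following the strands of the Postnikov diagram through the region rather than by purely combinatorial bookkeeping on the graph.

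With a single-face square in hand, Lemma~\ref{lemma:sq-move} produces a move-equivalent graph whose Postnikov diagram has strictly fewer crossings between the two loop strands, i.e.\ at most $N-1$ of them; by the inductive hypothesis that graph, and therefore the original one, can be transformed by moves into one with no interior vertices. The base case needs no separate treatment, since the reduction step can fail only when there are no interior vertices to begin with, and $N\ge 0$ guarantees termination. The main obstacle is precisely the single-face claim of the second paragraph: one must make the right minimality choice and then rule out \emph{every} way an interior vertex could sit inside the chosen quadrilateral, which requires the same kind of careful strand analysis used in the preceding two lemmas. A secondary point requiring attention is that the preliminary passage to a degree-$2$-free representative genuinely stays within the class of cylindric 2-loop plabic graphs --- the two loop strands are untouched --- and that the inductive hypothesis is legitimately applied to a graph for which only the value of $N$, and not the number of interior vertices, is known to have decreased.
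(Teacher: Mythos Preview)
Your proposal is correct and follows essentially the same approach as the paper: iterate Lemma~\ref{lemma:sq-move}, using Lemmas~\ref{lemma:mult-edges} and~\ref{lemma:same-int-vert} to locate an applicable square, with the number of crossings between the two loop strands as the induction parameter. You are in fact more careful than the paper's one-sentence proof, which simply asserts that Lemmas~\ref{lemma:mult-edges} and~\ref{lemma:same-int-vert} place us in the hypothesis of Lemma~\ref{lemma:sq-move} without isolating or justifying the single-face condition; your minimality argument is exactly the missing step.
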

\begin{proof}
If there are interior vertices, then by Lemmas~\ref{lemma:mult-edges} and~\ref{lemma:same-int-vert}, we are always in a situation where, by Lemma~\ref{lemma:sq-move}, we can reduce the number of edges until we get to a plabic graph where there are no interior vertices.
\end{proof}

\begin{lemma}\label{lemma:black-left}
If there are interior vertices in a cylindric 2-loop plabic graph, then there must be a black interior vertex that is adjacent to the left string.
\end{lemma}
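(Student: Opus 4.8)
The plan is to argue by contradiction, reduce the statement to one about the associated Postnikov diagram, and then run a strand analysis in the spirit of the proofs of Lemmas~\ref{lemma:mult-edges} and~\ref{lemma:same-int-vert}.

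First, observe that it is enough to produce \emph{some} interior vertex adjacent to the left string: such a vertex is automatically black. Indeed, by the structure of cylindric $k$-loop plabic graphs described after Definition~\ref{defn:cyl-k-loop}, the only vertices of the left string with non-string edges running into the region $R$ between the two strings are the white ones (the black vertices of the left string send their non-string edges to the left, away from $R$, and their string edges go to other string vertices). Since a cylindric $2$-loop plabic graph has no unicolored edges, any vertex of $R$ joined to such a white vertex is black. So suppose toward a contradiction that the graph has an interior vertex but that no interior vertex is adjacent to the left string.

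Under this assumption the structure near the left string is very restricted: every non-string edge of a white left-string vertex must terminate on a \emph{black} vertex of $R$, and that vertex is not interior, hence lies on the right string --- so it is a ``full slant'' crossing all of $R$. On the other side, a black interior vertex can only be joined to white interior vertices (by assumption it cannot meet a white left-string vertex, and it cannot meet a white right-string vertex since those have no edges into $R$); so by leaflessness and the absence of isolated vertices every cluster of interior vertices must contain a white interior vertex joined to a black right-string vertex. Thus interior vertices, if present, ``hang off'' the right loop and never reach the left loop. Passing to the associated Postnikov diagram (Theorem~\ref{thm:alt-str-reduced-gen}) with loop strands $L$ (left) and $L'$ (right), this says that no strand bounding an interior face can cross $L$: if such a strand did cross $L$, then walking among the faces toward that crossing --- where every face on the $R$-side of $L$ is a white left-string vertex --- would exhibit an interior face sharing a corner with a white left-string vertex, i.e.\ exactly the black interior vertex adjacent to the left string that we are seeking. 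Hence all strands bounding interior faces are confined to the side of $L$ containing $R$, and can enter or leave $R$ only by crossing $L'$.

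The remaining --- and hardest --- step is to show that this configuration is impossible. Take a strand $s$ bounding an interior face; since $s$ is confined and cannot be a round trip (condition~(7) of Definition~\ref{defn:alt-strand-diag} together with Theorem~\ref{thm:reduced-trips}), it must cross $L'$, and between two consecutive crossings with $L'$ whose intervening arc lies in $R$ that arc together with an arc of $L'$ bounds a disk $D \subseteq R$. Choosing $D$ innermost (equivalently, choosing an interior face closest to $L'$), one analyzes the strands meeting $s$ inside $D$: by the left/right alternation (condition~(6)) they cannot be paired off without forcing either a self-intersection of some strand or a pair of strands with a bad double crossing, both of which are forbidden (the latter by Theorem~\ref{thm:reduced-trips}). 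This case analysis is entirely parallel to the ones in the proofs of Lemmas~\ref{lemma:mult-edges} and~\ref{lemma:same-int-vert}; the delicate points are making the extremal choice of $D$ and checking each possible routing of the strands through $D$. Carrying this out contradicts the existence of an interior face, hence of an interior vertex, so the lemma follows. (By the mirror argument one also gets a white interior vertex adjacent to the right string whenever interior vertices exist, though only the stated version is needed.)
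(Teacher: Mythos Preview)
Your argument starts correctly and shares with the paper the key first reduction: under the contradictory hypothesis, every white left-string vertex must connect by a ``full slant'' straight across to a black right-string vertex. But from there your route diverges and, as written, does not close.

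There are two genuine gaps. First, your claim that ``no strand bounding an interior face can cross $L$'' is not justified. A strand $s$ can bound an interior face $F$ at one location and cross $L$ at a completely different location; the ``walk toward the crossing'' does not show that an \emph{interior} face ever shares a corner with a white left-string face, because the faces you pass through along $s$ may all be right-string or unoriented faces once you leave $F$. You have only established that interior vertices are connected, through other interior vertices, to the right string --- that does not by itself confine the strands bounding their faces to one side of $L$. Second, and more seriously, the innermost-disk analysis you describe is announced but not carried out. Saying that it is ``entirely parallel'' to Lemmas~\ref{lemma:mult-edges} and~\ref{lemma:same-int-vert} is not a proof: those lemmas deal with quite different configurations (an interior vertex with two edges to the same string), and the parity/alternation argument you sketch for strands entering $D$ does not obviously go through without further structural input.

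The paper's proof uses the same first reduction but then proceeds very concretely: once every white left-string vertex sends only full slants to the right string, the region between two \emph{consecutive} slants is bounded by a small number of string edges, and there are exactly three possible shapes for this region (depending on how many extra string vertices sit on each side). For each of the three shapes the paper draws the forced strand segments along the boundary, observes that only finitely many strands can enter or exit the enclosed area, and checks case by case that there is a unique way to complete the Postnikov diagram inside --- and that unique completion has no interior oriented face. This is a finite, explicit check rather than an innermost-disk argument. If you want to salvage your approach, you would need to actually perform your disk analysis with the same level of detail; alternatively, replacing your last two paragraphs with the three-region case analysis gives a complete proof.
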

\begin{proof}
Assume for contradiction we have a cylindric 2-loop plabic graph with interior vertices, but no black interior vertex connected to the left string.  Then each white vertex on the left string is connected only to two black vertices on left string and some positive number of black vertices on the right string.  If we look at the area of the graph between two consecutive edges that connect the strands, we have three possibilities:
\begin{center}
\begin{tikzpicture}[scale=1]
\fill[gray!40!white] (0,0) rectangle (4,4);
\path[-,font=\large, >=angle 90, line width=0.4mm]
(1,0) edge (1,4)
(3,0) edge (3,4)
(0,2) edge (1,2)
(1,1) edge (3,2)
(1,3) edge (3,2)
(1,1) edge (1.5,1.1)
(1,1) edge (1.1,0.5)
(1,3) edge (1.5, 2.9)
(1,3) edge (1.1, 3.5)
(3,2) edge (2.1, 2.3)
(3,2) edge (2.1, 1.7)
(3,2) edge (2.2, 1.4)
(3,2) edge (2.85, 1)
(3,2) edge (2.2, 2.6)
(3,2) edge (2.85, 3);
\draw [line width=0.25mm, fill=white] (1,1) circle (0.75mm);
\draw [line width=0.25mm, fill=white] (1,3) circle (0.75mm);
\draw [line width=0.25mm, fill=black] (1,2) circle (0.75mm);
\draw [line width=0.25mm, fill=black] (3,2) circle (0.75mm);
\node at (1.4,0.9) {\rotatebox{-35}{$\vdots$}};
\node at (1.25,3.35) {\rotatebox{35}{$\vdots$}};
\node at (2.65,2.85) {\rotatebox{-60}{$\vdots$}};
\node at (2.4,1.3) {\rotatebox{60}{$\vdots$}};
\node at (2.1,2.15) {$\vdots$};
\fill[gray!40!white] (5,0) rectangle (9,4);
\path[-,font=\large, >=angle 90, line width=0.4mm]
(6,0) edge (6,4)
(8,0) edge (8,4)
(8,2) edge (9,2)
(8,1) edge (6,2)
(8,3) edge (6,2)
(8,1) edge (7.5,1.1)
(8,1) edge (7.9,0.5)
(8,1) edge (7.85, 1.9)
(8,1) edge (7.2, 1.55)
(8,3) edge (7.5, 2.9)
(8,3) edge (7.9, 3.5)
(8,3) edge (7.85, 2.1)
(8,3) edge (7.2, 2.45)
(6,2) edge (6.8, 1.4)
(6,2) edge (6.15, 1)
(6,2) edge (6.8, 2.6)
(6,2) edge (6.15, 3);
\draw [line width=0.25mm, fill=white] (6,2) circle (0.75mm);
\draw [line width=0.25mm, fill=white] (8,2) circle (0.75mm);
\draw [line width=0.25mm, fill=black] (8,1) circle (0.75mm);
\draw [line width=0.25mm, fill=black] (8,3) circle (0.75mm);
\node at (7.75,3.32) {\rotatebox{-35}{$\vdots$}};
\node at (7.65,1.8) {\rotatebox{-60}{$\vdots$}};
\node at (7.6,0.9) {\rotatebox{35}{$\vdots$}};
\node at (7.45,2.4) {\rotatebox{60}{$\vdots$}};
\node at (6.55,1.35) {\rotatebox{-55}{$\vdots$}};
\node at (6.35,2.85) {\rotatebox{55}{$\vdots$}};
\fill[gray!40!white] (10,0) rectangle (14,4);
\path[-,font=\large, >=angle 90, line width=0.4mm]
(11,0) edge (11,4)
(13,0) edge (13,4)
(10,2) edge (11,2)
(13,2) edge (14,2)
(11,1) edge (13,1)
(11,3) edge (13,3)
(11,1) edge (11.65,0.88)
(11,1) edge (11.15,0.4)
(11,3) edge (11.65,3.12)
(11,3) edge (11.15,3.6)
(13,1) edge (12.4,0.88)
(13,1) edge (12.85,0.4)
(13,1) edge (12.4,1.12)
(13,1) edge (12.85,1.6)
(13,3) edge (12.4,3.12)
(13,3) edge (12.85,3.6)
(13,3) edge (12.4,2.88)
(13,3) edge (12.85,2.4);
\draw [line width=0.25mm, fill=white] (11,1) circle (0.75mm);
\draw [line width=0.25mm, fill=white] (11,3) circle (0.75mm);
\draw [line width=0.25mm, fill=white] (13,2) circle (0.75mm);
\draw [line width=0.25mm, fill=black] (11,2) circle (0.75mm);
\draw [line width=0.25mm, fill=black] (13,1) circle (0.75mm);
\draw [line width=0.25mm, fill=black] (13,3) circle (0.75mm);
\node at (11.45,0.78) {\rotatebox{-45}{$\vdots$}};
\node at (11.25,3.45) {\rotatebox{45}{$\vdots$}};
\node at (12.52,0.75) {\rotatebox{45}{$\vdots$}};
\node at (12.72,1.45) {\rotatebox{-45}{$\vdots$}};
\node at (12.52,2.75) {\rotatebox{45}{$\vdots$}};
\node at (12.72,3.45) {\rotatebox{-45}{$\vdots$}};
\end{tikzpicture}
\end{center}

We can draw in parts of some strands based on what we know of the graph:
\begin{center}
\begin{tikzpicture}[scale=1]
\fill[gray!40!white] (0,0) rectangle (4,4);
\path[-,font=\large, >=angle 90, line width=0.4mm]
(1,0) edge (1,4)
(3,0) edge (3,4)
(0,2) edge (1,2)
(1,1) edge (3,2)
(1,3) edge (3,2)
(1,1) edge (1.5,1.1)
(1,1) edge (1.1,0.5)
(1,3) edge (1.5, 2.9)
(1,3) edge (1.1, 3.5)
(3,2) edge (2.1, 2.3)
(3,2) edge (2.1, 1.7)
(3,2) edge (2.2, 1.4)
(3,2) edge (2.85, 1)
(3,2) edge (2.2, 2.6)
(3,2) edge (2.85, 3);
\draw [line width=0.25mm, fill=white] (1,1) circle (0.75mm);
\draw [line width=0.25mm, fill=white] (1,3) circle (0.75mm);
\draw [line width=0.25mm, fill=black] (1,2) circle (0.75mm);
\draw [line width=0.25mm, fill=black] (3,2) circle (0.75mm);
\node at (1.4,0.9) {\rotatebox{-35}{$\vdots$}};
\node at (1.25,3.35) {\rotatebox{35}{$\vdots$}};
\node at (2.65,2.85) {\rotatebox{-60}{$\vdots$}};
\node at (2.4,1.3) {\rotatebox{60}{$\vdots$}};
\node at (2.1,2.15) {$\vdots$};
\path[->,font=\large, >=angle 90, line width=0.4mm,blue]
(1.65,1.7) edge (2.2, 1.1)
(2,1.8) edge (1.75, 1)
(1.7,3) edge (1.9,2.2)
(2.1,2.9) edge (1.5,2.3)
(1.3,2.75) edge (0.7, 2.25)
(0.7, 1.75) edge (1.3,1.25)
(0.26,3.9) edge (0.25,4);
\draw [line width=0.4mm,blue] plot [smooth, tension=0.8] coordinates { (0.25,0) (0.5,1) (1.3,2) (0.5,3) (0.25,4) };
\fill[gray!40!white] (5,0) rectangle (9,4);
\path[-,font=\large, >=angle 90, line width=0.4mm]
(6,0) edge (6,4)
(8,0) edge (8,4)
(8,2) edge (9,2)
(8,1) edge (6,2)
(8,3) edge (6,2)
(8,1) edge (7.5,1.1)
(8,1) edge (7.9,0.5)
(8,1) edge (7.85, 1.9)
(8,1) edge (7.2, 1.55)
(8,3) edge (7.5, 2.9)
(8,3) edge (7.9, 3.5)
(8,3) edge (7.85, 2.1)
(8,3) edge (7.2, 2.45)
(6,2) edge (6.8, 1.4)
(6,2) edge (6.15, 1)
(6,2) edge (6.8, 2.6)
(6,2) edge (6.15, 3);
\draw [line width=0.25mm, fill=white] (6,2) circle (0.75mm);
\draw [line width=0.25mm, fill=white] (8,2) circle (0.75mm);
\draw [line width=0.25mm, fill=black] (8,1) circle (0.75mm);
\draw [line width=0.25mm, fill=black] (8,3) circle (0.75mm);
\node at (7.75,3.32) {\rotatebox{-35}{$\vdots$}};
\node at (7.65,1.8) {\rotatebox{-60}{$\vdots$}};
\node at (7.6,0.9) {\rotatebox{35}{$\vdots$}};
\node at (7.45,2.4) {\rotatebox{60}{$\vdots$}};
\node at (6.55,1.35) {\rotatebox{-55}{$\vdots$}};
\node at (6.35,2.85) {\rotatebox{55}{$\vdots$}};
\path[->,font=\large, >=angle 90, line width=0.4mm,blue]
(6.75,2.8) edge (7.3, 2.2)
(7.3,1.8) edge (6.7,1.2)
(7.7, 2.75) edge (8.3,2.25)
(7.7, 2.25) edge (8.3,2.75)
(8.3,1.25) edge (7.7, 1.75)
(8.3,1.75) edge (7.7, 1.25)
(7,3) edge (7,0.7);
\fill[gray!40!white] (10,0) rectangle (14,4);
\path[-,font=\large, >=angle 90, line width=0.4mm]
(11,0) edge (11,4)
(13,0) edge (13,4)
(10,2) edge (11,2)
(13,2) edge (14,2)
(11,1) edge (13,1)
(11,3) edge (13,3)
(11,1) edge (11.65,0.88)
(11,1) edge (11.15,0.4)
(11,3) edge (11.65,3.12)
(11,3) edge (11.15,3.6)
(13,1) edge (12.4,0.88)
(13,1) edge (12.85,0.4)
(13,1) edge (12.4,1.12)
(13,1) edge (12.85,1.6)
(13,3) edge (12.4,3.12)
(13,3) edge (12.85,3.6)
(13,3) edge (12.4,2.88)
(13,3) edge (12.85,2.4);
\draw [line width=0.25mm, fill=white] (11,1) circle (0.75mm);
\draw [line width=0.25mm, fill=white] (11,3) circle (0.75mm);
\draw [line width=0.25mm, fill=white] (13,2) circle (0.75mm);
\draw [line width=0.25mm, fill=black] (11,2) circle (0.75mm);
\draw [line width=0.25mm, fill=black] (13,1) circle (0.75mm);
\draw [line width=0.25mm, fill=black] (13,3) circle (0.75mm);
\node at (11.45,0.78) {\rotatebox{-45}{$\vdots$}};
\node at (11.25,3.45) {\rotatebox{45}{$\vdots$}};
\node at (12.52,0.75) {\rotatebox{45}{$\vdots$}};
\node at (12.72,1.45) {\rotatebox{-45}{$\vdots$}};
\node at (12.52,2.75) {\rotatebox{45}{$\vdots$}};
\node at (12.72,3.45) {\rotatebox{-45}{$\vdots$}};
\path[->,font=\large, >=angle 90, line width=0.4mm,blue]
(11.7, 1.25) edge (12.3, 0.75)
(12.3, 1.25) edge (11.7, 0.75)
(11.7, 3.25) edge (12.3, 2.75)
(12.3, 3.25) edge (11.7, 2.75)
(11.3,2.8) edge (10.7, 2.3)
(10.7, 1.7) edge (11.3,1.2)
(12.7, 2.8) edge (13.3,2.3)
(13.3,1.7) edge (12.7, 1.2)
(10.26,3.9) edge (10.25,4)
(13.74,3.9) edge (13.75,4);
\draw [line width=0.4mm,blue] plot [smooth, tension=0.8] coordinates { (10.25,0) (10.5,1) (11.5,2) (10.5,3) (10.25,4) };
\draw [line width=0.4mm,blue] plot [smooth, tension=0.8] coordinates { (13.75,0) (13.5,1) (12.5,2) (13.5,3) (13.75,4) };
\end{tikzpicture}
\end{center}

For each area enclosed by edges in these figures, the strands drawn above must be the only strands that enter or exit the area.  If we test all the ways to connect the entering strands to the exiting strands, we find that in each case, there is only one way to do this while maintaining the rules of Postnikov diagrams and without creating additional edges between the vertices on the strands:
\begin{center}
\begin{tikzpicture}[scale=1]
\fill[gray!40!white] (0,0) rectangle (4,4);
\path[-,font=\large, >=angle 90, line width=0.4mm]
(1,0) edge (1,4)
(3,0) edge (3,4)
(0,2) edge (1,2)
(1,1) edge (3,2)
(1,3) edge (3,2)
(1,1) edge (1.5,1.1)
(1,1) edge (1.1,0.5)
(1,3) edge (1.5, 2.9)
(1,3) edge (1.1, 3.5)
(3,2) edge (2.2, 1.4)
(3,2) edge (2.85, 1)
(3,2) edge (2.2, 2.6)
(3,2) edge (2.85, 3);
\draw [line width=0.25mm, fill=white] (1,1) circle (0.75mm);
\draw [line width=0.25mm, fill=white] (1,3) circle (0.75mm);
\draw [line width=0.25mm, fill=black] (1,2) circle (0.75mm);
\draw [line width=0.25mm, fill=black] (3,2) circle (0.75mm);
\node at (1.4,0.9) {\rotatebox{-35}{$\vdots$}};
\node at (1.25,3.35) {\rotatebox{35}{$\vdots$}};
\node at (2.65,2.85) {\rotatebox{-60}{$\vdots$}};
\node at (2.4,1.3) {\rotatebox{60}{$\vdots$}};
\path[->,font=\large, >=angle 90, line width=0.4mm,blue]
(2.2, 2.7) edge (0,2.2)
(0,1.8) edge (2.2, 1.3)
(1.8,3) edge (1.8,0.7)
(0.26,3.9) edge (0.25,4);
\draw [line width=0.4mm,blue] plot [smooth, tension=0.8] coordinates { (0.25,0) (0.5,1) (1.3,2) (0.5,3) (0.25,4) };
\fill[gray!40!white] (5,0) rectangle (9,4);
\path[-,font=\large, >=angle 90, line width=0.4mm]
(6,0) edge (6,4)
(8,0) edge (8,4)
(8,2) edge (9,2)
(8,1) edge (6,2)
(8,3) edge (6,2)
(8,1) edge (7.5,1.1)
(8,1) edge (7.9,0.5)
(8,3) edge (7.5, 2.9)
(8,3) edge (7.9, 3.5)
(6,2) edge (6.8, 1.4)
(6,2) edge (6.15, 1)
(6,2) edge (6.8, 2.6)
(6,2) edge (6.15, 3);
\draw [line width=0.25mm, fill=white] (6,2) circle (0.75mm);
\draw [line width=0.25mm, fill=white] (8,2) circle (0.75mm);
\draw [line width=0.25mm, fill=black] (8,1) circle (0.75mm);
\draw [line width=0.25mm, fill=black] (8,3) circle (0.75mm);
\node at (7.75,3.32) {\rotatebox{-35}{$\vdots$}};
\node at (7.6,0.9) {\rotatebox{35}{$\vdots$}};
\node at (6.55,1.35) {\rotatebox{-55}{$\vdots$}};
\node at (6.35,2.85) {\rotatebox{55}{$\vdots$}};
\path[->,font=\large, >=angle 90, line width=0.4mm,blue]
(6.8, 2.7) edge (9,2.2)
(9,1.8) edge (6.8, 1.3)
(7.2,3) edge (7.2,0.7)
(8.74,3.9) edge (8.75,4);
\draw [line width=0.4mm,blue] plot [smooth, tension=0.8] coordinates { (8.75,0) (8.5,1) (7.7,2) (8.5,3) (8.75,4) };
\fill[gray!40!white] (10,0) rectangle (14,4);
\path[-,font=\large, >=angle 90, line width=0.4mm]
(11,0) edge (11,4)
(13,0) edge (13,4)
(10,2) edge (11,2)
(13,2) edge (14,2)
(11,1) edge (13,1)
(11,3) edge (13,3)
(11,1) edge (11.65,0.88)
(11,1) edge (11.15,0.4)
(11,3) edge (11.65,3.12)
(11,3) edge (11.15,3.6)
(13,1) edge (12.4,0.88)
(13,1) edge (12.85,0.4)
(13,3) edge (12.4,3.12)
(13,3) edge (12.85,3.6);
\draw [line width=0.25mm, fill=white] (11,1) circle (0.75mm);
\draw [line width=0.25mm, fill=white] (11,3) circle (0.75mm);
\draw [line width=0.25mm, fill=white] (13,2) circle (0.75mm);
\draw [line width=0.25mm, fill=black] (11,2) circle (0.75mm);
\draw [line width=0.25mm, fill=black] (13,1) circle (0.75mm);
\draw [line width=0.25mm, fill=black] (13,3) circle (0.75mm);
\node at (11.45,0.78) {\rotatebox{-45}{$\vdots$}};
\node at (11.25,3.45) {\rotatebox{45}{$\vdots$}};
\node at (12.52,0.75) {\rotatebox{45}{$\vdots$}};
\node at (12.72,3.45) {\rotatebox{-45}{$\vdots$}};
\path[->,font=\large, >=angle 90, line width=0.4mm,blue]
(12.6, 3.2) edge (10,2.2)
(10,1.8) edge (12.6, 0.8)
(11.7, 3.2) edge (14,2.2)
(14,1.8) edge (11.7, 0.8)
(10.26,3.9) edge (10.25,4)
(13.74,3.9) edge (13.75,4);
\draw [line width=0.4mm,blue] plot [smooth, tension=0.8] coordinates { (10.25,0) (10.5,1) (11.5,2) (10.5,3) (10.25,4) };
\draw [line width=0.4mm,blue] plot [smooth, tension=0.8] coordinates { (13.75,0) (13.5,1) (12.5,2) (13.5,3) (13.75,4) };
\end{tikzpicture}
\end{center}

These Postnikov diagrams don't have any interior vertices, which is a contradiction.
\end{proof}

Now we are ready to prove Theorem~\ref{thm:int-vert-k-loop}.
\begin{proof}
First, we'll just consider the first and second farthest right loops.  If there are interior vertices, then by Lemma~\ref{lemma:black-left}, there must be a black interior vertex adjacent to the left string.  The proof of Lemma~\ref{lemma:mult-edges} tells us that there must be a black vertex connected by multiple nonparallel edges to the left string.  Then, by Lemmas~\ref{lemma:same-int-vert} and~\ref{lemma:sq-move}, we can do a square move to reduce the number of crossings between the loops in the associated alternating strange diagram.  Since we are doing this square move on a black vertex, we are pushing these crossings to the left (as pictured in the proof of Lemma~\ref{lemma:sq-move}).  We can continue this process until there are no interior vertices between the first and second farthest right loops.  Now we'll look at the second and third farthest right loops and, using the same method, remove all interior vertices here.  Notice that we do not create any interior vertices between the first and second farthest right loops, as we are always pushing crossings to the left.  Repeating this process with each pair of loops, going from right to left, removes all interior vertices.
\end{proof}

\section{Proof of Theorems~\ref{thm:Te} and~\ref{thm:Tf}}\label{sec:R-matThm}

\begin{lemma}\label{lemma:lambda-rel}
We have the following relation between $\lambda_j$ and $\lambda_{j+1}$: $$\lambda_{j+1}(x,y,z)=\frac{y_j\lambda_j(x,y,z)+z_j\left(\prod_{i=1}^n x_i-\prod_{i=1}^n y_i\right)}{x_{j+1}}$$
\end{lemma}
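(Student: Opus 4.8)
The plan is to establish the equivalent identity
\[
x_{j+1}\lambda_{j+1}(x,y,z)=y_j\lambda_j(x,y,z)+z_j\Bigl(\prod_{i=1}^n x_i-\prod_{i=1}^n y_i\Bigr),
\]
by counting weighted directed paths in the universal cover of the expanded network in two different ways.

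First I would fix notation for the relevant vertices. I would use the fact that in a cylindric $2$-loop expanded directed plabic network with the canonical orientation every directed path first runs monotonically up the left string, then crosses at most one slanted edge, and then runs monotonically up the right string; in particular there are no directed cycles, and a path that has reached the right string cannot return to the left. With the indexing of Section~\ref{sec:R-mat}, the slanted edge $z_m$ joins the white vertex $w_m$ at the bottom of $x_{m+1}$ to the black vertex at the top of $y_m$; the source $a_j$ of $\lambda_j$ is $w_j$, and its target $b_j$ is the black vertex one fundamental domain up at the top of $y_{j-1}$. Thus $a_{j+1}$ sits one $x$-step (and one weight-$1$ step) above $a_j$ on the left string, $b_{j+1}$ sits one $y$-step above $b_j$ on the right string, and $b_j$ is the black vertex immediately below $b_{j+1}$ among the black vertices of the right string.

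Next I would introduce $\Lambda:=\sum_P x_P$, the total weight of all directed paths $P$ from $a_j$ to $b_{j+1}$, and evaluate it twice. Splitting by the first edge of $P$: if $P$ starts with $z_j$, it is forced to climb one full turn of the right string to reach $b_{j+1}$, a unique path of weight $z_j\prod_{i=1}^n y_i$; otherwise $P$ starts by climbing $x_{j+1}$ and the weight-$1$ edge above it to $a_{j+1}$, and removing that prefix sets up a bijection with the paths counted by $\lambda_{j+1}$, contributing $x_{j+1}\lambda_{j+1}$. So $\Lambda=z_j\prod y_i+x_{j+1}\lambda_{j+1}$. Splitting instead according to whether $P$ meets $b_j$: if it does, the $a_j\to b_j$ part ranges over exactly the paths counted by $\lambda_j$ while the remaining $b_j\to b_{j+1}$ part is forced of weight $y_j$, contributing $y_j\lambda_j$; if it does not, then $P$ must enter the right string precisely at $b_{j+1}$, which forces it to ascend a full turn of the left string and then cross the next copy of $z_j$, a unique path of weight $z_j\prod_{i=1}^n x_i$. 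So $\Lambda=y_j\lambda_j+z_j\prod x_i$. Equating the two expressions for $\Lambda$ and dividing by $x_{j+1}$ gives the lemma.

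I expect the only real difficulty to be the vertex-and-edge bookkeeping: making the identifications of $a_j,b_j,a_{j+1},b_{j+1}$ and of the attachments of the edges $z_m$ precise, and verifying in the second count that the unique $a_j\to b_{j+1}$ path avoiding $b_j$ is the one that goes once around the left loop before crossing $z_j$; once the combinatorial picture is set up, both counts are immediate. As an alternative I could instead record the closed form $\lambda_j=\sum_{m=j}^{j+n-1}\bigl(\prod_{i=j+1}^{m}x_i\bigr)z_m\bigl(\prod_{i=m+1}^{j+n-1}y_i\bigr)$ (indices read modulo $n$) and observe that $x_{j+1}\lambda_{j+1}-y_j\lambda_j$ telescopes, the only surviving terms being the $m=j+n$ term $z_j\prod x_i$ and the $m=j$ term $z_j\prod y_i$.
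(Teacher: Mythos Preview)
Your proposal is correct and takes essentially the same path-counting approach as the paper. The paper's one-paragraph proof sets up the bijection directly---take a path in $\lambda_j$ that does not begin with $z_j$, strip the initial $x_{j+1}$ step on the left string and append a $y_j$ step on the right string to obtain a path in $\lambda_{j+1}$, then account for the two exceptional paths---whereas you package the same bijection as a double count of the auxiliary quantity $\Lambda$; the content is identical, and your telescoping alternative would also work.
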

\begin{proof}
If we take a path that gives a term in $\lambda_j$ and does not start with $z_j$, we can add two additional edges at the end on the right string and remove two edges at the beginning on the left string, to get a path that gives a term in $\lambda_{j+1}$.  This process gives us all the terms in $\lambda_{j+1}$ except the one that comes from the path that stays on the left string and crosses over at the last moment on $z_j$.
\end{proof}

We can choose one black vertex on the left string of a cylindric 2-loop extended plabic network and the white vertex on the right string that is part of the same interior face.  Adding one edge of weight $p$ and one edge of weight $-p$ from the white vertex to the black vertex does not change any of the boundary measurements, because every time there is a path that goes through the edge with weight $p$, there is another path exactly the same except it goes through the edge of weight $-p$.  The weights of these paths will cancel out when computing the boundary measurements.  We can expand these two vertices into 3 vertices each, as seen in Figure~\ref{fig:edge-p}, where the top white vertex on the right and bottom black vertex on the left may or may not have a third edge, depending on whether or not the original white and black vertices were of degree 2 or 3 before adding the edges weighted $p$ and $-p$.

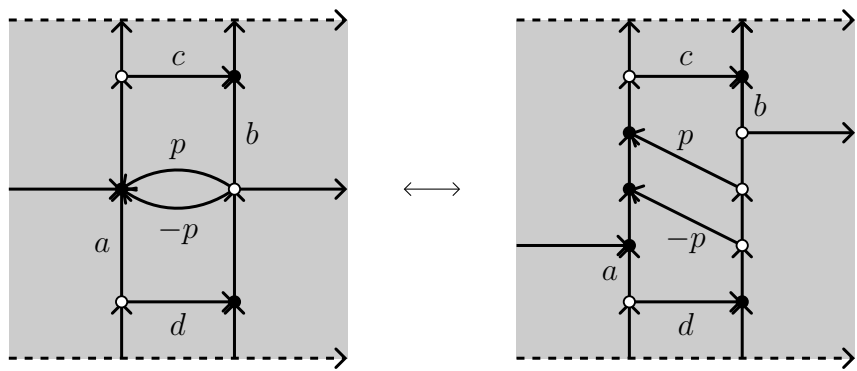
\begin{figure}[htp]
\begin{center}
\begin{tikzpicture}[scale=0.75]
\fill[gray!40!white] (0,0) rectangle (6,6);
\path[->,font=\large, >=angle 90, line width=0.4mm]
(0,3) edge (2,3)
(4,3) edge (6,3)
(2,0) edge (2,1)
(2,1) edge node[left] {$a$} (2,3)
(2,3) edge (2,5)
(2,5) edge (2,6)
(4,0) edge (4,1)
(4,1) edge (4,3)
(4,3) edge node[right] {$b$} (4,5)
(4,5) edge (4,6)
(2,1) edge node[below] {$d$} (4,1)
(2,5) edge node[above] {$c$} (4,5)
(4,3) edge[bend left=35] node[below] {$-p$} (2,3)
(4,3) edge[bend right=35] node[above] {$p$} (2,3);
\path[dashed,->,font=\large, >=angle 90, line width=0.4mm]
(0,0) edge (6,0)
(0,6) edge (6,6);
\draw [line width=0.25mm, fill=white] (2,1) circle (1mm);
\draw [line width=0.25mm, fill=white] (2,5) circle (1mm);
\draw [line width=0.25mm, fill=white] (4,3) circle (1mm);
\draw [line width=0.25mm, fill=black] (2,3) circle (1mm);
\draw [line width=0.25mm, fill=black] (4,1) circle (1mm);
\draw [line width=0.25mm, fill=black] (4,5) circle (1mm);
\path[<->,font=\large, >=angle 90]
(7,3) edge (8,3);
\fill[gray!40!white] (9,0) rectangle (15,6);
\path[->,font=\large, >=angle 90, line width=0.4mm]
(9,2) edge (11,2)
(13,4) edge (15,4)
(11,0) edge (11,1)
(11,1) edge node[left] {$a$} (11,2)
(11,2) edge (11,3)
(11,3) edge (11,4)
(11,4) edge (11,5)
(11,5) edge (11,6)
(13,0) edge (13,1)
(13,1) edge (13,2)
(13,2) edge (13,3)
(13,3) edge node[right] {$b$} (13,6)
(13,4) edge (13,5)
(13,5) edge (13,6)
(11,1) edge node[below] {$d$} (13,1)
(11,5) edge node[above] {$c$} (13,5)
(13,2) edge node[below] {$-p$} (11,3)
(13,3) edge node[above] {$p$} (11,4);
\path[dashed,->,font=\large, >=angle 90, line width=0.4mm]
(9,0) edge (15,0)
(9,6) edge (15,6);
\draw [line width=0.25mm, fill=white] (11,1) circle (1mm);
\draw [line width=0.25mm, fill=white] (11,5) circle (1mm);
\draw [line width=0.25mm, fill=white] (13,2) circle (1mm);
\draw [line width=0.25mm, fill=white] (13,3) circle (1mm);
\draw [line width=0.25mm, fill=white] (13,4) circle (1mm);
\draw [line width=0.25mm, fill=black] (11,2) circle (1mm);
\draw [line width=0.25mm, fill=black] (11,3) circle (1mm);
\draw [line width=0.25mm, fill=black] (11,4) circle (1mm);
\draw [line width=0.25mm, fill=black] (13,1) circle (1mm);
\draw [line width=0.25mm, fill=black] (13,5) circle (1mm);
\end{tikzpicture}
\end{center}
\caption{Adding the edges $p$ and $-p$ to a cylindric 2-loop directed plabic network.}
\label{fig:edge-p}
\end{figure}

Notice that what was a hexagon is now a pentagon on top of a square on top of a pentagon.  We can turn the upper pentagon into a square and perform the square move.  This will allow us to perform the square move on the face above that, and so on.  When we change original edge weights $x_{i+1}, y_i$, and $z_i$ by a square move, we will denote the new weights by $\widetilde{x_{i+1}}, \widetilde{y_i}$, and $\widetilde{z_i}$, as in Figure~\ref{fig:sq-move}.  We can continue doing square moves in this way until we end up with a graph that looks the same as the one we started with.  Now the edge weighted $-p$ will be the edge at the top of the square and the edge originally weighted $p$, which has a a new weight, has been pushed around the cylinder so that it is under the edge weighted $-p$.  Notice that each original edge weight is changed by a square move exactly once, so ignoring the two edges oriented from right to left, we end up with a new graph with edge weights $\widetilde{x},\widetilde{y}$, and $\widetilde{z}$.

Each square move looks as depicted in Figure~\ref{fig:sq-move}.  From the figure, we see that as we perform the sequence of square moves, there is always one edge, besides the fixed edge weighted $-p$, oriented from right to left.  

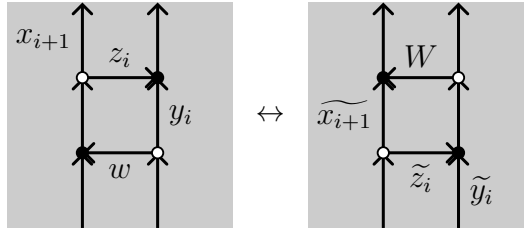
\begin{figure}[htb]
\begin{center}
\begin{tikzpicture}[scale=0.5]
\fill[gray!40!white] (-3,-3) rectangle (3,3);
\path[->,font=\large, >=angle 90, line width=0.4mm]
(1,-1) edge node[below] {$w$} (-1,-1)
(-1,-1) edge (-1,1)
(1,-1) edge node[right] {$y_i$} (1,1)
(-1,1) edge node[above] {$z_i$} (1,1)
(1,1) edge (1,3)
(-1,1) edge node[left] {$x_{i+1}$} (-1,3)
(1,-3) edge (1,-1)
(-1,-3) edge (-1,-1);
\draw [line width=0.25mm, fill=white] (-1,1) circle (1.5mm);
\draw [line width=0.25mm, fill=black] (1,1) circle (1.5mm);
\draw [line width=0.25mm, fill=white] (1,-1) circle (1.5mm);
\draw [line width=0.25mm, fill=black] (-1,-1) circle (1.5mm);
\node at (4,0) {$\leftrightarrow$};
\fill[gray!40!white] (5,-3) rectangle (11,3);
\path[->,font=\large, >=angle 90, line width=0.4mm]
(7,-1) edge node[below] {$\widetilde{z_i}$} (9,-1)
(7,-1) edge node[left] {$\widetilde{x_{i+1}}$} (7,1)
(9,-1) edge (9,1)
(9,1) edge node[above] {$W$} (7,1)
(9,1) edge (9,3)
(7,1) edge(7,3)
(9,-3) edge node[right] {$\widetilde{y_i}$} (9,-1)
(7,-3) edge (7,-1);
\draw [line width=0.25mm, fill=black] (7,1) circle (1.5mm);
\draw [line width=0.25mm, fill=white] (9,1) circle (1.5mm);
\draw [line width=0.25mm, fill=black] (9,-1) circle (1.5mm);
\draw [line width=0.25mm, fill=white] (7,-1) circle (1.5mm);
\end{tikzpicture}
\caption{A square move in the sequence of moves that push the edge weighted $p$ around the cylinder.}
\label{fig:sq-move}
\end{center}
\end{figure}

\begin{lemma}\label{lemma:unique-p}
There is at most one value of $p$ for which the weight of the bottom edge of the square in the network, after the square move has propagated all the way around the cylinder, is equal to $p$.
\end{lemma}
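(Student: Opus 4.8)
The plan is to track the weight of the mobile edge — the one that started out with weight $p$ — through the whole sequence of square moves of Figure~\ref{fig:sq-move}, to write the weight $F(p)$ that the bottom edge of the square ends up with after the edge has been pushed once around the cylinder, and to show that $F$ depends on $p$ by a fractional-linear transformation, so that the equation $F(p)=p$ has at most one admissible solution. As a first step I would unwind a single square move: in the notation of Figure~\ref{fig:sq-move} I would express the new weight of the mobile edge as an explicit function of its old weight $w$ and of the three neighbouring weights $y_i,z_i,x_{i+1}$, which in the edge-weighted form of the move comes out fractional-linear in $w$, while recording at the same time the induced changes $y_i\mapsto\widetilde{y_i}$, $z_i\mapsto\widetilde{z_i}$, $x_{i+1}\mapsto\widetilde{x_{i+1}}$.

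The crux is to compose these one-step maps as the mobile edge travels once around the cylinder and to verify that $p\mapsto F(p)$ is still fractional-linear in $p$ — indeed one expects it to be affine, since the parameter $p$ enters path weights additively. This is not automatic: by the time the mobile edge reaches a given square, the left-hand edge of that square has already been altered by the preceding square move, so its weight already depends on $p$, and a naive composition would raise the degree in $p$. The clean way to see that the degree does not grow is the transfer-matrix / loop-group picture: the two added edges of weights $p$ and $-p$, together with the weight-$1$ identity segments flanking them, contribute an elementary transvection whose single parameter is controlled by $p$, and each square move is a move in this picture under which the transvection stays a transvection — its parameter transforming by a fixed fractional-linear map, the rest of the change being absorbed into the state of the remaining network. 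Composing the fixed fractional-linear maps shows $F$ is fractional-linear in $p$, and tracking the additive role of $p$ pins down its precise shape.

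Finally I would conclude from the shape of $F$. If $F$ is affine, $F(p)=ap+b$ with $a\neq 1$, then $F(p)=p$ has the unique solution $p=b/(1-a)$; if the composition step only delivers that $F$ is fractional-linear, then $F(p)=p$ is a quadratic one of whose roots is the degenerate value $p=0$ (for $p=0$ the added edges are invisible, so they stay invisible under all the moves and $F(0)=0$), and $p=0$ is excluded from consideration, leaving at most one admissible root; in the exceptional sub-case where the equation degenerates to an identity one checks directly from the explicit one-step formulas that this does not occur for a genuine cylindric $2$-loop network. Either way at most one value of $p$ satisfies the condition of the lemma.

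The step I expect to be the main obstacle is the composition: showing that the modified weights fed into successive square moves, which already depend on $p$, nonetheless leave the mobile edge's final weight a (fractional-)linear function of $p$. Carrying this out cleanly requires the loop-group bookkeeping matched carefully against the orientation and face-weight conventions of Figures~\ref{fig:edge-p} and~\ref{fig:sq-move}; once that is in place, the count of fixed points and the rest of the argument are routine.
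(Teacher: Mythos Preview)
Your general strategy --- track the mobile edge weight as a fractional-linear function of $p$, then use $F(0)=0$ to reduce the fixed-point equation $F(p)=p$ to a linear one --- is exactly right and matches the paper's. But the step you single out as the ``main obstacle'' is not an obstacle at all, and the loop-group machinery you propose to handle it is unnecessary.

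Look again at Figure~\ref{fig:sq-move}. The square move at step $i$ consumes the mobile edge $w$ together with the \emph{original} weights $y_i$, $z_i$, $x_{i+1}$ and produces the new mobile weight $W=\dfrac{x_{i+1}}{z_i(1+y_i/(z_iw))}=\dfrac{x_{i+1}w}{z_iw+y_i}$ on top. The modified weights $\widetilde{x_{i+1}},\widetilde{y_i},\widetilde{z_i}$ all end up \emph{below} $W$, not above it. The next square therefore involves only the still-untouched original weights $y_{i+1},z_{i+1},x_{i+2}$. So the coefficients of the one-step map are always independent of $p$; the feared feedback of $p$-dependent weights into later moves simply does not occur.

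With that cleared up, the paper's argument is a one-line induction: if $w=\dfrac{ap}{bp+c}$ with $a,b,c$ independent of $p$, then $W=\dfrac{(ax_{i+1})p}{(az_i+by_i)p+cy_i}$ is of the same special form. Starting from $w=p$ and going once around the cylinder yields $F(p)=\dfrac{Ap}{Bp+C}$ with $A,B,C$ constants; setting $F(p)=p$ and cancelling the factor of $p$ (this is precisely your observation that $F(0)=0$) leaves the linear equation $Bp=A-C$, hence at most one solution. Your affine-versus-fractional-linear dichotomy and the loop-group detour can be dropped entirely.
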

\begin{proof}
If we are preforming the move where the edge oriented from right to left is pushed from between $\widetilde{z_{i-1}}$ and $z_i$ to between $\widetilde{z_i}$ and $z_{i+1}$, then suppose this edge is weighted $w=\frac{ap}{bp+c}$ where $a,b,c$ don't depend on $p$.  Then the weight of the edge oriented from right to left after the square move is $\frac{x_{i+1}}{z_i(1+\frac{y_i}{z_iw})}=\frac{(ax_{i+1})p}{(az_i+by_i)p+cy_i}$.  So, by induction, we find that the weight of the bottom edge of the square in the network, after the square move has propagated all the way around the cylinder, is a constant times $p$ divided by a linear function of $p$.  Setting this equal to $p$, we get a linear equation, so there is at most one solution.
\end{proof}

\begin{thm}\label{thm:tilde-vars}
If we add the edges weighted by $p$ and $-p$ such that the the face above edge $p$ has the edge labeled $y_j$, and the face below edge $-p$ has the edge labeled $x_j$, then let $p=p_j=\displaystyle\frac{\prod_{i=1}^n x_i-\prod_{i=1}^n y_i}{\lambda_j(x,y,z)}$ and perform the sequence of square moves described above.  Then when the edge directed from the right string to the left string that is not the edge weighted by $-p$ is just above the face with the edge labeled $\widetilde{x_k}$, the weight of this edge is $p_k=\displaystyle\frac{\prod_{i=1}^n x_i-\prod_{i=1}^n y_i}{\lambda_k(x,y,z)}$.  After doing all the square moves, we have the following values of $\widetilde{x_i},\widetilde{y_i}$, and $\widetilde{z_i}$:$$\widetilde{x_i}=\frac{y_{i-1}\lambda_{i-1}(x,y,z)}{\lambda_i(x,y,z)},$$$$\widetilde{y_i}=\frac{x_{i+1}\lambda_{i+1}(x,y,z)}{\lambda_i(x,y,z)},$$$$\widetilde{z_i}=z_i.$$
\end{thm}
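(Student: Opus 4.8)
The plan is to prove Theorem~\ref{thm:tilde-vars} by induction along the sequence of square moves, letting Lemma~\ref{lemma:lambda-rel} do all of the algebra. First I would record the edge-weighted square move in the exact orientation of Figure~\ref{fig:sq-move}: if the square being mutated has left edge weighted $x_{i+1}$, right edge weighted $y_i$, top edge weighted $z_i$, and the moving right-to-left edge enters at the bottom-left (black) vertex with weight $w$, then after the move one has
$\widetilde{z_i}=z_i$, $\widetilde{y_i}=y_i+z_i w$, $\widetilde{x_{i+1}}=\dfrac{x_{i+1}y_i}{y_i+z_i w}$, and the moving edge, now leaving the top-left vertex, has weight $W=\dfrac{x_{i+1}w}{y_i+z_i w}$. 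The formula for $W$ is exactly the one already derived in the proof of Lemma~\ref{lemma:unique-p}; the three $\widetilde{\ }$-formulas follow from the same local path-weight bookkeeping (equivalently, from the edge-weighted specialization of move (M1)), and nothing is disturbed by the fact that some of $x_{i+1},y_i,z_i$ may equal $1$ after expansion, since no division by those weights occurs.

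Next I would run the induction. Write $D:=\prod_{i=1}^{n}x_i-\prod_{i=1}^{n}y_i$ and $p_k:=D/\lambda_k(x,y,z)$, all in terms of the original weights. The inductive claim is: at the stage where the moving right-to-left edge sits just above the face carrying $\widetilde{x_k}$ (i.e.\ immediately after the square move that modified $x_k$), the moving edge has weight $p_k$, and each already-modified edge carries the value predicted by the theorem. For the step, apply the local formulas above at level $k$ with $w=p_k$; note that at the start of that move the weights $x_{k+1},y_k,z_k$ are still their original values, since each of $x_i,y_i,z_i$ is modified exactly once and only by the level-$i$ move. Lemma~\ref{lemma:lambda-rel} in the form $y_k\lambda_k+z_kD=x_{k+1}\lambda_{k+1}$ gives $y_k+z_k p_k=x_{k+1}\lambda_{k+1}/\lambda_k$, hence $\widetilde{y_k}=x_{k+1}\lambda_{k+1}/\lambda_k$, $\widetilde{x_{k+1}}=y_k\lambda_k/\lambda_{k+1}$, $\widetilde{z_k}=z_k$, and $W=p_k\lambda_k/\lambda_{k+1}=D/\lambda_{k+1}=p_{k+1}$. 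These are precisely the claimed values and they feed the hypothesis at the next stage, so the induction closes. Letting $i$ range over $j,j+1,\dots,j+n-1$ (indices mod $n$) — which is the full trip of the moving edge around the cylinder — then yields all of the stated $\widetilde{x_i},\widetilde{y_i},\widetilde{z_i}$.

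For the base case I would unwind Figure~\ref{fig:edge-p}: inserting the edges weighted $p$ and $-p$ and expanding yields a pentagon--square--pentagon, and after the single preparatory step that turns the upper pentagon into a square, the edge originally weighted $p$ (the moving edge throughout) sits at the bottom of the square at level $j$ still carrying weight $p=p_j=D/\lambda_j$, so the inductive hypothesis holds initially. The loop then closes consistently: after the $n$-th square move the moving edge returns to just under the fixed edge $-p$ with weight $p_{j+n}=p_j$ (the subscripts of $\lambda$ being read mod $n$), and Lemma~\ref{lemma:unique-p} certifies that $p_j$ is the unique value of $p$ for which this happens, so the construction and the resulting weights $\widetilde{x},\widetilde{y},\widetilde{z}$ are well defined.

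The step I expect to be the main obstacle is the base case: matching the index conventions of Figure~\ref{fig:edge-p} to the clean inductive set-up — namely checking that the moving edge is indeed ``just above'' the face with $x_j$ after the preparatory pentagon-to-square move, and that the square appearing there genuinely carries $(x_{j+1},y_j,z_j)$ in the roles above — and, alongside it, verifying carefully that the edge-weighted square move in this particular orientation (with an extra right-to-left edge present) really does produce the four local formulas. Once those are pinned down, everything else is the one-line cancellation furnished by Lemma~\ref{lemma:lambda-rel}.
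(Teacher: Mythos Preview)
Your proposal is correct and is essentially the paper's own proof: it proves the theorem by checking what a single square move does to the four local weights, using Lemma~\ref{lemma:lambda-rel} to simplify each expression, and letting induction carry the result around the cylinder. Your presentation is in fact a bit cleaner than the paper's --- you record the general local formulas $\widetilde y_i=y_i+z_iw$, $\widetilde x_{i+1}=x_{i+1}y_i/(y_i+z_iw)$, $W=x_{i+1}w/(y_i+z_iw)$, $\widetilde z_i=z_i$ once and then substitute $w=p_k$, whereas the paper plugs in $p_j$ from the outset and works through four separate computations --- but the content and the role of Lemma~\ref{lemma:lambda-rel} are identical.
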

\begin{proof}
We only need to show this for one square move.
\begin{align*}
\widetilde{p_j}&=\frac{x_{j+1}}{z_j\left(1+\frac{y_j}{z_jp_j}\right)}\\
&=\frac{x_{j+1}\left(\frac{\prod_{i=1}^n x_i-\prod_{i=1}^n y_i}{\lambda_j(x,y,z)}\right)}{z_j\left(\frac{\prod_{i=1}^n x_i-\prod_{i=1}^n y_i}{\lambda_j(x,y,z)}\right)+y_j}\\
&=\frac{x_{j+1}\left(\prod_{i=1}^n x_i-\prod_{i=1}^n y_i\right)}{z_j\left(\prod_{i=1}^n x_i-\prod_{i=1}^n y_i\right)+y_j\lambda_j(x,y,z)}\\
&=\frac{\left(\prod_{i=1}^n x_i-\prod_{i=1}^n y_i\right)}{\lambda_{j+1}(x,y,z)},\text{ by Lemma~\ref{lemma:lambda-rel}}\\
\widetilde{x_{j+1}}&=\frac{x_{j+1}}{1+\frac{z_jp_j}{y_j}}\\
&=\frac{x_{j+1}}{1+\frac{z_j\left(\prod_{i=1}^n x_i-\prod_{i=1}^n y_i\right)}{y_j\lambda_j(x,y,z)}}\\
&=\frac{x_{j+1}y_j\lambda_j(x,y,z)}{y_j\lambda_j(x,y,z)+z_j\left(\prod_{i=1}^n x_i-\prod_{i=1}^n y_i\right)}\\
&=\frac{y_j\lambda_j(x,y,z)}{\lambda_{j+1}(x,y,z)},\text{ by Lemma~\ref{lemma:lambda-rel}}\\
\widetilde{y_j}&=y_j\left(1+\frac{z_jp_j}{y_j}\right)\\
&=y_j+\frac{z_j\left(\prod_{i=1}^n x_i-\prod_{i=1}^n y_i\right)}{\lambda_j(x,y,z)}\\
&=\frac{y_j\lambda_j(x,y,z)+z_j\left(\prod_{i=1}^n x_i-\prod_{i=1}^n y_i\right)}{\lambda_j(x,y,z)}\\
&=\frac{x_{j+1}\lambda_{j+1}(x,y,z)}{\lambda_j(x,y,z)},\text{ by Lemma~\ref{lemma:lambda-rel}}\\
\widetilde{z_j}&=\frac{\widetilde{y_j}}{p_j\left(1+\frac{y_j}{z_jp_j}\right)}\\
&=\frac{\frac{x_{j+1}\lambda_{j+1}(x,y,z)}{\lambda_j(x,y,z)}}{p_j+\frac{y_j}{z_j}}\\
&=\frac{x_{j+1}z_j\lambda_{j+1}(x,y,z)}{z_jp_j\lambda_j(x,y,z)+y_j\lambda_j(x,y,z)}\\
&=\frac{x_{j+1}z_j\lambda_{j+1}(x,y,z)}{z_j\left(\prod_{i=1}^n x_i-\prod_{i=1}^n y_i\right)+y_j\lambda_j(x,y,z)}\\
&=z_j,\text{ by Lemma~\ref{lemma:lambda-rel}}
\end{align*}
\end{proof}

\begin{thm}\label{thm:prime-vars}
After pushing the square move through every face, we can remove the edges weighted $p$ and $-p$.  Then we can use gauge transformations to force all the edge weights that were originally set to 1 equal to 1 again.  We obtain the new edge weights $x_i', y_i', z_i'$:
\begin{align*}
x_i'&=\begin{cases} 1 & x_i\text{ set to }1, \\ \frac{\left(\prod_{k=0}^{\alpha_i} y_{i-k-1}\right)\lambda_{i-\alpha_i-1}(x,y,z)}{\lambda_i(x,y,z)} & \text{otherwise,} \end{cases}\\
y_i'&=\begin{cases} 1 & y_i\text{ set to }1, \\ \frac{\left(\prod_{k=0}^{\beta_i} x_{i+k+1}\right)\lambda_{i+\beta_i+1}(x,y,z)}{\lambda_i(x,y,z)} & \text{otherwise,} \end{cases}\\
z_i'&=z_i\left(\prod_{\ell\in A_i} \widetilde{x_\ell}\right)\left(\prod_{m\in B_i} \widetilde{y_m}\right)\\
&=\frac{z_i\left(\prod_{k=1}^{\alpha_i} y_{i-k-1}\right)\lambda_{i-\alpha_i-1}(x,y,z)\left(\prod_{k=1}^{\beta_i} x_{i+k+1}\right)\lambda_{i+\beta_i+1}(x,y,z)}{\lambda_{i-1}(x,y,z)\lambda_{i+1}(x,y,z)}.
\end{align*}
\end{thm}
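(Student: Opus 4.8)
The plan is to assemble Theorem~\ref{thm:prime-vars} from the pieces already set up, with the closed-form formulas coming out of a telescoping of consecutive $\lambda$'s. First I would invoke Theorem~\ref{thm:tilde-vars}: after the full cycle of square moves the expanded network carries edge weights $\widetilde{x_i}=y_{i-1}\lambda_{i-1}(x,y,z)/\lambda_i(x,y,z)$, $\widetilde{y_i}=x_{i+1}\lambda_{i+1}(x,y,z)/\lambda_i(x,y,z)$, $\widetilde{z_i}=z_i$, together with the two extra right-to-left edges. By Theorem~\ref{thm:tilde-vars} applied at the original index $j$, the free right-to-left edge returns to sit in parallel with the fixed edge of weight $-p_j$ and itself carries weight $p_j$; removing this cancelling pair does not change the boundary measurements, by the same path-pairing argument used to introduce $p$ and $-p$, and the local pentagon--square--pentagon region then collapses back to a hexagon via (M3). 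At this stage we have the expanded directed plabic network with weights $\widetilde{x_i},\widetilde{y_i},\widetilde{z_i}$ and nothing else.

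Next I would undo the expansion. The expansion procedure of Definition~\ref{defn:expanded-directed-plabic-network} is a composition of unicolored-edge and middle-vertex moves that inserts weight-$1$ edges, so to reverse it I would first apply gauge transformations --- which preserve the boundary measurements --- at the vertices introduced by the expansion, arranging that every such inserted edge again carries weight $1$. Tracking the weights along a run: for a genuine edge $x_i$ on the left string with maximal run of inserted edges $x_{i-1},\dots,x_{i-\alpha_i}$ directly below it (so $A_i=\{i-1,\dots,i-\alpha_i\}$), the gauge transformations collapse the product $\widetilde{x_i}\widetilde{x_{i-1}}\cdots\widetilde{x_{i-\alpha_i}}$ onto the single surviving edge; symmetrically the run $y_{i+1},\dots,y_{i+\beta_i}$ above a genuine $y_i$ (so $B_i=\{i+1,\dots,i+\beta_i\}$) deposits $\widetilde{y_i}\widetilde{y_{i+1}}\cdots\widetilde{y_{i+\beta_i}}$ on its surviving edge; and the cross edge $z_i$, being incident to the vertices of these two runs, absorbs exactly the factors $\prod_{\ell\in A_i}\widetilde{x_\ell}$ and $\prod_{m\in B_i}\widetilde{y_m}$, so $\widetilde{z_i}=z_i$ becomes $z_i\big(\prod_{\ell\in A_i}\widetilde{x_\ell}\big)\big(\prod_{m\in B_i}\widetilde{y_m}\big)$. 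Contracting the now-weight-$1$ inserted edges by (M2) and (M3) returns the original graph carrying exactly these weights.

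Finally I would run the telescoping. Since $\widetilde{x_\ell}$ and $\widetilde{y_m}$ are ratios of consecutive $\lambda$'s, the products over a run collapse: $\prod_{\ell\in A_i\cup\{i\}}\widetilde{x_\ell}=\big(\prod_{k=0}^{\alpha_i}y_{i-k-1}\big)\lambda_{i-\alpha_i-1}(x,y,z)/\lambda_i(x,y,z)$, which is the asserted $x_i'$; likewise $\prod_{m\in B_i\cup\{i\}}\widetilde{y_m}$ gives $y_i'$, and the factors landing on $z_i$ telescope to $\big(\prod_{k=1}^{\alpha_i}y_{i-k-1}\big)\lambda_{i-\alpha_i-1}(x,y,z)/\lambda_{i-1}(x,y,z)$ and $\big(\prod_{k=1}^{\beta_i}x_{i+k+1}\big)\lambda_{i+\beta_i+1}(x,y,z)/\lambda_{i+1}(x,y,z)$, yielding the displayed formula for $z_i'$. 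The hard part is the bookkeeping in the middle step: verifying that the gauge transformation along each run really deposits the full telescoping product on the intended surviving edge, that what lands on $z_i$ is indexed precisely by $A_i$ and $B_i$ (the run immediately below $x_i$ and immediately above $y_i$, no more and no less), and that the gauges chosen for different runs are mutually consistent where they meet at shared vertices. Once that is pinned down the remaining manipulations are routine.
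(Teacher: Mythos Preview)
Your proposal is correct and is essentially the same approach as the paper's: the paper's proof reads in its entirety ``Clear by computation,'' and what you have written is exactly that computation spelled out --- invoke Theorem~\ref{thm:tilde-vars} for the $\widetilde{x_i},\widetilde{y_i},\widetilde{z_i}$, cancel the $p$ and $-p$ edges, gauge the inserted edges back to $1$, and telescope the resulting products of consecutive $\lambda$'s. Your remark about contracting back to the unexpanded graph via (M2)/(M3) is harmless but unnecessary, since the statement is formulated for the expanded network with the appropriate weights forced to $1$.
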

\begin{proof}
Clear by computation.
\end{proof}

We can now prove the first part of Theorem~\ref{thm:Te}.
\begin{proof}
$T_e$ is the transformation we obtain when we do the process from Theorem~\ref{thm:tilde-vars} and then Theorem~\ref{thm:prime-vars}.  Since adding the $p$ and $-p$ edges, applying square moves, removing the $p$ and $-p$ edges, and applying gauge transformations all preserve the boundary measurements, $T_e$ preserves the boundary measurements.
\end{proof}

\begin{lemma}\label{lemma:lambdas-equal}
For any $i, \lambda_i(x,y,z)=\lambda_i(\widetilde{x},\widetilde{y},\widetilde{z})$.
\end{lemma}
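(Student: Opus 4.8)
The plan is to prove the identity by a direct computation, turning the defining sum for $\lambda_i$ into a telescoping sum by means of the recursion in Lemma~\ref{lemma:lambda-rel} together with the explicit formulas for $\widetilde x,\widetilde y,\widetilde z$ from Theorem~\ref{thm:tilde-vars}. First I would record the combinatorial shape of $\lambda_i$: a path contributing to $\lambda_i(x,y,z)$ in the expanded $2$-loop network runs up the left string, crosses to the right string along exactly one slanted edge $z_j$, and then runs up the right string, so that with all indices read modularly and $\lambda_{i+n}=\lambda_i$ by periodicity we have
\[
\lambda_i(x,y,z)=\sum_{j=i}^{i+n-1}\left(\prod_{k=i+1}^{j}x_k\right)z_j\left(\prod_{k=j+1}^{i+n-1}y_k\right).
\]
Since the sequence of square moves leaves the underlying graph unchanged, the same formula, with $\widetilde x,\widetilde y,\widetilde z$ substituted for $x,y,z$, computes $\lambda_i(\widetilde x,\widetilde y,\widetilde z)$.

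Next I would substitute $\widetilde{x_k}=y_{k-1}\lambda_{k-1}/\lambda_k$, $\widetilde{y_k}=x_{k+1}\lambda_{k+1}/\lambda_k$, $\widetilde{z_k}=z_k$ into the $j$-th term, where every $\lambda_\bullet$ abbreviates $\lambda_\bullet(x,y,z)$. The products of consecutive $\lambda$'s telescope, $\prod_{k=i+1}^{j}\lambda_{k-1}/\lambda_k=\lambda_i/\lambda_j$ and $\prod_{k=j+1}^{i+n-1}\lambda_{k+1}/\lambda_k=\lambda_{i+n}/\lambda_{j+1}=\lambda_i/\lambda_{j+1}$, so the $j$-th term of $\lambda_i(\widetilde x,\widetilde y,\widetilde z)$ equals $\frac{\lambda_i^2}{\lambda_j\lambda_{j+1}}(\prod_{k=i}^{j-1}y_k)\,z_j\,(\prod_{k=j+2}^{i+n}x_k)$. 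Setting $\Pi:=\prod_{k=1}^n x_k-\prod_{k=1}^n y_k$, Lemma~\ref{lemma:lambda-rel} reads $x_{j+1}\lambda_{j+1}=y_j\lambda_j+z_j\Pi$, so replacing $z_j$ by $(x_{j+1}\lambda_{j+1}-y_j\lambda_j)/\Pi$ rewrites this $j$-th term as $\frac{\lambda_i^2}{\Pi}(c_j-c_{j+1})$, where $c_m:=(\prod_{k=i}^{m-1}y_k)(\prod_{k=m+1}^{i+n}x_k)/\lambda_m$. Summing $j$ from $i$ to $i+n-1$ collapses the telescope to $\frac{\lambda_i^2}{\Pi}(c_i-c_{i+n})$; the empty-product conventions give $c_i=(\prod_k x_k)/\lambda_i$ and $c_{i+n}=(\prod_k y_k)/\lambda_i$, hence $c_i-c_{i+n}=\Pi/\lambda_i$, and therefore $\lambda_i(\widetilde x,\widetilde y,\widetilde z)=\lambda_i^2/\lambda_i=\lambda_i(x,y,z)$.

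The substance is entirely bookkeeping, and I expect the only real nuisance, as opposed to a genuine obstacle, to be managing the modular indices and the empty-product cases at the two ends of the range, along with checking that the displayed formula for $\lambda_i$ holds uniformly for every expanded $2$-loop graph, including faces one of whose $x$-, $y$-, or $z$-weights was set to $1$ by the expansion. One technical point: the computation divides by $\Pi$, so as written it proves the identity on the dense locus $\Pi\neq 0$; since both sides are rational functions of the weights, it then holds identically by continuity. If one prefers to avoid the division, one can instead check that $\lambda_i$ is unchanged under each individual square move of the sequence in Theorem~\ref{thm:tilde-vars}, reusing the single-move computations carried out there, but the telescoping argument above is the most direct route.
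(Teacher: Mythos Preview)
Your proof is correct and takes a genuinely different route from the paper. The paper argues conceptually: in the special case where every vertex of the expanded network has degree $3$, the quantity $\lambda_i(x,y,z)$ is literally a coefficient of a boundary measurement, and since the passage from $(x,y,z)$ to $(\widetilde x,\widetilde y,\widetilde z)$ is a composition of square moves (which preserve boundary measurements) with no gauge transformations needed in this case, $\lambda_i$ is preserved. It then observes that the square-move formulas for $\widetilde x_i,\widetilde y_i,\widetilde z_i$ are insensitive to whether a vertex has degree $2$ or $3$, so the identity extends to the general expanded network.

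Your approach instead substitutes the explicit formulas of Theorem~\ref{thm:tilde-vars} into the closed form for $\lambda_i$ and uses Lemma~\ref{lemma:lambda-rel} to force a telescope. This is more elementary and entirely self-contained: it neither invokes the boundary-measurement interpretation nor the degree-$2$/degree-$3$ reduction, and it makes the identity a purely algebraic consequence of the single-step recursion. The paper's argument, on the other hand, explains \emph{why} the identity holds (it is a shadow of invariance of boundary measurements) and avoids any computation. Your handling of the $\Pi=0$ locus by density is clean; the paper's route sidesteps this issue altogether.
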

\begin{proof}
First suppose that we are in the case where the expanded plabic graph has no vertices of degree 2.  Then $\lambda_i(x,y,z)$ is exactly the coefficient of a power of $\zeta$ in one of the boundary measurements.  So $\lambda_i(x,y,z)=\lambda_i(x'y'z')$, as the transformation does not change the boundary measurements.  But in this case, $(x'y'z')=(\widetilde{x},\widetilde{y},\widetilde{z})$, so $\lambda_i(x,y,z)=\lambda_i(\widetilde{x},\widetilde{y},\widetilde{z})$.

Now suppose we are in the general case.  The lack of a third edge on some vertices does not affect the square move in any way.  The transformation from $(x,y,z)$ to $(\widetilde{x},\widetilde{y},\widetilde{z})$ is the same as if all the vertices were degree 3, and thus, we still have that $\lambda_i(x,y,z)=\lambda_i(\widetilde{x},\widetilde{y},\widetilde{z})$.
\end{proof}

Since $\prod_{i=1}^n x_i=\prod_{i=1}^n \widetilde{y_i}$, $\prod_{i=1}^n y_i=\prod_{i=1}^n \widetilde{x_i}$, and $\lambda_i(x,y,z)=\lambda_i(\widetilde{x},\widetilde{y},\widetilde{z})$, $-p_j$ from our original network is equal to $p_j$ from the network with edge weights $(\widetilde{x},\widetilde{y},\widetilde{z})$.  Thus, we can find $(\widetilde{\widetilde{x}},\widetilde{\widetilde{y}},\widetilde{\widetilde{z}})$ by taking our original network, moving the edge $p_j$ around by square moves, and then moving the edge $-p_j$ around by square moves.

\begin{lemma}\label{lemma:p'}
After applying the gauge transformations to the network to get from $(\widetilde{x},\widetilde{y},\widetilde{z})$ to $(x',y',z')$, the edges labeled by $p_j,-p_j$ in the $(\widetilde{x},\widetilde{y},\widetilde{z})$ network are replaced with $-p_j'$ and $p_j'=\displaystyle\frac{\prod_{i=1}^n x_i'-\prod_{i=1}^n y_i'}{\lambda_j(x',y',z')}$, respectively.
\end{lemma}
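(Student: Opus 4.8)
Here is how I would prove Lemma~\ref{lemma:p'}.

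The plan is to carry the two edges of weights $p_j$ and $-p_j$ along through the passage from the network with weights $(\widetilde{x},\widetilde{y},\widetilde{z})$ to the one with weights $(x',y',z')$, and to compute their weights under the gauge transformations $G$ that restore the originally-$1$ edges. Write $g(v)$ for the net gauge parameter at a vertex $v$, so that $G$ multiplies the weight of a directed edge $u\to v$ by $g(v)/g(u)$; in particular $G$ fixes the weight of every closed directed cycle, and if an edge has weight $1$ both before and after $G$ then its two endpoints have equal gauge parameters. I will use three inputs: the telescoping identities $\prod_{i=1}^n\widetilde{x_i}=\prod_{i=1}^n y_i$ and $\prod_{i=1}^n\widetilde{y_i}=\prod_{i=1}^n x_i$, which follow from Theorem~\ref{thm:tilde-vars} together with $\prod_i\lambda_{i-1}(x,y,z)=\prod_i\lambda_i(x,y,z)$; Lemma~\ref{lemma:lambdas-equal}; and the formulas $p_j=\dfrac{\prod_i x_i-\prod_i y_i}{\lambda_j(x,y,z)}$ and $p_j'=\dfrac{\prod_i x_i'-\prod_i y_i'}{\lambda_j(x',y',z')}$.

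First I would record what $G$ does to the ingredients of $p_j'$. The number $\prod_i x_i'$ is the weight of the directed cycle running up the left string (the black-to-white and expansion edges on it have weight $1$) and $\prod_i y_i'$ that of the cycle up the right string; since $G$ fixes cycle weights, $\prod_i x_i'=\prod_i\widetilde{x_i}=\prod_i y_i$ and likewise $\prod_i y_i'=\prod_i x_i$. For the denominator, $\lambda_j$ is a sum of weights of paths running from the vertex $a_j$ on the left string to the vertex $b_j$ on the right string; each such path begins with an edge out of $a_j$ and ends with an edge into $b_j$, and any further visit to $a_j$ or $b_j$ contributes an incoming and an outgoing edge and is weight-neutral under $G$, so $G$ multiplies $\lambda_j$ by $g(b_j)/g(a_j)$. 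With Lemma~\ref{lemma:lambdas-equal} this gives $\lambda_j(x',y',z')=\dfrac{g(b_j)}{g(a_j)}\lambda_j(x,y,z)$, hence
\[
p_j'=\frac{\prod_i x_i'-\prod_i y_i'}{\lambda_j(x',y',z')}=\frac{\bigl(\prod_i y_i-\prod_i x_i\bigr)g(a_j)}{g(b_j)\,\lambda_j(x,y,z)}=-p_j\cdot\frac{g(a_j)}{g(b_j)}.
\]

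It then remains to check that $G$ sends the $-p_j$-edge to $-p_j\cdot g(a_j)/g(b_j)$ and the $p_j$-edge to the negative of this. By the expansion of Figure~\ref{fig:edge-p} and the square moves of Figure~\ref{fig:sq-move}, after all the moves the $-p_j$-edge runs from a white vertex of the right string to a black vertex of the left string, each of which is joined to $b_j$, respectively $a_j$, by a chain of string edges of weight $1$; since $G$ scales such an edge by the ratio of the gauge parameters at its endpoints, those parameters coincide, so the endpoints of the $-p_j$-edge carry the parameters $g(b_j)$ and $g(a_j)$. As the $-p_j$-edge points from the right string to the left string, $G$ multiplies it by $g(a_j)/g(b_j)$, giving weight $-p_j\cdot g(a_j)/g(b_j)=p_j'$ by the display. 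The propagated $p_j$-edge lies one hexagon further down and runs between the analogous pair of vertices, and the same computation (using the telescoping identity once more to see that the relevant $\lambda$ is again $\lambda_j$) gives it weight $-p_j'$. The case of vertices of degree $2$ is handled exactly as in the proof of Lemma~\ref{lemma:lambdas-equal}, since the absence of a third edge changes neither the moves nor the gauge parameters.

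The step I expect to be the main obstacle is the geometric bookkeeping just invoked: pinning down, after the whole cyclic sequence of square moves, precisely which vertices of the expanded network are the endpoints of the two surviving edges, and verifying that these endpoints are tied by weight-$1$ string edges to the vertices $a_j$ and $b_j$ governing $\lambda_j$. Once that identification is in hand, everything else is the short manipulation displayed above.
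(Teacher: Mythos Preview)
Your approach is correct and is essentially the same idea as the paper's, with one packaging difference worth noting. You track gauge parameters $g(a_j),g(b_j)$ at the two endpoints and compute how $G$ scales $\lambda_j$ and the $p_j$-edge separately, then observe that the two scaling factors are reciprocal. The paper short-circuits this: it observes directly that $p_j\cdot\lambda_j(\widetilde x,\widetilde y,\widetilde z)$ is a sum of weights of closed cycles through the bottom-right vertex of the face containing $\widetilde{x_j}$ (the $p_j$-edge crosses right-to-left, then a $\lambda_j$-path returns left-to-right once around the cylinder), and since gauge transformations preserve cycle weights, the product is invariant. Hence if $w$ denotes the new weight of the edge, $w\cdot\lambda_j(x',y',z')=p_j\cdot\lambda_j(\widetilde x,\widetilde y,\widetilde z)=\prod_i x_i-\prod_i y_i$, and the formula for $-p_j'$ drops out immediately using $\prod_i x_i'=\prod_i y_i$ and $\prod_i y_i'=\prod_i x_i$.

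This bypasses the step you flagged as the main obstacle: you never need to identify the endpoints of the surviving edges or argue that they are linked by weight-$1$ chains to $a_j$ and $b_j$, because the only geometric fact used is that the $p_j$-edge concatenated with any $\lambda_j$-path is a closed cycle---and that is exactly how the $p_j$-edge was inserted in the first place. So your argument is fine, but the cycle-weight formulation is both shorter and avoids the bookkeeping you were worried about.
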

\begin{proof}
In the network with edges $(\widetilde{x},\widetilde{y},\widetilde{z})$, consider $p_j\lambda_j(\widetilde{x},\widetilde{y},\widetilde{z})=\prod_{i=1}^n x_i-\prod_{i=1}^n y_i$ = a sum of weights of some cycles beginning and ending at the vertex on the bottom right of the face between the two strings with edge $\widetilde{x_j}$.  Since gauge transformations don't affect the weight of cycles, $p_j\lambda_j(\widetilde{x},\widetilde{y},\widetilde{z})=-p_j'\lambda_j(x',y',z')$ where $-p_j'$ is the weight of the edge previously weighted $p_j$ after applying gauge transformations to obtain the network with edge weights $(x',y',z')$ from the one with edge weights $(\widetilde{x},\widetilde{y},\widetilde{z})$.  So, $p'$ must be $\displaystyle\frac{\prod_{i=1}^n y_i-\prod_{i=1}^n x_i}{\lambda_1(x',y',z')}=\displaystyle\frac{\prod_{i=1}^n x_i'-\prod_{i=1}^n y_i'}{\lambda_1(x',y',z')}$.  A similar argument holds for $-p$.
\end{proof}

\begin{lemma}\label{lemma:gauge-square-comm}
Gauge transformations commute with the square move.
\end{lemma}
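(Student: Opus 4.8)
The plan is to localize the statement and then verify commutation case-by-case according to the position of the vertex $v$ at which the gauge transformation acts relative to the square being mutated. Throughout I use two elementary facts. A gauge transformation at $v$ only rescales the edges incident to $v$ and changes nothing else; a square move changes only the weights of the four boundary edges of the square, leaving the four leg weights and the underlying graph untouched. Moreover, a gauge transformation preserves every face weight, whereas a square move transforms the five face weights at the square by the formulas in move (M1) and fixes all other face weights.

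\textbf{Case 1: $v$ is not a corner of the square.} Then the edges rescaled by the gauge transformation (those incident to $v$, at most one of which is a leg of the square) are disjoint from the four boundary edges of the square, which are the only edges the square move alters. The two transformations therefore act on disjoint coordinates of $\R_{>0}^{E}$ and commute.

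\textbf{Case 2: $v$ is a corner of the square.} Here the supports overlap, and I would argue via face weights together with a rigidity observation. Write $g$ for the gauge transformation and $s$ for the square move. The networks $g(s(N))$ and $s(g(N))$ have the same face weights: $g$ preserves face weights and $s$ changes them by the (M1) formulas, regardless of order. They also have the same leg weights: in the order $s$-then-$g$, the move $s$ leaves all legs fixed and then $g$ rescales the leg(s) at $v$ by the gauge factor; in the order $g$-then-$s$, $g$ rescales the leg(s) at $v$ first and then $s$ leaves all legs fixed, and since a gauge transformation ignores vertex colours and $s$ reverses no leg orientation, the factor acquired at each leg of $v$ is the same in both orders. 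Finally, on a square together with its four legs there is no nontrivial gauge transformation fixing all four legs, because each corner of the square carries a leg and so the gauge parameter at that corner is forced to equal $1$; hence any two edge-weightings of this local picture that are gauge-equivalent and have equal leg weights coincide. Since the square move is well-defined on edge-weightings modulo gauge (its effect on the face weights, equivalently on edge weights modulo gauge, is prescribed by (M1)), $g(s(N))$ and $s(g(N))$ are gauge-equivalent; having equal leg weights, they are equal. Alternatively, Case 2 can be checked by a direct substitution into the edge-weighted form of (M1).

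I expect Case 2 to be the only genuine obstacle: there the two transformations interact, so one cannot conclude from disjoint support, and some care is needed with the orientation conventions and with tracking which edges incident to $v$ are legs and which are square edges after the colours flip. If one prefers to read the square move of Figure~\ref{fig:sq-move} as an (M1) move followed by gauge transformations restoring the normal form, the conclusion is unchanged: Cases 1 and 2 show $g$ commutes with the underlying (M1) move, and gauge transformations commute with one another, so $g$ commutes with the composite as well.
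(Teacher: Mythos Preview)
Your proof is correct, but it is considerably more elaborate than the paper's. The paper dispatches the lemma in a single sentence: since the edge-weighted square move is derived from the face-weighted move (M1), and gauge transformations leave face weights unchanged, the two operations commute. Your Case~2 argument is essentially a careful unpacking of this one-liner: you observe that $g(s(N))$ and $s(g(N))$ have the same face weights, then invoke a rigidity statement (face weights together with the four leg weights determine the edge weights on the square) to pass from equal face weights to equal edge weights. This rigidity step is something the paper leaves implicit. Your Case~1 and the closing remark about decomposing the normalized move of Figure~\ref{fig:sq-move} as (M1) followed by gauge are not needed for the paper's argument, but they do make the statement robust against ambiguity in what ``the square move'' means at the level of edge weights. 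In short: same core idea, yours is the explicit version, the paper's is the conceptual one.
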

\begin{proof}
Since our formulas for the square move come from the face weighted square move, and gauge transformations don't affect face weights, gauge transformations commute with the square move.
\end{proof}

\begin{thm}\label{thm:comm-diag}
We have the following commutative diagram.

\begin{tikzpicture}[scale=1.5]
\node(A) at (0,2) {$(x,y,z)$};
\node(B) at (3,2) {$(\widetilde{x},\widetilde{y},\widetilde{z})$};
\node(C) at (6,2) {$(x',y',z')$};
\node(D) at (3,1) {$(\widetilde{\widetilde{x}},\widetilde{\widetilde{y}},\widetilde{\widetilde{z}})$};
\node(E) at (6,1) {$(\widetilde{x'},\widetilde{y'},\widetilde{z'})$};
\node(F) at (4.5,0) {$(\left(\widetilde{x}\right)',\left(\widetilde{y}\right)',\left(\widetilde{z}\right)')=(x'',y'',z'')$};
\path[->,font=\large, >=angle 90]
(A) edge node[above] {square moves} (B)
(B) edge node[above] {gauge trans.} (C)
(B) edge node[left] {square moves} (D)
(C) edge node[right] {square moves} (E)
(D) edge node[left] {gauge trans.} (F)
(E) edge node[right] {gauge trans.} (F)
(D) edge node[above] {gauge trans.} (E);
\end{tikzpicture}
\end{thm}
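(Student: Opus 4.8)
The plan is to slice the hexagon into three pieces that each commute for a transparent reason. The top-left composite $(x,y,z)\to(\widetilde x,\widetilde y,\widetilde z)\to(x',y',z')$ is just the definition of $T_e$ provided by Theorems~\ref{thm:tilde-vars} and~\ref{thm:prime-vars}, so nothing needs to be checked there. What remains is the square with vertices $(\widetilde x,\widetilde y,\widetilde z)$, $(x',y',z')$, $(\widetilde{\widetilde x},\widetilde{\widetilde y},\widetilde{\widetilde z})$, $(\widetilde{x'},\widetilde{y'},\widetilde{z'})$ and the triangle with vertices $(\widetilde{\widetilde x},\widetilde{\widetilde y},\widetilde{\widetilde z})$, $(\widetilde{x'},\widetilde{y'},\widetilde{z'})$, $(x'',y'',z'')$. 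The two tools are Lemma~\ref{lemma:gauge-square-comm} (gauge transformations commute with the square move) and Lemma~\ref{lemma:p'} (which says what the $\pm p_j$ edges become under the gauge step).

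For the square, I would first argue that its two square-move sides are the same sequence of local moves, transported across the gauge transformation $(\widetilde x,\widetilde y,\widetilde z)\to(x',y',z')$ that joins their sources. Applying the $T_e$-construction to $(x',y',z')$ means sliding the edge weighted $p_j'=\frac{\prod x_i'-\prod y_i'}{\lambda_j(x',y',z')}$ once around the cylinder; by Lemma~\ref{lemma:p'} this edge is the gauge image of the edge weighted $-p_j$ at the top of the square in $(\widetilde x,\widetilde y,\widetilde z)$, and sliding that edge once around $(\widetilde x,\widetilde y,\widetilde z)$ is precisely the construction producing $(\widetilde{\widetilde x},\widetilde{\widetilde y},\widetilde{\widetilde z})$, as noted just before Lemma~\ref{lemma:p'} (using $\prod x_i=\prod\widetilde y_i$, $\prod y_i=\prod\widetilde x_i$, and Lemma~\ref{lemma:lambdas-equal}). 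Since a gauge transformation leaves the underlying plabic graph and its vertex coloring untouched, both slides are carried out by square moves at the same faces in the same order, with the two networks differing by the transported gauge at every intermediate stage. An induction on the length of the slide, invoking Lemma~\ref{lemma:gauge-square-comm} once per square move, then shows the square commutes; in particular $(\widetilde{\widetilde x},\widetilde{\widetilde y},\widetilde{\widetilde z})$ and $(\widetilde{x'},\widetilde{y'},\widetilde{z'})$ differ by a gauge transformation, supplying the arrow between them.

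For the triangle, both arrows into $(x'',y'',z'')$ are the concluding step of the $T_e$-construction: delete the (now mutually canceling) $\pm p$ edges and apply the gauge transformation renormalizing to $1$ every edge originally set to $1$. This renormalizing gauge is canonical, since the variable edges were chosen exactly so as to exhaust the gauge freedom, so $(x'',y'',z'')$ is the unique renormalized representative of the common gauge class of $(\widetilde{\widetilde x},\widetilde{\widetilde y},\widetilde{\widetilde z})$ and $(\widetilde{x'},\widetilde{y'},\widetilde{z'})$; hence the composite of the arrow $(\widetilde{\widetilde x},\widetilde{\widetilde y},\widetilde{\widetilde z})\to(\widetilde{x'},\widetilde{y'},\widetilde{z'})$ with the renormalizing gauge of the latter agrees with the renormalizing gauge of the former, and the triangle commutes. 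Deleting or inserting a pair of opposite-weight parallel edges changes no boundary measurement and commutes with both square moves and gauge transformations, so this cleanup may be slotted in wherever convenient without affecting any of the above.

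The step I expect to cost the most effort is the middle paragraph: making rigorous the claim that the two square-move sides of the square are ``the same sequence of moves up to gauge.'' This needs care that a square move is determined by, and only affects, local data at a single face of the fixed plabic graph; that the expansions used to realize each move preserve the face-by-face correspondence between the two slides; and that Lemma~\ref{lemma:p'} really does align the two starting configurations. With that secured, the inductive use of Lemma~\ref{lemma:gauge-square-comm} and the uniqueness argument for the triangle are routine.
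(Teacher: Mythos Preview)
Your proposal is correct and follows essentially the same approach as the paper's proof: you invoke Lemma~\ref{lemma:p'} to align the square-move arrow out of $(x',y',z')$ with the one out of $(\widetilde x,\widetilde y,\widetilde z)$, use Lemma~\ref{lemma:gauge-square-comm} to make the square commute, and then close the triangle by the uniqueness of the gauge-normalized representative (the paper phrases this last step as a dimension count: the number of edges not set to $1$ equals the dimension of the face-and-trail weight space). Your write-up is somewhat more explicit about the induction along the slide and the bookkeeping with the $\pm p$ edges, but there is no substantive difference in strategy.
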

\begin{proof}
By Lemma~\ref{lemma:p'} we can get from $(x',y',z')$ to $(\widetilde{x'},\widetilde{y'},\widetilde{z'})$ by square moves.  By Lemma~\ref{lemma:lambdas-equal}, we can find $(\widetilde{\widetilde{x}},\widetilde{\widetilde{y}},\widetilde{\widetilde{z}})$ from $(\widetilde{x},\widetilde{y},\widetilde{z})$ by doing square moves.  Lemma~\ref{lemma:gauge-square-comm} tells us we have the bottom arrow of the square in the diagram such that the square commutes.  Both $(\left(\widetilde{x}\right)',\left(\widetilde{y}\right)',\left(\widetilde{z}\right)')$ and $(x'',y'',z'')$ can be obtained from $(\widetilde{\widetilde{x}},\widetilde{\widetilde{y}},\widetilde{\widetilde{z}})$ by gauge transformations.  Since $(\left(\widetilde{x}\right)',\left(\widetilde{y}\right)',\left(\widetilde{z}\right)')$ and $(x'',y'',z'')$ both have the same edge weights set equal to 1 and the number of edge weights not set to be 1 is the same as the dimension of the space of face and trail weights, $(\left(\widetilde{x}\right)',\left(\widetilde{y}\right)',\left(\widetilde{z}\right)')=(x'',y'',z'')$.
\end{proof}

We can now prove the second part of Theorem~\ref{thm:Te}.
\begin{proof}
This is a matter of checking that $(x'',y'',z'')=(x,y,z)$.  We will to this using the left branch of our commutative diagram in Theorem~\ref{thm:comm-diag}.

Notice that rather than moving the edge $p$ all the way around and then moving the edge $-p$ all the way around, we can do one square move with $p$ and one square move with $-p$.  Computations very similar to those in the proof of Theorem~\ref{thm:tilde-vars} show that $(\widetilde{\widetilde{x}},\widetilde{\widetilde{y}},\widetilde{\widetilde{z}})=(x,y,z)$.  This means we don't have to do any gauge transformations to get to $(x'',y'',z'')$ and in fact $(x'',y'',z'')=(\widetilde{\widetilde{x}},\widetilde{\widetilde{y}},\widetilde{\widetilde{z}})=(x,y,z)$.
\end{proof}

Suppose we have a cylindric 2-loop plabic network with no interior vertices.  Consider the expanded graph on its universal cover.  Let $y_i$ be a weight in the expanded plabic graph that has not been set to 1.  Let $j$ be minimal such that $j>i$ and $x_j$ is not set to 1.  Let $k,\ell$ be maximal such that $k\leq i, \ell<i$ and $x_k,y_\ell$ are not set to 1.  Let $a$ be the source below $x_k$, $b$ be the source between $x_j$ and $x_k$, $c$ be the source above $x_j$, $d$ be the sink between $y_i$ and $y_\ell$, and $e$ be the sink below $y_\ell$.  Let $n(w) =$ the number of times the edge $w$ crosses the cut from the right $-$ the number of times the edge $w$ crosses the cut from the left.  Then we have the following relations:
\begin{align*}
M_{bd}&=x_j(-\zeta)^{n(x_j)}M_{cd}+y_\ell (-\zeta)^{n(y_\ell)}M_{be}-x_jy_\ell (-\zeta)^{n(x_j)+n(y_\ell)}M_{ce}\\
M_{ad}&=x_k(-\zeta)^{n(x_k)}M_{bd}+y_\ell (-\zeta)^{n(y_\ell)}M_{ae}-x_ky_\ell (-\zeta)^{n(x_k)+n(y_\ell)}M_{be}
\end{align*}

Thus, we have relations between $y_\ell$ and $x_j$ and between $y_\ell$ and $x_k$.  In particular, we have:
\begin{align*}
x_j&=\frac{M_{bd}-y_\ell (-\zeta)^{n(y_\ell)}M_{be}}{(-\zeta)^{n(x_j)}M_{cd}-y_\ell (-\zeta)^{n(x_j)+n(y_\ell)}M_{ce}}\\
y_\ell&=\frac{M_{ad}-x_k(-\zeta)^{n(x_k)}M_{bd}}{(-\zeta)^{n(y_\ell)}M_{ae}-(-\zeta)^{n(x_k)+n(y_\ell)}x_k M_{be}}
\end{align*}

Combining these, we get a relation between $x_j$ and $x_k$. $$x_j=\frac{M_{bd}M_{ae}-M_{ad}M_{be}}{(-\zeta)^{n(x_j)}(M_{ae}M_{cd}-M_{ad}M_{ce})+x_k(-\zeta)^{n(x_j)+n(x_k)}(M_{bd}M_{ce}-M_{be}M_{cd})}$$

\begin{lemma}\label{lemma:xj-xn}
If we know the boundary measurements of a network, then repeated substitution with the above formula to get $x_j$ as a function of $x_n$ will always yield a linear expression in $x_n$ divided by another linear expression in $x_n$.
\end{lemma}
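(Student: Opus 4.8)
The plan is to recognize the last displayed formula, which expresses $x_j$ in terms of $x_k$, as a fractional linear (Möbius) transformation of $x_k$ whose coefficients depend only on the boundary measurements, and then to use that such transformations are closed under composition.

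First I would observe that the formula has the shape $x_j = \dfrac{A_j}{B_j + C_j x_k}$, where $A_j = M_{bd}M_{ae} - M_{ad}M_{be}$, $B_j = (-\zeta)^{n(x_j)}(M_{ae}M_{cd} - M_{ad}M_{ce})$ and $C_j = (-\zeta)^{n(x_j)+n(x_k)}(M_{bd}M_{ce} - M_{be}M_{cd})$ are expressions in the $M_{st}$ and powers of $-\zeta$ only, with no dependence on any edge weight; equivalently, the assignment $x_k \mapsto x_j$ is the action of the matrix $\left(\begin{smallmatrix} 0 & A_j \\ C_j & B_j \end{smallmatrix}\right)$ by Möbius transformation over the field of rational functions in $\zeta$ with coefficients among the boundary measurements. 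Then I would fix the cyclic list $m_0 = n, m_1, \dots, m_r$ of all indices $m$ for which $x_m$ is not set to $1$, taken in increasing cyclic order around the cylinder and arranged so that the displayed formula relates $x_{m_{s+1}}$ to $x_{m_s}$. Since the expanded graph is finite this list is finite, so to write a given $x_j = x_{m_s}$ as a function of $x_n = x_{m_0}$ one composes the $s$ elementary Möbius transformations coming from the steps $m_0 \to m_1, \dots, m_{s-1}\to m_s$. Composition of Möbius transformations corresponds to multiplication of the associated $2\times 2$ matrices, and a product of finitely many such matrices is again a matrix $\left(\begin{smallmatrix} \alpha & \beta \\ \gamma & \delta \end{smallmatrix}\right)$ whose entries are Laurent polynomials in $\zeta$ with coefficients built from the $M_{st}$; hence $x_j = \dfrac{\alpha x_n + \beta}{\gamma x_n + \delta}$, which is exactly a linear expression in $x_n$ over a linear expression in $x_n$.

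The hard part will not be the group law but checking that the iteration genuinely closes up: one must verify that when the displayed formula is invoked at the step $m_{s-1}\to m_s$, the auxiliary boundary vertices $a,b,c,d,e$ and the crossing numbers $n(x_{m_s})$, $n(x_{m_{s-1}})$ occurring in it are precisely the data singled out one step earlier, so that the substitution really eliminates $x_{m_{s-1}}$ and introduces no edge weight other than, ultimately, $x_n$. This is a matter of unwinding the definitions of $j$, $k$ and of the sources and sinks $a, b, c, d, e$ and tracking them consistently as one walks around the cylinder; once that bookkeeping is in place the lemma follows from the matrix computation above, together with the remark that all coefficients lie in the field of rational functions of $\zeta$ generated by the boundary measurements, so the final expression is an honest ratio of linear polynomials in $x_n$.
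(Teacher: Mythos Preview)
Your proposal is correct and takes essentially the same approach as the paper: both arguments reduce to the observation that composing a fractional linear map $x_j=\frac{A+Bx_m}{C+Dx_m}$ with the specific elementary step $x_m=\frac{E}{F+Gx_n}$ yields another fractional linear map in $x_n$. The paper simply writes out this one substitution directly ($x_j=\frac{(AF+BE)+AGx_n}{(CF+DE)+CGx_n}$), whereas you phrase it as closure of M\"obius transformations under composition via $2\times 2$ matrices; the content is identical, and your additional bookkeeping discussion is not needed for the lemma as stated.
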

\begin{proof}
We only need to check one step.  Suppose we have $x_j=\frac{A+Bx_m}{C+Dx_m}$ and $x_m=\frac{E}{F+Gx_n}$, coming from the formula above.  Then $x_j=\frac{(AF+BE)+AGx_n}{(CF+DE)+CGx_n}$.
\end{proof}

Now we prove the third part of Theorem~\ref{thm:Te}.
\begin{proof}
Choose an edge on the left string that is not set to 1.  Call it $x_j$.  Let $x_k$ be the next edge below $x_j$ on the left string that is not set to one.  Suppose we do a transformation that preserves the boundary measurements and replaces $x_j$ with $x_j'$, $x_k$ with $x_k'$, and so on.  Since the transformation does not change the boundary measurements, the above relation between $x_j$ and $x_k$ holds for $x_j'$ and $x_k'$.  Similarly, if $x_\ell'$ is the next edge weight on the left string that is not set to 1, $x_k'$ and $x_\ell'$ have the same relationship.  So, substitution gives us a formula for $x_j'$ in terms of $x_\ell'$.  Repeating this process all the way around the cylinder, we get an expression for $x_j'$ in terms of itself.  By Lemma~\ref{lemma:xj-xn}, clearing the denominator gives a quadratic equation in $x_j'$.  Quadratic equations have at most two solutions, so there are at most 2 possibilities for $x_j'$.  We have identified two solutions in our involution, so we know these are the only two solutions.
\end{proof}

Suppose now that we have a cylindric 3-loop plabic graph.  By Theorem~\ref{thm:int-vert-k-loop}, we can assume without loss of generality that there are no interior vertices between the left and middle strings or the middle and right strings.

We will label the left, middle, and right strings 1, 2, and 3, respectively.  Then $T_e^{(i,i+1)}$ will be applying $T_e$ to the strings $i$ and $i+1$.  Taking the expanded directed plabic network for this graph, we can add the edges $p,-p$ between strings 1 and 2, $q,-q$ between strings 2 and 3, and $r,-r$ between strings 1 and 2 so that moving $p$ around the cylinder by square moves until it is below $-p$ is equivalent to applying $T_e^{(1,2)}$, modulo gauge transformations, then moving $q$ around the cylinder by square moves until it's below $p$ is equivalent to applying $T_e^{(2,3)}$, modulo gauge transformations, and finally moving $r$ around the cylinder by square moves until it's below $q$ is equivalent to applying $T_e^{(1,2)}$ a second time, modulo gauge transformations.  Since gauge transformations commute with square moves (Lemma~\ref{lemma:gauge-square-comm}), we can apply all of our gauge transformations at the end, and get $T_e^{(1,2)}\circ T_e^{(2,3)}\circ T_e^{(1,2)}$.

\begin{figure}[htb]
\begin{center}
\begin{tikzpicture}[scale=0.75]
\fill[gray!40!white] (0,0) rectangle (8,12);
\path[->,font=\large, >=angle 90, line width=0.4mm]
(0,3) edge (2,3)
(6,9) edge (8,9)
(2,0) edge (2,2)
(2,2) edge node[left] {$a$} (2,3)
(2,3) edge (2,4)
(2,4) edge (2,6)
(2,6) edge (2,7)
(2,7) edge (2,9)
(2,9) edge (2,11)
(2,11) edge (2,12)
(4,0) edge (4,1)
(4,1) edge node[left] {$c$} (4,2)
(4,2) edge (4,4)
(4,4) edge (4,5)
(4,5) edge (4,6)
(4,6) edge (4,7)
(4,7) edge (4,8)
(4,8) edge (4,9)
(4,9) edge (4,10)
(4,10) edge node[right] {$b$} (4,11)
(4,11) edge (4,12)
(6,0) edge (6,1)
(6,1) edge (6,5)
(6,5) edge (6,8)
(6,8) edge (6,9)
(6,9) edge node[right] {$d$} (6,10)
(6,10) edge (6,12)
(2,2) edge node[below] {$e$} (4,2)
(4,10) edge node[above] {$h$} (6,10)
(2,11) edge node[above] {$f$} (4,11)
(4,1) edge node[below] {$g$} (6,1);;
\path[<-,font=\large, >=angle 90, line width=0.4mm]
(2,4) edge node[below] {$-p$} (4,4)
(2,6) edge node[below] {$-r$} (4,6)
(2,7) edge node[above] {$r$} (4,7)
(2,9) edge node[above] {$p$} (4,9)
(4,5) edge node[below] {$-q$} (6,5)
(4,8) edge node[above] {$q$} (6,8);
\path[-,font=\large, >=angle 90, line width=0.25mm,red]
(0,6.5) edge (8,6.5);
\draw [line width=0.25mm, fill=white] (2,2) circle (1mm);
\draw [line width=0.25mm, fill=white] (2,11) circle (1mm);
\draw [line width=0.25mm, fill=white] (4,1) circle (1mm);
\draw [line width=0.25mm, fill=white] (4,4) circle (1mm);
\draw [line width=0.25mm, fill=white] (4,6) circle (1mm);
\draw [line width=0.25mm, fill=white] (4,7) circle (1mm);
\draw [line width=0.25mm, fill=white] (4,9) circle (1mm);
\draw [line width=0.25mm, fill=white] (4,10) circle (1mm);
\draw [line width=0.25mm, fill=white] (6,5) circle (1mm);
\draw [line width=0.25mm, fill=white] (6,8) circle (1mm);
\draw [line width=0.25mm, fill=white] (6,9) circle (1mm);
\draw [line width=0.25mm, fill=black] (2,3) circle (1mm);
\draw [line width=0.25mm, fill=black] (2,4) circle (1mm);
\draw [line width=0.25mm, fill=black] (2,6) circle (1mm);
\draw [line width=0.25mm, fill=black] (2,7) circle (1mm);
\draw [line width=0.25mm, fill=black] (2,9) circle (1mm);
\draw [line width=0.25mm, fill=black] (4,2) circle (1mm);
\draw [line width=0.25mm, fill=black] (4,5) circle (1mm);
\draw [line width=0.25mm, fill=black] (4,8) circle (1mm);
\draw [line width=0.25mm, fill=black] (4,11) circle (1mm);
\draw [line width=0.25mm, fill=black] (6,1) circle (1mm);
\draw [line width=0.25mm, fill=black] (6,10) circle (1mm);
\end{tikzpicture}
\end{center}
\caption{A face of a network with the edges $p,-p,q,-q,r$, and $-r$ added as described above.  The area above the red line is the top half to the face, as referenced in Lemma~\ref{lemma:8-cycle}.}
\label{fig:pqr}
\end{figure}
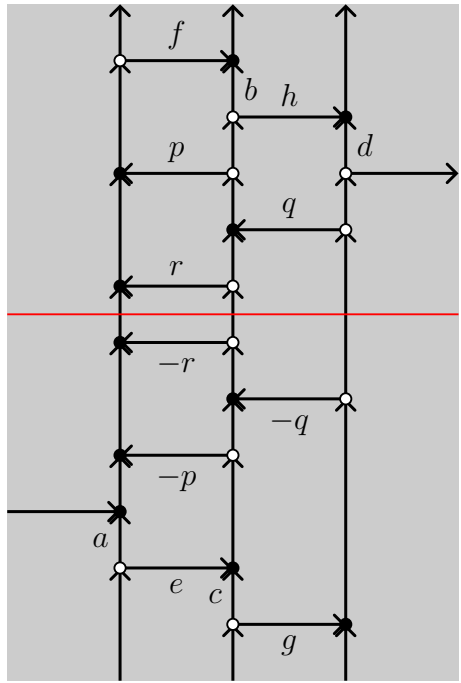

\begin{lemma}\label{lemma:8-cycle}
Starting with a plabic network that looks like the top half of a face with $p,q$, and $r$ added, there is an 8-cycle of square moves.  That is, there is a sequence of 8 square moves where the networks we obtain after each of the first 7 square moves are all different from each other and from the original network, but the network we obtain after the 8th square move is the same as the original network.
\end{lemma}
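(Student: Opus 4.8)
The plan is to reduce the statement to a finite, entirely combinatorial check by fixing once and for all the small network $N_0$ that ``looks like the top half of a face with $p,q,r$ added'': three vertical strings, with the crossing edges $r,q,p,h,f$ occurring in this vertical order above the red line of Figure~\ref{fig:pqr}, where $r,p,f$ join strings $1$ and $2$ and $q,h$ join strings $2$ and $3$. First I would list the faces of $N_0$ and record which, if any, are honest square faces; since the interior faces are pentagons, I would adopt the convention used implicitly in the passage around Figure~\ref{fig:sq-move} that each ``square move'' in the statement means the composite operation ``expose a square by an $(M2)$ unicolored-edge (un)contraction and/or an $(M3)$ middle-vertex insertion, then apply the square move $(M1)$.'' These preparatory moves leave every face weight unchanged, so bundling them costs nothing, and with this convention there is at each stage a unique ``progress-making'' square, namely the one exposed between the two lowest slanted edges; I would take that square to be the site of the first of the eight moves.

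Next I would simply run the eight moves, drawing $N_0,N_1,\dots,N_7$ and verifying $N_8=N_0$. The bookkeeping device is the cyclic word that records the vertical order of the five crossing edges together with the string-pair each joins; a square move transposes two adjacent entries of this word and relabels one slanted edge's weight by the rule of Figure~\ref{fig:sq-move}. The assertion of the lemma is that this local dynamics is periodic with period exactly $8$. The appearance of the number $8$ is not an accident: the quiver dual to $N_0$ with its boundary faces frozen is (mutation-equivalent to) a rank-two $G_2$ quiver, and the eight moves are the alternating mutation sequence on its two mutable vertices, whose periodicity of period $h(G_2)+2=8$ is the rank-two Zamolodchikov periodicity of Fomin--Zelevinsky. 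So there are two routes to close this step: cite that periodicity and transport it back through the dictionary ``square move $=$ mutation at a $4$-valent vertex of the dual quiver'', or (since the quiver is tiny) check the period-$8$ return directly from the square-move weight formulas together with a $\lambda$-identity in the spirit of Lemma~\ref{lemma:lambda-rel}. I would carry out the direct check and use the $G_2$ picture as a cross-check.

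The step I expect to be the actual work is not ``$N_8=N_0$'' but the clause that $N_1,\dots,N_7$ are pairwise distinct and distinct from $N_0$. For this I would extract from each $N_i$ only its combinatorial type — the cyclic word above, i.e.\ the relative vertical order of $p,q,r,h,f$ and their string-pairs, forgetting weights — and exhibit that the eight types occurring along the cycle are mutually distinct; distinctness of the networks themselves then follows a fortiori, and this also certifies that at every stage exactly one progress-making square is available, so the sequence of eight moves is forced rather than vacuous. A secondary subtlety I would watch is the interaction with the frozen boundary: I must confirm that no square move ever calls for acting on a face meeting string $1$ or string $3$ of the half-face, since those are not genuine square faces of the ambient cylindric network. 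The $G_2$-quiver description makes this transparent, because those faces are precisely the frozen vertices and are never mutated.

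Finally, I would note how this local lemma feeds the braid relation for $T_e$ (part (4) of Theorem~\ref{thm:Te}): comparing $T_e^{(1,2)}\circ T_e^{(2,3)}\circ T_e^{(1,2)}$ with $T_e^{(2,3)}\circ T_e^{(1,2)}\circ T_e^{(2,3)}$ amounts, face by face, to running the local $8$-cycle of square moves in the two opposite directions and ending at the same network, and since gauge transformations commute with square moves by Lemma~\ref{lemma:gauge-square-comm} all gauge corrections can be deferred to the very end; so the $8$-cycle of this lemma is exactly the combinatorial heart of the braid identity.
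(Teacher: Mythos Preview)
Your proposed shortcut through Zamolodchikov periodicity does not work as stated. The network $N_0$ has \emph{three} interior (mutable) faces, not two: between strings $1$--$2$ the edges $r,p,f$ cut out two faces, and between strings $2$--$3$ the edges $q,h$ cut out one more. In the paper's normalized picture these are exactly the faces labeled $a,b,c$, sitting in an $L$-shape with $c$ adjacent to each of $a$ and $b$ but $a,b$ not adjacent. So the mutable part of the dual quiver is a path on three vertices, i.e.\ type $A_3$, not a rank-two quiver; moreover $G_2$ is not skew-symmetric, so it cannot arise as the dual quiver of a plabic graph in the first place. The numerology $h(G_2)+2=8$ is a coincidence, and the 8-cycle here is not the bipartite-belt periodicity of any rank-two root system. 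Relatedly, your claim that ``exactly one progress-making square is available'' at each stage is false at the level of mutable faces: in the initial configuration all three of $a,b,c$ are honest alternating squares.

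Your fallback plan---just run the eight square moves and check---is exactly what the paper does, and it is the right thing to do. The paper streamlines the check by passing to the undirected, face-weighted picture (legitimate because this piece of the cylinder is a disk), introducing the abbreviations $A=1+c+ac$ and $B=1+bA$, and then drawing all eight face-weighted networks explicitly; the return to the starting network and the pairwise distinctness of the intermediate ones are read off from the pictures. If you carry out the direct verification you will reproduce this; just drop the $G_2$ framing and the ``unique square'' language.
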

\begin{proof}
Since this 8-cycle is on a portion of the cylinder isomorphic to the disk, we will use an undirected plabic network with face weights to prove this.  This gives us a more general statement, as we could choose any orientations and sets of edge weights that give the plabic networks in the following figures, and the proof would hold true.  Thus, we will consider that we are starting with a plabic network that looks as follows:
\begin{center}
\begin{tikzpicture}[scale=0.75]
\fill[gray!40!white] (0,6) rectangle (8,12);
\path[-,font=\large, >=angle 90, line width=0.4mm]
(6,9) edge (8,9)
(2,6) edge (2,7)
(2,7) edge (2,9)
(2,9) edge (2,11)
(2,11) edge (2,12)
(4,6) edge (4,7)
(4,7) edge (4,8)
(4,8) edge (4,9)
(4,9) edge (4,10)
(4,10) edge (4,11)
(4,11) edge (4,12)
(6,6) edge (6,8)
(6,8) edge (6,9)
(6,9) edge (6,10)
(6,10) edge (6,12)
(4,10) edge (6,10)
(2,11) edge (4,11)
(2,7) edge (4,7)
(2,9) edge (4,9)
(4,8) edge (6,8);
\draw [line width=0.25mm, fill=white] (2,11) circle (1mm);
\draw [line width=0.25mm, fill=white] (4,7) circle (1mm);
\draw [line width=0.25mm, fill=white] (4,9) circle (1mm);
\draw [line width=0.25mm, fill=white] (4,10) circle (1mm);
\draw [line width=0.25mm, fill=white] (6,8) circle (1mm);
\draw [line width=0.25mm, fill=white] (6,9) circle (1mm);
\draw [line width=0.25mm, fill=black] (2,7) circle (1mm);
\draw [line width=0.25mm, fill=black] (2,9) circle (1mm);
\draw [line width=0.25mm, fill=black] (4,8) circle (1mm);
\draw [line width=0.25mm, fill=black] (4,11) circle (1mm);
\draw [line width=0.25mm, fill=black] (6,10) circle (1mm);
\node at (1,9) {$g$};
\node at (3,6.5) {$h$};
\node at (3,8) {$c$};
\node at (3,10) {$b$};
\node at (3,11.5) {$f$};
\node at (5,7) {$i$};
\node at (5,9) {$a$};
\node at (5,11) {$e$};
\node at (7,7.5) {$j$};
\node at (7,10.5) {$d$};
\end{tikzpicture}
\end{center}

Let $A=1+c+ac$ and $B=1+b+bc+abc=1+bA$.  Applying unicolored edge contractions shows that the above plabic network is the same as the network in the upper left corner of the following 8-cycle:
\begin{center}
\begin{tikzpicture}[scale=1]
\fill[gray!40!white] (0,6) rectangle (4,10);
\path[-,font=\large, >=angle 90, line width=0.4mm]
(0,6) edge (1,7)
(0,8) edge (1,8)
(0,10) edge (1,9)
(2,10) edge (2,9)
(4,9) edge (3,8)
(4,6) edge (3,7)
(2,7) edge (1,7)
(1,8) edge (1,7)
(1,8) edge (1,9)
(2,9) edge (1,9)
(2,9) edge (2,8)
(2,8) edge (1,8)
(2,8) edge (2,7)
(3,7) edge (2,7)
(3,7) edge (3,8)
(2,8) edge (3,8);
\draw [line width=0.25mm, fill=white] (1,7) circle (0.75mm);
\draw [line width=0.25mm, fill=white] (1,9) circle (0.75mm);
\draw [line width=0.25mm, fill=white] (2,8) circle (0.75mm);
\draw [line width=0.25mm, fill=white] (3,7) circle (0.75mm);
\draw [line width=0.25mm, fill=black] (1,8) circle (0.75mm);
\draw [line width=0.25mm, fill=black] (2,7) circle (0.75mm);
\draw [line width=0.25mm, fill=black] (2,9) circle (0.75mm);
\draw [line width=0.25mm, fill=black] (3,8) circle (0.75mm);
\node at (2.5,7.5) {$a$};
\node at (1.5,8.5) {$b$};
\node at (1.5,7.5) {$c$};
\node at (3.5,7.5) {$d$};
\node at (2.75,9) {$e$};
\node at (1.35,9.5) {$f$};
\node at (0.5,8.6) {$g$};
\node at (0.5,7.4) {$h$};
\node at (2,6.4) {$i$};
\node at (5,8) {$\leftrightarrow$};
\fill[gray!40!white] (6,6) rectangle (10,10);
\path[-,font=\large, >=angle 90, line width=0.4mm]
(6,6) edge (7,7)
(6,8) edge (6.5,8.5)
(6,10) edge (7,9)
(8,10) edge (8,9)
(10,9) edge (9.5,8.5)
(10,6) edge (8.5,7)
(6.5,8.5) edge (7,9)
(7,9) edge (8,9)
(8,9) edge (8.5,8.5)
(8.5,8.5) edge (8,8)
(8,8) edge (7,8)
(7,8) edge (6.5,8.5)
(7,8) edge (7,7)
(7,7) edge (8.5,7)
(8.5,7) edge (9.5,8.5)
(9.5,8.5) edge (8.5,8.5)
(8.5,7) edge (8,8);
\draw [line width=0.25mm, fill=white] (7,8) circle (0.75mm);
\draw [line width=0.25mm, fill=white] (7,9) circle (0.75mm);
\draw [line width=0.25mm, fill=white] (8.5,7) circle (0.75mm);
\draw [line width=0.25mm, fill=white] (8.5,8.5) circle (0.75mm);
\draw [line width=0.25mm, fill=black] (9.5,8.5) circle (0.75mm);
\draw [line width=0.25mm, fill=black] (8,8) circle (0.75mm);
\draw [line width=0.25mm, fill=black] (8,9) circle (0.75mm);
\draw [line width=0.25mm, fill=black] (7,7) circle (0.75mm);
\draw [line width=0.25mm, fill=black] (6.5,8.5) circle (0.75mm);
\node at (8.6,8) {$\frac{ac}{1+c}$};
\node at (7.5,8.5) {$b(1+c)$};
\node at (7.6,7.5) {$\frac{1}{c}$};
\node at (9.5,7.5) {$d$};
\node at (9,9.25) {$e$};
\node at (7.4,9.5) {$f$};
\node at (6.4,9) {$g$};
\node at (6.4,7.5) {$\frac{hc}{1+c}$};
\node at (7.75,6.4) {$i(1+c)$};
\node at (11,8) {$\leftrightarrow$};
\fill[gray!40!white] (12,6) rectangle (16,10);
\path[-,font=\large, >=angle 90, line width=0.4mm]
(12,6) edge (14,7)
(12,8) edge (13,8)
(12,10) edge (13,9)
(14,10) edge (14,9)
(16,9) edge (15,9)
(16,6) edge (15,7)
(14,9) edge (15,9)
(15,8) edge (15,9)
(13,8) edge (13,9)
(14,9) edge (13,9)
(14,9) edge (14,8)
(14,8) edge (13,8)
(14,8) edge (14,7)
(15,7) edge (14,7)
(15,7) edge (15,8)
(14,8) edge (15,8);
\draw [line width=0.25mm, fill=white] (15,9) circle (0.75mm);
\draw [line width=0.25mm, fill=white] (13,9) circle (0.75mm);
\draw [line width=0.25mm, fill=white] (14,8) circle (0.75mm);
\draw [line width=0.25mm, fill=white] (15,7) circle (0.75mm);
\draw [line width=0.25mm, fill=black] (13,8) circle (0.75mm);
\draw [line width=0.25mm, fill=black] (14,7) circle (0.75mm);
\draw [line width=0.25mm, fill=black] (14,9) circle (0.75mm);
\draw [line width=0.25mm, fill=black] (15,8) circle (0.75mm);
\node at (14.5,7.5) {$\frac{a}{A}$};
\node at (13.5,8.5) {$bA$};
\node at (14.5,8.5) {$\frac{1+c}{ac}$};
\node at (15.5,7.7) {$\frac{dA}{1+c}$};
\node at (15,9.5) {$\frac{ace}{A}$};
\node at (13.35,9.5) {$f$};
\node at (12.5,8.6) {$g$};
\node at (13,7.25) {$\frac{hc}{1+c}$};
\node at (14.25,6.4) {$i(1+c)$};
\draw [line width=0.1mm] plot [smooth, tension=0.8] coordinates { (14,5) (2,5) };
\node at (14.05,5.18) {\rotatebox{90}{$\rightarrow$}};
\node at (2.03,4.83) {\rotatebox{90}{$\leftarrow$}};
\fill[gray!40!white] (0,0) rectangle (4,4);
\path[-,font=\large, >=angle 90, line width=0.4mm]
(0,0) edge (2.25,0.75)
(0,2) edge (0.75,2.25)
(0,4) edge (0.75,3.25)
(2,4) edge (2.75, 3.25)
(4,3) edge (3.25, 2.75)
(4,0) edge (3.25, 0.75)
(0.75, 3.25) edge (1.75, 3.25)
(1.75, 3.25) edge (2.75, 3.25)
(2.75, 3.25) edge (3.25, 2.75)
(3.25, 2.75) edge (3.25, 1.75)
(3.25, 1.75) edge (3.25, 0.75)
(3.25,0.75) edge (2.25, 0.75)
(2.25, 0.75) edge (2.25, 1.75)
(2.25, 1.75) edge (1.75, 2.25)
(1.75, 2.25) edge (0.75, 2.25)
(0.75, 2.25) edge (0.75, 3.25)
(1.75, 2.25) edge (1.75, 3.25)
(2.25, 1.75) edge (3.25, 1.75);
\draw [line width=0.25mm, fill=black] (0.75,3.25) circle (0.75mm);
\draw [line width=0.25mm, fill=black] (2.75,3.25) circle (0.75mm);
\draw [line width=0.25mm, fill=black] (3.25,1.75) circle (0.75mm);
\draw [line width=0.25mm, fill=black] (2.25, 0.75) circle (0.75mm);
\draw [line width=0.25mm, fill=black] (1.75, 2.25) circle (0.75mm);
\draw [line width=0.25mm, fill=white] (1.75,3.25) circle (0.75mm);
\draw [line width=0.25mm, fill=white] (3.25, 2.75) circle (0.75mm);
\draw [line width=0.25mm, fill=white] (3.25,0.75) circle (0.75mm);
\draw [line width=0.25mm, fill=white] (2.25, 1.75) circle (0.75mm);
\draw [line width=0.25mm, fill=white] (0.75, 2.25) circle (0.75mm);
\node at (2.75,1.25) {$\frac{a}{A}$};
\node at (1.25,2.75) {$\frac{1}{bA}$};
\node at (2.5,2.5) {$\frac{(1+c)A}{ac}$};
\node at (3.67,1.75) {$\frac{dA}{1+c}$};
\node at (3.25,3.5) {$\frac{ace}{A}$};
\node at (1.5,3.65) {$\frac{fbA}{B}$};
\node at (0.37,2.75) {$gB$};
\node at (1.2,1.4) {$\frac{hcb}{(1+c)B}$};
\node at (2.7,0.37) {$i(1+c)$};
\node at (5,2) {$\leftrightarrow$};
\fill[gray!40!white] (6,0) rectangle (10,4);
\path[-,font=\large, >=angle 90, line width=0.4mm]
(6,0) edge (8,1)
(6,2) edge (7,2)
(6,4) edge (7,3)
(8,4) edge (8,3)
(10,3) edge (9,3)
(10,0) edge (9,1)
(8,3) edge (9,3)
(9,2) edge (9,3)
(7,2) edge (7,3)
(8,3) edge (7,3)
(8,3) edge (8,2)
(8,2) edge (7,2)
(8,2) edge (8,1)
(9,1) edge (8,1)
(9,1) edge (9,2)
(8,2) edge (9,2);
\draw [line width=0.25mm, fill=black] (7,3) circle (0.75mm);
\draw [line width=0.25mm, fill=black] (8,2) circle (0.75mm);
\draw [line width=0.25mm, fill=black] (9,3) circle (0.75mm);
\draw [line width=0.25mm, fill=black] (9,1) circle (0.75mm);
\draw [line width=0.25mm, fill=white] (8,3) circle (0.75mm);
\draw [line width=0.25mm, fill=white] (7,2) circle (0.75mm);
\draw [line width=0.25mm, fill=white] (9,2) circle (0.75mm);
\draw [line width=0.25mm, fill=white] (8,1) circle (0.75mm);
\node at (8.5,1.5) {$\frac{A}{a}$};
\node at (7.5,2.5) {$\frac{1}{bA}$};
\node at (8.5,2.5) {$\frac{B}{c(1+a)}$};
\node at (9.52,1.7) {\rotatebox{90}{$d(1+a)$}};
\node at (9,3.5) {$\frac{ace}{A}$};
\node at (7.35,3.5) {$\frac{fbA}{B}$};
\node at (6.5,2.6) {$gB$};
\node at (7,1.25) {$\frac{hcb(1+a)}{B}$};
\node at (8.25,0.5) {$\frac{ai}{1+a}$};
\node at (11,2) {$\leftrightarrow$};
\fill[gray!40!white] (12,0) rectangle (16,4);
\path[-,font=\large, >=angle 90, line width=0.4mm]
(12,0) edge (14,1)
(12,2) edge (12.5,1.5)
(12,4) edge (13.5,3)
(14,4) edge (15,3)
(16,3) edge (15.5,1.5)
(16,0) edge (15,1)
(13.5,3) edge (15,3)
(15,3) edge (15,2)
(15,2) edge (15.5,1.5)
(15.5,1.5) edge (15,1)
(15,1) edge (14,1)
(14,1) edge (13.5,1.5)
(13.5,1.5) edge (12.5,1.5)
(12.5,1.5) edge (13.5,3)
(14,2) edge (13.5,1.5)
(14,2) edge (15,2)
(14,2) edge (13.5,3);
\draw [line width=0.25mm, fill=black] (13.5,3) circle (0.75mm);
\draw [line width=0.25mm, fill=black] (15,2) circle (0.75mm);
\draw [line width=0.25mm, fill=black] (15,1) circle (0.75mm);
\draw [line width=0.25mm, fill=black] (13.5,1.5) circle (0.75mm);
\draw [line width=0.25mm, fill=white] (15,3) circle (0.75mm);
\draw [line width=0.25mm, fill=white] (15.5,1.5) circle (0.75mm);
\draw [line width=0.25mm, fill=white] (14,1) circle (0.75mm);
\draw [line width=0.25mm, fill=white] (12.5,1.5) circle (0.75mm);
\draw [line width=0.25mm, fill=white] (14,2) circle (0.75mm);
\node at (14.5,1.5) {$\frac{B}{a(1+b)}$};
\node at (13.4,2.1) {\footnotesize$\frac{1+b}{bc(1+a)}$};
\node at (14.4,2.5) {$\frac{c(1+a)}{B}$};
\node at (15.75,1.1) {\rotatebox{90}{\footnotesize$d(1+a)$}};
\node at (15.3,3.4) {$\frac{ae(1+b)}{1+a}$};
\node at (13.6,3.5) {$\frac{fb}{1+b}$};
\node at (12.5,2.6) {$gB$};
\node at (12.9,1) {$\frac{hcb(1+a)}{B}$};
\node at (14.25,0.5) {$\frac{ai}{1+a}$};
\draw [line width=0.1mm] plot [smooth, tension=0.8] coordinates { (14,-1) (2,-1) };
\node at (14.05,-0.82) {\rotatebox{90}{$\rightarrow$}};
\node at (2.03,-1.17) {\rotatebox{90}{$\leftarrow$}};
\fill[gray!40!white] (0,-6) rectangle (4,-2);
\path[-,font=\large, >=angle 90, line width=0.4mm]
(0,-6) edge (1,-5)
(0,-4) edge (1,-4)
(0,-2) edge (1,-3)
(2,-2) edge (2,-3)
(4,-3) edge (3,-4)
(4,-6) edge (3,-5)
(2,-5) edge (1,-5)
(1,-4) edge (1,-5)
(1,-4) edge (1,-3)
(2,-3) edge (1,-3)
(2,-4) edge (1,-4)
(2,-5) edge (2,-3)
(3,-4) edge (3,-5)
(3,-4) edge (2,-4)
(2,-5) edge (3,-5);
\draw [line width=0.25mm, fill=black] (1,-5) circle (0.75mm);
\draw [line width=0.25mm, fill=black] (1,-3) circle (0.75mm);
\draw [line width=0.25mm, fill=black] (2,-4) circle (0.75mm);
\draw [line width=0.25mm, fill=black] (3,-5) circle (0.75mm);
\draw [line width=0.25mm, fill=white] (1,-4) circle (0.75mm);
\draw [line width=0.25mm, fill=white] (2,-5) circle (0.75mm);
\draw [line width=0.25mm, fill=white] (2,-3) circle (0.75mm);
\draw [line width=0.25mm, fill=white] (3,-4) circle (0.75mm);
\node at (2.5,-4.5) {$\frac{1}{a}$};
\node at (1.5,-3.5) {$\frac{1}{b}$};
\node at (1.5,-4.5) {\footnotesize$\frac{bc(1+a)}{1+b}$};
\node at (3.6,-4.5) {\rotatebox{90}{$d(1+a)$}};
\node at (2.8,-3) {$\frac{ae(1+b)}{1+a}$};
\node at (1.35,-2.5) {$\frac{fb}{1+b}$};
\node at (0.4,-3.25) {\rotatebox{90}{$g(1+b)$}};
\node at (0.5,-4.6) {$h$};
\node at (2,-5.6) {$\frac{ai}{1+a}$};
\node at (5,-4) {$\leftrightarrow$};
\fill[gray!40!white] (6,-6) rectangle (10,-2);
\path[-,font=\large, >=angle 90, line width=0.4mm]
(6,-6) edge (7.25,-5.25)
(6,-4) edge (6.75,-4.75)
(6,-2) edge (6.75,-2.75)
(8,-2) edge (7.75, -2.75)
(10,-3) edge (9.25, -4.25)
(10,-6) edge (9.25, -5.25)
(6.75, -2.75) edge (7.75, -2.75)
(8.25, -5.25) edge (7.25, -5.25)
(7.25, -5.25) edge (6.75, -4.75)
(6.75, -4.75) edge (6.75, -3.75)
(9.25, -4.25) edge (9.25, -5.25)
(9.25,-5.25) edge (8.25, -5.25)
(8.25, -5.25) edge (8.25, -4.25)
(8.25, -4.25) edge (7.75, -3.75)
(7.75, -3.75) edge (6.75, -3.75)
(6.75, -3.75) edge (6.75, -2.75)
(7.75, -3.75) edge (7.75, -2.75)
(8.25, -4.25) edge (9.25, -4.25);
\draw [line width=0.25mm, fill=black] (6.75, -2.75) circle (0.75mm);
\draw [line width=0.25mm, fill=black] (8.25, -5.25) circle (0.75mm);
\draw [line width=0.25mm, fill=black] (6.75, -4.75) circle (0.75mm);
\draw [line width=0.25mm, fill=black] (9.25, -4.25) circle (0.75mm);
\draw [line width=0.25mm, fill=black] (7.75, -3.75) circle (0.75mm);
\draw [line width=0.25mm, fill=white] (7.25, -5.25) circle (0.75mm);
\draw [line width=0.25mm, fill=white] (6.75, -3.75) circle (0.75mm);
\draw [line width=0.25mm, fill=white] (7.75, -2.75) circle (0.75mm);
\draw [line width=0.25mm, fill=white] (8.25, -4.25) circle (0.75mm);
\draw [line width=0.25mm, fill=white] (9.25, -5.25) circle (0.75mm);
\node at (8.75,-4.75) {$a$};
\node at (7.25,-3.25) {$\frac{1}{b}$};
\node at (7.5,-4.5) {$\frac{bc}{1+b}$};
\node at (9.67,-4.5) {$d$};
\node at (8.8,-3) {$e(1+b)$};
\node at (7.25,-2.4) {$\frac{fb}{1+b}$};
\node at (6.37,-3.4) {\rotatebox{90}{$g(1+b)$}};
\node at (6.5,-5.15) {$h$};
\node at (8,-5.6) {$i$};
\node at (11,-4) {$\leftrightarrow$};
\fill[gray!40!white] (12,-6) rectangle (16,-2);
\path[-,font=\large, >=angle 90, line width=0.4mm]
(12,-6) edge (13,-5)
(12,-4) edge (13,-4)
(12,-2) edge (13,-3)
(14,-2) edge (14,-3)
(16,-3) edge (15,-4)
(16,-6) edge (15,-5)
(14,-5) edge (13,-5)
(13,-4) edge (13,-5)
(13,-4) edge (13,-3)
(14,-3) edge (13,-3)
(14,-3) edge (14,-4)
(14,-4) edge (13,-4)
(14,-4) edge (14,-5)
(15,-5) edge (14,-5)
(15,-5) edge (15,-4)
(14,-4) edge (15,-4);
\draw [line width=0.25mm, fill=white] (13,-5) circle (0.75mm);
\draw [line width=0.25mm, fill=white] (13,-3) circle (0.75mm);
\draw [line width=0.25mm, fill=white] (14,-4) circle (0.75mm);
\draw [line width=0.25mm, fill=white] (15,-5) circle (0.75mm);
\draw [line width=0.25mm, fill=black] (13,-4) circle (0.75mm);
\draw [line width=0.25mm, fill=black] (14,-5) circle (0.75mm);
\draw [line width=0.25mm, fill=black] (14,-3) circle (0.75mm);
\draw [line width=0.25mm, fill=black] (15,-4) circle (0.75mm);
\node at (14.5,-4.5) {$a$};
\node at (13.5,-3.5) {$b$};
\node at (13.5,-4.5) {$c$};
\node at (15.5,-4.5) {$d$};
\node at (14.8,-3) {$e$};
\node at (13.35,-2.5) {$f$};
\node at (12.5,-3.4) {$g$};
\node at (12.5,-4.6) {$h$};
\node at (14,-5.6) {$i$};
\end{tikzpicture}
\end{center}

\end{proof}

Finally, we can prove the last part of Theorem~\ref{thm:Te}.
\begin{proof}
Consider one face in our expanded plabic network.  We can either add $p,-p,q,-q,r$, and $-r$ as described above, or we can add $P,-P,Q,-Q,R$, and $-R$ where $P,-P$ are between strings 2 and 3, $Q,-Q$ are between strings 1 and 2, and $R,-R$ are between strings 2 and 3 so that moving $P$ around the cylinder by square moves until it's below $-P$ is equivalent to applying $T_e^{(2,3)}$, modulo gauge transformations, then moving $Q$ around the cylinder by square moves until it's below $P$ is equivalent to applying $T_e^{(1,2)}$, modulo gauge transformations, and finally moving $R$ around the cylinder by square moves until it's below $Q$ is equivalent to applying $T_e^{(2,3)}$ a second time, modulo gauge transformations.  If we add $P,Q,R$ to our network, to a face, the face looks as follows:
\begin{center}
\begin{tikzpicture}[scale=0.75]
\fill[gray!40!white] (0,0) rectangle (8,12);
\path[->,font=\large, >=angle 90, line width=0.4mm]
(0,3) edge (2,3)
(6,9) edge (8,9)
(2,0) edge (2,2)
(2,2) edge node[left] {$a$} (2,3)
(2,3) edge (2,4)
(2,4) edge (2,7)
(2,7) edge (2,11)
(2,11) edge (2,12)
(4,0) edge (4,1)
(4,1) edge node[left] {$c$} (4,2)
(4,2) edge (4,3)
(4,3) edge (4,4)
(4,4) edge (4,5)
(4,5) edge (4,6)
(4,6) edge (4,7)
(4,7) edge (4,8)
(4,8) edge (4,10)
(4,10) edge node[right] {$b$} (4,11)
(4,11) edge (4,12)
(6,0) edge (6,1)
(6,1) edge (6,3)
(6,3) edge (6,5)
(6,5) edge (6,6)
(6,6) edge (6,8)
(6,8) edge (6,9)
(6,9) edge node[right] {$d$} (6,10)
(6,10) edge (6,12)
(2,2) edge node[below] {$e$} (4,2)
(4,10) edge node[above] {$h$} (6,10)
(2,11) edge node[above] {$f$} (4,11)
(4,1) edge node[below] {$g$} (6,1);
\path[<-,font=\large, >=angle 90, line width=0.4mm]
(4,3) edge node[below] {$-P$} (6,3)
(4,5) edge node[below] {$-R$} (6,5)
(4,6) edge node[above] {$R$} (6,6)
(4,8) edge node[above] {$P$} (6,8)
(2,4) edge node[below] {$-Q$} (4,4)
(2,7) edge node[above] {$Q$} (4,7);
\draw [line width=0.25mm, fill=white] (2,2) circle (1mm);
\draw [line width=0.25mm, fill=white] (2,11) circle (1mm);
\draw [line width=0.25mm, fill=white] (4,1) circle (1mm);
\draw [line width=0.25mm, fill=black] (4,3) circle (1mm);
\draw [line width=0.25mm, fill=black] (4,5) circle (1mm);
\draw [line width=0.25mm, fill=black] (4,6) circle (1mm);
\draw [line width=0.25mm, fill=black] (4,8) circle (1mm);
\draw [line width=0.25mm, fill=white] (4,10) circle (1mm);
\draw [line width=0.25mm, fill=white] (6,5) circle (1mm);
\draw [line width=0.25mm, fill=white] (6,8) circle (1mm);
\draw [line width=0.25mm, fill=white] (6,9) circle (1mm);
\draw [line width=0.25mm, fill=black] (2,3) circle (1mm);
\draw [line width=0.25mm, fill=black] (2,4) circle (1mm);
\draw [line width=0.25mm, fill=white] (6,3) circle (1mm);
\draw [line width=0.25mm, fill=black] (2,7) circle (1mm);
\draw [line width=0.25mm, fill=white] (6,6) circle (1mm);
\draw [line width=0.25mm, fill=black] (4,2) circle (1mm);
\draw [line width=0.25mm, fill=white] (4,4) circle (1mm);
\draw [line width=0.25mm, fill=white] (4,7) circle (1mm);
\draw [line width=0.25mm, fill=black] (4,11) circle (1mm);
\draw [line width=0.25mm, fill=black] (6,1) circle (1mm);
\draw [line width=0.25mm, fill=black] (6,10) circle (1mm);
\end{tikzpicture}
\end{center}

Let's look at just the upper half of this picture.  Notice this is the same as the second step in the 8-cycle in Lemma~\ref{lemma:8-cycle}.  Starting with $p,q$, and $r$ inserted into a face, the 8-cycle tells us that using a square move to change to a network where the upper half of that face looks like the one with $P,Q$, and $R$, although we do not know that the edges have the right weights, and then pushing each of those edges up one face on the cylinder, is the same as pushing $p,q$, and $r$ up one face on the cylinder, and then using a square move to change to a network that looks like one with $P,Q$, and $R$ pushed up one face on the cylinder.

Suppose we move $p,q$, and $r$ all the way around the cylinder (performing $T_e^{(1,2)}\circ T_e^{(2,3)}\circ T_e^{(1,2)}$, modulo gauge transformations) and then do a square move to get to a network that looks like one with $P,Q$, and $R$ pushed around the cylinder.  The previous paragraph tells us that instead, we could push $p,q$, and $r$ around the cylinder except for one face, then do a square move to get to a network that looks like one with $P,Q$, and $R$ pushed around the cylinder except for one face, then push the three edges up the last face.  We can keep doing this square move to get a face that looks like one with $P,Q$, and $R$ begin pushed around the cylinder one face earlier until it's the first thing we do.  Doing this move first gives us values on the edges that are the same as those after we push them around the cylinder, as the values of $p,q$, and $r$ are the same before and after being pushed around the cylinder.  By Lemma~\ref{lemma:unique-p} this means that the values are the $P,Q$, and $R$ defined above, and further, that $T_e^{(1,2)}\circ T_e^{(2,3)}\circ T_e^{(1,2)}=T_e^{(2,3)}\circ T_e^{(1,2)}\circ T_e^{(2,3)}$.
\end{proof}

We now turn our attention to $T_f$ and Theorem~\ref{thm:Tf}.

\begin{thm}\label{thm:face-prime}
Applying $T_e$ to the edge weights defined in Lemma~\ref{lemma:face-to-edge}, then turning the graph back into a face weighted graph gives the following weights:
\begin{align*}
a_i'&=\frac{\widehat{\lambda}_j(a,b,c)}{\widehat{\lambda}_p(a,b,c)\left(\prod_{\substack{b_r\text{ associated to }s\\\text{where }p\leq s<j}} b_r\right)}&&\hspace{-1.3in}\text{where }a_i\text{ is associated to }j,a_{i-1}\text{ is associated to }p\\
b_i'&=\frac{\widehat{\lambda}_q(a,b,c)}{\widehat{\lambda}_j(a,b,c)\left(\prod_{\substack{a_r\text{ associated to }s\\\text{where }j< s\leq q}} a_r\right)}&&\hspace{-1.3in}\text{where }b_i\text{ is associated to }j, b_{i+1}\text{ is associated to }q\\
c_i'&=\frac{c_i\widehat{\lambda}_{i-1}(a,b,c)\left(\prod_{\substack{a_r\text{ associated to }s\\\text{where }i\leq s\leq i+1}} a_r\right)\left(\prod_{\substack{b_r\text{ associated to }s\\\text{where }i-1\leq s\leq i}} b_r\right)}{\widehat{\lambda}_{i+1}(a,b,c)}\\
t'&=t
\end{align*}
\end{thm}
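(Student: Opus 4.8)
The plan is to prove the identity by direct computation, routed through the edge-weighted picture. Start with a cylindric $2$-loop plabic graph carrying face and trail weights $(a,b,c,t)$; by Lemma~\ref{lemma:face-to-edge} there is a distinguished edge-weighting $(x,y,z)$ of the expanded network with the canonical orientation realizing these face weights, and with the indexing normalized so that $y_1=b_1$, so that $\widehat\lambda_i(a,b,c)=\lambda_i(x,y,z)$ holds by definition. Apply $T_e$ to $(x,y,z)$; by Definition~\ref{defn:Te} (equivalently Theorem~\ref{thm:prime-vars}) the new edge weights are $x_i',y_i',z_i'$. Finally read off the face and trail weights of the $T_e$-transformed network using Definition~\ref{defn:face-weight-cyl} and Definition~\ref{defn:trail}, and check that they equal $a_i',b_i',c_i',t$ of Definition~\ref{defn:Tf}. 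Since $T_e$ and the passages between edge and face weights are all well-defined, the entire content is the bookkeeping that identifies the two collections of rational functions.

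First I would set up the dictionary between the face labels $a_j,b_j,c_i$ and the edge indices. In the expanded network every interior face is a hexagon bounded by one $x$-edge (white$\to$black on the left string), one weight-$1$ edge on the left string, one $y$-edge (white$\to$black on the right string), one weight-$1$ edge on the right string, and the two slanted $z$-edges; by Lemma~\ref{lemma:face-to-edge} the face $a_j$ associated to $i$ attaches to the $x$-edge shared by $a_j$ and $c_i$, with that edge carrying weight $a_j^{-1}$, and likewise $b_j$ associated to $i$ attaches to the $y$-edge shared by $b_j$ and $c_i$, carrying weight $b_j$; the $x$- and $y$-edges created purely by the expansion are exactly those set to $1$, and these are precisely the edges counted by $A_i,B_i$, so that $\alpha_i$ (resp.\ $\beta_i$) is the number of $a$-faces (resp.\ $b$-faces) strictly between the one associated to $i$ and the one associated to $i-1$ (resp.\ $i+1$). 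With this dictionary, running Definition~\ref{defn:face-weight-cyl} around a hexagon recovers the formula for $c_i$ from Lemma~\ref{lemma:face-to-edge}; running the same computation with the primed edge weights expresses the new $c_i$-face weight as a ratio of the appropriate product of $z'$-weights against the surviving non-$1$ $x'$- and $y'$-weights on that hexagon, and the new $a_j$ (resp.\ $b_j$) face weight as the inverse of the relevant $x'$-edge weight (resp.\ the relevant $y'$-edge weight).

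Next I would substitute the $T_e$-formulas $x_i'=\left(\prod_{k=0}^{\alpha_i}y_{i-k-1}\right)\lambda_{i-\alpha_i-1}/\lambda_i$, $y_i'=\left(\prod_{k=0}^{\beta_i}x_{i+k+1}\right)\lambda_{i+\beta_i+1}/\lambda_i$, $z_i'=z_i\left(\prod_{k=1}^{\alpha_i}y_{i-k-1}\right)\lambda_{i-\alpha_i-1}\left(\prod_{k=1}^{\beta_i}x_{i+k+1}\right)\lambda_{i+\beta_i+1}/(\lambda_{i-1}\lambda_{i+1})$, together with $\widehat\lambda_i(a,b,c)=\lambda_i(x,y,z)$ and Lemma~\ref{lemma:lambda-rel} to telescope. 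I expect the weight of the $x'$-edge shared by the image of the $a_i$-face and the $c_j$-face (with $a_i$ associated to $j$, $a_{i-1}$ to $p$) to collapse to $\widehat\lambda_p(a,b,c)\big(\prod_{b_r\text{ assoc.\ to }s,\,p\leq s<j}b_r\big)/\widehat\lambda_j(a,b,c)$, whose inverse is exactly $a_i'$; symmetrically for $b_i'$; and the $c_i$-hexagon ratio to collapse to $c_i\widehat\lambda_{i-1}(a,b,c)\big(\prod_{a_r\text{ assoc.\ to }s,\,i\leq s\leq i+1}a_r\big)\big(\prod_{b_r\text{ assoc.\ to }s,\,i-1\leq s\leq i}b_r\big)/\widehat\lambda_{i+1}(a,b,c)$. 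The point is that the telescoping products $\prod_k y_{i-k-1}$ and $\prod_k x_{i+k+1}$ appearing in $T_e$ are, under the dictionary, exactly the products of $b$-weights and $a$-weights skipped over by a chain of expansion-edges, which is what converts them into the ``associated to'' products of $T_f$. For the trail weight, the distinguished trail has $5$ edges: the two boundary edges and two string edges are weight-$1$, and the remaining one is the slanted edge carrying weight $t$; since the square-move sequence fixes every $z$-weight (Theorem~\ref{thm:tilde-vars}, $\widetilde z_i=z_i$) and leaves the weight-$1$ string edges on the trail untouched, and since gauge transformations preserve trail weights (Theorem~\ref{thm:mod-gauge-cyl}) and the removal of the $\pm p$ edges does not affect the trail, we get $t'=t$.

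The main obstacle is entirely the index bookkeeping: matching $A_i,B_i$ and $\alpha_i,\beta_i$ against the ``associated to'' relation, reconciling $\lambda$ with $\widehat\lambda$ under the normalization $y_1=b_1$, and verifying that every product range ($\prod_{k=0}^{\alpha_i}$ versus $\prod_{k=1}^{\alpha_i}$, $p\leq s<j$ versus $i\leq s\leq i+1$, and so on) comes out right after telescoping via Lemma~\ref{lemma:lambda-rel}. No individual step is deep. The cleanest route is to treat a generic face first (all interior vertices of degree $3$, no edges set to $1$, so $\alpha_i=\beta_i=0$ and the formulas reduce to the Example~\ref{ex:LP-face} shape), and then reinstate the expansion-edges, at which point each set-to-$1$ edge contributes exactly one matching extra factor on each side.
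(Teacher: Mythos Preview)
Your approach is correct, but it takes a longer route than the paper. The paper's proof observes that since face and trail weights are gauge-invariant, one may compute the new face weights directly from the intermediate weights $(\widetilde x,\widetilde y,\widetilde z)$ of Theorem~\ref{thm:tilde-vars} rather than from $(x',y',z')$. The tilde formulas are uniformly $\widetilde x_i=y_{i-1}\lambda_{i-1}/\lambda_i$, $\widetilde y_i=x_{i+1}\lambda_{i+1}/\lambda_i$, $\widetilde z_i=z_i$ with no case split for edges set to $1$, so the face weight of (say) the $a$-face bounded by the string segment with $x$-edges indexed $p+1,\dots,j$ is simply $\prod_{i=p+1}^{j}\widetilde x_i^{-1}=\lambda_j\big/\big(\lambda_p\prod_{i=p}^{j-1}y_i\big)$, which under the dictionary is exactly $a_i'$. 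Your route through the primed variables recovers the same product, but only because the $\alpha_i,\beta_i$ bookkeeping in $x_i',y_i',z_i'$ is precisely what undoes the gauge transformation: the single surviving non-$1$ edge $x_j'$ already equals $\prod_{i=p+1}^{j}\widetilde x_i$. So the telescoping you anticipate is real, but it does not require Lemma~\ref{lemma:lambda-rel}; it is just the identity $j-\alpha_j-1=p$ together with the face-to-edge dictionary. The paper's shortcut buys you exactly the elimination of that bookkeeping. Your argument for $t'=t$ is fine and is essentially the paper's, though note that the black-to-white string edges on the trail remain weight $1$ already in the tilde network (they are never part of any square), so the trail weight is $t$ before any gauge transformation is applied.
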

\begin{proof}
The formulas for $a_i', b_i'$, and $c_i'$ follow from Theorem~\ref{thm:tilde-vars}.  Note there is no need to use the formulas for $x'_i,y'_i$, and $z'$, as these arise from the formulas for $\widetilde{x}_i, \widetilde{y}_i$ by gauge transformations, which do not affect the face weights.  Because of the way we chose our trail, none of the gauge transformations will affect the weight of the trail.  Thus the trail weight remains the same.
\end{proof}

Now we prove Theorem~\ref{thm:Tf}.

\begin{proof}
By Theorem~\ref{thm:face-prime}, these properties are inherited from Theorem~\ref{thm:Te}.
\end{proof}

\section{Spider Web Quiver Proofs}\label{sec:ClAlgPfs}

\subsection{Proof of Proposition~\ref{prop:mutation-seq}}\label{sec:proof-mutation-seq}

Let $Q_r=\mu_r\mu_{r-1}...\mu_1(Q)$.  Lemmas~\ref{lemma:edges-middle-middle} through~\ref{lemma:edges-outer-inner} may be proven by induction.

\begin{lemma}\label{lemma:edges-middle-middle}
For $r<n-2$, the edges of $Q_r$ between vertices in the middle circle are:
\begin{enumerate}[(1)]
\item a possibly empty directed path $1\to 2\to...\to r$,
\item a directed path $r+1\to r+2\to...\to n$,
\item an oriented triangle $r\to n\to r+1\to r$.
\end{enumerate}
\end{lemma}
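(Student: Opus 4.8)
The plan is to argue by induction on $r$. Recall from Definition~\ref{defn:spider-web-quiver} that in $Q=Q_0$ the edges among the middle vertices form the directed cycle $1\to 2\to\cdots\to n\to 1$, so the first case to settle is $r=1$. At vertex $1$ of $Q$ the only middle-circle neighbours are $n$ (incoming) and $2$ (outgoing), so by Definition~\ref{defn:quiver-mutation} the mutation $\mu_1$ creates the single new middle–middle edge $n\to 2$ (from the length-two path $n\to 1\to 2$), reverses $n\to 1$ and $1\to 2$ into $1\to n$ and $2\to 1$, and produces no $2$-cycle among middle vertices because $2\to n$ is not an edge of $Q$ once $n\ge 4$ (and for $n\le 3$ the lemma is vacuous). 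Hence the middle–middle edges of $Q_1$ are the path $2\to 3\to\cdots\to n$ together with the triangle $1\to n\to 2\to 1$, i.e.\ exactly the asserted picture for $r=1$, with the path $1\to\cdots\to r$ empty.

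For the inductive step I would assume $Q_r$ has the stated form for some $1\le r\le n-4$ and set $Q_{r+1}=\mu_{r+1}(Q_r)$. In $Q_r$ the middle-circle neighbours of $r+1$ are precisely $n$ (its unique incoming middle edge, from the triangle $r\to n\to r+1\to r$), $r$ (outgoing, from the same triangle), and $r+2$ (outgoing, from the path $r+1\to\cdots\to n$). Thus $\mu_{r+1}$ adds the new middle–middle edges $n\to r$ and $n\to r+2$, reverses $n\to r+1,\ r+1\to r,\ r+1\to r+2$ into $r+1\to n,\ r\to r+1,\ r+2\to r+1$, and leaves every other middle–middle edge — the path $1\to\cdots\to r$ and the tail $r+2\to r+3\to\cdots\to n$ — unchanged. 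Removing $2$-cycles, the old triangle edge $r\to n$ and the new edge $n\to r$ cancel, while $n\to r+2$ has no opposite edge, since $r+2\to n$ would belong to $Q_r$ only when $r=n-3$, which $r\le n-4$ excludes. What survives is the path $1\to 2\to\cdots\to r\to r+1$, the path $r+2\to r+3\to\cdots\to n$, and the triangle $r+1\to n\to r+2\to r+1$, which is exactly the claimed description of $Q_{r+1}$.

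Two points make this argument self-contained and explain where the real care is needed. First, only edges among middle vertices ever need to be tracked: a new middle–middle edge created by $\mu_{r+1}$ can arise only from a length-two path through $r+1$ with both endpoints in the middle circle, and whether a middle–middle $2$-cycle cancels depends only on multiplicities of middle–middle edges, so nothing about the inner or outer circles intervenes (this is why the lemma can be split off from the joint induction with the later lemmas). Second, I would check that no edge multiplicities accumulate: at each step every new edge is introduced once, the two paths and the triangle are simple, and the one $2$-cycle that occurs cancels completely, so $Q_{r+1}$ again has only simple edges among its middle vertices. The only genuine obstacle is the bookkeeping at the mutated vertex — distinguishing which incident edges get reversed, which are freshly created, and which pair off into the cancelling $2$-cycle — but once the neighbour list of $r+1$ in $Q_r$ is read off from the inductive hypothesis, everything reduces to a direct application of the mutation rule, with the hypothesis $r<n-2$ being exactly what keeps the path $r+1\to\cdots\to n$ long enough that no extra cancellation alters the picture.
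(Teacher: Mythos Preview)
Your proof is correct and follows exactly the inductive approach the paper indicates (the paper itself says only that Lemmas~\ref{lemma:edges-middle-middle} through~\ref{lemma:edges-outer-inner} ``may be proven by induction'' and gives no further detail). Your explicit verification of the base case, the careful neighbour analysis at $r+1$, the identification of the single cancelling $2$-cycle $r\to n\to r$, and the observation that middle--middle edges can be tracked independently of the outer and inner circles are all sound and fill in precisely the details the paper omits.
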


Let $A$ be the set of vertices in the middle circle that have edges to the outer circle.  Let $A'=\{\text{vertices }i\ |\ i+1\in A\}$.

\begin{lemma}\label{lemma:edges-middle-outer-1inA}
Suppose $1\in A$.  Then for $r<n-2$ the edges between the middle and outer circle in $Q_r$ are:
\begin{enumerate}[(1)]
\item (the largest vertex on the outer circle originally connected to a vertex on the middle circle) $\to1\to 1^-$,
\item $r\to$ (the largest vertex on the outer circle that originally had an edge to a vertex between 1 and $r$) $\to$ (the largest vertex strictly between 1 and $r$ in $A'$) $\to$ (the next largest vertex along the outer circle originally connected to a vertex on the middle circle) $\to$ (the next largest vertex strictly between 1 and $r$ in $A'$) $\to...\to$ (the smallest vertex strictly between 1 and $r$ in $A'$) $\to$ (the next largest vertex along the outer circle originally connected to a vertex on the middle circle) $\to 1$,
\item $n\to$ (the largest vertex on the outer circle that originally had an edge to a vertex between 1 and $n-1$) $\to$ (the largest vertex in $A$ less than $n$) $\to$ (the next largest vertex along the outer circle originally connected to a vertex on the middle circle) $\to$ (the next largest vertex in $A$) $\to...\to$ (the largest vertex of $A$ greater than $r+1$) $\to$ (the next largest vertex along the inner circle originally connected to a vertex on the middle circle) $\to r+1$.
\end{enumerate}
\end{lemma}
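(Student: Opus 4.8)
The plan is to prove Lemma~\ref{lemma:edges-middle-outer-1inA} by induction on $r$, carried out simultaneously with Lemma~\ref{lemma:edges-middle-middle} and the remaining lemmas of this section (the parallel statements for the inner circle, for the case $1\notin A$, and for the outer--inner edges). This simultaneous induction is forced on us: computing $\mu_r(Q_{r-1})$ via Definition~\ref{defn:quiver-mutation} requires knowing \emph{all} edges incident to vertex $r$ in $Q_{r-1}$, not merely the middle--outer ones. The base case is immediate, since $Q_0=Q$ is the spider web quiver itself and the claimed configuration degenerates to what Definition~\ref{defn:spider-web-quiver} prescribes; in the first genuine step the inductive hypothesis is just a restatement of that definition. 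One further observation streamlines the whole argument: vertex $1$ is mutated only by $\mu_1$, and no later mutation $\mu_2,\dots,\mu_r$ can create an edge between $1$ and the outer circle — this uses the inductive description of the middle--middle edges, which contains no direct arrow $1\to r$ once $r\ge 3$ — so part (1) of the lemma is stable after $\mu_1$ and the real content lies in parts (2) and (3).

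For the inductive step I would first assemble the list of edges incident to vertex $r$ in $Q_{r-1}$. From Lemma~\ref{lemma:edges-middle-middle} for $Q_{r-1}$ these are the triangle edges $n\to r$, $r\to r-1$ together with the path edge $r\to r+1$; from the present lemma applied with index $r-1$ (which places $r$ at the end of the zigzag of part (2)) together with its inner analogue, these are the — possibly empty — blocks of outer and inner vertices adjacent to $r$, whose precise shape depends on whether $r\in A$, whether $r\in A'$ (i.e.\ $r+1\in A$), and the corresponding conditions on the inner side. Then $\mu_r$ is applied mechanically: insert $i\to j$ for every pair $i\to r$, $r\to j$; reverse all edges at $r$; delete 2-cycles. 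The new middle--middle edges reassemble into the paths $1\to\cdots\to r$, $r+1\to\cdots\to n$ and the triangle $r\to n\to r+1\to r$ of Lemma~\ref{lemma:edges-middle-middle} for $Q_r$ (in particular the shortcut $n\to r-1$ cancels against the triangle edge $r-1\to n$ of $Q_{r-1}$), and the new middle--outer edges are to be matched against parts (1)--(3) of the statement for $Q_r$.

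The heart of the argument, and the step I expect to be the main obstacle, is tracking how the zigzag paths of parts (2) and (3) evolve under $\mu_r$. When $r\notin A$ and $r\notin A'$, vertex $r$ has no radial edge to the outer circle, so the mutation merely transports the existing zigzags one index forward. When $r\in A$ and/or $r\in A'$, the shortcut edges created at $r$ splice the outer vertices attached to $r$ into the path $r\to\cdots\to 1$ of part (2) or the path $n\to\cdots\to r+1$ of part (3), lengthening it by exactly the blocks named in the statement; the delicate point is to confirm that the reversals and shortcuts leave no surviving 2-cycle between $r$ and an outer vertex, which rests on the fact that in a spider web quiver the two radial edges of any face are oppositely oriented, so $r$ is never joined to the same outer vertex in both directions. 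The descriptions such as ``the largest vertex on the outer circle that originally had an edge to a vertex between $1$ and $r$'' are precisely what the inductive hypothesis records, so once these compatibility checks are done the match is forced. The inner-circle lemma and the outer--inner lemma follow by the same bookkeeping with the roles of $A$, $A'$ and of the endpoints $n$, $r+1$ interchanged, which is exactly why all the lemmas of the section must be proved together.
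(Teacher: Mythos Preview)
Your approach is correct and is essentially the same as the paper's: the paper simply states that Lemmas~\ref{lemma:edges-middle-middle} through~\ref{lemma:edges-outer-inner} ``may be proven by induction'' and gives no further detail. Your proposal supplies exactly the simultaneous induction on $r$ that this one-line remark points to, and your case analysis (tracking whether $r\in A$, $r\in A'$, and handling the zigzag splicing under $\mu_r$) is the natural way to carry it out.
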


\begin{lemma}\label{lemma:edges-middle-outer-1notA}
Suppose $1\not\in A$.  For $r<n-2$, if $r$ is smaller that the smallest vertex of $A$, then the edges between the middle and outer circle in $Q_r$ are:
\begin{enumerate}[(1)]
\item the same edges as in the original quiver, that is, (the largest vertex on the outer circle originally connected to a vertex on the middle circle) $\to$ (the largest vertex in $A$) $\to$ (the next largest vertex along the outer circle originally connected to a vertex on the middle circle) $\to$ (the next largest vertex in $A$) $\to$ (the next largest vertex along the outer circle originally connected to a vertex on the middle circle) $\to...\to$ (the smallest vertex in $A$) $\to$ (the largest vertex along the outer circle originally connected to a vertex on the middle circle).
\end{enumerate}
Otherwise the edges between the middle and outer circle in $Q_r$ are:
\begin{enumerate}[(1)]
\item (the largest vertex on the outer circle originally connected to a vertex on the middle circle) $\to$ (the smallest vertex in $A$ along the middle circle),
\item $r\to$ (the largest vertex on the outer circle that originally had an edge to a vertex between 1 and $r$) $\to$ (the largest vertex strictly between 1 and $r$ in $A'$) $\to$ (the next largest vertex along the outer circle originally connected to a vertex on the middle circle) $\to$ (the next largest vertex strictly between 1 and $r$ in $A'$) $\to...\to$ (the smallest vertex marked in $A'$),
\item $n\to$ (the largest vertex on the outer circle that originally had an edge to a vertex between 1 and $n-1$) $\to$ (the largest vertex in $A$ less than $n$) $\to$ (the next largest vertex along the outer circle originally connected to a vertex on the middle circle) $\to$ (the next largest vertex in $A$) $\to...\to$ (the largest vertex of $A$ greater than $r+1$) $\to$ (the next largest vertex along the inner circle originally connected to a vertex on the middle circle) $\to r+1$.
\end{enumerate}
\end{lemma}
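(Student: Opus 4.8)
The plan is to prove Lemma~\ref{lemma:edges-middle-outer-1notA} as one ingredient of the simultaneous induction on $r$ that also establishes Lemmas~\ref{lemma:edges-middle-middle} through~\ref{lemma:edges-outer-inner}. Because the three rules of Definition~\ref{defn:quiver-mutation} applied at the vertex $r$ mix middle--middle, middle--outer and middle--inner edges, these lemmas cannot be proven in isolation: the inductive statement must carry the full local structure of $Q_{r-1}$ at $r$ — together with the relative cyclic positions of the outer-circle vertices — and deduce the full local structure of $Q_r=\mu_r(Q_{r-1})$. The base case is $r=1$. Since $1\notin A$ we have $\min A\ge 2>1$, so the vertex $1$ has no edge to the outer circle; then rule~(1) creates no outer-incident edge through $1$, rules~(2) and~(3) only affect edges incident to $1$, and $Q_0=Q$ has its middle--outer edges tautologically as in $Q$. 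Thus conclusion~(1) of the ``$r<\min A$'' case holds.

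For the inductive step I would assume the description of $Q_{r-1}$ and compute the effect of $\mu_r$, retaining only the middle--outer edges. The first subcase, $r<\min A$, is immediate: by the inductive hypothesis the middle--outer edges of $Q_{r-1}$ are still those of $Q$, and since $r<\min A$ the vertex $r$ is not joined to the outer circle, so rule~(1) produces no new middle--outer edge, rule~(2) reverses none, and rule~(3) cancels none; hence the middle--outer edges of $Q_r$ again coincide with those of $Q$. The substantive subcase is $r\ge\min A$, which I would split into the \emph{transition} mutation (the first $r$ with $r\ge\min A$, namely $r=\min A$, where $\mu_r$ turns the original alternating chain of $Q$ into the three-part configuration of conclusions~(1)--(3), using the triangle and path edges of Lemma~\ref{lemma:edges-middle-middle} incident to $r$ to bring $n$ and $r$ into the picture) and the \emph{steady-state} mutations (where $r>\min A$ and one checks that $\mu_r$ extends the path out of $r-1$ in conclusion~(2) and the path into $r$ in conclusion~(3) by exactly one outer-circle vertex originally connected to the middle circle and one vertex of $A'$, respectively $A$, inserted in the order dictated by the cyclic positions carried by the induction). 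In every case the verification is purely local at $r$: one lists the finitely many edges of $Q_{r-1}$ incident to $r$ supplied by the inductive hypothesis, applies rules~(1)--(3), and matches the result against the claimed edges.

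I expect the main obstacle to be the bookkeeping of the alternating paths and of the $2$-cycles. Rule~(1) will manufacture spurious edges — in particular a $2$-cycle at $r$ coming from the edge that $\mu_{r-1}$ had just introduced, as well as $2$-cycles between $r$ and certain outer vertices — and one must verify that rule~(3) removes exactly these and nothing else, so that the alternating chain grows by precisely one ``rung'' at each step; this is what forces the cyclic order of the outer vertices into the inductive statement rather than letting one treat the middle--outer edges separately. Finally, the restriction $r<n-2$ is where the triangle of Lemma~\ref{lemma:edges-middle-middle} degenerates and the pattern changes, so it costs nothing here: the induction only ever applies $\mu_r$ in that range, and the tail of the mutation sequence $\tau$ is handled by the separate endgame analysis used to prove Proposition~\ref{prop:mutation-seq}.
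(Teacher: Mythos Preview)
Your proposal is correct and takes essentially the same approach as the paper: the paper's entire proof is the single sentence ``Lemmas~\ref{lemma:edges-middle-middle} through~\ref{lemma:edges-outer-inner} may be proven by induction,'' and your write-up is a detailed (and accurate) elaboration of exactly that simultaneous induction on~$r$.
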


Let $B$ be the set of vertices in the middle circle that have edges to the inner circle.  Let $B'=\{\text{vertices }i\ |\ i+1\in B\}$.

\begin{lemma}\label{lemma:edges-middle-inner-1inB}
Suppose $1\in B$.  Then for $r<n-2$ the edges between the middle and inner circle in $Q_r$ are:
\begin{enumerate}[(1)]
\item (the largest vertex on the inner circle originally connected to a vertex on the middle circle) $\to1\to 1^+$,
\item $r\to$ (the largest vertex on the inner circle that originally had an edge to a vertex between 1 and $r$) $\to$ (the largest vertex strictly between 1 and $r$ in $B'$) $\to$ (the next largest vertex along the inner circle originally connected to a vertex on the middle circle) $\to$ (the next largest vertex strictly between 1 and $r$ in $B'$) $\to...\to$ (the smallest vertex strictly between 1 and $r$ in $B'$) $\to$ (the next largest vertex along the inner circle originally connected to a vertex on the middle circle) $\to 1$,
\item $n\to$ (the largest vertex on the inner circle that originally had an edge to a vertex between 1 and $n-1$) $\to$ (the largest vertex in $B$ less than $n$) $\to$ (the next largest vertex along the inner circle originally connected to a vertex on the middle circle) $\to$ (the next largest vertex in $B$) $\to...\to$ (the largest vertex of $B$ greater than $r+1$) $\to$ (the next largest vertex along the inner circle originally connected to a vertex on the middle circle) $\to r+1$.
\end{enumerate}
\end{lemma}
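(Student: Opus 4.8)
The plan is to prove Lemma~\ref{lemma:edges-middle-inner-1inB} together with its companions — Lemma~\ref{lemma:edges-middle-middle}, the middle--outer lemmas \ref{lemma:edges-middle-outer-1inA} and \ref{lemma:edges-middle-outer-1notA}, and the lemma \ref{lemma:edges-outer-inner} on outer--inner edges — by a single simultaneous induction on $r$. A simultaneous induction is forced here: applying $\mu_r$ to $Q_{r-1}$ alters the middle--inner edges in a way that depends on \emph{every} arrow incident to the vertex $r$ in $Q_{r-1}$, and those arrows reach all three circles, so the families of lemmas cannot be disentangled.

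For the base case $r=1$ with $1\in B$, apply $\mu_1$ directly to $Q$. The arrows at vertex $1$ in $Q$ are read off from the spider-web structure and the labeling convention: two arrows along the middle circle, a consecutive family of inter-circle arrows from $1$ to the inner circle together with one incoming inner-circle arrow (whose source is $1^+$, by definition of the labels), and the analogous family for the outer circle. Reversing the arrows at $1$, inserting a shortcut arrow $u\to v$ for each path $u\to 1\to v$, and deleting the resulting $2$-cycles yields exactly the three chains in items (1)--(3) with $r=1$, the degenerate subcases (empty stretches of $B'$, and so on) collapsing the chains in the obvious way.

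For the inductive step, assume the claims hold for $Q_{r-1}$ with $r-1<n-2$. By the inductive form of Lemma~\ref{lemma:edges-middle-middle} the middle-circle arrows at $r$ in $Q_{r-1}$ are exactly $n\to r$, $r\to r-1$, and $r\to r+1$; the inductive forms of the middle--outer and middle--inner lemmas give the finite list of outer- and inner-circle vertices joined to $r$ and the direction of each such arrow. Apply $\mu_r$: reverse all arrows at $r$, add $u\to v$ for each path $u\to r\to v$, and cancel $2$-cycles. Tracking which new arrows survive — in particular those produced from an incoming inner-circle arrow $u\to r$ paired with $r\to r-1$, and from $n\to r$ paired with the outgoing arrows at $r$ — shows that the chain in item (2) grows by splicing in the vertex formerly indexed $r-1$ and the appropriate member of $B'$, while the apex of the middle-circle triangle advances from $r-1$ to $r$; item (3) is handled the same way. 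This reproduces the stated edge set for $Q_r$.

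The main obstacle is precisely this inductive bookkeeping: one must check that step (2) of the mutation creates no arrows beyond those in the asserted chains and that step (3) cancels exactly the $2$-cycles it produces, and one must separate out the boundary cases — whether $r\in B$ or not, whether the stretch of $B'$ strictly between $1$ and $r$ is empty, and the special role of the vertex $n$ at the apex of the triangle. These verifications are routine but intricate, and they rely on the spider-web axioms (every face oriented, exactly two inter-circle edges per face, at least three sides) to exclude the degenerate local configurations that would otherwise spoil the induction.
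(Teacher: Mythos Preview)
Your proposal is correct and takes essentially the same approach as the paper: the paper simply states that Lemmas~\ref{lemma:edges-middle-middle} through~\ref{lemma:edges-outer-inner} ``may be proven by induction'' and gives no further details, so your simultaneous induction on $r$ is exactly what is intended, just fleshed out.
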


\begin{lemma}\label{lemma:edges-middle-inner-1notB}
Suppose $1\not\in B$.  For $r<n-2$, if $r$ is smaller that the smallest vertex of $B$, then the edges between the middle and inner circle in $Q_r$ are:
\begin{enumerate}[(1)]
\item the same edges as in the original quiver, that is, (the largest vertex on the inner circle originally connected to a vertex on the middle circle) $\to$ (the largest vertex in $B$) $\to$ (the next largest vertex along the inner circle originally connected to a vertex on the middle circle) $\to$ (the next largest vertex in $B$) $\to$ (the next largest vertex along the inner circle originally connected to a vertex on the middle circle) $\to...\to$ (the smallest vertex in $B$) $\to$ (the largest vertex along the inner circle originally connected to a vertex on the middle circle).
\end{enumerate}
Otherwise the edges between the middle and inner circle in $Q_r$ are:
\begin{enumerate}[(1)]
\item (the largest vertex on the inner circle originally connected to a vertex on the middle circle) $\to$ (the smallest vertex in $B$ along the middle circle),
\item $r\to$ (the largest vertex on the inner circle that originally had an edge to a vertex between 1 and $r$) $\to$ (the largest vertex strictly between 1 and $r$ in $B'$) $\to$ (the next largest vertex along the inner circle originally connected to a vertex on the middle circle) $\to$ (the next largest vertex strictly between 1 and $r$ in $B'$) $\to...\to$ (the smallest vertex marked in $B'$),
\item $n\to$ (the largest vertex on the inner circle that originally had an edge to a vertex between 1 and $n-1$) $\to$ (the largest vertex in $B$ less than $n$) $\to$ (the next largest vertex along the inner circle originally connected to a vertex on the middle circle) $\to$ (the next largest vertex in $B$) $\to...\to$ (the largest vertex of $B$ greater than $r+1$) $\to$ (the next largest vertex along the inner circle originally connected to a vertex on the middle circle) $\to r+1$.
\end{enumerate}
\end{lemma}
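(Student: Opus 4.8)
The plan is to prove Lemma~\ref{lemma:edges-middle-inner-1notB} as one case of a single simultaneous induction on $r$ that also establishes Lemmas~\ref{lemma:edges-middle-middle}, \ref{lemma:edges-middle-outer-1inA}, \ref{lemma:edges-middle-outer-1notA}, \ref{lemma:edges-middle-inner-1inB}, and the remaining structural lemmas describing the outer--outer, inner--inner, and outer--inner edges of $Q_r$. These statements cannot be decoupled, because the single mutation $\mu_r$ applied to $Q_{r-1}$ disturbs every type of edge incident to vertex $r$ at once: all of the arrows $r$ has to the middle, outer, and inner circles feed into the ``add $i\to j$ for every $i\to r\to j$'' rule of Definition~\ref{defn:quiver-mutation}, so the inductive description of $Q_r$ near all three circles must be carried along together. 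A useful preliminary remark is that the inner and outer circles play completely symmetric roles relative to the middle circle in Definition~\ref{defn:spider-web-quiver}; the involution swapping the two non-middle circles and simultaneously interchanging $A\leftrightarrow B$, $A'\leftrightarrow B'$, and the superscripts $\,{}^-\leftrightarrow{}^+$ is an automorphism of the whole configuration that commutes with each $\mu_k$. Under this involution Lemma~\ref{lemma:edges-middle-inner-1notB} is exactly the image of Lemma~\ref{lemma:edges-middle-outer-1notA}, so it is enough to run the induction once; I phrase it on the inner side only to fix notation.

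For the base case take $r=0$, i.e.\ the unmutated quiver $Q_0=Q$: then we are in the ``$r$ smaller than the smallest vertex of $B$'' alternative, and the asserted edge set is literally the set of middle--inner edges in Definition~\ref{defn:spider-web-quiver}, since the faces between the middle and inner circles are oriented with two cross-edges each, forcing the stated interleaving of the vertices of $B$ with $1^+,2^+,\dots$ along the inner circle. For the first mutation, the hypothesis $1\notin B$ says $\mu_1$ is performed at a vertex with no edge to the inner circle; by Definition~\ref{defn:quiver-mutation} it then creates, reverses, and deletes no inner edge, so $Q_1$ still carries the original middle--inner edges, matching the $r=1$ instance.

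The inductive step is the heart of the argument. Assuming the full structural description for $Q_{r-1}$, I would first read off from the $(r-1)$-instances of Lemma~\ref{lemma:edges-middle-middle} and of the middle--outer and middle--inner lemmas the complete list of arrows at vertex $r$ in $Q_{r-1}$: on the middle side $n\to r$ together with $r\to r-1$ and $r\to r+1$, from the triangle $(r-1)\to n\to r\to(r-1)$ and the path $r\to r+1\to\cdots\to n$; on the inner side $r$ is the head of the last arrow of the item-(3) path whenever $B$ still has vertices above $r$, and $r$ becomes the tail of an inner arrow precisely once $r$ enters the range governed by $B'$; similarly on the outer side. Then I would apply the three operations of Definition~\ref{defn:quiver-mutation}: the shortcut rule produces new arrows from each in-neighbor of $r$ to each out-neighbor of $r$; reversal flips all arrows at $r$; and finally one cancels the 2-cycles so created, in particular the one on $\{r-1,r\}$ and, at the transition value where $r$ first reaches $\min B$, the 2-cycle that converts the ``original'' direct edge into the ``new'' zig-zag configuration. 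Matching the surviving arrows against the $r$-instance of Lemma~\ref{lemma:edges-middle-inner-1notB}, and of the companion lemmas, closes the induction.

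The main obstacle is exactly this final matching: it is a lengthy but mechanical check that, after the 2-cycle cancellations, the freshly created shortcut arrows reproduce precisely the zig-zag path of item (2), the unchanged terminal segment of item (3), and the new item-(1) arrow from the top inner vertex into $\min B$. Two case boundaries need separate attention: whether $r$ has yet reached $\min B$ (selecting the ``same as the original quiver'' alternative or the three-item alternative), and whether $r$ itself lies in $B$ and/or in $A$ --- the last point because if $r\in A\cap B$ then $\mu_r$ also creates arrows directly between the inner and outer circles, which is the reason the outer--inner lemma has to sit inside the same induction. None of these sub-cases needs anything beyond careful application of the mutation rule, so once the neighborhood of $r$ in $Q_{r-1}$ is pinned down correctly the verification is routine.
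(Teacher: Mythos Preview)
Your proposal is correct and matches the paper's approach exactly: the paper states only that Lemmas~\ref{lemma:edges-middle-middle} through~\ref{lemma:edges-outer-inner} ``may be proven by induction,'' with no further detail. You have supplied precisely that induction---simultaneous over all the structural lemmas, using the inner/outer symmetry to reduce~\ref{lemma:edges-middle-inner-1notB} to~\ref{lemma:edges-middle-outer-1notA}, and carrying out the mutation at $r$ by reading off the neighborhood of $r$ in $Q_{r-1}$ from the $(r-1)$-instances---which is the intended argument.
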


\begin{lemma}\label{lemma:edges-outer-outer}
For $r<n-2$, the edges of $Q_r$ between vertices in the outer circle are:
\begin{enumerate}[(1)]
\item The directed circle $1^-\to2^- \to...\to n^- \to 1^-$,
\item The edge $1^- \to$ (the largest vertex on the outer circle originally connected to a vertex on the middle circle), if $r\geq$ (the smallest vertex in $A$).
\end{enumerate}
\end{lemma}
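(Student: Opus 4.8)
The plan is to prove this lemma as one component of the simultaneous induction on $r$ that establishes Lemmas~\ref{lemma:edges-middle-middle} through~\ref{lemma:edges-outer-inner}; throughout the inductive step for $Q_r=\mu_r(Q_{r-1})$ I may therefore assume that all of those lemmas hold for $Q_{r-1}$. The base case $r=0$ is immediate: $Q_0=Q$ has, between outer-circle vertices, exactly the directed cycle $1^-\to 2^-\to\cdots\to n^-\to 1^-$ by the construction of a spider web quiver, and since every vertex of $A$ has index at least $1>0$, the conditional second edge is correctly absent.

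For the inductive step, the key point is that vertex $r$ lies on the middle circle, so by Definition~\ref{defn:quiver-mutation} the only way $\mu_r$ can change the set of edges joining two outer-circle vertices is by creating a new edge $u\to v$ for every pair $u\to r$, $r\to v$ with $u$ and $v$ both on the outer circle (reversal at $r$ affects no outer–outer edge, and cancellation of any resulting $2$-cycle must be checked). So I would first read off, from the stage-$(r-1)$ versions of Lemmas~\ref{lemma:edges-middle-middle}, \ref{lemma:edges-middle-outer-1inA} and~\ref{lemma:edges-middle-outer-1notA}, the incoming and outgoing outer-circle neighbours of $r$ in $Q_{r-1}$, splitting into the cases $1\in A$ and $1\notin A$ and, within each, according to whether $r-1$ is still below or has reached the smallest vertex of $A$. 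In each case one finds the following: if $r$ has not yet reached the smallest vertex of $A$, then $r$ has no outgoing edge to the outer circle, so $\mu_r$ creates no new outer–outer edge; exactly at the stage $r=\min A$ the configuration $1^-\to r\to\alpha$ appears (coming from $1^-\to i$ in the original quiver together with the final segment $\cdots\to\min A\to\alpha$ of the outer-to-middle structure in Lemma~\ref{lemma:edges-middle-outer-1notA}, or from $1^-\to 1$ and $1\to\alpha$ when $1\in A$), and $\mu_r$ produces precisely the edge $1^-\to\alpha$; and for $r>\min A$ the vertex $r$ again has only middle- and inner-circle out-neighbours, so $\mu_r$ creates no further outer–outer edge. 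It then remains to check that $1^-\to\alpha$ never lies in a $2$-cycle created by any $\mu_r$, so that once it appears it persists, and that the directed cycle $1^-\to\cdots\to n^-\to 1^-$ is untouched by every mutation in the sequence.

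The routine part is verifying the arithmetic of the mutation rule in each sub-case. The genuine obstacle will be the combinatorial bookkeeping required to keep the simultaneous induction consistent: correctly identifying at each stage which outer vertex $1^-$ points to, confirming that the outer vertex immediately preceding $\min A$ in the outer-to-middle path of Lemma~\ref{lemma:edges-middle-outer-1notA} (respectively the vertex $u$ in the edge $u\to 1$ of Lemma~\ref{lemma:edges-middle-outer-1inA}) is indeed $1^-$, and handling the small or degenerate configurations (for instance $|A|=1$, or $\alpha$ coinciding with a vertex occurring elsewhere in the relevant paths) so that no spurious $2$-cycle or extra edge is introduced. These verifications have to be done in lockstep with the analogous arguments for the middle–middle, middle–outer, middle–inner and outer–inner edges, which is exactly why the lemmas of this subsection are proved together by a single induction.
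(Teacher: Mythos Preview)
Your proposal is correct and follows exactly the approach the paper uses: the paper's entire proof of Lemmas~\ref{lemma:edges-middle-middle}--\ref{lemma:edges-outer-inner} is the single sentence ``may be proven by induction,'' and your outline of the simultaneous induction, together with the case analysis for how $\mu_r$ affects outer--outer edges, is precisely the argument being invoked. One small remark: the companion lemmas (e.g.\ Lemma~\ref{lemma:edges-middle-outer-1inA}) are implicitly stated for $r\geq 1$ rather than $r=0$, so in carrying out the simultaneous induction you would take $r=1$ as the base case (verifying directly from $Q$), though for the present lemma in isolation your $r=0$ base case is also valid.
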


\begin{lemma}\label{lemma:edges-inner-inner}
For $r<n-2$, the edges of $Q_r$ between vertices in the inner circle are:
\begin{enumerate}[(1)]
\item The directed circle $1^+ \to 2^+ \to...\to n^+ \to 1^+$.
\item The edge $1^+ \to$ (the largest vertex on the inner circle originally connected to a vertex on the middle circle), if $r\geq$ (the smallest vertex in $B$).
\end{enumerate}
\end{lemma}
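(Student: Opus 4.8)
The plan is to establish Lemma~\ref{lemma:edges-inner-inner} as one component of the simultaneous induction on $r$ that also proves Lemmas~\ref{lemma:edges-middle-middle} through~\ref{lemma:edges-inner-inner}. For the base case $r=1$ I would compute $Q_1=\mu_1(Q)$ directly from the definition of a spider web quiver and read off the edges between inner-circle vertices; since the middle-circle vertex $1$ is incident to the inner circle only when $1\in B$, this immediately produces the two announced alternatives. For the inductive step I assume all of Lemmas~\ref{lemma:edges-middle-middle}--\ref{lemma:edges-inner-inner} for $Q_{r-1}$ and analyze the single mutation $Q_r=\mu_r(Q_{r-1})$.

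The key structural point is that $r$ is a vertex of the \emph{middle} circle, so no edge between two inner-circle vertices is incident to $r$. By the definition of quiver mutation, mutating at $r$ can alter the inner--inner edges only by adjoining a new edge $i^+\to j^+$ for each directed $2$-path $i^+\to r\to j^+$ with both endpoints on the inner circle, followed by deletion of any resulting $2$-cycles; in particular no inner--inner edge is ever destroyed, so the oriented circle $1^+\to 2^+\to\cdots\to n^+\to 1^+$ automatically survives. Thus the whole argument reduces to identifying the edges of $Q_{r-1}$ incident to $r$ that go to the inner circle, which is exactly the content of Lemmas~\ref{lemma:edges-middle-inner-1inB} and~\ref{lemma:edges-middle-inner-1notB} (with their index parameter specialized to $r-1$), supplemented by the triangle-and-path description of Lemma~\ref{lemma:edges-middle-middle} that locates $r$ among the middle-circle vertices.

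Carrying this out, I would split into the cases $1\in B$ and $1\notin B$, and in the latter distinguish whether $r-1$ lies below, at, or above $\min B$. In the regime where $r$ has not yet reached $\min B$, the inductive description of the middle--inner edges shows that $r$ admits no $2$-path through it joining two inner vertices (its incident inner edges, if any, are all oriented the same way relative to $r$), so the inner--inner edge set is carried over verbatim. In the regime $r\ge\min B$, the $2$-paths through $r$ do produce new edges, but using the fan structure of the original edges at each middle vertex together with the counterclockwise orientation of the inner circle, every such edge either already appears along the oriented circle or is cancelled in a $2$-cycle, leaving precisely the single chord $1^+\to(\text{largest inner vertex originally joined to the middle circle})$; one checks that this chord first appears exactly at $r=\min B$, matching the hypothesis in item (2).

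The main obstacle I anticipate is the bookkeeping in the inductive step: one must pin down, in each of the several cases, the exact head and tail of every edge incident to $r$ (to the inner circle, to the outer circle, and to its middle-circle neighbours through the triangle of Lemma~\ref{lemma:edges-middle-middle}), and then verify that all the $2$-cycles created by $\mu_r$ cancel so that the inner--inner edges survive unchanged apart from the advertised chord. The high-level mechanism is short, but the work lies entirely in checking these cancellations uniformly across $1\in B$ versus $1\notin B$ and $r$ below/at/above $\min B$, in close parallel with the outer-circle analysis of Lemma~\ref{lemma:edges-outer-outer}.
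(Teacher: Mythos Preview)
Your proposal is correct and follows exactly the approach the paper indicates: the paper simply states that Lemmas~\ref{lemma:edges-middle-middle} through~\ref{lemma:edges-outer-inner} ``may be proven by induction,'' and your plan of simultaneous induction on $r$, using the descriptions of $Q_{r-1}$ from the companion lemmas to analyze $\mu_r$, is precisely that. One small caution: the sentence ``in particular no inner--inner edge is ever destroyed'' is premature as written, since a new edge from a $2$-path through $r$ could in principle form a $2$-cycle with an existing circle edge; you correctly flag this in your final paragraph as part of the bookkeeping to verify, so just be sure not to assert it before the cancellation check is done.
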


\begin{lemma}\label{lemma:edges-outer-inner}
For $r<n-2$, the edges of $Q_r$ between vertices between the outer and inner circle are:
\begin{enumerate}[(1)]
\item $1^-\to$ (the largest vertex on the inner circle originally connected to a vertex on the middle circle) and $1^+\to$ (the largest vertex on the outer circle originally connected to a vertex on the middle circle), if the smallest vertex in $B$ is the smallest vertex in $A$,
\item $1^-\to$ (the largest vertex on the inner circle originally connected to a vertex on the middle circle), if the smallest vertex in $A$ is less than the smallest vertex in $B$,
\item $1^+\to$ (the largest vertex on the outer circle originally connected to a vertex on the middle circle), if the smallest vertex in $B$ is less than the smallest vertex in $A$.
\end{enumerate}
\end{lemma}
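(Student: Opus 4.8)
The plan is to prove Lemmas~\ref{lemma:edges-middle-middle} through~\ref{lemma:edges-outer-inner} simultaneously by induction on $r$, since the description of any one class of edges in $Q_r$ after applying $\mu_r$ requires knowing \emph{all} the edges incident to the vertex $r$ in $Q_{r-1}$, hence the conclusions of the other lemmas for $Q_{r-1}$. In the base case we may start from $Q_0 = Q$, which by the construction of a spider web quiver has no edges between the outer and inner circles; one then checks directly what $\mu_1$ does. The vertex $1$ is incident only to its two neighbors on the middle circle and, when $1\in A$ or $1\in B$, to $1^-$ or $1^+$; mutating there creates exactly the edges advertised in Lemma~\ref{lemma:edges-outer-inner} (either none, or the one edge $1^-\to(\text{largest inner vertex connected to the middle})$ when $\min A<\min B$, etc.) and nothing else.

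For the inductive step, assume $Q_{r-1}$ has the structure described by all the lemmas and apply $\mu_r$. From Lemmas~\ref{lemma:edges-middle-middle} and~\ref{lemma:edges-middle-outer-1inA}--\ref{lemma:edges-middle-inner-1notB} one reads off the complete list of in- and out-neighbors of the vertex $r$ in $Q_{r-1}$: two lie on the middle circle, and the rest lie on the outer or inner circle according to whether $r$ is in $A'$, in $B'$, or equals one of the relevant ``largest vertex originally connected'' indices. An edge between an outer vertex and an inner vertex can be created or destroyed by $\mu_r$ only if $r$ has a neighbor of \emph{each} kind, so only finitely many local configurations need to be examined. In each such configuration the mutation rule (add $u\to v$ for every $u\to r\to v$, reverse the edges at $r$, delete $2$-cycles) is applied to the known local picture and compared against the claimed description; one verifies that the edge out of $1^-$ (resp.\ $1^+$) is born exactly when the corresponding threshold on $r$ is first crossed, that it is unchanged at subsequent steps, and that $\mu_r$ never produces a spurious second outer--inner edge or a $2$-cycle among outer and inner vertices.

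The main obstacle will be the bookkeeping surrounding the three cases distinguished by the relative order of $\min A$ and $\min B$, together with the fact that the step $r$ at which the edge out of $1^-$ (resp.\ $1^+$) appears is the same step at which the edge in item~(2) of Lemma~\ref{lemma:edges-outer-outer} (resp.\ Lemma~\ref{lemma:edges-inner-inner}) appears, so the three inductions must be synchronized carefully. Tracking the sets $A$, $A'$, $B$, $B'$, the quantities ``largest vertex originally connected to $\{1,\dots,r\}$'', and the current images of $1^-$ and $1^+$ across the simultaneous induction, and checking that the three listed cases are exhaustive and mutually exclusive, is where essentially all the effort lies; once the local picture at $r$ is pinned down, the mutation computations themselves are routine.
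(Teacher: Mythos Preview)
Your proposal is correct and follows essentially the same approach as the paper, which simply states that Lemmas~\ref{lemma:edges-middle-middle} through~\ref{lemma:edges-outer-inner} ``may be proven by induction.'' You have fleshed out what that simultaneous induction on $r$ actually entails---tracking all neighbors of the vertex $r$ in $Q_{r-1}$ via the companion lemmas and then applying the mutation rule---which is exactly the intended argument; your remarks about the bookkeeping across the cases $\min A \lessgtr \min B$ and the synchronization with Lemmas~\ref{lemma:edges-outer-outer} and~\ref{lemma:edges-inner-inner} accurately identify where the work lies.
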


\begin{lemma}\label{lemma:Qn-2}
If $s_{n-1,n}$ is the transposes vertices $n-1$ and $n$, then $s_{n-1,n}\mu_n\mu_{n-1}(Q_{n-2})=Q_{n-2}$.
\end{lemma}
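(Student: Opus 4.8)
The plan is to compute $Q_{n-2}$ explicitly, then apply the two mutations $\mu_{n-1}$ and $\mu_n$ together with the transposition $s_{n-1,n}$, and check that every edge is restored. Since Lemmas~\ref{lemma:edges-middle-middle}--\ref{lemma:edges-outer-inner} describe $Q_r$ only for $r<n-2$, the first step is to obtain $Q_{n-2}=\mu_{n-2}(Q_{n-3})$ by performing one more mutation on the quiver described by those lemmas with $r=n-3$. In $Q_{n-3}$ the only middle--middle edges touching $\{n-2,n-1,n\}$ are the directed path $n-2\to n-1\to n$ together with the oriented triangle $n-3\to n\to n-2\to n-3$ (Lemma~\ref{lemma:edges-middle-middle}), and the edges to the outer and inner circles at these vertices are the ones recorded in items (3) of Lemmas~\ref{lemma:edges-middle-outer-1inA}--\ref{lemma:edges-middle-inner-1notB}. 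Mutating at $n-2$ then yields a completely explicit picture of $Q_{n-2}$ in which the neighbourhoods of $n-1$ and $n$ each consist of only a bounded number of edges: to $n-2$, to each other, and to the relevant outermost and innermost vertices of the outer and inner circles. I would record this picture as an auxiliary claim, proved by the same inductive case bookkeeping used for the earlier lemmas.

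With $Q_{n-2}$ in hand, the second step is the direct computation. Mutate first at $n-1$: by Definition~\ref{defn:quiver-mutation} this reverses the few edges at $n-1$, adds the 2-paths through $n-1$ as new edges, and then deletes 2-cycles; then mutate at $n$ and repeat. Because all edges incident to $n-1$ and $n$ in $Q_{n-2}$ are known explicitly, this is a finite check: the new edges created by the first mutation are exactly those reversed or cancelled by the second, and conversely, so that $\mu_n\mu_{n-1}(Q_{n-2})$ agrees with $Q_{n-2}$ except that the labels $n-1$ and $n$ have been interchanged — the middle--middle triangle and the two directed paths reappear, and the outer and inner attachments at $n-1$ and $n$ are swapped. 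Applying $s_{n-1,n}$ then restores the labelling and produces $Q_{n-2}$ exactly.

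The third step is to organize the verification by edge type: middle--middle edges (controlled by the triangle/path structure of Lemma~\ref{lemma:edges-middle-middle}), middle--outer edges, middle--inner edges, and the outer--outer, inner--inner, and outer--inner edges of Lemmas~\ref{lemma:edges-outer-outer}--\ref{lemma:edges-outer-inner}, which are untouched by mutations at $n-1$ and $n$ and are therefore automatically fine. Two symmetries cut the work roughly in half: the inner-circle bookkeeping mirrors the outer-circle bookkeeping verbatim, and within each there is a sub-case split according to whether $n-1$ and $n$ themselves lie in $A$ (resp.\ $B$), paralleling the $1\in A$ versus $1\notin A$ dichotomy of the earlier lemmas.

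The main obstacle is the 2-cycle bookkeeping in the second step: one must check that every edge created by a 2-path through $n-1$ or $n$ either survives with the correct orientation or is annihilated against an already-present edge of opposite orientation, and this is exactly where the hypothesis that $Q$ is a spider web quiver — so that every face is oriented and has at least three sides, forcing the local configurations at $n-1$ and $n$ to be of the restricted type above — enters. Once the lemma is proved, Proposition~\ref{prop:mutation-seq} follows immediately: $s_{n-1,n}\mu_n\mu_{n-1}(Q_{n-2})=Q_{n-2}=\mu_{n-2}\cdots\mu_1(Q)$ gives $\tau(Q)=\mu_1\cdots\mu_{n-2}\mu_{n-2}\cdots\mu_1(Q)$, which collapses to $Q$ by the involutivity of each $\mu_k$, applied from the inside out.
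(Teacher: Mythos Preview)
Your proposal and the paper's proof are the same approach: a direct case check. The paper's entire proof is the single sentence ``We can check all the cases,'' so your outline is in fact considerably more detailed than what the paper provides. One small caveat: your claim that the outer--outer, inner--inner, and outer--inner edges are ``untouched by mutations at $n-1$ and $n$'' is not automatic --- if $n-1$ or $n$ has both an incoming and an outgoing edge to the outer (or inner) circle in $Q_{n-2}$, the mutation will produce a new outer--outer (or inner--inner) edge via the $2$-path rule, and you must verify it cancels. This is part of the ``$n-1,n\in A$'' sub-case you already flagged, so it folds into your case analysis rather than requiring a new idea.
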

\begin{proof}
We can check all the cases.
\end{proof}

The proof of Proposition~\ref{prop:mutation-seq} follows easily.

\begin{proof}
\begin{align*}
\mu_1\mu_2...\mu_{n-2}s_{n-1,n}\mu_n\mu_{n-1}...\mu_1(Q)&=\mu_1\mu_2...\mu_{n-2}s_{n-1,n}\mu_n\mu_{n-1}(Q_{n-2})\\
&=\mu_1\mu_2...\mu_{n-2}(Q_{n-2})\text{ by Lemma~\ref{lemma:Qn-2}}\\
&=Q\text{ as }\mu_1\mu_2...\mu_{n-2}\text{ is }(\mu_{n-2}\mu_{n-3}...\mu_1)^{-1}
\end{align*}
\end{proof}

\subsection{Proof of Theorem~\ref{thm:x-vars}}\label{sec:proof-x-vars}

Recall the definitions of $A$ and $B$ from~\ref{sec:proof-mutation-seq}.

Let $x_{i,A}:=\begin{cases} x_\alpha & \text{if }i\text{ is the smallest element of }A, \\ 1 & \text{otherwise}.\end{cases}$

Let $x_{i,B}:=\begin{cases} x_\beta & \text{if }i\text{ is the smallest element of }B, \\ 1 & \text{otherwise}.\end{cases}$

We will define $x_0:=x_n$ for ease of notation.

\begin{prop}\label{prop:x-vars-half}
For a quiver $Q$ as described at the beginning of the section, $$\mu_{n-2}...\mu_1(x_i)=\frac{\sum_{j=0}^i x_{[j]^-}x_{[j]^+}\left(\prod_{k=0}^{j-1} x_k\right)\left(\prod_{k=j+1}^i x_{k+1}x_{k,A}x_{k,B}\right)}{\prod_{k=1}^i x_k},$$ for $1\leq i\leq n-2$.
\end{prop}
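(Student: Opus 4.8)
The plan is to prove the formula by induction on $i$, using the explicit description of the intermediate quivers $Q_r=\mu_r\cdots\mu_1(Q)$ supplied by Lemmas~\ref{lemma:edges-middle-middle}--\ref{lemma:edges-outer-inner}. First I would observe that the only mutation in the sequence $\mu_{n-2}\cdots\mu_1$ that alters the cluster variable at vertex $i$ is $\mu_i$ itself, so that $\mu_{n-2}\cdots\mu_1(x_i)=\mu_i\mu_{i-1}\cdots\mu_1(x_i)$; moreover, for every $k\le i-1$ the value at vertex $k$ after $\mu_{i-1}\cdots\mu_1$ is already $\mu_k\cdots\mu_1(x_k)$, which the inductive hypothesis (applied at index $k$, together with the same reduction) identifies with the right-hand side of Proposition~\ref{prop:x-vars-half} at index $k$. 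Thus computing $\mu_{n-2}\cdots\mu_1(x_i)$ reduces to a single seed mutation at vertex $i$ of the seed $(Q_{i-1},\mathbf x^{(i-1)})$, whose quiver is known explicitly and whose cluster variables are either the inductively known values (at vertices $1,\dots,i-1$) or the original variables $x_j,\ x_{j^-},\ x_{j^+}$ (everywhere else).

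Next I would record, from Lemmas~\ref{lemma:edges-middle-middle}--\ref{lemma:edges-outer-inner} with $r=i-1$, exactly which arrows of $Q_{i-1}$ are incident to vertex $i=(i-1)+1$. Among middle-circle vertices these are the outgoing arrows $i\to i+1$ and $i\to i-1$ (coming from the path and the triangle of Lemma~\ref{lemma:edges-middle-middle}) and the single incoming arrow $n\to i$; the arrows joining $i$ to the outer and inner circles are read off from Lemmas~\ref{lemma:edges-middle-outer-1inA}--\ref{lemma:edges-middle-inner-1notB}, the relevant feature being that $i$ acquires an outgoing arrow to $\alpha$ (resp.\ $\beta$) precisely when $i$ is the smallest element of $A$ (resp.\ $B$), while the incoming arrows from the outer (resp.\ inner) circle carry exactly the variable $x_{[i]^-}$ (resp.\ $x_{[i]^+}$). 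Applying the seed-mutation rule of Definition~\ref{defn:seed-mutation} at vertex $i$ then yields
\[
\mu_{n-2}\cdots\mu_1(x_i)=\frac{x_{i+1}\,x_{i,A}\,x_{i,B}\cdot\mu_{n-2}\cdots\mu_1(x_{i-1})+x_n\,x_{[i]^-}\,x_{[i]^+}}{x_i},
\]
where the first summand is the product of the (current) variables over the arrows leaving $i$ and the second is the product over the arrows entering $i$.

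To close the induction I would check that the claimed closed form satisfies this same one-step recursion: peeling off the $j=i$ term of the sum in Proposition~\ref{prop:x-vars-half} leaves $x_n\,x_{[i]^-}\,x_{[i]^+}\prod_{k=1}^{i-1}x_k$ (using $x_0:=x_n$), while the remaining terms, summed over $0\le j\le i-1$, are obtained from the corresponding sum for index $i-1$ by multiplying each by the single extra factor $x_{i+1}x_{i,A}x_{i,B}$; dividing everything by $\prod_{k=1}^{i}x_k$ reproduces the displayed recursion. The base case $i=1$ is a direct computation of $\mu_1(x_1)$ against the original quiver $Q$, in which the degenerate "triangle" of Lemma~\ref{lemma:edges-middle-middle} collapses to the single middle-circle arrow $n\to1$.

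I expect the main obstacle to be the bookkeeping in the second paragraph: the arrow set at vertex $i$ in $Q_{i-1}$ genuinely depends on several dichotomies (whether $1\in A$, whether $1\in B$, whether $r=i-1$ lies below or above $\min A$ and $\min B$, and whether $i$ lies in $A$, $A'$, $B$, or $B'$), each governed by a different one of Lemmas~\ref{lemma:edges-middle-outer-1inA}--\ref{lemma:edges-middle-inner-1notB}, and in each of these cases one must verify that the product of variables over the arrows out of $i$ collapses to exactly $x_{i+1}x_{i,A}x_{i,B}\,\mu_{n-2}\cdots\mu_1(x_{i-1})$ and the product over the arrows into $i$ to exactly $x_n\,x_{[i]^-}\,x_{[i]^+}$ — including the wraparound subtleties hidden in the definitions of $x_{[i]^-}$, $x_{[i]^+}$ and the convention $x_0=x_n$. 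Once the arrow count has been pinned down case by case, the algebra is just the routine telescoping of the third paragraph.
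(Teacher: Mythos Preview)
Your proposal is correct and follows essentially the same approach as the paper: reduce to a single mutation at $i$ via $\mu_{n-2}\cdots\mu_1(x_i)=\mu_i\cdots\mu_1(x_i)$, read off the arrows at $i$ in $Q_{i-1}$ from the structural lemmas to obtain the recursion
\[
\mu_{n-2}\cdots\mu_1(x_i)=\frac{x_n x_{[i]^-}x_{[i]^+}+\mu_{n-2}\cdots\mu_1(x_{i-1})\,x_{i+1}x_{i,A}x_{i,B}}{x_i},
\]
and then conclude by induction. Your write-up is simply more explicit about the bookkeeping and the telescoping verification than the paper's terse proof.
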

\begin{proof} Notice that $\mu_{n-2}...\mu_1(x_i)=\mu_i...\mu_1(x_i)$.  Using this and Lemmas~\ref{lemma:edges-middle-middle} through~\ref{lemma:edges-middle-inner-1notB}, we see $$\mu_{n-2}...\mu_1(x_1)=\frac{x_nx_{[1]^-}x_{[1]^+}+x_2x_{1,A}x_{1,B}}{x_1},$$ $$\mu_{n-2}...\mu_1(x_i)=\frac{x_nx_{[i]^-}x_{[i]^+}+\mu_{n-2}...\mu_1(x_{i-1})x_{i+1}x_{i,A}x_{i,B}}{x_i}.$$
Then the result follows by induction.
\end{proof}

We now have the proof of Theorem~\ref{thm:x-vars}.

\begin{proof}
First consider $i=n,n-1$.  In this case, $\mu_1\mu_2...\mu_{n-2}s_{n-1,n}\mu_n\mu_{n-1}...\mu_1(x_i)=s_{n-1,n}\mu_n\mu_{n-1}...\mu_1(x_i)$.  We can compute this using Lemmas~\ref{lemma:edges-middle-middle} through~\ref{lemma:edges-middle-inner-1notB} and Proposition~\ref{prop:x-vars-half}.  We have the following:
$$\tau(x_{n-1})=\frac{x_{[n-1]^-}x_{[n-1]^+}+\mu_{n-2}...\mu_1(x_{n-2})x_{n-1,A}x_{n-1,B}}{x_n}$$
$$\tau(x_n)=\frac{x_{[n-1]^-}x_{[n-1]^+}+\mu_{n-2}...\mu_1(x_{n-2})x_{n-1,A}x_{n-1,B}}{x_{n-1}}$$
Notice that for any $i$, $x_{[i]^-_*}=x_{[i]^-}\left(\prod_{k=i+1}^j x_{k,A}\right),$ as long as there is an element of $A$ in $\{1,...,j\}$.  Similarly, $x_{[i]^+_*}=x_{[i]^+}\left(\prod_{k=i+1}^j x_{k,B}\right),$ as long as there is an element of $B$ in $\{1,...,j\}$.  Substituting for $\mu_{n-2}...\mu_1(x_{n-2})$ in each of the above cases, we have what we want.

Now consider $i<n-2$.  Because mutation is an involution, we know $$\mu_i...\mu_1\mu_1\mu_2...\mu_{n-2}s_{n-1,n}\mu_n\mu_{n-1}...\mu_1(x_i)=\mu_{i+1}...\mu_{n-2}s_{n-1,n}\mu_n\mu_{n-1}...\mu_1(x_i).$$  Since mutating at a vertex only affects the variable at that vertex, this is the same as $\mu_i\mu_{i-1}...\mu_1(x_i)$.  From Proposition~\ref{prop:x-vars-half}, we know the formula for this expression.  Thus, all we need to do is show, starting with the cluster variables $\widetilde{x}_i=x_i\overline{x}$, mutating at 1 through $i$ gives us back the formulas in Proposition~\ref{prop:x-vars-half}.
\begin{align*}
\mu_i...\mu_1(\widetilde{x}_i)&=\frac{\sum_{j=0}^i x_{[j]^-}x_{[j]^+}\left(\prod_{k=0}^{j-1} x_k\overline{x}\right)\left(\prod_{k=j+1}^i x_{k+1}\overline{x}x_{k,A}x_{k,B}\right)}{\prod_{k=1}^i x_k\overline{x}}\\
&=\frac{\sum_{j=0}^i x_{[j]^-}x_{[j]^+}\left(\prod_{k=0}^{j-1} x_k\right)\left(\overline{x}\right)^j\left(\prod_{k=j+1}^i x_{k+1}x_{k,A}x_{k,B}\right)\left(\overline{x}\right)^{i-j}}{\left(\prod_{k=1}^i x_k\right)\left(\overline{x}\right)^i}\\
&=\frac{\sum_{j=0}^i x_{[j]^-}x_{[j]^+}\left(\prod_{k=0}^{j-1} x_k\right)\left(\prod_{k=j+1}^i x_{k+1}x_{k,A}x_{k,B}\right)}{\left(\prod_{k=1}^i x_k\right)}\\
\end{align*}
\end{proof}

\subsection{Proof of Theorem~\ref{thm:y-vars}}\label{sec:proof-y-vars}

\begin{lemma}
For $1< r\leq n-2$, the $y$-variables for the vertices in the middle circle of the quiver after $r$ mutations are:
\begin{enumerate}[(1)]
\item If $i<r$, then $\displaystyle y_i'=\frac{y_{i+1}\sum_{j=0}^{i-1}\prod_{k=0}^j y_k}{\sum_{j=0}^{i+1}\prod_{k=0}^j y_k}$
\item $\displaystyle y_r'=\frac{\sum_{j=0}^{r-1}\prod_{k=0}^j y_k}{\prod_{k=0}^r y_k}$
\item $\displaystyle y_{r+1}'=\frac{\prod_{k=0}^{r+1} y_k}{\sum_{j=0}^r \prod_{k=0}^j y_k}$
\item If $r+1<m<n$, then $y_m'=y_m$
\item $\displaystyle y_n'=y_n\sum_{j=0}^r\prod_{k=0}^j y_k$
\end{enumerate}
\end{lemma}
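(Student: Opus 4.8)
The plan is to prove the lemma by induction on $r$, carrying along all of the middle-circle $y$-variables simultaneously. The only structural input I would need is Lemma~\ref{lemma:edges-middle-middle}, which records the middle-circle edges of $Q_r$; the edges of $Q_r$ to the inner and outer circles are irrelevant here, because the $y$-seed mutation rule of Definition~\ref{defn:seed-with-coeff-mutation} changes the $y$-variable only at the mutated vertex and at its neighbours, and Lemma~\ref{lemma:edges-middle-middle} pins down exactly which middle-circle vertices neighbour the vertex we mutate at. Throughout I would write $Y_i^{(r)}$ for the value of $y_i$ after the first $r$ mutations $\mu_r\cdots\mu_1$, keep the convention $y_0=1$, and use repeatedly the telescoping identity $\sum_{j=0}^{r}\prod_{k=0}^{j}y_k+\prod_{k=0}^{r+1}y_k=\sum_{j=0}^{r+1}\prod_{k=0}^{j}y_k$.

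For the base case I would take $r=1$, where the ``$i<r$'' clause is vacuous, and compute directly in the original spider web quiver $Q$: vertex $1$ has exactly the two middle-circle edges $1\to2$ and $n\to1$, so $\mu_1$ sends $y_1\mapsto y_1^{-1}$, multiplies $y_2$ by $(1+y_1^{-1})^{-1}$ and $y_n$ by $(1+y_1)$, and fixes every other middle-circle variable. Specializing the claimed formulas to $r=1$ and using $y_0=1$ returns exactly these values, so the base case holds.

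For the inductive step I would assume the formulas after $r$ mutations for some $1\le r<n-2$ and apply $\mu_{r+1}$ to $Q_r$. By Lemma~\ref{lemma:edges-middle-middle} the middle-circle edges at vertex $r+1$ in $Q_r$ are precisely $n\to r+1$, $r+1\to r$ and $r+1\to r+2$ (the last is present since $r+1<n$). Hence the mutation rule gives $Y_{r+1}^{(r+1)}=(Y_{r+1}^{(r)})^{-1}$, multiplies $Y_r^{(r)}$ and $Y_{r+2}^{(r)}$ by $(1+(Y_{r+1}^{(r)})^{-1})^{-1}$, multiplies $Y_n^{(r)}$ by $(1+Y_{r+1}^{(r)})$, and leaves every other middle-circle $Y$ fixed. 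I would then substitute the inductive formulas — using that $r+2$ lies in the range $r+1<m<n$, so $Y_{r+2}^{(r)}=y_{r+2}$ — and simplify each expression with the telescoping identity; in every case the outcome is the formula with $r+1$ in place of $r$, for instance $Y_n^{(r+1)}=y_n\bigl(\sum_{j=0}^{r}\prod_{k=0}^{j}y_k+\prod_{k=0}^{r+1}y_k\bigr)=y_n\sum_{j=0}^{r+1}\prod_{k=0}^{j}y_k$.

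The argument is essentially bookkeeping, and I expect the only delicate points to be organizational: confirming that in $Q_r$ no middle-circle vertex other than $r$, $r+2$ and $n$ is a neighbour of $r+1$ (which is exactly the content of Lemma~\ref{lemma:edges-middle-middle}), tracking which of the five clauses each index falls into before and after the mutation, and checking the boundary conditions on the index ranges — in particular $r+2<n$ throughout the induction, so that $Y_{r+2}^{(r)}=y_{r+2}$ prior to mutating. Once the local picture at $r+1$ is fixed the algebra collapses immediately through the telescoping identity, so I do not anticipate any substantive obstacle beyond this case analysis.
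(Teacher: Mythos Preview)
Your proposal is correct and follows the same approach as the paper, which simply states ``This can be shown by induction.'' You have supplied the details the paper omits: the base case at $r=1$, the identification of the middle-circle neighbours of $r+1$ in $Q_r$ via Lemma~\ref{lemma:edges-middle-middle}, and the telescoping verification for each of the five clauses.
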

\begin{proof}
This can be shown by induction.
\end{proof}

\begin{lemma}
After mutating all $n$ vertices and performing the transposition $s_{n-1,n}$, the $y$-variables for the vertices in the middle circle of the quiver are:
\begin{enumerate}[(1)]
\item If $i<n-2$, then $\displaystyle y_i'=\frac{y_{i+1}\sum_{j=0}^{i-1}\prod_{k=0}^j y_k}{\sum_{j=0}^{i+1}\prod_{k=0}^j y_k}$
\item $\displaystyle y_{n-2}'=\frac{y_{n-1}\left(\sum_{j=0}^{n-3}\prod_{k=0}^j y_k\right)\left(1+y_n\sum_{j=0}^{n-2}\prod_{k=0}^j y_k\right)}{\sum_{j=0}^{n-1}\prod_{k=0}^j y_k}$
\item $\displaystyle y_{n-1}'=\frac{1}{y_n\sum_{j=0}^{n-2}\prod_{k=0}^j y_k}$
\item $\displaystyle y_n'=\frac{\sum_{j=0}^{n-2}\prod_{k=0}^j y_k}{\prod_{k=0}^{n-1} y_k}$
\end{enumerate}
\end{lemma}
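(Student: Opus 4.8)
The plan is to obtain this statement by feeding the preceding lemma (the middle-circle $y$-variables after $r$ mutations, $1<r\le n-2$) in at $r=n-2$ and then applying the two remaining mutations $\mu_{n-1}$, $\mu_n$ and the transposition $s_{n-1,n}$, using the $y$-seed mutation rule of Definitions~\ref{defn:seed-with-coeff-mutation} and~\ref{defn:y-seed}. So the starting data are the values $\hat y_1,\dots,\hat y_n$ at the middle-circle vertices of $Q_{n-2}=\mu_{n-2}\cdots\mu_1(Q)$, namely $\hat y_i=\frac{y_{i+1}\sum_{j=0}^{i-1}\prod_{k=0}^j y_k}{\sum_{j=0}^{i+1}\prod_{k=0}^j y_k}$ for $i<n-2$, $\hat y_{n-2}=\frac{\sum_{j=0}^{n-3}\prod_{k=0}^j y_k}{\prod_{k=0}^{n-2}y_k}$, $\hat y_{n-1}=\frac{\prod_{k=0}^{n-1}y_k}{\sum_{j=0}^{n-2}\prod_{k=0}^j y_k}$, and $\hat y_n=y_n\sum_{j=0}^{n-2}\prod_{k=0}^j y_k$. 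The only structural input required is the list of arrows incident to $n-1$ and $n$ in $Q_{n-2}$ and in $\mu_{n-1}(Q_{n-2})$. Reading Lemma~\ref{lemma:edges-middle-middle} at the boundary value $r=n-2$ (where the path edge $n-1\to n$ and the triangle edge $n\to n-1$ form a $2$-cycle and cancel), the middle-circle edges of $Q_{n-2}$ are the directed path $1\to 2\to\cdots\to n-2$ together with $n-2\to n$ and $n-1\to n-2$; in particular $n-1$ and $n$ are not adjacent, $n-1$ carries the single middle edge $n-1\to n-2$, and $n$ receives the single middle edge $n-2\to n$. The possible extra edges from $n-1$ or $n$ to the outer and inner circles (Lemmas~\ref{lemma:edges-middle-outer-1inA}--\ref{lemma:edges-middle-inner-1notB}) touch only outer, inner, and the vertex $n-2$, hence do not enter any middle-circle $y$-variable and may be ignored here.

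Carrying out the steps: $\mu_{n-1}$ sends $\hat y_{n-1}\mapsto\hat y_{n-1}^{-1}$, multiplies $\hat y_{n-2}$ by $(1+\hat y_{n-1}^{-1})^{-1}$, and fixes $\hat y_i$ for $i<n-2$ and $i=n$ (the only middle neighbour of $n-1$ is $n-2$). Using $y_0=1$ one has $1+\hat y_{n-1}^{-1}=\bigl(\sum_{j=0}^{n-1}\prod_{k=0}^j y_k\bigr)/\prod_{k=0}^{n-1}y_k$, since $\prod_{k=0}^{n-1}y_k$ is exactly the $j=n-1$ term, giving the new value $\frac{y_{n-1}\left(\sum_{j=0}^{n-3}\prod_{k=0}^j y_k\right)}{\sum_{j=0}^{n-1}\prod_{k=0}^j y_k}$ at $n-2$. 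Mutation at $n-1$ creates no arrow joining $n-2$ and $n$ and does not alter the edge $n-2\to n$, so in $\mu_{n-1}(Q_{n-2})$ the vertex $n$ still meets $n-2$ through that single edge and retains the value $\hat y_n$. Then $\mu_n$ inverts $\hat y_n$ and multiplies the current $y_{n-2}$ by $1+\hat y_n=1+y_n\sum_{j=0}^{n-2}\prod_{k=0}^j y_k$, producing exactly the extra factor in the claimed $y_{n-2}'$, and it leaves $y_{n-1}$ and all $y_i$ with $i<n-2$ alone. Finally $s_{n-1,n}$ interchanges the values sitting at vertices $n-1$ and $n$: the value $\hat y_n^{-1}=\frac{1}{y_n\sum_{j=0}^{n-2}\prod_{k=0}^j y_k}$ moves to $n-1$ and $\hat y_{n-1}^{-1}=\frac{\sum_{j=0}^{n-2}\prod_{k=0}^j y_k}{\prod_{k=0}^{n-1}y_k}$ moves to $n$. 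Collecting the four cases gives precisely the displayed formulas.

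The computation is routine once the quiver is pinned down, so the delicate point—where I expect the real work to lie—is the $r=n-2$ boundary analysis: verifying that $Q_{n-2}$ has the stated middle edges and, more importantly, that across all the configurations in Lemmas~\ref{lemma:edges-middle-outer-1inA}--\ref{lemma:edges-outer-inner} (i.e. whether $1\in A$, whether $1\in B$, and the locations of the smallest elements of $A$ and $B$) the mutations $\mu_{n-1}$ and $\mu_n$ never create a direct $n-1$--$n$ arrow and never change the multiplicity of $n-2\to n$. This uniformity is essentially the content of Lemma~\ref{lemma:Qn-2}, and invoking it lets one conclude the four formulas hold with no case distinctions.
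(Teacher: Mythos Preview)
Your argument is correct and follows the same approach as the paper, which simply states ``These values can be easily computed.'' You have carried out that computation in full, supplying the quiver analysis at $r=n-2$ (including the $2$-cycle cancellation that puts $Q_{n-2}$ outside the literal range $r<n-2$ of Lemma~\ref{lemma:edges-middle-middle}) and tracking the two remaining mutations and the transposition explicitly; the paper leaves all of this implicit.
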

\begin{proof}
These values can be easily computed.
\end{proof}

\begin{lemma}\label{lemma:y-center}
If $1<r\leq n-2$, then after mutating vertex $r$ for the second time, the $y$-variables in the middle circle of the quiver are:
\begin{enumerate}[(1)]
\item If $\ell<i-1$, then $$y_{\ell}=\frac{y_{\ell+1}\sum_{j=0}^{\ell-1}\prod_{k=0}^j y_k}{\sum_{j=0}^{\ell+1}\prod_{k=0}^j y_k}$$
\item $\displaystyle y_{r-1}=\frac{\left(y_r\sum_{j=0}^{r-2}\prod_{k=0}^j y_k\right)\left(\left(\sum_{j=r+1}^n \prod_{k=r+1}^{j-1} y_k\right)+\left(\prod_{k=r+1}^n y_k\right)\left(\sum_{j=0}^{r-1}\prod_{k=0}^j y_k\right)\right)}{\sum_{j=0}^{n-1}\prod_{k=0}^j y_k}$
\item $\displaystyle y_r=\frac{\sum_{j=0}^{n-1}\prod_{k=0}^j y_k}{\left(y_{r+1}\sum_{j=0}^{r-1}\prod_{k=0}^j y_k\right)\left(\left(\sum_{j=r+2}^n \prod_{k=r+2}^{j-1} y_k\right)+\left(\prod_{k=r+2}^n y_k\right)\left(\sum_{j=0}^r\prod_{k=0}^j y_k\right)\right)}$
\item If $i<m<n$, then $$y_m=\frac{\sum_{j=m}^n\prod_{k=m}^{j-1} y_k+\left(\prod_{k=m}^n y_k\right)\left(\sum_{j=0}^{m-2}\prod_{k=0}^j y_k\right)}{y_{m+1}\left(\sum_{j=m+2}^n\prod_{k=m+2}^{j-1} y_k+\left(\prod_{k=m+2}^n y_k\right)\left(\sum_{j=0}^m\prod_{k=0}^j y_k\right)\right)}$$
\item $\displaystyle y_n=\frac{\left(\sum_{j=0}^{r-1}\prod_{k=0}^j y_k\right)\left(1+y_n\sum_{j=0}^{n-2}\prod_{k=0}^j y_k\right)}{\left(\prod_{k=0}^r y_k\right)\left(\sum_{j=r+1}^n\prod_{k=r+1}^{j-1} y_k+\left(\prod_{k=r+1}^n y_k\right)\left(\sum_{j=0}^{r-1}\prod_{k=0}^j y_k\right)\right)}$
\end{enumerate}
\end{lemma}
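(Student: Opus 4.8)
The plan is to prove Lemma~\ref{lemma:y-center} by downward induction on $r$, following the order in which the mutations $\mu_{n-2},\mu_{n-3},\dots,\mu_1$ are applied in the second half of $\tau$. Throughout I would use the $y$-seed mutation rule of Definition~\ref{defn:seed-with-coeff-mutation} together with the explicit descriptions of the intermediate quivers in Lemmas~\ref{lemma:edges-middle-middle}--\ref{lemma:edges-outer-inner}. The key structural point is that each $\mu_k$ is an involution on quivers and $s_{n-1,n}\mu_n\mu_{n-1}(Q_{n-2})=Q_{n-2}$ by Lemma~\ref{lemma:Qn-2}, so after performing $\mu_{n-2},\mu_{n-3},\dots,\mu_{r+1}$ in the second pass the underlying quiver is again $Q_r$; hence the arrows incident to vertex $r$ at the moment we mutate there are read off directly from Lemma~\ref{lemma:edges-middle-middle} (the incoming arrows $r-1\to r$ and $r+1\to r$ from the path and the triangle, and the outgoing arrow $r\to n$), possibly together with arrows to or from the outer and inner circles coming from Lemmas~\ref{lemma:edges-middle-outer-1inA}--\ref{lemma:edges-middle-inner-1notB}, which change only the outer/inner $y$-variables and so do not enter the middle-circle formulas. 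This tells us exactly which middle-circle variables are multiplied by $(1+y_r^{-1})^{-1}$ or by $(1+y_r)$ when we mutate at $r$, and with what multiplicity (single arrows in every case here).

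For the base case $r=n-2$ I would start from the $y$-variables listed in the lemma immediately preceding Lemma~\ref{lemma:y-center} (the state after applying $s_{n-1,n}\mu_n\mu_{n-1}\cdots\mu_1$), read off the arrows at vertex $n-2$ in $Q_{n-2}$, apply $\mu_{n-2}$, and check that the five displayed formulas come out as claimed. For the inductive step I would assume the five formulas hold after the second-time mutation at vertex $r+1$, then apply $\mu_r$. Since vertex $r$ in $Q_r$ is adjacent among the middle-circle vertices only to $r-1$, $r+1$ and $n$, the only middle-circle $y$-variables changed by $\mu_r$ are those at $r-1$, $r$, $r+1$ and $n$; every other middle-circle variable is carried over unchanged and already has the stated form (cases (1) and (4) of Lemma~\ref{lemma:y-center} for indices away from $r$). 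So the real content of the step is verifying case (3) (the inversion $y_r\mapsto y_r^{-1}$ dressed with the factors from the arrows at $r$), cases (2) and (5) at the vertices $r-1$ and $n$, and the single instance $m=r+1$ of case (4).

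The main obstacle will be the algebraic simplification: the numerators and denominators are nested sums of the shape $\sum_j\prod_k y_k$ over several ranges, and after multiplying the inductive expressions by $(1+y_r)$ and $(1+y_r^{-1})^{-1}$ and clearing denominators one must recognize the results as the telescoped sums in the statement. I expect the cleanest way to control this is to abbreviate the partial products $P_j:=\prod_{k=0}^{j}y_k$ and the partial sums $S_a^{b}:=\sum_{j=a}^{b}\prod_{k=a}^{j-1}y_k$, rewrite each formula in these terms, and check the elementary identities of the form $(1+y_r)\,(\text{lower-range sum})=(\text{shifted sum})$ that make the induction close, being careful with the wrap-around convention $y_0:=y_n$. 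An alternative route would be to extract the intermediate $y$-variables from intermediate $x$-variables via the usual $y=\prod x$ relation and Proposition~\ref{prop:x-vars-half}, but since Proposition~\ref{prop:x-vars-half} only records cluster variables after the first pass, this reorganization does not shorten the second-pass bookkeeping, so I would carry out the direct $y$-computation.
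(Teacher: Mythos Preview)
Your proposal is correct and takes essentially the same approach as the paper: the paper's proof is simply ``This can be shown by induction,'' and your downward induction on $r$ (starting from the state recorded in the preceding lemma and using the quiver descriptions of Lemmas~\ref{lemma:edges-middle-middle}--\ref{lemma:edges-outer-inner} together with Lemma~\ref{lemma:Qn-2} to identify the arrows at each step) is exactly the intended argument, fleshed out in more detail than the paper provides.
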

\begin{proof}
This can be shown by induction.
\end{proof}

For $1\leq i\leq n-2$, let $$\widetilde{y_i}=\frac{\prod_{k=0}^i y_k}{\sum_{j=0}^{i-1}\prod_{k=0}^j y_k},$$ $$\widehat{y_i}=\frac{y_{i+1}\left(\sum_{j=0}^{i-1}\prod_{k=0}^j y_k\right)\left(\left(\sum_{j=i+2}^n\prod_{k=i+2}^{j-1} y_k\right)+\left(\prod_{i+2}^n y_k\right)\left(\sum_{j=0}^i\prod_{k=0}^j y_k\right)\right)}{\sum_{j=0}^{n-1}\prod_{k=0}^j y_k}.$$  That is, $\widetilde{y_i}$ is the value of $y_i'$ before mutating at $i$ for the first time, and $\widehat{y_i}$ is the value of $y_i'$ before mutating at $i$ for the second time.  Let $$y_{n-1}^*=\frac{\prod_{k=0}^{n-1} y_k}{\sum_{j=0}^{n-2}\prod_{k=0}^j y_k},\ y_n^*=y_n\sum_{j=0}^{n-2}\prod_{k=0}^j y_k,$$ so $y_{n-1}^*$ is the value of $y_{n-1}'$ before mutating at $n-1$ and $y_n^*$ is the value of $y_n'$ before mutating at $n$.

\begin{lemma}\label{lemma:y-A}
Suppose $n\in A$.  Then after completing the mutation sequence, we have the following $y$-variables for the outer circle.
\begin{enumerate}[(1)]
\item If there are no edges between $i^-$ and the middle circle, then $y_{i^-}'=y_{i^-}$.

\item If $i$ is maximal so that there are edges between $i^-$ and the middle circle, then $$y_{i^-}=\frac{y_{i^-}\left(1+\widehat{y_{d_1}}\right)}{\left(1+\widetilde{y_{d_1}}^{-1}\right)}.$$

\item If $i$ is second largest so that there are edges between $i^-$ and the middle circle, then $$y_{i^-}=\frac{y_{i^-}(1+y_{n-1}^*)\prod_{k=d_i}^{n-2}\left(1+\widetilde{y_k}\right)}{(1+y_n^{*-1})\prod_{k=d_i}^{n-2}\left(1+\widehat{y_k}^{-1}\right)}.$$

\item Otherwise, $$y_{i^-}=\frac{y_{i^-}\prod_{k=d_i}^{e_i-1}\left(1+\widetilde{y_k}\right)}{\prod_{k=d_i}^{e_i-1}\left(1+\widehat{y_k}^{-1}\right)}.$$
\end{enumerate}
\end{lemma}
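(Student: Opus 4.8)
The plan is to track the single coefficient $y_{i^-}$ through the entire sequence $\tau=\mu_1\mu_2\cdots\mu_{n-2}s_{n-1,n}\mu_n\mu_{n-1}\cdots\mu_1$. The key observation is that no vertex of the outer circle is ever mutated, and the transposition $s_{n-1,n}$ does not touch the outer circle, so by the $y$-mutation rule $y_{i^-}$ changes only when we mutate at a middle vertex $k$ that is at that moment adjacent to $i^-$: it is multiplied by $(1+y_k)$ if the current arrow is $i^-\to k$, and by $(1+y_k^{-1})^{-1}$ if it is $k\to i^-$, where $y_k$ denotes the value of the coefficient at $k$ at that step. In particular, if $i^-$ has no edges to the middle circle it is never affected, which is case (1).

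For the remaining cases the task is to determine exactly which mutations touch $y_{i^-}$ and with which sign. First I would use Lemmas~\ref{lemma:edges-middle-outer-1inA} and \ref{lemma:edges-middle-outer-1notA}, which describe the middle–outer edges of $Q_r=\mu_r\cdots\mu_1(Q)$, together with the analogous analysis of the descending pass that underlies Lemma~\ref{lemma:y-center}, to read off stage by stage when $i^-$ gains or loses an edge to a middle vertex and in which direction. The pattern that emerges is that during the ascending pass $y_{i^-}$ accumulates factors $(1+\widetilde{y_k})$ for $k$ ranging over $d_i,\ldots,e_i-1$ (the $\widetilde{y_k}$ being precisely the coefficient at $k$ just before its first mutation, as defined before the lemma), and during the descending pass it accumulates the compensating factors $(1+\widehat{y_k}^{-1})^{-1}$ over the same range of $k$ (the $\widehat{y_k}$ being the coefficient at $k$ just before its second mutation); that is case (4). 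The two distinguished vertices $n-1$ and $n$, whose coefficients are $y_{n-1}^*$ and $y_n^*$ just before the transposition and whose outer edges are disturbed by $s_{n-1,n}$ (recall $n\in A$ here), contribute the extra factors $(1+y_{n-1}^*)$ and $(1+y_n^{*-1})^{-1}$ in the ``second largest'' case (3). In case (2), where $i=\alpha$ is the largest outer vertex meeting the middle circle, the original edges between $i^-$ and $d_i,\ldots,e_i$ get progressively rerouted during the ascending pass until only an edge between $i^-$ and the smallest vertex of $A$ survives (the edge $\cdots\to 1\to 1^-$ of Lemma~\ref{lemma:edges-middle-outer-1inA}(1) when $1\in A$, or its analogue from Lemma~\ref{lemma:edges-middle-outer-1notA}); after all the intermediate cancellations the net effect is $y_{i^-}(1+\widehat{y_{d_1}})/(1+\widetilde{y_{d_1}}^{-1})$.

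Rather than arguing the product of factors in one stroke, the cleanest way to organize the argument is a triple induction paralleling Lemma~\ref{lemma:y-center} and the two lemmas preceding it: establish a formula for $y_{i^-}$ after $r$ steps of the ascending pass, then after $s_{n-1,n}\mu_n\mu_{n-1}$, and then after the $r$-th step of the descending pass, each by induction on $r$ using the explicit middle-circle coefficients already computed and the edge structure from Section~\ref{sec:proof-mutation-seq} (and Proposition~\ref{prop:mutation-seq}, which guarantees the quiver returns to $Q$ so the final formulas make sense). Specializing $r$ to its terminal value and simplifying then yields the four stated formulas.

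The step I expect to be the main obstacle is the careful bookkeeping of the \emph{orientations} of the middle–outer edges at each intermediate quiver, and in particular their behaviour near the vertices $n-1$ and $n$, where $s_{n-1,n}$ interchanges the roles of two middle vertices: getting the exponents $\pm 1$ correct in every subcase is exactly what separates cases (2), (3) and (4), and it is the only genuinely delicate point. Once the adjacency and orientation data are pinned down, each induction step is a routine application of the $y$-mutation rule combined with Lemma~\ref{lemma:y-center} and the middle-circle computations.
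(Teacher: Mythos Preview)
Your proposal is correct and follows essentially the same approach as the paper: the paper's proof consists of the single sentence ``We can prove this by using Lemmas~\ref{lemma:edges-middle-outer-1inA} and~\ref{lemma:edges-middle-outer-1notA} to know the arrows in $Q_r$ and applying the $y$-mutation rules,'' and your plan is precisely an elaboration of how to carry that out. Your additional detail about organizing the argument as an induction paralleling the middle-circle computations, and your identification of the orientation bookkeeping near $n-1,n$ as the delicate point, accurately fills in what the paper leaves implicit.
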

\begin{proof}
We can prove this by using Lemmas~\ref{lemma:edges-middle-outer-1inA} and~\ref{lemma:edges-middle-outer-1notA} to know the arrows in $Q_r$ and applying the $y$-mutation rules.
\end{proof}

\begin{lemma}\label{lemma:y-not-A}
Suppose $n\not\in A$.  Then after completing the mutation sequence, we have the following $y$-variables for the outer circle.
\begin{enumerate}[(1)]
\item If there are no edges between $i^-$ and the middle circle, then $y_{i^-}'=y_{i^-}$.

\item If $i$ is maximal so that there are edges between $i^-$ and the middle circle, then $$y_{i^-}=\frac{y_{i^-}(1+y_{n-1}^*)\left(1+\widehat{y_{d_1}}\right)\prod_{k=e_i}^{n-2}\left(1+\widetilde{y_k}\right)}{(1+y_n^{*-1})\left(1+\widetilde{y_{d_1}}^{-1}\right)\prod_{k=e_i}^{n-2}\left(1+\widehat{y_k}^{-1}\right)}.$$

\item Otherwise, $$y_{i^-}=\frac{y_{i^-}\prod_{k=d_i}^{e_i-1}\left(1+\widetilde{y_k}\right)}{\prod_{k=d_i}^{e_i-1}\left(1+\widehat{y_k}^{-1}\right)}.$$
\end{enumerate}
\end{lemma}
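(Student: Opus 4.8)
The plan is to mirror the proof of Lemma~\ref{lemma:y-A}: we track how the coefficient $y_{i^-}$ evolves under each individual mutation in the sequence $\tau = \mu_1\cdots\mu_{n-2}\,s_{n-1,n}\,\mu_n\mu_{n-1}\cdots\mu_1$ (that is, $\mu_1,\ldots,\mu_n$, then the transposition, then $\mu_{n-2},\ldots,\mu_1$), using the $y$-mutation rule of Definition~\ref{defn:seed-with-coeff-mutation} together with the explicit description of the intermediate quivers $Q_r$ from Section~\ref{sec:proof-mutation-seq}. Since $y_{i^-}$ is altered only when one mutates at a vertex joined to $i^-$ by an edge of the current quiver, the first step is to read off, from Lemmas~\ref{lemma:edges-middle-outer-1inA} and~\ref{lemma:edges-middle-outer-1notA}, exactly which middle-circle vertices are adjacent to $i^-$ at each stage and with which orientation. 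If $i^-$ has no edge to the middle circle at all, then no mutation ever touches it and $y_{i^-}' = y_{i^-}$, giving part (1).

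For the remaining cases the key point is that when we mutate at a middle vertex $k$ joined to $i^-$, the coefficient $y_{i^-}$ is multiplied by $(1+y_k)^{\#\{k\to i^-\}}$ or by $(1+y_k^{-1})^{-\#\{i^-\to k\}}$ according to the orientation, where $y_k$ is its value at that instant. By Lemma~\ref{lemma:y-center} and the lemmas preceding it, the value of $y_k$ just before its first mutation in $\tau$ is $\widetilde{y_k}$, its value just before its second mutation is $\widehat{y_k}$, and the analogous values for $k=n-1,n$ are $y_{n-1}^*$ and $y_n^*$; these quantities were defined precisely so as to feed into this computation. Walking through the sequence, vertex $i^-$ is successively adjacent to $d_i, d_i+1, \dots, e_i$ (possibly twice, once on the "up" pass and once on the "down" pass), together with wrap-around contributions at the ends of the middle circle (near vertices $1$, $n-1$, $n$) in the case where $i$ is the maximal outer vertex. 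Collecting the resulting factors and checking that consecutive contributions at $i^-$ alternate in orientation so that the product telescopes yields $\prod_{k=d_i}^{e_i-1}(1+\widetilde{y_k})\big/\prod_{k=d_i}^{e_i-1}(1+\widehat{y_k}^{-1})$ in the generic case (part (3)) and the additional factors $(1+y_{n-1}^*)(1+\widehat{y_{d_1}})\big/\big((1+y_n^{*-1})(1+\widetilde{y_{d_1}}^{-1})\big)$ plus the longer product $\prod_{k=e_i}^{n-2}(1+\widetilde{y_k})\big/\prod_{k=e_i}^{n-2}(1+\widehat{y_k}^{-1})$ in the maximal case (part (2)).

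The place where the hypothesis $n\notin A$ enters — and where the main obstacle lies — is in the arrow configuration of $Q_r$ around vertices $n-1$ and $n$, and hence in which mutations touch the maximal $i^-$ and in which direction. When $n\in A$ vertex $n$ carries an edge to the outer circle, so its mutations directly feed $y_{i^-}$ for the top outer vertex; when $n\notin A$ the edge pattern near $n-1,n$ is different, and the role of the wrap-around factors shifts, producing exactly the extra product $\prod_{k=e_i}^{n-2}(1+\widetilde{y_k})\big/\prod_{k=e_i}^{n-2}(1+\widehat{y_k}^{-1})$ that distinguishes part (2) of this lemma from Lemma~\ref{lemma:y-A}. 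Carefully re-deriving this adjacency-and-orientation data under the new hypothesis (tracing through whichever sub-case of Lemmas~\ref{lemma:edges-middle-outer-1inA}–\ref{lemma:edges-middle-outer-1notA} applies, depending on whether $1\in A$) is the bulk of the argument; once it is pinned down, the conclusion follows by a routine induction on the number of mutations performed, exactly as in the proofs of Lemmas~\ref{lemma:y-center} and~\ref{lemma:y-A}.
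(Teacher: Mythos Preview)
Your proposal is correct and follows essentially the same approach as the paper: the paper's proof is a one-line appeal to Lemmas~\ref{lemma:edges-middle-outer-1inA} and~\ref{lemma:edges-middle-outer-1notA} to determine the arrows in $Q_r$ incident to $i^-$ at each stage, then applies the $y$-mutation rules, which is exactly the strategy you outline (indeed, you supply more detail than the paper does).
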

\begin{proof}
We can prove this by using Lemmas~\ref{lemma:edges-middle-outer-1inA} and~\ref{lemma:edges-middle-outer-1notA} to know the arrows in $Q_r$ and applying the $y$-mutation rules.
\end{proof}

\begin{lemma}\label{lemma:y-B}
Suppose $n\in B$.  Then after completing the mutation sequence, we have the following $y$-variables for the inner circle.
\begin{enumerate}[(1)]
\item If there are no edges between $i^+$ and the middle circle, then $y_{i^+}'=y_{i^+}$.

\item If $i$ is maximal so that there are edges between $i^+$ and the middle circle, then $$y_{i^+}=\frac{y_{i^+}\left(1+\widehat{y_{f_1}}\right)}{\left(1+\widetilde{y_{f_1}}^{-1}\right)}.$$

\item If $i$ is second largest so that there are edges between $i^+$ and the middle circle, then $$y_{i^+}=\frac{y_{i^+}(1+y_{n-1}^*)\prod_{k=f_i}^{n-2}\left(1+\widetilde{y_k}\right)}{(1+y_n^{*-1})\prod_{k=f_i}^{n-2}\left(1+\widehat{y_k}^{-1}\right)}.$$

\item Otherwise, $$y_{i^+}=\frac{y_{i^+}\prod_{k=f_i}^{g_i-1}\left(1+\widetilde{y_k}\right)}{\prod_{k=f_i}^{g_i-1}\left(1+\widehat{y_k}^{-1}\right)}.$$
\end{enumerate}
\end{lemma}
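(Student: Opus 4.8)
The plan is to mirror the proof of Lemma~\ref{lemma:y-A}, replacing the outer circle by the inner circle throughout. The mutation sequence $\tau=\mu_1\mu_2\cdots\mu_{n-2}s_{n-1,n}\mu_n\mu_{n-1}\cdots\mu_1$ only touches the vertices $1,\dots,n$ of the middle circle, so a variable $y_{i^+}$ attached to the inner circle changes exactly when we mutate at a middle vertex adjacent to $i^+$ in the current quiver (or its successor after $s_{n-1,n}$ and the second round of mutations). The adjacencies between the middle and inner circles in $Q_r$ are governed by Lemmas~\ref{lemma:edges-middle-inner-1inB} and~\ref{lemma:edges-middle-inner-1notB}, which are the exact analogues of Lemmas~\ref{lemma:edges-middle-outer-1inA} and~\ref{lemma:edges-middle-outer-1notA} with $A,A'$ replaced by $B,B'$. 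Consequently, the same induction that produces the formulas of Lemma~\ref{lemma:y-A} produces the formulas of Lemma~\ref{lemma:y-B}, with $d_i,e_i$ replaced by $f_i,g_i$ and $d_1$ by $f_1$.

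Concretely, I would proceed as follows. Fix a vertex $i^+$ on the inner circle and let $f_i\le\cdots\le g_i$ be the middle-circle vertices it is joined to. Using Lemmas~\ref{lemma:edges-middle-inner-1inB} and~\ref{lemma:edges-middle-inner-1notB} together with the $y$-mutation rule of Definition~\ref{defn:seed-with-coeff-mutation}, track the value of $y_{i^+}$ through the first block $\mu_{n-2}\cdots\mu_1$: each mutation $\mu_k$ with $f_i\le k<g_i$ contributes either a factor $(1+\widetilde{y_k})$ in the numerator or a factor $(1+\widehat{y_k}^{-1})$ in the denominator, according to the direction of the edge between $k$ and $i^+$ in $Q_{k-1}$, where $\widetilde{y_k}$ and $\widehat{y_k}$ are precisely the intermediate values of $y_k'$ recorded just before the first and second mutations at $k$ (as defined before Lemma~\ref{lemma:y-A}). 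Then one handles the passage through $s_{n-1,n}\mu_n\mu_{n-1}$, which introduces the correction factors $(1+y_{n-1}^*)$ and $(1+y_n^{*-1})$ in the case where $g_i$ reaches near $n$, and finally the second block $\mu_n\mu_{n-1}\cdots\mu_1$, which supplies the remaining factors indexed from $f_i$ up to $g_i-1$. Collecting the factors in each of the four cases — $i^+$ disconnected from the middle circle, $i$ maximal (where the wraparound through $f_1$ appears because $n\in B$ forces the edge at $n$ into play), $i$ second largest, and all other $i$ — gives exactly the four displayed formulas.

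The main obstacle is purely bookkeeping: correctly determining, at each stage $Q_r$, the orientation of the edge between the active middle vertex and $i^+$, so that one knows whether the $y$-mutation rule contributes a $(1+\,\cdot\,)$ factor in the numerator or a $(1+\,\cdot\,)^{-1}$ factor in the denominator, and then recognizing the accumulated product as $\prod(1+\widetilde{y_k})/\prod(1+\widehat{y_k}^{-1})$. The vertices $n-1$ and $n$ require separate attention because of the transposition $s_{n-1,n}$ and because $\widetilde{y_{n-1}},\widehat{y_{n-1}},\widetilde{y_n},\widehat{y_n}$ are replaced by $y_{n-1}^*,y_n^*$; this is exactly where the ``second largest'' and ``maximal'' cases of the lemma originate. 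Since $n\in B$ guarantees (as in the analogous remark for $A$) that the last middle vertex carries an edge to the inner circle, the maximal $i^+$ interacts with the mutation at $n$, which is why its formula involves $\widehat{y_{f_1}}$ and $\widetilde{y_{f_1}}$ rather than the generic endpoints. All of this is a direct transcription of the argument already carried out for Lemma~\ref{lemma:y-A}, so the induction goes through verbatim with the indicated substitutions.
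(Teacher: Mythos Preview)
Your approach is correct and matches the paper's own proof exactly: the paper simply invokes Lemmas~\ref{lemma:edges-middle-inner-1inB} and~\ref{lemma:edges-middle-inner-1notB} to determine the arrows in $Q_r$ and then applies the $y$-mutation rules, which is precisely the mirror of the Lemma~\ref{lemma:y-A} argument you describe.
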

\begin{proof}
We can prove this by using Lemmas~\ref{lemma:edges-middle-inner-1inB} and~\ref{lemma:edges-middle-inner-1notB} to know the arrows in $Q_r$ and applying the $y$-mutation rules.
\end{proof}

\begin{lemma}\label{lemma:y-not-B}
Suppose $n\not\in B$.  Then after completing the mutation sequence, we have the following $y$-variables for the inner circle.
\begin{enumerate}[(1)]
\item If there are no edges between $i^+$ and the middle circle, then $y_{i^+}'=y_{i^+}$.

\item If $i$ is maximal so that there are edges between $i^+$ and the middle circle, then $$y_{i^+}=\frac{y_{i^+}(1+y_{n-1}^*)\left(1+\widehat{y_{f_1}}\right)\prod_{k=g_i}^{n-2}\left(1+\widetilde{y_k}\right)}{(1+y_n^{*-1})\left(1+\widetilde{y_{f_1}}^{-1}\right)\prod_{k=g_i}^{n-2}\left(1+\widehat{y_k}^{-1}\right)}.$$

\item Otherwise, $$y_{i^+}=\frac{y_{i^+}\prod_{k=f_i}^{g_i-1}\left(1+\widetilde{y_k}\right)}{\prod_{k=f_i}^{g_i-1}\left(1+\widehat{y_k}^{-1}\right)}.$$
\end{enumerate}
\end{lemma}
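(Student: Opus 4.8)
The plan is to follow the strategy of the proofs of Lemmas~\ref{lemma:y-A}, \ref{lemma:y-not-A}, and~\ref{lemma:y-B}: trace what the sequence $\tau=\mu_1\mu_2\cdots\mu_{n-2}s_{n-1,n}\mu_n\mu_{n-1}\cdots\mu_1$ does to each inner-circle coefficient $y_{i^+}$, one mutation at a time. The key simplification is that $\tau$ never mutates at an inner-circle vertex, so by the coefficient-mutation rule (Definition~\ref{defn:seed-with-coeff-mutation}) $y_{i^+}$ is only ever rescaled: at the step $\mu_j$ it is multiplied by $(1+y_j)^{\#\{i^+\to j\}}$ when the current quiver has edges $i^+\to j$, and by $(1+y_j^{-1})^{-\#\{j\to i^+\}}$ when it has edges $j\to i^+$, where $y_j$ denotes the value current at that step. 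The computation therefore rests on two inputs already in hand: first, which middle-circle vertices $i^+$ is joined to, and in which directions, at each stage --- read off from Lemmas~\ref{lemma:edges-middle-inner-1inB} and~\ref{lemma:edges-middle-inner-1notB} for the intermediate quivers $Q_r$, and from Lemma~\ref{lemma:Qn-2} and Proposition~\ref{prop:mutation-seq} for the block $s_{n-1,n}\mu_n\mu_{n-1}$; and second, the value of $y_j$ at the instant it is mutated, namely $\widetilde{y_j}$ on the first pass $\mu_1\cdots\mu_{n-2}$, $\widehat{y_j}$ on the second pass $\mu_{n-2}\cdots\mu_1$, and $y_{n-1}^*$ or $y_n^*$ in the middle block --- all computed already in the middle-circle lemmas just preceding this one.

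I would split along the three clauses. Clause (1) is immediate: if $i^+$ has no edge to the middle circle in $Q$, then by Lemmas~\ref{lemma:edges-middle-inner-1inB}/\ref{lemma:edges-middle-inner-1notB} it has none in any $Q_r$ and none is created by $s_{n-1,n}$, so $y_{i^+}$ is never touched. For the generic clause (3) --- $i^+$ joined to a block of middle vertices not reaching the $n$/$1$ seam, together with $n\notin B$ --- I would cut $\tau$ at the middle block. On the first pass, Lemma~\ref{lemma:edges-middle-inner-1notB} (or~\ref{lemma:edges-middle-inner-1inB} if $1\in B$) shows that at the moment $\mu_k$ is performed the only edge at $i^+$ consumed is an outgoing edge $i^+\to k$, occurring for $k=f_i,\dots,g_i-1$, so $y_{i^+}$ accumulates $\prod_{k=f_i}^{g_i-1}(1+\widetilde{y_k})$; the step $\mu_{g_i}$ and the block $s_{n-1,n}\mu_n\mu_{n-1}$ (which, as $n\notin B$ and $i^+$ is not the maximal inner vertex, touch no edge at $i^+$) leave it fixed. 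On the second pass these edges have been reversed, so performing $\mu_k$ for $k=g_i-1,\dots,f_i$ multiplies $y_{i^+}$ by $(1+\widehat{y_k}^{-1})^{-1}$, using that $y_k$ has value $\widehat{y_k}$ at its second mutation. Collecting the contributions gives the displayed formula; the passage through the intermediate quivers is made rigorous by the same induction as in Lemmas~\ref{lemma:y-center} and~\ref{lemma:y-A}.

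Clause (2) --- $i^+$ the maximal inner vertex, whose block of middle neighbours abuts vertex $1$, with $f_1$ the minimal middle index met by $1^+$ --- is handled by the same bookkeeping, but now the steps at $f_1$, $n-1$, and $n$ are non-inert. Since $n\notin B$ there is no edge $n$--$i^+$ in $Q$, but Lemma~\ref{lemma:edges-middle-inner-1notB}(3) shows the mutation sequence creates one in each $Q_r$; reading that picture off, one finds that beyond the product over the range of middle indices appearing in the statement, $y_{i^+}$ also picks up $(1+\widehat{y_{f_1}})$ and $(1+\widetilde{y_{f_1}}^{-1})^{-1}$ from the two mutations at $f_1$ (outgoing edge on the first pass, reversed on the second) and $(1+y_{n-1}^*)$ and $(1+y_n^{*-1})^{-1}$ from the middle-block mutations at $n-1$ and $n$; assembling these yields the displayed formula. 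This is precisely where the present lemma diverges from Lemma~\ref{lemma:y-B}: when $n\in B$ there is a genuine edge to the maximal inner vertex already in $Q$, which forces Lemma~\ref{lemma:y-B} to carry a separate ``second largest'' clause and a simpler ``maximal'' formula, whereas here the contributions of $n-1$, $n$, and the seam all concentrate on the one maximal vertex.

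The obstacle is organizational rather than conceptual: keeping indices consistent across the reversal of arrows between the two passes, and matching each of the roughly $2n$ mutation steps to the correct clause of Lemmas~\ref{lemma:edges-middle-inner-1inB} and~\ref{lemma:edges-middle-inner-1notB} (which clause applies depends on whether $1\in B$ and on whether the running index has passed the smallest element of $B$). Once the dictionary ``$\mu_j$ on the first pass $\leftrightarrow$ factor $(1+\widetilde{y_j})$ from an edge $i^+\to j$; $\mu_j$ on the second pass $\leftrightarrow$ factor $(1+\widehat{y_j}^{-1})^{-1}$ from an edge $j\to i^+$'' is fixed and the special steps at $f_1$, $n-1$, and $n$ are treated by hand, the statement follows just as in the proof of Lemma~\ref{lemma:y-not-A}, of which it is the inner-circle analogue.
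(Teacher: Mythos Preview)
Your proposal is correct and follows exactly the approach the paper uses: invoke Lemmas~\ref{lemma:edges-middle-inner-1inB} and~\ref{lemma:edges-middle-inner-1notB} to determine the edges between $i^+$ and the middle circle at each stage of the mutation sequence, then apply the $y$-mutation rules step by step. The paper's own proof is in fact a one-sentence sketch (``use Lemmas~\ref{lemma:edges-middle-inner-1inB} and~\ref{lemma:edges-middle-inner-1notB} to know the arrows in $Q_r$ and apply the $y$-mutation rules''), so your write-up is considerably more detailed than what the paper provides, but the underlying method is identical.
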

\begin{proof}
We can prove this by using Lemmas~\ref{lemma:edges-middle-inner-1inB} and~\ref{lemma:edges-middle-inner-1notB} to know the arrows in $Q_r$ and applying the $y$-mutation rules.
\end{proof}

Now we have the proof of Theorem~\ref{thm:y-vars}.
\begin{proof}
This follows from Lemmas~\ref{lemma:y-center} through~\ref{lemma:y-not-B}.
\end{proof}

\bibliographystyle{amsplain}

\end{document}